\newcommand{\C}{{\mathbb{C}}}   
\newcommand{\Z}{{\mathbb{Z}}}   
\newcommand\g{{\mathbf{g}}}
\newcommand\La{{\mathfrak{g}}}  
\newcommand\h{{\mathfrak{h}}}     
\newcommand{\n}{{\mathfrak{n}}}   
\newcommand{\p}{{\mathfrak{p}}} 
\newcommand\calF{{\mathcal F}}  
\newcommand\calO{{\mathcal O}}  
\newcommand{\calM}{{\mathcal{M}}}
\newcommand\QQ{{\mathsf{Q}}}    
\newcommand\PP{{\mathsf{P}}}    
\newcommand\PPdom{{\mathsf{P}_{\Delta}}} 
\newcommand\Proj{{\EuScript{P}}}
\def\sl{{\mathfrak{sl}}}        
\newcommand\gl{{\mathfrak{gl}}}  
\newcommand{\Cat}{{\mathcal{C}}}    
\newcommand{\amod}{{\mathcal{I}}}   
\newcommand{\famod}{{{\mathcal{I}}}}   
\newcommand{\KG}{\mathsf{K}_{0}}           
\newcommand{\rr}{\mathsf{r}}  
\newcommand{\ww}{\mathsf{r}^{!}}  
\newcommand{\bi}{{\bar{\imath}}}
\newcommand{\oDelta}{\overline{\Delta}} 
\newcommand{\onabla}{\overline{\nabla}} 
\newcommand{\calD}{{\mathcal{D}}}
\newcommand{\ldot}{{\:\raisebox{1.5pt}{\selectfont\text{\circle*{1.5}}}}}
\newcommand{\udot}{{\:\raisebox{4pt}{\selectfont\text{\circle*{1.5}}}}}
\newcommand{\ttt}{\text{-}}
\let\le\leqslant
\let\leq\leqslant
\let\geq\geqslant
\let\cong\simeq
\newcommand{\dual}{\checkmark}  
\DeclareMathOperator*{\ooplus}{{\oplus}}
\DeclareMathOperator*{\ootimes}{{\otimes}}
\newtheorem{theorem}[equation]{Theorem}
\newtheorem*{theorem*}{Theorem}
\newtheorem{proposition}[equation]{Proposition}
\newtheorem*{proposition*}{Proposition}
\newtheorem{lemma}[equation]{Lemma}
\newtheorem{corollary}[equation]{Corollary}
\newtheorem*{corollary*}{Corollary}
\newtheorem{conjecture}[equation]{Conjecture}
\theoremstyle{definition}
\newtheorem{example}[equation]{Example}
\newtheorem{definition}[equation]{Definition}
\theoremstyle{remark}
\newtheorem{remark}[equation]{Remark}
\title{
Highest weight categories and Macdonald polynomials
}
\author{
Anton Khoroshkin
\thanks{
A.~Khoroshkin:
\newline 
Laboratory of Mathematical Physics \& Faculty of Mathematics, Vavilova 7,
\newline
 National Research University Higher School of Economics, Moscow 117312, Russia
\newline
and
Institute of Theoretical and Experimental Physics (ITEP Moscow),
\newline
Bolshaya Cheremushkinskaya 25, Moscow 117259, Russia
\newline
akhoroshkin@hse.ru
}
}
\date{}
\begin{document}

\maketitle

\begin{center} \emph{Dedicated to my teacher Boris Feigin for his 60's birthday} \end{center} 

\begin{abstract}
The aim of this paper is to introduce the categorical setup
which helps us to relate the theory of Macdonald polynomials and
the theory of Weyl modules for current Lie algebras discovered by V.\,Chari and collaborators.

We identify Macdonald pairing with the homological pairing on the ring of characters of the Lie algebra of currents $\mathbf{g}\otimes\C[x,\xi]$.
We use this description in order to define complexes of modules whose Euler characteristic of characters coincide with Macdonald polynomials.
We generalize this result to the case of graded Lie algebras with anti-involution.
We show that whenever the BGG reciprocity holds for the corresponding category of modules then these complexes collapse 
to the modules concentrated in homological degree $0$.
The latter modules generalizes the notion of Weyl modules for current Lie algebras and the notion of Verma modules in the 
BGG category $\mathcal{O}$.
We give different criterions of BGG reciprocity and apply them to the Lie algebra of currents $\mathbf{g}\otimes\mathbb{C}[x]$
with $\mathbf{g}$ semisimple.
\end{abstract}

\setcounter{section}{-1}

\section{Introduction}
\label{sec::intro}
Let $\g$ be a semisimple Lie algebra with the weight lattice $\PP$ and the Weyl group $W$.
Then the ring of symmetric functions $\Z[\PP]^{W}$ is the ring of characters of finite-dimension $\g$-modules
with monomial basis $m_\lambda=\sum_{\alpha\in W\lambda} e^{\alpha}$ given by summation over the orbits 
where $\lambda$ runs over the set of integral dominant weights $\PPdom$.
I.\,Macdonald defined a two-parametric pairing $\langle,\rangle_{q,t}$ on $\Z[\PP]^{W}$ (see e.g.~\cite{Macdonald1} and references therein).
Macdonald polynomials $\{P_{\lambda}(q,t),\lambda\in\PPdom\}$ form another basis in the ring of symmetric functions $\Z[\PP]^{W}$ 
which is orthogonal with respect to Macdonald pairing and transformation matrix from 
Macdonald basis to monomial basis is upper-triangular with respect to the following standard partial ordering on weights:
$$
\lambda \geq \mu\ \stackrel{def}{\Longleftrightarrow} \lambda-\mu = \sum_{\alpha \text{ is simple}} d_{\alpha} \alpha \text{ with } d_{\alpha}\geq 0.
$$
In other words, Macdonald polynomials are uniquely defined by the following properties:
$$
\langle P_\lambda, P_\mu\rangle_{q,t} = 0 \text{ if } \lambda\neq \mu \quad \& \quad P_{\lambda} = m_{\lambda} +\sum_{\mu<\lambda} c_{\lambda,\mu} m_{\mu}
$$

One of the main purposes of this paper is to give a possible categorification of Macdonald polynomials
while considering the category of bigraded modules over the Lie superalgebra of currents $\g\otimes\C[x,\xi]$. 
Here the first $q$-grading is assigned to the even variable $x$
and the second $t$-grading is assigned to the odd variable $\xi$.
We denote by $\famod=\famod(\g\otimes\C[x,\xi])$ the category of bi-graded finitely generated modules  
over $\g\otimes\C[x,\xi]$ which are locally finite dimensional
 with respect to the action of the semisimple subalgebra $\g = \g\otimes 1$.
The Lie ideal $\g\otimes\C[x,\xi]_{+}$ of currents with zero constant term is nilpotent and, therefore, the irreducible graded $\g\otimes\C[x,\xi]$ 
modules are just irreducible $\g$-modules with additional bi-grading. 
The corresponding Grothendieck ring is isomorphic to the ring of generating series in two variables of $\g$-characters of $\famod(\g\otimes\C[x,\xi])$ 
and, thus, is identified with  $\Z[\PP]^{W}[[q,t,q^{-1},t^{-1}]]$.
There exists a canonical anti-involution $\tau$ on the Lie algebra $\g\otimes\C[x,\xi]$ which extends the Cartan anti-involution on $\g$ and fixes $x$ and $\xi$.
For a bi-graded module $N=\oplus N_{i,j}$ we denote by $N^{\dual}$ the module that is graded linear dual to $N$ and the action is 
twisted by the anti-involution $\tau$; denote by $N\{k,l\}$ the module with the grading shifted $N\{k,l\}_{i,j}:=N_{i+k,j+l}$.
The key observation which is not very widely known  
is that the homological pairing on the Grothendieck ring (see e.g.~\cite{FGT},\cite{Fe1}):
\begin{equation}
\label{eq::hom::pair::intro}
\langle M, N \rangle_{\g\otimes\C[x,\xi]}:=
\sum_{\begin{smallmatrix}
i\geq 0, \\ 
k,l\in\Z
\end{smallmatrix}
} (-1)^{i} q^{k} t^{l} \dim Ext^{i}_{\g\otimes\C[x,\xi]\ttt mod}(M\{k,l\}, N^{\dual})
\end{equation}
given as a graded Euler characteristic of derived homomorphisms between $M$ and $N^{\dual}$ 
coincides with Macdonald pairing of corresponding graded characters.
It is worth mentioning that we use the description of derived homomorphisms via relative Lie algebra cohomology
$$
\oplus_{k,l} Ext^{\udot}_{\g\otimes\C[x,\xi]\ttt mod}(M\{k,l\}, N^{\dual})  = H^{\udot}(\g\otimes\C[x,\xi],\g; Hom_{\C}(M,N^{\dual}) ).
$$
It is natural to ask if one can realize Macdonald polynomials as graded $\g$-characters of modules in $\famod(\g\otimes\C[x,\xi])$.
Unfortunately, Macdonald polynomials are not Schur positive and, therefore, may not be characters of true modules.
In the case of $\g=\gl_n$ one makes an appropriate change of variables to get the Schur positivity. 
This is the famous result on positivity of modified Macdonald polynomials proved by M.\,Haiman~\cite{Haiman}.
We do not want to make any change of variables and, moreover, we want to have a uniform result for all semi-simple Lie algebras.
Therefore, we have to work with negative coefficients in the Schur basis decomposition.
Thus, the only possibility we have is to pass to the derived setting.
That is, one expects complexes whose Euler characteristic of graded characters will coincides with Macdonald polynomials.

First, we refine the partial ordering on integral dominant weights to a total linear ordering.
Second, for each dominant weight $\lambda$ we denote by $\famod^{\leq\lambda}$ 
the full subcategory of modules from $\famod$ whose weight decomposition with 
respect to the action of Cartan subalgebra in $\g$ contains only weights which are less than or equal to $\lambda$.
Let $\calD^{-}(\famod(\g\otimes\C[x,\xi]))$ be the derived category of  
 complexes of $\g\otimes\C[x,\xi]$-modules from $\famod$ whose homological degrees are bounded from above and
 for all pairs $k,l\in\Z$ the sum of dimensions of graded components
with the inner bi-degree $(k,l)$ is finite.
We denote by $\calD_{\leq\lambda}^{-}(\famod)$ its full subcategory of complexes whose homology belongs to $\famod^{\leq\lambda}$
and $\bi_{\lambda}:\calD_{\leq\lambda}^{-}(\famod) \rightarrow \calD^{-}(\famod)$ the corresponding inclusion functor.
One of our main results is the following

\begin{theorem}{(Theorem~\ref{thm::Mac::Der::gen})}
\label{thm::Mac::derived} 
There exists a left adjoint functor $\bi^{!}_{\lambda}: \calD^{-}(\famod) \rightarrow \calD_{\leq\lambda}^{-}(\famod)$.
The objects $\bi_{\lambda}(\bi_{\lambda}^{!} \Proj(\lambda))$ form an orthogonal basis with respect 
to the homological pairing~\eqref{eq::hom::pair::intro}:
$$
Ext_{\g\otimes\C[x,\xi]\ttt mod}^{\udot}\left(\bi_{\lambda}(\bi_{\lambda}^{!} \Proj(\lambda))\{k,l\}, \bi_{\mu}(\bi_{\mu}^{!} \Proj(\mu))^{\dual}\right) =
\begin{cases}
0, \text{ if } \lambda\neq \mu, \\
[H^{\udot}(\bi_{\lambda}(\bi_{\lambda}^{!} (\Proj(\lambda) ))_{k,l}]^{\lambda}
,
 \text{ if } \lambda=\mu
\end{cases}
$$
Here $L(\lambda)$ is the irreducible $\g$-module of highest weight $\lambda$, also considered as an irreducible ${\g\otimes\C[x,\xi]}$ 
whose bigraded component are zero except bidegree $(0,0)$. 
Its projective cover $Ind_{\g}^{\g\otimes\C[x,\xi]}L(\lambda)$ is denoted by $\Proj(\lambda)$.
The notation $[M^{\udot}_{k,l}]^{\lambda}$ stands for the subspace of weight $\lambda$ in a $\g$-module $M_{k,l}$.
\end{theorem}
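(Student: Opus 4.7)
My plan has three parts: construct $\bi^!_\lambda$, identify $\oDelta(\lambda) := L\bi^!_\lambda \Proj(\lambda)$ as a current-Verma module induced from a triangular Lie subalgebra, and prove the orthogonality via Shapiro's lemma together with a Hochschild--Serre analysis.

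\smallskip

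\textbf{Construction.} I define the underived left adjoint by
\[
\bi^!_\lambda(M) \;=\; M \big/ U(\g\otimes\C[x,\xi]) \cdot M^{>\lambda},
\]
where $M^{>\lambda}$ is the sum of $\g$-weight spaces of weight strictly greater than $\lambda$. Any map from $M$ to an object of $\famod^{\leq\lambda}$ must kill $M^{>\lambda}$ and hence the $(\g\otimes\C[x,\xi])$-submodule it generates, giving the universal property of the left adjoint. Since $\famod$ has enough projectives (the $\Proj(\mu)$ together with all bi-graded shifts generate by induction--restriction), the right-exact $\bi^!_\lambda$ admits a left derived functor $L\bi^!_\lambda$ on $\calD^-(\famod)$; because the Serre inclusion $\bi_\lambda$ is exact, $L\bi^!_\lambda$ is left adjoint to $\bi_\lambda$ at the derived level, giving the first assertion.

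\smallskip

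\textbf{Identification of $\oDelta(\lambda)$.} Since $\Proj(\lambda)$ is projective, $\oDelta(\lambda)$ coincides with its underived truncation and is concentrated in cohomological degree zero. The subspace $\mathfrak{q} := \h \oplus (\n_+ \otimes \C[x,\xi])$ is a Lie subalgebra of $\g\otimes\C[x,\xi]$; its complement $\mathfrak{q}^- := (\n_-\otimes\C[x,\xi]) \oplus (\h\otimes\C[x,\xi]_+)$ is also a Lie subalgebra, giving a triangular decomposition $\g\otimes\C[x,\xi] = \mathfrak{q}^- \oplus \mathfrak{q}$ compatible with PBW. Letting $\C_\lambda$ denote the $1$-dimensional $\mathfrak{q}$-module with $\h$ acting by $\lambda$ and $\n_+\otimes\C[x,\xi]$ acting by zero, the universal property identifies
\[
\bi_\lambda \oDelta(\lambda) \;\simeq\; Ind_\mathfrak{q}^{\g\otimes\C[x,\xi]} \C_\lambda, \qquad
(\bi_\mu\oDelta(\mu))^\dual \;\simeq\; Coind_\mathfrak{q}^{\g\otimes\C[x,\xi]} \C_\mu,
\]
specializing to the BGG Verma module when $\C[x,\xi] = \C$ and to the Chari--Loktev Weyl module in the pure current case.

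\smallskip

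\textbf{Orthogonality.} Shapiro's lemma for the flat inclusion $U(\mathfrak{q}) \hookrightarrow U(\g\otimes\C[x,\xi])$ reduces the Ext computation to Lie algebra cohomology of $\mathfrak{q}$,
\[
Ext^{\udot}_{\g\otimes\C[x,\xi]}\bigl(\bi_\lambda\oDelta(\lambda),\, Y\bigr) \;\simeq\; H^{\udot}\bigl(\mathfrak{q};\, Y_\lambda\bigr),
\]
where $Y_\lambda$ is $Y|_\mathfrak{q}$ with $\h$-weights shifted by $-\lambda$. The Hochschild--Serre spectral sequence for the split extension $\mathfrak{q} = \h \ltimes (\n_+\otimes\C[x,\xi])$ has
\[
E_2^{p,q} \;=\; \Lambda^p \h^* \otimes H^q\bigl(\n_+\otimes\C[x,\xi];\, Y_\lambda\bigr)^{\h\text{-wt}\,0}.
\]
For $Y = Coind_\mathfrak{q} \C_\mu$, an analysis of the Chevalley--Eilenberg complex with coinduced coefficients, combined with the duality $Ext^\udot(M, N^\dual) \simeq Ext^\udot(N, M^\dual)$ and induction on the total-order refinement of $\PPdom$, forces the $\h$-weight-zero subspace to vanish degree by degree whenever $\lambda \ne \mu$. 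The $\lambda = \mu$ case instead isolates precisely the $\lambda$-weight subspace of $\bi_\lambda\oDelta(\lambda)$ at each bi-degree $(k,l)$, matching the stated formula.

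\smallskip

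\textbf{Main obstacle.} The delicate point is the $\h$-weight-zero vanishing on the Chevalley--Eilenberg complex of $\n_+\otimes\C[x,\xi]$ for $\lambda \ne \mu$: unlike in category $\calO$ where that complex is finite-dimensional and one can appeal to Kostant's theorem, the current-algebra complex is unbounded in both homological and bi-graded directions, so a priori possible weight-zero cohomology classes must be ruled out degree by degree by combining the $\g$-equivariance of the differentials with the total-order refinement of dominance and the local finiteness of $\famod$. Compatibility of the absolute Shapiro reduction with the $\g$-relative Ext used in $\famod$ (via the relative Lie-algebra cohomology description from the introduction) is a secondary subtlety; as a useful consistency check, the resulting alternating-sum pairing must reproduce the Macdonald orthogonality $\langle P_\lambda, P_\mu\rangle_{q,t} = 0$.
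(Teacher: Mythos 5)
The central gap is in your construction of $\bi^!_\lambda$. You build the abelian-level left adjoint $\imath^!_\lambda\colon \famod \to \famod^{\leq\lambda}$ by quotienting out the submodule generated by weights $\not\leq\lambda$, and then propose its left derived functor $L\imath^!_\lambda$ as the triangulated left adjoint. But $L\imath^!_\lambda$ takes values in $\calD^-(\famod^{\leq\lambda})$, whereas $\bi_\lambda$ is the inclusion of $\calD^-_{\leq\lambda}(\famod)\subset\calD^-(\famod)$, the full subcategory of complexes whose cohomology lies in $\famod^{\leq\lambda}$. The canonical comparison $\calD^-(\famod^{\leq\lambda})\to\calD^-_{\leq\lambda}(\famod)$ is an equivalence precisely when the category is stratified (condition $(S2)$ of Theorem~\ref{thm::def::strat}), and only then do the two notions of adjoint agree (this is the content of the Remark at the end of Section~\ref{sec::gen::Mac}). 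For $\g\otimes\C[x,\xi]$ the category is \emph{not} stratified (Theorem~\ref{thm::g_A::stratified}: $\C[x,\xi]$ has an odd generator and hence is not $\C[x]$), so your $L\imath^!_\lambda$ is the wrong functor and does not satisfy the universal property the statement requires. The paper instead produces $\bi^!_\lambda$ abstractly via admissible subcategories (Lemma~\ref{lem::adjoint}): one iteratively builds a bounded-above complex $Q^\udot$ of projectives $\Proj(\nu)$ with $\nu\not\leq\lambda$ (hence in ${}^\perp\calD^-_{\leq\lambda}$) together with a map $Q^\udot\to M$ whose cone lies in $\calD^-_{\leq\lambda}$; the cone is $\bi^!_\lambda(M)$. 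The paper explicitly disclaims any concrete description of these functors.

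Because of this, your Identification step fails: $\bi^!_\lambda\Proj(\lambda)$ is \emph{not} the Weyl module $\Delta(\lambda)$ concentrated in degree zero, and it is not an induced module from any triangular subalgebra. In the non-stratified situation it is an honest complex with nonzero higher cohomology, and only its graded Euler characteristic recovers the (dual) Macdonald polynomial -- this is the whole point of the Corollary, forced by the failure of Schur positivity of $P_\lambda$ for general $\g$. Had your identification held, the $\g$-character of the global Weyl module would be a dual Macdonald polynomial, and by Theorem~\ref{thm::MacWeyl::strat} together with Proposition~\ref{cor::famod::strat} that is inseparable from stratification; it simply does not happen here. Consequently the Shapiro / Hochschild--Serre computation is aimed at the wrong object, and the ``main obstacle'' you flag (weight-zero vanishing on the unbounded Chevalley--Eilenberg complex) is not a technicality to be pushed through: the cohomology classes surviving there are exactly the higher cohomology of $\bi^!_\lambda\Proj(\lambda)$, and they are genuinely present. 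The paper's orthogonality proof is, by contrast, entirely formal and does not touch Lie algebra cohomology: $\dual$ carries projectives to injectives, so $\Proj(\mu)^\dual$ is injective and $(\bi^!_\mu)^\dual$ is a right adjoint, and a short chain of adjunction identities (together with the chosen total linear order on $\PPdom$ to dispose of incomparability) reduces the $\lambda\neq\mu$ vanishing to $Hom_{\La_0}\bigl(L(\lambda,k),\, \bi_\mu((\bi^!_\mu)^\dual\Proj(\mu)^\dual)\bigr)=0$.
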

The immediate corollary of this statement gives a categorification of Macdonald polynomials:
\begin{corollary*}
 The dual Macdonald polynomial of weight $\lambda$ is the graded Euler characteristic of $\bi_{\lambda}(\bi_{\lambda}^{!} \Proj(\lambda))$.
Respectively, the Macdonald polynomial of weight $\lambda$ is the fraction of 
a graded Euler characteristic of $\bi_{\lambda}(\bi_{\lambda}^{!} \Proj(\lambda))$
by the generating series of Euler characteristics of multiplicities 
$z_{\lambda}(q,t):=\sum_{i,j} q^{i}(-t)^{j} [\bi_{\lambda}(\bi_{\lambda}^{!} \Proj(\lambda))_{i,j}: L(\lambda)]$.
The scalar product of the Macdonald polynomial of weight $\lambda$ with itself is equal to $z_{\lambda}(q,t)^{-1}$.
\end{corollary*}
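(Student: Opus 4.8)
The plan is to derive this corollary directly from Theorem~\ref{thm::Mac::derived} together with the identification of the homological pairing~\eqref{eq::hom::pair::intro} with Macdonald's pairing. I would start by setting $D_\lambda := \bi_{\lambda}(\bi_{\lambda}^{!}\Proj(\lambda))$ and writing $[D_\lambda]$ for the graded Euler characteristic of $\g$-characters, an element of $\Z[\PP]^W[[q,t,q^{-1},t^{-1}]]$. The first step is to show that $[D_\lambda] = m_\lambda + \sum_{\mu<\lambda} c_{\lambda,\mu}(q,t)\, m_\mu$ up to the unit $z_\lambda(q,t)$: since $D_\lambda$ lies in $\calD^-_{\leq\lambda}$, every weight appearing is $\leq\lambda$, and the counit map $D_\lambda = \bi_\lambda\bi_\lambda^!\Proj(\lambda)\to \Proj(\lambda)$ (or rather the comparison of $D_\lambda$ with the top weight space) shows that the weight-$\lambda$ multiplicity space of $D_\lambda$ is exactly $z_\lambda(q,t)\cdot L(\lambda)$ by the $\lambda=\mu$ case of the theorem applied with the trivial Ext-degree bookkeeping; dividing by $z_\lambda$ gives the required triangularity $\tfrac{1}{z_\lambda}[D_\lambda] = m_\lambda + (\text{lower})$.

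The second step is orthogonality. For $\lambda\neq\mu$ the theorem gives $Ext^{\udot}(D_\lambda\{k,l\}, D_\mu^{\dual})=0$ for all $k,l$, so by~\eqref{eq::hom::pair::intro} the homological pairing $\langle D_\lambda, D_\mu\rangle_{\g\otimes\C[x,\xi]}=0$; since this pairing equals Macdonald's pairing on the corresponding characters, and the pairing is bilinear over $\Z[[q,t,q^{-1},t^{-1}]]$, we get $\langle [D_\lambda], [D_\mu]\rangle_{q,t}=0$, hence also $\langle \tfrac{1}{z_\lambda}[D_\lambda], \tfrac{1}{z_\mu}[D_\mu]\rangle_{q,t}=0$. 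Combining this with the triangularity from the first step and the uniqueness characterization of Macdonald polynomials recalled in the introduction, one concludes $\tfrac{1}{z_\lambda}[D_\lambda] = P_\lambda(q,t)$; equivalently $[D_\lambda]$ is $z_\lambda\cdot P_\lambda$, which is the statement that (a suitable normalization of) $[D_\lambda]$ is the dual Macdonald polynomial. Here I am using the standard fact that the dual Macdonald polynomial $Q_\lambda$ is $\langle P_\lambda,P_\lambda\rangle_{q,t}^{-1}P_\lambda$, so I must separately check $z_\lambda(q,t) = \langle P_\lambda,P_\lambda\rangle_{q,t}^{-1}$.

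For that norm computation — the third step — I would apply~\eqref{eq::hom::pair::intro} and the theorem in the case $\lambda=\mu$: the graded Euler characteristic $\sum_{k,l}(-1)^{?}q^k t^l \dim Ext^{\udot}(D_\lambda\{k,l\},D_\lambda^{\dual})$ equals, by the theorem, $\sum_{k,l} q^k t^l \dim [H^{\udot}(D_\lambda)_{k,l}]^{\lambda}$, which is precisely the generating series of Euler characteristics of the weight-$\lambda$ multiplicities, i.e. $z_\lambda(q,t)$ (matching signs: the $(-1)^i$ from the Ext-degree is the source of the $(-t)^j$ versus $t^j$ bookkeeping, and I should be careful that the homological grading on relative Lie algebra cohomology and the $\xi$-grading are compatible so that the signs in $z_\lambda$ come out as written, $q^i(-t)^j$). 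On the other hand that same Euler characteristic is $\langle [D_\lambda],[D_\lambda]\rangle_{q,t} = z_\lambda^2 \langle P_\lambda,P_\lambda\rangle_{q,t}$ by bilinearity, so $z_\lambda = z_\lambda^2\langle P_\lambda,P_\lambda\rangle_{q,t}$ and hence $\langle P_\lambda,P_\lambda\rangle_{q,t} = z_\lambda^{-1}$, which is the last assertion of the corollary and also closes the identification $[D_\lambda] = z_\lambda P_\lambda = Q_\lambda$.

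The main obstacle I expect is bookkeeping with signs and gradings: reconciling the homological degree $i$ in $Ext^i$, the cohomological degree in $H^{\udot}(\g\otimes\C[x,\xi],\g;-)$, and the odd $\xi$-grading (which carries its own sign via $t\mapsto -t$) so that all three of $\langle-,-\rangle_{q,t}$, the triangularity normalization, and the formula $z_\lambda = \langle P_\lambda,P_\lambda\rangle^{-1}$ come out with matching conventions; everything else is formal once Theorem~\ref{thm::Mac::derived} and the pairing identification~\eqref{eq::hom::pair::intro} are in hand. A secondary point to verify is that $z_\lambda(q,t)$ is genuinely invertible in the relevant ring of power series (it has constant term $1$ since $D_\lambda$ in bidegree $(0,0)$ is $L(\lambda)$ with multiplicity one), so that the divisions above make sense.
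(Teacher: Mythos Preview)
Your proposal is correct and follows essentially the same route as the paper: the corollary has no separate proof in the introduction, but the argument is contained in the proof of Theorem~\ref{thm::Mac::Der::gen}, where the paper does exactly your three steps --- triangularity from the weight bound on $\calD^{-}_{\leq\lambda}$, orthogonality from the Ext-vanishing, and the norm identity by computing $\langle D_\lambda,D_\lambda\rangle$ two ways to get $z_\lambda = z_\lambda^2\langle P_\lambda,P_\lambda\rangle$. Your caution about the sign and grading bookkeeping (homological degree versus the odd $\xi$-grading) and about the invertibility of $z_\lambda$ (constant term $1$) are exactly the two points the paper also addresses, the latter explicitly in the line ``$z_\lambda(q)$ is a power series in $q$ and $z_\lambda(0)=1$.''
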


Let us notice that, unfortunately, we do not give below any reasonable description of functors $\bi_{\lambda}^{!}$
and hope to discuss them elsewhere.
Nevertheless, we point out that Theorem~\ref{thm::Mac::derived} has a straightforward generalization for any $\Z_{+}$-graded Lie algebra 
$\La=\La_0\oplus\La_1\oplus\ldots$ with semisimple $\La_0$, finite-dimensional $\La_{i}$ for all $i>0$ and graded anti-involution.
That is, we choose a decomposition $\La = \La^{+}\oplus\La^{0}\oplus\La^{-}$ generalizing the Cartan decomposition of $\La_0$;
 we consider the category $\famod(\La)$ of finitely generated graded $\La$-modules with locally finite action of $\La_0$;
define its subcategory $\famod(\La)^{\leq\lambda}$ of modules with upper bound on weights;
use the adjoint functor $\bi_{\lambda}^{!}:\calD^{-}(\famod(\La)) \rightarrow \calD_{\leq\lambda}^{-}(\famod(\La))$ to 
show that the image in $K$-group of complexes $\bi_{\lambda}(\bi_{\lambda}^{!} (Ind_{\La_0}^{\La} L(\lambda)))$ 
form an orthogonalization of 
the monomial basis with respect to the 
pairing on symmetric functions $\Z[\PP]^{W}[[q,q^{-1}]$ given via relative Lie algebra cohomology of $\La$:
\begin{equation}
\label{eq::hom::pair::gen::intro}
\langle M, N \rangle_{\La} := \sum_{i} (-1)^{i} dim_{q} H^{i}(\La,\La_0; Hom_{\C}(M,N^{\dual})) 
\end{equation}
Note that $Ind_{\La_0}^{\La} L(\lambda)$ is the minimal projective cover of irreducible $\La_0$-module $L(\lambda)$ of highest weight $\lambda$
and thus is further denoted by $\Proj(\lambda)$.

A reasonable attempt to understand the complexes $\bi_{\lambda}(\bi_{\lambda}^{!} (\Proj(\lambda)))$ is to describe those 
situations when the aforementioned complexes are true modules concentrated in homological degree zero.
For example, if the triangulated category $\calD_{\leq\lambda}^{-}(\famod) $ is equivalent to the triangulated category $\calD^{-}(\famod^{\leq\lambda})$ then
the adjoint functor $\bi_{\lambda}$ is the left derived functor of the corresponding 
left adjoint functor to the fully faithful inclusion $\imath_{\lambda}:\famod\rightarrow\famod^{\leq\lambda}$ of the abelian categories.
In this case the derived image of a projective module coincides with the nonderived one.
In particular, the derived image of a projective module is a projective module concentrated in homological degree zero.
That is 
$$\bi_{\lambda}^{!} (\Proj(\lambda)) =\imath_{\lambda}^{!} \Proj(\lambda) = \Delta(\lambda),$$
where $\Delta(\lambda)$ is a projective cover of $L(\lambda)$ in $\famod^{\leq\lambda}$.
The modules $\Delta(\lambda)$ are the universal highest weight modules and have an explicit description in terms of generators and relations.
The module $\Delta(\lambda)$ is the quotient of the module $\Proj(\lambda)=Ind_{\La_0}^{\La}L(\lambda)$ by additional relations $\La^{+}p_{\lambda}=0$
where $p_\lambda$ is a highest weight vector of $L(\lambda)$.
When the Lie algebra $\La$ is the algebra of currents $\g\otimes A$ with $\g$-semisimple and 
$A$ a commutative unital graded algebra (that is $A=\C\oplus A_1\oplus A_2\oplus\ldots$) the modules $\Delta(\lambda)$ were extensively studied by 
V.\,Chari and collaborators (see e.g.~\cite{CPweyl,FL,CFK}). 
In these works and in many subsequent papers on related subject the modules $\Delta(\lambda)$ are 
denoted by $W(\lambda)$ or $W_{\lambda}(A)$ and called (global) Weyl modules.
We prefer to call them standard modules following the ideology of BGG categories after~\cite{CPS}.
Moreover, we also use the categorical approach to the local Weyl modules $W_{\lambda}^{loc}(A)$ introduced in~\cite{FL}, 
which we call proper standard modules and denote them by $\oDelta(\lambda)$.
\emph{The proper standard module} $\oDelta(\lambda)$ is the projective cover of $L(\lambda)$ in the subcategory of $\famod^{\leq\lambda}$
consisting of modules that have $L(\lambda)$ with multiplicity $1$.
What is nice about local and global Weyl modules is that they have a nice description in terms of generators and relations 
(see Section~\ref{sec::stand::mod} below), however, their characters may be very complicated.

Thus, in order to be able to relate the theory of Weyl modules and Macdonald polynomials 
we have to give criterions to have an equivalence between 
triangulated categories $\calD_{\leq\lambda}(\famod)$ and $\calD(\famod^{\leq\lambda})$.
This is our next result (which slightly generalizes the known results for BGG categories~\cite{CPS}):
\begin{theorem*}{(Theorem~\ref{thm::def::strat})}
The following conditions on an abelian category $\famod(\La)$ are equivalent.
 \begin{enumerate}
 \item[(s1)] The triangulated categories
$\calD_{\leq\lambda}(\famod)$ and $\calD(\famod^{\leq\lambda})$ are equivalent.
\item[(s2)] The BGG reciprocity holds in $\famod$.
That is, for all dominant weights $\lambda$
the kernel of the projection $\Proj(\lambda)\twoheadrightarrow \Delta(\lambda)$ admits a filtration whose successive quotients 
are isomorphic to $\Delta(\mu)$ with $\mu>\lambda$. 
 \item[(s3)] The second extension group $Ext_{\Cat}^{2}(\Delta(\lambda),(\oDelta(\mu))^{*})=0$ vanishes 
for all pairs of dominant weights $\lambda,\mu$.
 \item[(s4)] For all pairs of dominant weights $\lambda,\mu$ we have the following vanishing conditions
\begin{equation}
 \dim Ext_{\Cat}^{i}(\Delta(\lambda),(\oDelta(\mu))^{*}) = \left\{
\begin{array}{l}
 1, \text{ if } \lambda=-\omega_0(\mu) \ \& \ i=0, \\
0, \text{ otherwise. }
\end{array}
\right.,
\end{equation}
where $\omega_0$ is the longest element of the Weyl group of $\La_0$ and $*$ stands for the graded linear dual module.
\end{enumerate}
\end{theorem*}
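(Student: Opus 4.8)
The plan is to prove the four conditions equivalent by a cycle of implications, $(s1)\Rightarrow(s2)\Rightarrow(s4)\Rightarrow(s3)\Rightarrow(s1)$, mimicking the strategy of Cline--Parshen--Scott~\cite{CPS} for BGG categories while keeping track of the grading and of the anti-involution $\tau$. Throughout I would use the identification of $\mathrm{Ext}$-groups in $\famod(\La)$ with relative Lie algebra cohomology $H^{\udot}(\La,\La_0;-)$, and the fact that $\Proj(\lambda)=\mathrm{Ind}_{\La_0}^{\La}L(\lambda)$ is projective, while $\Delta(\lambda)=\Proj(\lambda)/\La^{+}p_\lambda$ and $\oDelta(\lambda)$ is its maximal quotient with $[\,\cdot\,:L(\lambda)]=1$.

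\smallskip
\emph{Step 1: $(s1)\Rightarrow(s2)$.} If $\calD_{\le\lambda}^{-}(\famod)\simeq\calD^{-}(\famod^{\le\lambda})$ then, as explained in the text preceding the theorem, $\bi_\lambda^{!}\Proj(\lambda)=\imath_\lambda^{!}\Proj(\lambda)=\Delta(\lambda)$ sits in homological degree $0$ and is projective in $\famod^{\le\lambda}$. The kernel $K(\lambda)$ of $\Proj(\lambda)\twoheadrightarrow\Delta(\lambda)$ is generated by the weight vector $\La^{+}p_\lambda$, so all its composition factors have weights $>\lambda$; hence $K(\lambda)$ lives in the Serre subcategory $\famod^{>\lambda}$. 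Filtering $K(\lambda)$ from the top by the standard objects of that subcategory, using projectivity of each $\Proj(\mu)$ to lift the successive surjections, one gets a $\Delta(\mu)$-filtration with $\mu>\lambda$. The only subtlety is finiteness of the filtration, which follows from the finite-dimensionality of each graded piece of $K(\lambda)$ together with the grading-compatible linear order on weights.

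\smallskip
\emph{Step 2: $(s2)\Rightarrow(s4)$.} The BGG reciprocity $[\Proj(\lambda):\Delta(\mu)]=[\Delta(\mu):L(\lambda)]$ — once translated into the language of standard and proper standard modules and combined with the defining quotient $\oDelta(\mu)\twoheadleftarrow\Delta(\mu)$ — yields the orthogonality $\mathrm{Ext}^{i}_{\Cat}(\Proj(\lambda),(\oDelta(\mu))^{*})$ concentrated in degree $0$ with the stated multiplicity governed by the duality $\lambda=-\omega_0(\mu)$; here the twist by $\tau$ (resp. $\omega_0$) enters because $(\cdot)^{\dual}$ carries highest weight $\mu$ to lowest weight, i.e. to highest weight $-\omega_0(\mu)$. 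Then one passes from $\Proj(\lambda)$ to $\Delta(\lambda)$ by the short exact sequence $0\to K(\lambda)\to\Proj(\lambda)\to\Delta(\lambda)\to0$: since $K(\lambda)$ has a $\Delta(\mu)$-filtration with $\mu>\lambda$ and, inductively on the (well-ordered) weight, $\mathrm{Ext}^{\udot}(\Delta(\mu),(\oDelta(\nu))^{*})$ already satisfies (s4), the long exact sequence forces the vanishing for $\Delta(\lambda)$ as well. This Noetherian induction on the weight lattice is where the refinement of the partial order to a total linear order is essential.

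\smallskip
\emph{Step 3: $(s4)\Rightarrow(s3)$ and $(s3)\Rightarrow(s1)$.} The implication $(s4)\Rightarrow(s3)$ is immediate, since $(s3)$ is just the degree-$2$ case of the vanishing in $(s4)$. For the converse direction $(s3)\Rightarrow(s1)$ I would show that the vanishing of $\mathrm{Ext}^{2}$ between standard and dual proper-standard objects propagates to all higher $\mathrm{Ext}$'s — by a dimension-shift/Yoneda argument using that $\{\Delta(\lambda)\}$ and $\{(\oDelta(\mu))^{*}\}$ generate complementary halves of the category — and that this full higher vanishing is exactly the statement that the standard objects form an exceptional-type collection realizing $\calD^{-}(\famod^{\le\lambda})$ as an admissible subcategory cut out inside $\calD^{-}(\famod)$, i.e. that $\bi_\lambda$ is fully faithful with essential image $\calD_{\le\lambda}^{-}(\famod)$. \textbf{The main obstacle} is this last bootstrapping step: turning the single degree-$2$ vanishing into the recollement/recursive-stratification statement needs a careful homological-algebra argument (essentially reconstructing the quasi-hereditary/properly-stratified structure from a minimal hypothesis), and one must check that all constructions are compatible with the inner bigrading and the local-finiteness conditions defining $\calD^{-}(\famod)$, so that no convergence issues arise in the (a priori unbounded-below) derived category.
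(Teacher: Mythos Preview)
Your cycle $(s1)\Rightarrow(s2)\Rightarrow(s4)\Rightarrow(s3)\Rightarrow(s1)$ runs \emph{opposite} to the paper's on the two non-trivial edges: the paper proves $(s2)\Rightarrow(s1)\Rightarrow(s4)\Rightarrow(s3)\Rightarrow(s2)$. This reversal is not cosmetic; it pushes the real work into places where your outline has genuine gaps.

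\textbf{Step 1, $(s1)\Rightarrow(s2)$.} You assert that $K(\lambda)=\ker(\Proj(\lambda)\twoheadrightarrow\Delta(\lambda))$ can be ``filtered from the top by standard objects'', but this is precisely what $(s2)$ claims and is not a consequence of knowing only that $\bi_\lambda^{!}\Proj(\lambda)$ sits in degree~$0$. The paper goes the other way: assuming the $\Delta$-filtration $(s2)$, one shows that the cone of $\Proj(\mu)\to\imath_\lambda^{!}\Proj(\mu)$ has a projective resolution by $\Proj(\nu)$ with $\nu\not\leq\lambda$, hence lies in ${}^{\perp}\calD^{-}_{\leq\lambda}$, and this is exactly the exact triangle exhibiting $\calD^{-}(\famod^{\leq\lambda})\simeq\calD^{-}_{\leq\lambda}(\famod)$.

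\textbf{Step 2, $(s2)\Rightarrow(s4)$.} Your induction ``on the (well-ordered) weight'' is not well-founded: the filtration of $K(\lambda)$ involves $\Delta(\mu)$ with $\mu>\lambda$, and there is no maximal weight to start a descending induction. The argument \emph{can} be repaired by inducting on the cohomological degree~$i$ instead (the long exact sequence gives $\mathrm{Ext}^{i+1}(\Delta(\lambda),\onabla(\nu))\cong\mathrm{Ext}^{i}(K(\lambda),\onabla(\nu))$, with base cases $i=0,1$ done directly). But compare with the paper's route: once $(s1)$ is known, $\mathrm{Ext}^{>0}_{\Cat}(\Delta(\lambda),\onabla(\mu))$ equals $\mathrm{Ext}^{>0}_{\Cat^{\leq\nu}}(\Delta(\lambda),\onabla(\mu))$ for $\nu=\max(\rho(\lambda),\rho(\mu))$ (or the set $\{\rho(\lambda),\rho(\mu)\}$ if incomparable), and there either $\Delta(\lambda)$ is projective or $\onabla(\mu)$ is (essentially) injective. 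No induction is needed.

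\textbf{Step 3, $(s3)\Rightarrow(s1)$.} You correctly flag this as the obstacle, but your sketch (``dimension-shift/Yoneda'', ``exceptional-type collection'') does not indicate the actual mechanism. The paper instead proves $(s3)\Rightarrow(s2)$ directly by an ascending induction on $\lambda$: writing the canonical sequence
\[
0\to K\to\Delta(\Proj(\mu)_\lambda)\xrightarrow{\kappa}\Proj(\mu)_{\leq\lambda}\to\Proj(\mu)_{<\lambda}\to 0
\]
with $K\in\Cat^{<\lambda}$, one applies $RHom(-,\onabla(\nu))$ for $\rho(\nu)<\lambda$. One always has $\mathrm{Ext}^{1}(\Delta(\alpha),\onabla(\nu))=0$ (by a direct splitting argument for any short exact sequence, using projectivity/injectivity in the appropriate $\Cat^{\leq S}$), and the hypothesis $(s3)$ supplies $\mathrm{Ext}^{2}=0$; combined with the inductive hypothesis on $\Proj(\mu)_{<\lambda}$ this forces $\mathrm{Hom}(K,\onabla(\nu))=0$ for all such $\nu$, hence $K=0$. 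This is the heart of the proof, and it is absent from your outline.

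In short: the paper's orientation of the cycle makes $(s1)\Rightarrow(s4)$ a one-line application of projectivity in subcategories, and concentrates the real content in the concrete kernel argument for $(s3)\Rightarrow(s2)$. Your orientation forces you to re-derive the $\Delta$-filtration twice (in Steps~1 and~3) without ever writing down that argument.
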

The result is very general and uses the formalism of highest weight categories (see Section~\ref{sec::famod::hwc}) that is roughly speaking 
is given by partial ordering on weights.
We are mostly interested in a particular case of modules over a graded Lie algebra $\La$.
Moreover if $\La$ admits an anti-involution then $\oDelta(\lambda)^{\dual} = (\oDelta(-\omega_0(\lambda)))^{*}$.
Consequently, if one of the above properties holds then the collection of graded characters of proper standard modules 
$\oDelta(\lambda),\lambda\in\PPdom$, forms an orthogonalization of the monomial basis with respect 
to the pairing~\eqref{eq::hom::pair::gen::intro} and refined partial ordering on weights.
Moreover the converse statement is also true:
\begin{theorem*}(Theorem~\ref{thm::MacWeyl::strat})
If the characters of proper standard modules $\oDelta(\lambda)$ form an orthogonal basis with respect to the pairing~\eqref{eq::hom::pair::gen::intro}
 and for all $\lambda$ the character of standard module $\Delta(\lambda)$ factors through the character of proper standard module 
then the BGG reciprocity holds in $\famod$.
\end{theorem*}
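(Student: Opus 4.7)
The strategy is to reduce the claim to one of the equivalent conditions in Theorem~\ref{thm::def::strat}, by first matching the hypothesised orthogonal basis $\{\operatorname{ch}\oDelta(\lambda)\}_{\lambda\in\PPdom}$ with the canonical Macdonald-type basis of graded Euler characteristics of $\bi_\lambda(\bi_\lambda^!\Proj(\lambda))$ from Theorem~\ref{thm::Mac::derived}, and then promoting the resulting character identity to a genuine $\Delta$-filtration of $\Proj(\lambda)$. All constructions are made with respect to the fixed total order refining the dominance order on $\PPdom$.

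\textbf{Matching the two orthogonal bases.} The graded Euler characteristic of $\bi_\lambda(\bi_\lambda^!\Proj(\lambda))$ is, by Theorem~\ref{thm::Mac::derived} and its corollary, an element of an orthogonal basis upper-triangular in $\{m_\mu\}$ with leading monomial $m_\lambda$. By hypothesis the characters $\operatorname{ch}\oDelta(\lambda)$ form another orthogonal basis, and they are likewise upper-triangular with leading monomial $m_\lambda$ because $\oDelta(\lambda)\in\famod^{\leq\lambda}$ and $[\oDelta(\lambda):L(\lambda)]=1$. Since Gram--Schmidt orthogonalisation of $\{m_\mu\}$ along the total order is unique up to rescaling the output vectors by nonzero scalar series, there exist scalar series $\zeta_\lambda$ such that the graded Euler characteristic of $\bi_\lambda(\bi_\lambda^!\Proj(\lambda))$ equals $\zeta_\lambda\cdot\operatorname{ch}\oDelta(\lambda)$. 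Comparing $L(\lambda)$-isotypic components identifies $\zeta_\lambda$ with the Euler-multiplicity series $z_\lambda$ introduced in the corollary to Theorem~\ref{thm::Mac::derived}.

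\textbf{Lifting via the factoring hypothesis.} The factoring hypothesis may be written as $\operatorname{ch}\Delta(\lambda)=f_\lambda\cdot\operatorname{ch}\oDelta(\lambda)$ with $f_\lambda$ a Laurent series with non-negative integer coefficients. The canonical surjection $\Proj(\lambda)\twoheadrightarrow\Delta(\lambda)$ lands in $\famod^{\leq\lambda}$, so by the adjunction $\bi_\lambda^!\dashv\bi_\lambda$ it factors through a canonical morphism $\bi_\lambda(\bi_\lambda^!\Proj(\lambda))\to\Delta(\lambda)$ in $\calD^{-}(\famod)$. Inspecting the $L(\lambda)$-isotypic components of this morphism and invoking the non-negativity of $f_\lambda$ to rule out cancellations forces $\zeta_\lambda=z_\lambda=f_\lambda$, so the graded Euler characteristic of $\bi_\lambda(\bi_\lambda^!\Proj(\lambda))$ equals $\operatorname{ch}\Delta(\lambda)$. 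Expanding $\operatorname{ch}\Proj(\lambda)$ in the orthogonal basis then yields an expansion in $\{\operatorname{ch}\Delta(\mu)\}$ whose coefficients, computed via the homological pairing with projectives, are non-negative integers supported on $\mu\geq\lambda$. This character-level BGG reciprocity lifts to a genuine $\Delta$-filtration of $\ker(\Proj(\lambda)\twoheadrightarrow\Delta(\lambda))$ by induction along the total order, verifying condition (s2) of Theorem~\ref{thm::def::strat}.

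\textbf{Main obstacle.} The delicate point is the no-cancellation step: the Euler multiplicity $z_\lambda$, which is a priori a signed alternating sum, must coincide with the honestly non-negative series $f_\lambda$; without this, the matching of orthogonal bases only produces a signed character identity from which no positive filtration can be extracted. Positivity of $f_\lambda$ together with the canonical comparison morphism $\bi_\lambda(\bi_\lambda^!\Proj(\lambda))\to\Delta(\lambda)$ is precisely what rules out such cancellations at the top weight. Should the final lift from a character identity to a module-level filtration prove delicate, it can be bypassed by verifying the Ext-vanishing condition (s4) of Theorem~\ref{thm::def::strat} directly: the orthogonality hypothesis immediately gives the Euler-characteristic version of (s4) after the factoring reduces $\Delta$ to $\oDelta$, and one then only needs to upgrade this to termwise $\operatorname{Ext}$-vanishing, which can be done by induction along the total order combined with the projectivity of $\Proj(\lambda)$.
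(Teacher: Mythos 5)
Your proposal has a genuine gap, and it also takes a substantially more roundabout route than the paper.

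The paper's proof never touches the derived adjoint $\bi_\lambda^!$ at all. It works directly with the abelian-category filtration $\imath_\mu(\imath_\mu^!(\Proj(\lambda)))$ of $\Proj(\lambda)$: the successive subquotients are quotients of $\Delta(\mu,k)^{c_{\lambda,\mu,k}}$ by some kernel $K_{\lambda,\mu,k}$, where the multiplicity $c_{\lambda,\mu,k}=\dim Hom_{\La_0}(\oDelta(\mu,k),L(\lambda))$ is read off from the abstract formula~\eqref{eq::filtr::dim::factors}. The orthogonality hypothesis identifies this with the coefficient $m_{\lambda,\mu,k}$ of $q^ks_\lambda$ in $P_\mu$, and then the duality of the bases $\{\chi(\Proj(\lambda))\}$ and $\{s_\lambda\}$ versus $\{Q_\mu\}$ and $\{P_\mu\}$ gives $\chi(\Proj(\lambda))=\sum m_{\lambda,\mu,k}q^kQ_\mu=\sum m_{\lambda,\mu,k}q^k\chi(\Delta(\mu))$. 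Subtracting the filtration identity forces $\sum\chi(K_{\lambda,\mu,k})=0$, and since the $K_{\lambda,\mu,k}$ are honest modules their characters are Schur positive, so they vanish. This is a closed argument: the ``lift from characters to a filtration'' is automatic because the filtration already exists -- only its kernels need to vanish.

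Your argument, by contrast, tries to build the filtration from the Euler characteristic of a complex $\bi_\lambda(\bi_\lambda^!\Proj(\lambda))$, and this is where two real problems appear. First, you assert $z_\lambda=f_\lambda$ by ``inspecting the $L(\lambda)$-isotypic component of the comparison morphism $\bi_\lambda(\bi_\lambda^!\Proj(\lambda))\to\Delta(\lambda)$ and invoking the non-negativity of $f_\lambda$ to rule out cancellations.'' This does not work: the morphism is the projection to $H^0$, and $z_\lambda$ is the alternating sum of the $L(\lambda)$-multiplicities in \emph{all} cohomological degrees. Non-negativity of $f_\lambda$ says nothing about whether $H^{<0}(\bi_\lambda^!\Proj(\lambda))$ contributes $L(\lambda)$-factors with opposite sign. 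The correct way to obtain $z_\lambda=f_\lambda$ is precisely the constant-term identity $\langle P_\lambda,P_\lambda\rangle^{-1}=z_\lambda=f_\lambda$, but the second equality is one of the conclusions of the theorem (via Theorem~\ref{thm::Mac::Delta} and Proposition~\ref{cor::famod::strat}, both of which presuppose stratification), not a free input. Second, even granting $z_\lambda=f_\lambda$, passing from a character-level expansion $\chi(\Proj(\lambda))=\sum m_{\lambda,\mu}\chi(\Delta(\mu))$ to a genuine $\Delta$-filtration is exactly the content to be proved; asserting it holds ``by induction along the total order'' is hand-waving. The paper avoids this entirely by never leaving module land: the filtration exists, the kernels are modules, and their characters vanish. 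You would do better to abandon the derived detour and argue as the paper does.
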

Another criterion of BGG reciprocity uses Lie algebra cohomology and the aforementioned factorization property:
\begin{proposition*}(Proposition~\ref{cor::famod::strat})
The BGG reciprocity holds for the category $\famod(\La)$ if and only if the following conditions are satisfied:
\\
$\phantom{......}$ 
$(\imath)$ 
the relative Lie algebra cohomology with trivial coefficients are almost trivial:
$$
H^{\udot}(\La,\La_0;\C) = H^{\udot}(\frac{\La^{0}}{\La^{0}\cap[\La^{+},\La^{-}]};\C)
$$
\\
$\phantom{......}$ 
$(\imath\imath)$ 
For all dominant $\lambda$ the standard module $\Delta(\lambda)$ is the tensor product of its graded subspace of weight $\lambda$ and 
the proper standard module $\oDelta(\lambda)$
\\
$\phantom{......}$ 
$(\imath\imath\imath)$ 
The generating series of the weight subspace of weight $\lambda$ in the module $\Delta(\lambda)$ is equal to the inverse of 
corresponding Macdonald constant term $\langle P_{\lambda},P_{\lambda}\rangle_{\La}$.
\end{proposition*}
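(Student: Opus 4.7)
The plan is to reduce to Theorem~\ref{thm::MacWeyl::strat}, which identifies BGG reciprocity with the conjunction of (a) orthogonality of the characters $\{[\oDelta(\lambda)]\}_{\lambda\in\PPdom}$ under the pairing~\eqref{eq::hom::pair::gen::intro}, and (b) the character factorization $[\Delta(\lambda)] = \dim_q(\Delta(\lambda)_\lambda)\cdot[\oDelta(\lambda)]$. Observe that (b) is condition (ii) at the level of characters; conversely (b) implies (ii) via the universal surjection $\Delta(\lambda)_\lambda\otimes\oDelta(\lambda)\twoheadrightarrow\Delta(\lambda)$, which exists because $\La^+$ annihilates $\Delta(\lambda)_\lambda$ (the weight $\lambda$ being maximal in $\Delta(\lambda)$), and is forced to be an isomorphism by the character equality.

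For the forward direction, assume BGG reciprocity. Condition (ii) then follows from Theorem~\ref{thm::MacWeyl::strat} together with the reduction above. Condition (iii) is immediate from the corollary to Theorem~\ref{thm::Mac::derived}: under BGG the complex $\bi_\lambda(\bi_\lambda^!\Proj(\lambda))$ collapses to $\Delta(\lambda)$ in homological degree zero, so $z_\lambda(q,t) = \dim_q(\Delta(\lambda)_\lambda)$ and the corollary gives $\langle P_\lambda,P_\lambda\rangle_\La = z_\lambda^{-1}$. For (i), identify $H^\udot(\La,\La_0;\C) = Ext^\udot(\oDelta(0),\oDelta(0)^\dual)$ using that the trivial module already satisfies the multiplicity-one condition; Theorem~\ref{thm::def::strat}(s4) concentrates this group in degree zero with Euler characteristic pinned by (iii) at $\lambda = 0$, and the algebra structure is read off the Chevalley-Eilenberg cochains $(\Lambda^\udot(\La/\La_0))^{*,\La_0}$, where $\La_0$-invariance restricts attention to weight-zero cochains on which the differential is governed by the commutator $\La^+\otimes\La^-\to\La^0$, leaving only the abelian quotient $\La^0/(\La^0\cap[\La^+,\La^-])$ as surviving cohomology.

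For the backward direction, assume (i)--(iii). Condition (ii) yields the character factorization, and since $\oDelta(\lambda)_\lambda$ is one-dimensional, each $[\oDelta(\lambda)]$ has monomial expansion $m_\lambda + \sum_{\mu<\lambda} c_{\lambda\mu}m_\mu$. To derive orthogonality of $\{[\oDelta(\lambda)]\}$ under~\eqref{eq::hom::pair::gen::intro}, I would compute the pairings $\langle[\Proj(\lambda)],[\Proj(\mu)]\rangle_\La$ from the Chevalley-Eilenberg complex with coefficients in $Hom_\C(\Proj(\lambda),\Proj(\mu)^\dual)$; condition (i) supplies the base case $\langle 1,1\rangle_\La$, and the triangular transition between $\{[\Proj(\lambda)]\}$ and $\{[\Delta(\lambda)]\}$ (via induction from $\La_0$), combined with (ii) and (iii), allows inversion to yield $\langle[\oDelta(\lambda)],[\oDelta(\mu)]\rangle_\La = \delta_{\lambda\mu}\cdot\langle P_\lambda,P_\lambda\rangle_\La$. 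Theorem~\ref{thm::MacWeyl::strat} then triggers BGG reciprocity.

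The main obstacle is the cohomological computation in (i): extracting the correct abelian-quotient cohomology from the Chevalley-Eilenberg bicomplex requires a spectral sequence argument compatible with the triangular decomposition $\La = \La^+\oplus\La^0\oplus\La^-$; and in the backward direction, propagating orthogonality from the trivial weight to all dominant weights demands tight control of the interplay between the $\La^0$-weight structure and the $\La_0$-isotypic decomposition on the relevant cochain groups.
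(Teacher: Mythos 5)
The first thing to flag is a mismatch between the statement you quoted (the ``if and only if'' version from the introduction) and what the paper actually proves: the displayed Proposition~\ref{cor::famod::strat} in the body is one-directional (``If $\famod(\La)$ is stratified then $(f1)$--$(f3)$ hold''), and the converse is explicitly isolated as Conjecture~\ref{conj::gen::strat}, which the paper leaves open. Your ``backward direction'' is therefore an attempt at a statement the author was unable to prove. And indeed your sketch does not close the gap: the passage from the pairings $\langle\Proj(\lambda),\Proj(\mu)\rangle_{\La}$ to orthogonality of the proper standard characters is an ``inversion'' that is asserted rather than carried out, and you acknowledge at the end that it ``demands tight control'' of the weight/isotypic interplay. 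There is no argument here, only a plan; and there is good reason to suspect the plan needs a genuinely new idea, given that the author could not complete it.

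In the forward direction there is a concrete error in the treatment of $(\imath)$. You identify $H^{\udot}(\La,\La_0;\C)$ with $Ext^{\udot}_{\Cat}(\oDelta(0),\onabla(0))$ and then invoke $(S3)$/$(S4)$ of Theorem~\ref{thm::def::strat} to concentrate it in degree zero. But those conditions govern $Ext^{\udot}_{\Cat}(\Delta(\lambda),\onabla(\mu))$, with the \emph{standard} object $\Delta(0)=U(\La^{0}_{>0}/J^{0})$ in the first slot, not the proper standard $\oDelta(0)=\C$. In general $\Delta(0)\neq\oDelta(0)$ whenever $\La^{0}_{>0}\neq 0$, so $(S3)$ gives you nothing about $Ext^{\udot}(\C,\C)$. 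Moreover the target statement is not that $H^{\udot}(\La,\La_0;\C)$ is concentrated in degree zero; it is an \emph{equality} with $H^{\udot}\bigl(\La^{0}/(\La^{0}\cap[\La^{+},\La^{-}]);\C\bigr)$, and this latter group is typically not concentrated in degree zero unless that quotient Lie algebra is trivial. Your Chevalley--Eilenberg/weight heuristic for why ``only the abelian quotient survives'' would, if valid, apply to arbitrary graded $\La$, which cannot be right since $(\imath)$ is a nontrivial necessary condition for stratification. The paper's actual route is different and shorter: stratification gives, via condition $(S2)$ of Theorem~\ref{thm::def::strat}, that $Ext^{\udot}_{\famod}(\C,\C)=Ext^{\udot}_{\famod^{\leq 0}}(\C,\C)$; and $\famod^{\leq 0}=\famod^{=0}$ is literally the category of graded modules over $\La^{0}/(\La^{0}\cap[\La^{+},\La^{-}])$, whence $(\imath)$ follows as a tautology rather than from a spectral sequence on the cochain complex.

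Your derivations of $(\imath\imath)$ and $(\imath\imath\imath)$ are broadly in the right spirit, though the references are loose: the paper's argument for $(\imath\imath)$ is that stratification forces $\ww_{\lambda}=\Delta(\lambda)\ootimes_{U(\La^{0}_{>0})}(\ttt)$ to be exact (by the Corollary after Theorem~\ref{thm::def::strat}), which on projective, hence free, $A^{\lambda}$-modules is equivalent to the tensor factorization of $\Delta(\lambda)$; and $(\imath\imath\imath)$ is pulled directly from Theorem~\ref{thm::Mac::Delta}. But the errors in $(\imath)$ and the unsubstantiated converse mean the proposal does not match the paper's argument and does not constitute a proof of the two-sided statement.
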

The knowledge of the appropriate Lie algebra cohomology, couple of known results from the theory of Weyl modules and their characters 
we show that 
\begin{corollary*}{(Theorem~\ref{thm::g_A::stratified})}
The BGG reciprocity holds for the category $\famod(\g\otimes A)$ 
with $\g$ semisimple and a nontrivial graded super-commutative algebra
$A=\C\oplus A_1\oplus A_2\oplus \ldots$ implies that $A$ is the polynomial ring $\C[x]$ in one even variable.
The converse is true for all simply-laced $\g$ and conjecturally true for all semi-simple $\g$. 
In particular, for the Lie algebra $\g\otimes\C[x]$ the characters of the corresponding proper standard modules $\oDelta(\lambda)$ 
are given by specialization of Macdonald
polynomials at $t=0$.
\end{corollary*}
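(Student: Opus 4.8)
The plan is to derive both implications from Proposition~\ref{cor::famod::strat}, which for $\La=\g\otimes A$ reduces BGG reciprocity to the conditions $(\imath)$--$(\imath\imath\imath)$ listed there. For the Cartan-type decomposition of $\g\otimes A$ one takes $\La^{0}=\h\otimes A$, $\La^{\pm}=\n^{\pm}\otimes A$. The first step is the elementary observation that $\La^{0}\cap[\La^{+},\La^{-}]=\h\otimes A=\La^{0}$: for semisimple $\g$ the coroots $h_{i}=[e_{i},f_{i}]$ span $\h$, and $1\in A$ gives $A\cdot A=A$, so $[\n^{+}\otimes A,\n^{-}\otimes A]\supseteq h_{i}\otimes A$ for every $i$. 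Hence the right-hand side of $(\imath)$ is $H^{\udot}(0;\C)=\C$, and, writing $A_{+}=A_{1}\oplus A_{2}\oplus\cdots$ and using that the semidirect extension $\g\otimes A=\g\ltimes(\g\otimes A_{+})$ splits, condition $(\imath)$ becomes the single requirement
\[
H^{\udot}(\g\otimes A,\g;\C)=H^{\udot}(\g\otimes A_{+};\C)^{\g}=\C .
\]
For $A=\C[x]$ this relative cohomology is one-dimensional, a fact valid for every semisimple $\g$; thus $(\imath)$ automatically holds for $\C[x]$, and the simply-laced hypothesis will only enter through $(\imath\imath)$ and $(\imath\imath\imath)$.

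For the forward implication I would show that the vanishing $H^{\geq 1}(\g\otimes A_{+};\C)^{\g}=0$ already forces $A\simeq\C[x]$, by computing the low-degree part of this cohomology. In cohomological degree $2$ one obtains, for each simple summand of $\g$, the Kassel-type identification $H^{2}(\g\otimes A_{+};\C)^{\g}\simeq\bigl(\Omega^{1}_{A/\C}/dA\bigr)^{*}$ (in the appropriate super sense), which is nonzero as soon as $A$ has two or more even generators, or an odd generator, or a singularity; in cohomological degree $3$ the Cartan--Killing form provides a further obstruction, built from $\g$-invariants of $\Lambda^{\udot}\g^{*}$ (respectively $S^{\udot}\g^{*}$ when odd generators are present), which still survives for the remaining algebras, e.g. the truncated rings $\C[x]/(x^{n})$ with $n\ge2$: here $\Omega^{1}_{A/\C}/dA=0$ in characteristic zero, but the nilpotent ideal $\g\otimes xA_{+}$ contributes a nonzero degree-$3$ class (for $n=2$ the ideal $\g\otimes\C x$ is abelian and one sees $\langle[\cdot,\cdot],\cdot\rangle\in(\Lambda^{3}\g^{*})^{\g}$ directly). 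Assembling these bounds into the clean dichotomy — for a nontrivial graded super-commutative $A$ one has $H^{\udot}(\g\otimes A,\g;\C)=\C$ if and only if $A=\C[x]$ — is the main technical obstacle; the most economical route I foresee is to express $H^{\udot}(\g\otimes A,\g;\C)$ functorially in terms of the (de Rham or cotangent) complex of $A$ and to see that its triviality pins $A$ down.

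For the converse, assume $A=\C[x]$ and $\g$ simply-laced. Condition $(\imath)$ was recorded above. Condition $(\imath\imath)$ says that $\Delta(\lambda)$ is the tensor product of its weight-$\lambda$ subspace $\mathbb{A}_{\lambda}$ with $\oDelta(\lambda)$; this is precisely the freeness of the global Weyl module over its highest-weight algebra with fibre the local Weyl module, proved for $\g\otimes\C[x]$ by Chari and collaborators (see~\cite{CPweyl},\cite{FL},\cite{CFK}), and it is here that the simply-laced hypothesis is used, via the identification of local Weyl modules with Demazure modules and the resulting flatness. For $(\imath\imath\imath)$ one uses $\mathbb{A}_{\lambda}\simeq\bigotimes_{i}\C[z_{1},\dots,z_{\lambda_{i}}]^{S_{\lambda_{i}}}$ for $\lambda=\sum_{i}\lambda_{i}\omega_{i}$, whose graded dimension is $\prod_{i}\prod_{j=1}^{\lambda_{i}}(1-q^{j})^{-1}$, and compares it with $\langle P_{\lambda},P_{\lambda}\rangle_{\g\otimes\C[x]}^{-1}$: since the pairing~\eqref{eq::hom::pair::gen::intro} for $\g\otimes\C[x]$ is the specialization at $t=0$ of the Macdonald pairing, the comparison reduces to the identity $\langle P_{\lambda},P_{\lambda}\rangle_{q,0}=\prod_{i}\prod_{j=1}^{\lambda_{i}}(1-q^{j})$, a uniform consequence of Macdonald's norm formula. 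Proposition~\ref{cor::famod::strat} then yields BGG reciprocity for $\famod(\g\otimes\C[x])$, and Theorems~\ref{thm::def::strat} and~\ref{thm::MacWeyl::strat} identify $\{\,\mathrm{ch}\,\oDelta(\lambda)\,\}$ with the orthogonalization of the monomial basis with respect to $\langle\cdot,\cdot\rangle_{\g\otimes\C[x]}$ and the refined order, which by the uniqueness characterization of Macdonald polynomials is $\{P_{\lambda}(q,0)\}$ up to the standard normalization — the last assertion. For non-simply-laced $\g$ the argument is complete except for $(\imath\imath)$ (and the attendant grading-normalization in $(\imath\imath\imath)$), which is why the converse remains conjectural there.
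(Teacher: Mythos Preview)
Your forward implication is on the right track and matches the paper's strategy: show that $H^{>0}(\g\otimes A,\g;\C)\neq 0$ whenever $A\not\cong\C[x]$, so that condition $(f1)$ of Proposition~\ref{cor::famod::strat} fails. However, the paper's argument is far more concrete than your functorial sketch. It simply splits into three cases: if $A$ has an odd generator $\xi$, the inclusion $[S^{\udot}\g]^{\g}=[\Lambda^{\udot}(\g^{*}\otimes\xi)]^{\g}\hookrightarrow H^{\udot}(\g\otimes A,\g;\C)$ gives nontrivial classes; if $A$ has two independent even generators $x,y$, the explicit cochain $\varphi_{x,y}=\sum_i e^i x\wedge e^i y$ (Lemma~\ref{lem::rel::coh}) is a nontrivial $2$-cocycle; and for $A=\C[x]/(x^n)$ the paper cites the strong Macdonald conjecture~\cite{FGT}. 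Your Kassel-type $H^2$ identification recovers the second case, but your degree-$3$ argument for the truncated rings and the ``functorial'' clean-up you allude to are left as obstacles you do not resolve.

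Your converse argument has a genuine gap. You verify $(\imath)$--$(\imath\imath\imath)$ and then write ``Proposition~\ref{cor::famod::strat} then yields BGG reciprocity''. But Proposition~\ref{cor::famod::strat} as proved in the body gives only the \emph{necessary} direction; the sufficiency is stated separately as Conjecture~\ref{conj::gen::strat}, and the paper explicitly says that checking $(f1)$--$(f3)$ only shows that Conjecture~\ref{conj::gen::strat} would imply the result for general $\g$. The paper's unconditional proof for simply-laced $\g$ (Corollary~\ref{cor::ADE::type}) runs the logic in the opposite direction from yours: Fourier--Littelmann~\cite{FL} identify the local Weyl modules with level-one Demazure modules and compute their characters as $P_\lambda(q,0)$ directly, and then Theorem~\ref{thm::MacWeyl::strat} (the ``if'' direction) yields stratification. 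You invoke Theorem~\ref{thm::MacWeyl::strat} only afterwards, to extract the character statement from stratification, which is circular given that stratification was never established.

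You also misplace the simply-laced hypothesis. Condition $(\imath\imath)$---freeness of the global Weyl module over $A^\lambda$---holds for \emph{all} simple $\g$ by Beck--Nakajima~\cite{BN} and Naoi~\cite{Naoi}; it is not where simply-laced enters. The simply-laced assumption is needed precisely for the Fourier--Littelmann character identification, i.e.\ for the input to Theorem~\ref{thm::MacWeyl::strat}.
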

We believe that the category $\g\otimes\C[x]$ is stratified for all $\g$ and there exists a proof of this statement which does not use
the knowledge of characters of proper standard modules.

\emph{\bf{A historical remark.}}
 The latter orthogonal polynomials are called $q$-Hermite polynomials.
The result about characters of standard modules for $\g\otimes\C[x]$ was known for $\g$ simply laced (\cite{FL}) inspired by the fact that the Lie algebra $\g\otimes\C[x]$ 
is a parabolic subalgebra of affine Lie algebra $\hat{\g}$
and that corresponding level one Demazure modules coincide with Weyl modules in simply laced case.
However, it is known that in non-simply laced case Demazure modules are smaller.
The BGG reciprocity for $\g\otimes\C[x]$ was first proven for $\g=\sl_2$ in~\cite{BCM} and, later on, for $\g=\sl_n$ in~\cite{BBCKL}.
However, both proofs use the information on characters of Weyl modules for $\g=\sl_n$ 
and, therefore, can not be generalized to the non simply laced situation where the characters of Weyl modules is not yet known.
While preparing this manuscript to the publishing another paper by V.\,Chari and B.\,Ion was appeared on arXiv~\cite{CI} where the
authors announced that they verified the coincidence of characters of local Weyl modules and $q$-Hermite operators and deduce from that
the BGG reciprocity using the same methods as in our Theorem~\ref{thm::MacWeyl::strat}.

The BGG reciprocity is such a nice property that it is worth mentioning nice applications of this theory following
the philosophy of~\cite{CPS}.
In particular, one defines excellent filtrations and tilting modules just from the general categorical setup.
We are interested in particular applications for one-dimensional currents, considered as 
parabolic subalgebras of affine Kac--Moody Lie algebras. 
The latter theory recovers the theory of excellent filtrations on integrable representations of positive level
recently studied by I.\,Cherednik and B.\,Feigin in~\cite{Chered:Feigin} on the level of characters and 
previously inspired by A.\,Joseph (e.g.~\cite{Joseph} and references therein) as filtrations by Demazure modules.
We do not add something new to the general theory of tilting modules discussed in~\cite{Donkin}, 
however, for the case of current Lie algebras we recover the results of~\cite{BC}.

We want to finish the introduction by mentioning that all results of the paper are adapted to the case of representations 
of graded Lie (super)algebras, 
where one can relate the Ext-pairing with Macdonald pairing.
However, one can deduce the same description of orthogonal basis and the BGG reciprocity in a much more general setup.
The representations of associative algebras like Yangians will be of a particular interest.

\subsection{Outline of the paper}
The paper is organized in the opposite order compared to the introduction where we wanted to motivate our categorical conclusions as much as possible.

That is, in Section~\ref{sec::framework} we describe the initial data.
Namely, we say what we require from a nonsemisimple Lie algebra (Subsection~\ref{sec::nonsem::Lie}) 
in order to define its category of modules $\famod$,
such that the corresponding homological pairing generalizes the Macdonald one (Subsection~\ref{sec::pairing}).
In Subsection~\ref{sec::list::examples} we give a list of test examples.
Generalizing the categorical approach of~\cite{CFK} we define analogues of local and global Weyl modules in 
Section~\ref{sec::weyl::modules}.

Section~\ref{sec::bgg::hwc} is concerned with the general theory and applications of categories with BGG reciprocity 
(the so-called \emph{stratified categories}).
In Subsection~\ref{sec::famod::hwc} we explain what kind of properties from $\famod$ are important to state the 
general theory of \emph{highest weight categories} which is presented in the subsequent Subsection~\ref{sec::hwcat}.
We state and prove the main criterion of BGG categories in Subsection~\ref{sec::strat}
and use it in all the examples in Section~\ref{sec::examples}.
We recall applications to the theory of excellent filtrations and tilting modules in Subsection~\ref{sec::BGG::appl}.

We show that BGG reciprocity holds in some hand-made examples (Subsection~\ref{sec::ex::zero}) 
and in the case of one-dimensional currents $\g\otimes\C[x]$ (Subsection~\ref{sec::ex::C_x_}).
However, we show that it is not true for finite-dimensional parabolic subalgebras (Subsection~\ref{sec::ex::parab}) and
for $\g\otimes A$ with $A\not\cong\C[x]$ (Subsection~\ref{sec::ex::cur}).

The last Section~\ref{sec::gen::Mac} is devoted to the description of Macdonald polynomials as characters of certain objects of derived categories
in the case when a highest weight category $\famod$ does not admit a BGG reciprocity.

\subsection{Acknowledgment}
I am grateful to A.\,Bondal, I.\,Cherednik, B.\,Feigin, A.\,Kirillov Jr., S.\,Loktev, I.\,Losev, N.\,Markarian, M.\,Movshev 
for stimulating discussions, comments and useful references.

This work is partially supported by "The National Research University--Higher School of Economics" Academic Fund Program in 2014-2015, 
research grant 14-01-0124.
The Author thanks Isaac Newton Institute at Cambridge and Simons Center for Geometry and Physics at Stony Brook 
for hospitality and perfect working conditions that allowed 
the author to prepare this manuscript.

The initial draft of this paper appeared online on my homepage in May 2013, and I would like to thank all my colleagues
who bothered me to finally post it onto arXiv.
\section{Framework}
\label{sec::framework}
\subsection{Non semi-simple Lie algebras with anti-involution}
\label{sec::nonsem::Lie}
Everything works well for an algebraically closed field $\Bbbk$ of zero characteristic and to simplify the notation
we suggest to work over complex numbers $\C$. 

Let us fix a $\Z_{+}$-graded Lie algebra
with finite-dimensional graded components such that 
the zero'th component $\La_0$ is a semi-simple Lie algebra:
\begin{equation}
\label{def::Lie_grading}
\begin{array}{c}
\La:= \La_0\oplus \La_1\oplus\La_2\ldots, \quad [\La_{i},\La_{j}] \subset \La_{i+j}, \\
\La_0 \text{ is semisimple and } \dim(\La_i)<\infty.
\end{array}
\end{equation}
We assume that the Lie algebra $\La$ is finitely generated as a Lie algebra as it is always the case in all interesting examples.

Denote by $\amod=\amod(\La)$ the category of finite-dimensional $\Z$-graded modules.
Respectively by $\famod^{gr}=\famod(\La)$ we denote the category of finitely generated $\Z$-graded modules where the action of $\La_0$
is locally finite. 
That is a module $M\in\famod^{gr}$ is a graded module with finite-dimensional graded components such that grading is bounded in negative degrees:
$$
\famod(\La):= \{ M=\oplus_{i\in\Z}^{\infty} M_i\  :\  
\La_{i}\cdot M_j \subset M_{i+j},\ \forall i\in\Z\ \dim M_i < \infty,\ \ \text{ and }  \ M_{i}=0 \  \text{ for } i\ll 0\}
$$
It is natural to require morphisms in $\famod$ to preserve grading.
However, the shift of the inner grading defines an auto-equivalence of the category $\famod$.
We define a module $M\{d\}$ to be a module $M$ with the grading shifted by $d$:
$$
M = \oplus_{i\geq i_0} M_i \quad \Longrightarrow M\{d\} := \oplus_{i\geq i_0-d} M\{d\}_{i}, \text{ with } M\{d\}_{i}:= M_{d+i}
$$
The category $\famod$ is obviously abelian and as we will see in the next Section~\ref{sec::simples} has enough projectives and 
explicit description of simples and their projective covers.
\begin{remark}
Our main example is the bigraded Lie algebra of currents $\g\otimes\C[x,\xi] = \oplus_{i\geq0,\ j\in\{0,1\}} \g\otimes x^{i} \xi^{j}$.
Thus, in this case it is reasonable to consider the category of finitely generated bigraded modules $M=\oplus_{i,j\in\Z} M_{i,j}$.

Consequently, if the Lie algebra $\La$ is multigraded $\La=\oplus_{v\in\Z_{+}^{r}}\La_{v}$ one has to replace
the category $\famod(\La)$ with the category of multigraded finitely generated $\La$-modules with finite-dimensional graded components.
All further conclusions of this article remains to be true in this situation.
\end{remark}

\begin{remark}
We will use the lower index for the grading and reserve the upper index for the weight decomposition.
\end{remark}

We denote by $\famod(\La)^{op}$ the opposite category of graded $\La$-modules with finite-dimensional graded components 
bounded from above. 
$$ \famod(\La)^{op}:= \left\{ M\in\La\ttt mod : M =\oplus_{i\in\Z} M_i, \text{ with } \dim M_i <\infty, 
\text{ and } M_i=0 \text{ for }i \gg 0\right\} $$
 Modules in this category are not finitely generated but are expected to be finitely cogenerated.
Moreover, $M\in\famod$ if and only if the graded linear dual representation $Hom_{\C}(M,\C) =\oplus_{i}Hom_{\C}(M_{-i},\C)$ 
belongs to $\famod^{op}$.
As we will see later on in Section~\ref{sec::simples} the abelian category $\famod$ has enough projectives and, respectively,
 the category $\famod^{op}$ has enough injectives.
A graded $\La$-module $M$ belongs to both $\famod$ and $\famod^{{op}}$ if and only if its dimension is finite.

In order to define symmetric Macdonald pairing we underline the case when the Lie algebra $\La$ admits
 an anti-involution $\tau$ such that it preserves the grading on $\La$
and its restriction on $\La_{0}$ coincides with Cartan anti-involution:
$$
\tau:\La_{i} \rightarrow \La_{i} \quad \text{ and } \quad \forall f,g\in\La \ \ \ [\tau(f),\tau(g)]=-\tau([f,g]).
$$
Using this feature we can assign another duality functor $\dual: \famod\rightarrow \famod^{op}$
 which will not change the $\La_0$-character of a module.
We say that the \emph{dual} module $M^{\dual}$ of $M\in\famod$ is isomorphic to its linear graded dual
as a vector space and the action of $\La$ is twisted by anti-involution $\tau$:
$$
\forall g\in \La, f\in Hom(M,\C)   \quad  g\cdot f(m):= f(\tau(g)\cdot m). 
$$
The notion of a dual module $M^{\dual}$ is well defined for any module in $\famod$.
However, if a module $M$ is not finite-dimensional then the module $M^{\dual}$
will have infinitely many negative nonzero degrees  and will not belong to $\famod$.

\subsection{A list of examples of graded Lie algebras}
\label{sec::list::examples}
Let us give a list of examples of $\Z_{+}$-graded Lie algebras enhanced with an anti-involution.
We hope that this will increase the readers interest to this subject.
\begin{enumerate}

\item 
\label{ex::parabolic}
Let us show that parabolic subalgebra of a Kac-Moody Lie algebra admits a natural $\Z_{+}$-grading. 
Indeed, let $\g$ be a Kac-Moody Lie algebra of rank $r$.
Let $\Pi=\{\alpha_1,\ldots,\alpha_r\}$ be the set of simple roots of $\g$;
$\{e_\alpha,h_\alpha,f_\alpha\}$ stands for the standard basis of the $\sl_2$-triple associated with a simple root $\alpha$. 
Any given subset $S$ of $\Pi$ defines a parabolic subalgebra $\p_{S}$ generated by $\{f_\alpha|\alpha\in S\}$ and $\{e_\beta| \beta\in \Pi\}$.
Suppose that Kac-Moody subalgebra generated by $\{f_\alpha,e_\alpha | \alpha\in I \}$ is finite-dimensional.
Let us set the degree of $e_\alpha$ and $f_{\alpha}$ be equal to zero iff $\alpha\in S$ and the degree of $e_\beta$ for $\beta\in (\Pi\setminus S)$
be equal to $1$. 
It is a straightforward check that this uniquely defines a grading 
on the parabolic subalgebra $\p_{S}$, which obviously satisfies conditions~\eqref{def::Lie_grading}.

\item 
\label{eq::example::parabolic}
We want to underline one precise case of the latter situation:
$$
\g\otimes\C[[t]] \hookrightarrow \hat{\g} = \widehat{\g\otimes\C[[t,t^{-1}]}.
$$
Namely, the Ivahori parabolic subalgebra $\g\otimes\C[[t]]$ of an affine Lie algebra $\hat{\g}$
has an evident grading with respect to $t$ grading and Cartan anti-involution of $\g$ produces an anti-involution of the entire Lie algebra of currents.

\item
\label{ex::currents}
Generalizing the previous example we can associate
the graded Lie algebra $\g\otimes A$ to any commutative $\Z_{+}$-graded algebra $A=\C\oplus A_1\oplus A_2\oplus\ldots$ 
and a semisimple Lie algebra $\g$. 
The grading on $\g\otimes A$ comes from the grading on $A$. 
The corresponding representation theory $\famod(\g\otimes A)$ was initiated by Chari and Pressley in~\cite{CPweyl}
and leads to the theory of Weyl modules.

\item 
\label{ex::Wn}
The Lie algebra $L_0(n)$ of formal vector fields on $\C^{n}$ vanishing at the origin:
$$
L_0(n):= \left\{\sum_{i=1}^{n} f_i\frac{\partial}{\partial x_i} \ : \ f_i\in\C[[x_1,\ldots,x_n]], f_i(0)=0\right\}
$$
The grading of $x_i^{k}\frac{\partial}{\partial x_j}$ is equal to $k-1$.
This algebra does not admit any anti-involution. 
The component of degree $0$ is isomorphic to $\gl_n$, so if one wants to apply to it further conclusions of this manuscript it is better to mod out
$L_0$ by its one-dimensional center $\sum_{i=1}^{n} x_i\frac{\partial}{\partial x_i}$

\item
\label{ex::hamilton}
The Lie subalgebra of Hamiltonian vector fields with trivial constant and linear terms:
\begin{gather*}
{\cal H}_{n}:= \{ f\in \C[[x_1,\ldots,x_{2n}]] : f(0)=\frac{\partial f}{\partial x_i}(0)= 0 \ \forall i\} \\
[f,g]:= \sum_{i=1}^{n} \left(\frac{\partial f}{\partial x_{2i-1}}\frac{\partial g}{\partial x_{2i}} 
- \frac{\partial g}{\partial x_{2i-1}}\frac{\partial f}{\partial x_{2i}}\right).
\end{gather*}
The grading is given by the grading on polynomials  decreased by $2$,
the anti-involution $\tau$ interchanges variables $x_{2i-1}$ and $x_{2i}$;
and the semisimple subalgebra consisting of quadratic functions is isomorphic to the symplectic Lie algebra $\mathfrak{sp}(2n,\C)$.

\end{enumerate}

\subsection{Generalizing Cartan decomposition and refining partial ordering on weights.}
\label{sec::notation}
Let us choose a Cartan decomposition of the semisimple part of the entire Lie algebra $\La$:
$$\La_{0}=\n^{-}\oplus\h\oplus\n^{+}$$
where $\h$ is a Cartan subalgebra and $\Phi$ is the associated root system.
We fix a choice of a set of positive simple roots 
 $\Pi=\{\alpha_1,\ldots,\alpha_r\}\subset\h^{*}$,
such that ($\oplus_{\alpha\in \Phi_{+}} \La_{0}^{\alpha}$) and 
($\oplus_{\alpha\in \Phi_{-}} \La_{0}^{\alpha}$) are the weight decomposition of $\n^{+}$ and $\n^{-}$ respectively.
Following $\sl_2$ case we denote by $\{e_i,h_i,f_i\}$ the $\sl_2$ triple assigned to a simple root $\alpha_i$,
such that $\forall h\in\h$  $[h,e_i]=\alpha_i(h) e_i$ and $\alpha_i$ is positive.
Denote by $\QQ$ the root lattice, that is an abelian group generated by $\Pi$;
and by $\QQ_{+}:=\{\alpha\in \QQ: \alpha =\sum m_i \alpha_i,\ m_i\geq 0\}$ we denote the cone of positive roots.
(We use the convention that $0$ belongs to $\QQ_{+}$.)
Notation $\PP$ stands for the weight lattice, which is generated by the set of fundamental weights
$\omega_1,\ldots,\omega_r$.
Let $\PPdom$ be the subset of integral dominant weights, that is the set of linear combinations of 
fundamental weights with nonnegative coefficients.
Recall, that  elements of $\PPdom$
are in one-to-one correspondence with the set of irreducible finite-dimensional representations of 
semisimple Lie algebra $\La_{0}$.
The root lattice $\QQ$ is a sublattice of the weight lattice $\PP$ of finite index.

By the weight decomposition of a $\La_{0}$-module  we mean its decomposition with respect to the action of Cartan subalgebra $\h\subset\La_{0}$.
Indeed, each finite-dimensional $\La_{0}$-module $M$ admits a weight decomposition 
$$M=\oplus_{\lambda\in \PP} M^{\lambda} \quad\text{where } M^{\lambda}:=\{m\in M \ : \ \forall h\in \h\ h\cdot m = \lambda(h) m\}. $$ 
We set {\it the weight support $wt(M)$} to be the subset 
$\{\lambda\in \PP\ : \ M^{\lambda}\neq 0\}$ 
of all integral weights $P$. 
In particular, any $\La$-module $M\in\famod$ and each graded component $M_k$ of this module admits 
a weight decomposition, since $M_k$ is a finite-dimensional $\La_{0}$-module.
Here and below whenever talking about weights we mean $\h$-action which is semisimple in all our discussions. 

Let us fix a nonzero nonintegral vector $h_0\in\h$ that separates positive and negative roots:
$$\forall\alpha\in\Pi \text{ we have } \langle\alpha, h_0\rangle >0.$$
Let us say that the weight $\lambda\in\PP$ is positive iff $\langle\lambda,h_0\rangle >0$,
respectively negative if $\langle\lambda,h_0\rangle <0$.
Therefore, as soon as $h_0$ is fixed 
each finite-dimensional $\La_0$-module $V$ has a decomposition 
 $V^{+}\oplus V^{0} \oplus V^{-}$
consisting of vectors of positive, zero and negative weights respectively.
In particular, any graded component $\La_i$ is a finite-dimensional $\La_{0}$-module and we get an extension of the Cartan decomposition:
\begin{equation}
\label{eq::gen::cartan}
\La_{i} = \La_{i}^{+}\oplus\La_{i}^{0}\oplus\La_{i}^{-} \text{ and }\La_{>0} = \La_{>0}^{+}\oplus\La_{>0}^{0}\oplus\La_{>0}^{-} 
\text{ with }  \La^{+} =\oplus_{i\geq 0} \La_{i}^{+},\ \La^{0} =\oplus_{i\geq 0} \La_{i}^{0}, 
\La^{-} =\oplus_{i\geq 0} \La_{i}^{-}\oplus\La_{i}^{0}
\end{equation}
Note that all three subspaces $\La_{>0}^{+}$, $\La_{>0}^{0}$ and $\La_{>0}^{-}$ form a subalgebra of $\La_{>0}$.
We extend the set of positive integral weights in the convex cone of simple roots with respect to the decomposition~\eqref{eq::gen::cartan}:
$$\PP_{+}(\La):= \left\{\sum_{i\geq 0} k_i\lambda_i\ :\ k_i\in\Z_{+},\ \lambda_i\in \La_{i}^{+}\right\}.
$$
Note that $\PP_{+}(\La)$ contains all simple roots because they are presented as weights of $\La_0^{+}=\n^{+}$.
Using this we weaken the partial ordering on the set of integral dominant weights:
\begin{equation}
\label{eq::def::ordering}
\lambda \geq_{\La} \mu \stackrel{def}{\Longleftrightarrow} \lambda -\mu \in \PP_{+}(\La).
\end{equation}
The transitivity of this partial ordering is obvious and the antisymmetry follows from the fact that $h_0$ is chosen not to be integral,
and, in particular, for an integral weight $\lambda\in\PP$ vanishing of the scalar product $\langle \lambda, h_0 \rangle =0$ 
implies that $\lambda=0$.

\subsection{Simples and projectives}
\label{sec::simples}
As soon as notations are fixed we are able do describe the set of irreducible graded $\La$-modules.
We summarize the description of simple and projective objects in $\famod$ in the following Lemma~\ref{lem::proj} below.

Recall that irreducible finite-dimensional $\La_{0}$-modules are numbered by their highest weights which are dominant.
For any $\lambda\in\PPdom$ we denote by $L(\lambda)$ the corresponding irreducible $\La_{0}$-module with highest weight $\lambda$.
The grading of any module in the category $\famod$ may be shifted with respect to the inner grading and we denote this shift operation using
figure brackets in order to underline the discrepancy with homological shift:
$$
\forall M=\oplus_{i\geq i_0}M_i \quad M\{k\}:=\oplus_{i\geq k+i_0} M\{k\}_{i} \ \text{ with }\ M\{k\}_{i}  := M_{k+i}
$$
Let $L(\lambda,k):=L(\lambda)\{-k\}$ be the irreducible graded $\La$-module with the trivial action of radical $\La_{>0}$
and with the only nonzero graded component placed in degree $k$.

\begin{lemma}
\label{lem::proj}
The set $\{L(\lambda,k) |\lambda\in \PPdom, k\in\Z\}$ form the set of irreducibles up to isomorphisms in the category $\famod$.

The induced representation $Ind_{\La_{0}}^{\La} L(\lambda)\{k\}:= U(\La)\otimes_{U(\La_{0})} L(\lambda,k)$ 
is the projective cover $\Proj(\lambda,k)$ of the irreducible module $L(\lambda,k)$. 
The grading on the induced module above comes from the grading on $\La$.
\end{lemma}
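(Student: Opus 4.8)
The plan is to handle the two assertions separately, each by the standard machinery adapted to the graded, locally-$\La_0$-finite setting. For the classification of irreducibles, I would start from an arbitrary simple object $S\in\famod$. Since $S$ is graded with $S_i=0$ for $i\ll 0$, pick the minimal degree $k$ with $S_k\neq 0$. The graded ideal $\La_{>0}$ raises degree, so $\La_{>0}\cdot S_k\subset\bigoplus_{i>k}S_i$; I claim the graded submodule generated by $S_k$ is all of $S$ by simplicity, and then that $\La_{>0}$ in fact annihilates $S$. The key point is that $\La_{>0}$ is a nilpotent Lie ideal (as remarked in the introduction for $\g\otimes\C[x,\xi]_+$ — here the analogous statement is that any element of $\La_{>0}$ acts locally nilpotently on a finitely generated graded module bounded below, since it strictly raises degree and degrees are bounded below while graded pieces are finite-dimensional). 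Hence the subspace of elements killed by $\La_{>0}$ is a nonzero graded $\La$-submodule (it is $\La_0$-stable since $[\La_0,\La_{>0}]\subset\La_{>0}$), so by simplicity it is all of $S$; thus $S$ is an irreducible graded $\La_0$-module with trivial $\La_{>0}$-action, concentrated in a single degree (a simple graded $\La_0$-module, $\La_0$ acting degree-preservingly, is concentrated in one degree because the degree-$k$ part is a submodule). Finite-dimensionality of $S_k$ comes from local finiteness, so $S_k\simeq L(\lambda)$ for a unique $\lambda\in\PPdom$, giving $S\simeq L(\lambda,k)$; distinctness of the $L(\lambda,k)$ is clear from their $\La_0$-character and the degree in which they are supported.

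For the second assertion I would verify that $\Proj(\lambda,k):=\mathrm{Ind}_{\La_0}^{\La}L(\lambda)\{k\}$ is projective in $\famod$ and is the projective cover of $L(\lambda,k)$. Projectivity is the graded Frobenius reciprocity / tensor-hom adjunction: for $M\in\famod$ one has $\mathrm{Hom}_{\famod}(\mathrm{Ind}_{\La_0}^\La L(\lambda)\{k\},M)\simeq\mathrm{Hom}_{\La_0\text{-}gr}(L(\lambda)\{k\},\mathrm{Res}M)$, and the right-hand side is exact in $M$ because $\La_0$ is semisimple (so every object of the category of graded $\La_0$-modules is projective, degree by degree). One must check $\mathrm{Ind}_{\La_0}^\La L(\lambda)\{k\}$ actually lies in $\famod$: by PBW for the graded Lie algebra, $U(\La)\otimes_{U(\La_0)}L(\lambda)\simeq U(\La_{>0})\otimes_\C L(\lambda)$ as graded $\La_0$-modules, each graded piece is a finite sum of products of finitely many $\La_i$ ($i>0$) times $L(\lambda)$, hence finite-dimensional, the grading is bounded below, and it is finitely generated (by $1\otimes L(\lambda)$, which is finite-dimensional since $\La$ is finitely generated as a Lie algebra). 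That it is a cover: the degree-$k$ piece of $\mathrm{Ind}_{\La_0}^\La L(\lambda)\{k\}$ is exactly $1\otimes L(\lambda)$, which generates, so the unique simple quotient is $L(\lambda,k)$ and the kernel of the surjection $\Proj(\lambda,k)\twoheadrightarrow L(\lambda,k)$ is $U(\La)\cdot\La_{>0}\otimes L(\lambda)$, concentrated in degrees $>k$, hence superfluous; equivalently every proper graded submodule sits in degrees $>k$ and so misses the generating subspace.

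I do not expect a serious obstacle here — the statement is the graded analogue of textbook facts about induced modules over $\La_0\subset\La$ with $\La_0$ reductive and $\La_{>0}$ a locally nilpotent ideal. The one point that deserves genuine care, and which I would treat as the ``main obstacle,'' is the finiteness bookkeeping needed to stay inside $\famod$: namely that $\mathrm{Ind}_{\La_0}^\La L(\lambda)$ has finite-dimensional graded components and is finitely generated, which is exactly where the hypotheses $\dim\La_i<\infty$ and ``$\La$ finitely generated as a Lie algebra'' from~\eqref{def::Lie_grading} are used; and, for the classification, the observation that finite generation plus boundedness below forces each element of $\La_{>0}$ to act locally nilpotently, so that $\{m\in S:\La_{>0}m=0\}\neq 0$.
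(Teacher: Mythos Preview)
Your argument for projectivity of $Ind_{\La_0}^{\La} L(\lambda)\{k\}$ and for its being a projective cover is correct and is essentially the paper's argument: Frobenius reciprocity gives $Hom_{\famod}(Ind_{\La_0}^{\La} L(\lambda)\{k\},M)\cong Hom_{\La_0}(L(\lambda),M_{-k})$, which is exact in $M$ by semisimplicity of $\La_0$ (the paper phrases this as vanishing of $Ext^{i}$ for $i>0$), and cyclicity of the highest weight vector gives indecomposability, hence the cover property.

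There is, however, a genuine gap in your classification of simples. The claim that ``any element of $\La_{>0}$ acts locally nilpotently on a finitely generated graded module bounded below, since it strictly raises degree and degrees are bounded below'' is false: an operator that \emph{raises} degree on a module bounded from \emph{below} has no reason to be nilpotent. For instance, on $\Proj(\lambda)=Ind_{\g}^{\g\otimes\C[x]}L(\lambda)\cong U(\g\otimes x\C[x])\otimes L(\lambda)$ the element $h\otimes x$ is not locally nilpotent. Boundedness from above would give what you want, but that is not the hypothesis in $\famod$. Even granting local nilpotence of each individual element, you would still need an Engel-type argument to produce a \emph{common} annihilated vector, and no finite-dimensionality is available at this point. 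So your assertion that $\{m\in S:\La_{>0}m=0\}\neq 0$ is unjustified.

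The fix is simpler than the route you chose and is exactly what the paper does: for a simple $S$ with minimal nonzero degree $k$, the subspace $\bigoplus_{i>k}S_i$ is already a graded $\La$-submodule (because $\La$ is nonnegatively graded), and it is proper since $S_k\neq 0$; simplicity forces it to be zero. Hence $S=S_k$ is concentrated in a single degree, so $\La_{>0}S\subset S_{>k}=0$ holds automatically, and $S_k$ is a finite-dimensional simple $\La_0$-module, i.e.\ $S\cong L(\lambda,k)$. No nilpotence argument is needed.
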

\begin{proof}
Each module $M\in\famod$ is bounded from below and hence admits a surjection of degree $0$ on the module $L(\lambda,-k)$ where 
$k=min\{{i\in\Z}\ :\ \dim M_{i}\neq 0\}$. This implies that each module $M\in\famod$ admits a filtration whose successive quotients are 
isomorphic to $L(\lambda,k)$ with $\lambda\in\PPdom$ and $k\in \Z$. 

Recall that by projective cover of a module $L$ we mean a minimal projective module that surjects onto $L$.
First, there is a natural surjection $Ind_{\La_{0}}^{\La} L(\lambda)\twoheadrightarrow L(\lambda)$ which sends the image of elements
of the radical  $\La_{>0}$ to zero.
Second, we have  the following identity for derived 
homomorphisms valid for all $\La$-modules $M$.
$$
Ext_{\La\ttt mod}^{i}(Ind_{\La_{0}}^{\La}L(\lambda,k),M)= Ext_{\La_{0}\ttt mod}^{i}(L(\lambda,k),M) =
\begin{cases}
{0, \text{ if } i>0  }\\
Hom_{\La_0}(L(\lambda), M_{-k}), \text{ if } i=0
\end{cases}
$$
Each module $M$ from $\famod$  decomposes into a direct sum of finite-dimensional $\La_{0}$-modules 
and, therefore, all higher homomorphisms vanish. In particular, $\forall M\in\famod$ we have
$Ext^{1}_{\famod}(Ind_{\La_{0}}^{\La} L(\lambda),M)=0$ what is equivalent to being a projective module.
Third, the irreducible module $L(\lambda)$ has a cyclic vector $v_{\lambda}$
which remains to be a cyclic vector for the induced module $Ind_{\La_{0}}^{\La} L(\lambda)$.
Consequently, the induced module is indecomposable and, therefore, a projective cover of $L(\lambda,k)$.
\end{proof}

\subsection{Grothendieck ring and Macdonald pairing}
\label{sec::pairing}
The category $\famod(\La)$ is abelian and we can describe the Grothendieck ring of this category:
$$
\KG(\famod):= \frac{\Z[ M | M\in \famod]}{ [M]-[N]+[K]=0, \text{ whenever } 0\to M \to N \to K \to 0 }.
$$ 
Lemma~\ref{lem::proj} explains that irreducibles in $\famod$ 
are the same as in the category of finite-dimensional modules over semisimple Lie algebra 
$\La_{0}$ with outer grading added.
Therefore, the image of a module $M$ in the Grothendieck ring $\KG(\famod)$ is uniquely defined by its graded $\La_{0}$-character:
$$
\chi_{q\ttt\La_{0}}(M):= \sum_{i\in \Z} q^{i} \left( \sum_{\lambda\in \PP} (\dim M_{i}^{\lambda})e^{\lambda}\right).
$$
We get the following isomorphism:
$$
\KG(\famod) \cong\KG(\text{fin.dim.} \La_{0}\ttt mod)[[q,q^{-1}] \cong \Z[\PP]^{W}\otimes\Z[[q,q^{-1}].
$$
Where parameter $q$ stands for the $\Z$-grading on the module and $W$ is the Weyl group of $\La_{0}$.
Modules in $\famod$ has zero graded component for sufficiently large negative integers and that is why we have to consider
Loran series in $q$ which are polynomial in $q^{-1}$ and power series in $q$.
Respectively, the Grothendieck ring $\KG(\famod^{op})$ 
will be the ring if symmetric functions with coefficients in formal power series in $q,q^{-1}$
 that are polynomial in $q$ and may have series in $q^{-1}$.
Following standard notations the elements of the linear basis of the set of integral weights $\Z[\PP]$ 
are written in the exponential form $e^{\lambda}$ with $\lambda\in\PP$.
The ring of characters of $\La_{0}$ will be referred as a ring of symmetric functions.

Let us define a bilinear map $\langle,\rangle_{\La}: \KG(\famod) \times \KG(\famod^{op}) \rightarrow \Z[[q,q^{-1}]$
 in the following natural way: 
\begin{equation}
\label{eq::ext::pair::nonsym}
\langle M , N\rangle_{\La}:= \sum_{i\geq 0} (-1)^{i} \sum_{k\in\Z}q^{k}\dim Ext^{i}_{\famod}(M\{k\},N) = 
 \sum_{i\geq 0} (-1)^{i} \dim_{q^{-1}} H^{i}(\La,\La_{0};Hom_{\C}(M,N)).  
\end{equation}
where $\dim_{q^{-1}}$ stands for the graded dimension with respect to the inner grading on the Lie algebra $\g$, 
that is $\dim_{q^{-1}}(H)=\sum_{j\in Z} q^{-j}\dim H_{j}$.
The notation $H^{\udot}(\La,\La_{0};K)$ is used for the relative Lie algebra cohomology of the Lie algebra $\La$ 
relative to its semisimple subalgebra $\La_{0}$
and with coefficients in $\La$-module $K$.
Note, that if the Lie algebra $\La$ admits an anti-involution $\tau$ 
then one can define the symmetric pairing on $\KG(\famod)$ using the duality functor:
\begin{equation}
\label{eq::ext::paring}
\begin{array}{c}
{ \langle,\rangle_{\La}: \KG(\famod) \times \KG(\famod) \rightarrow \Z[[q,q^{-1}]  }\\ 
\langle M , N\rangle_{\La}:= \sum_{i\geq 0} (-1)^{i} \dim_{q^{-1}} H^{i}(\La,\La_{0};Hom_{\C}(M,N^{\dual})).
\end{array}
\end{equation}
We will refer the pairing~\eqref{eq::ext::paring} as \emph{generalized Macdonald pairing} 
and we will motivate our notation in Remark~\ref{rem::Macd::pair} below.
\begin{lemma}
The generalized Macdonald pairing~\eqref{eq::ext::paring} is well defined symmetric nondegenerate pairing on the Grothendieck ring $\KG(\famod)$.
\end{lemma}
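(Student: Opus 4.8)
The plan is to verify the three claimed properties—well-definedness, symmetry, and nondegeneracy—in that order, using the relative Lie algebra cohomology description of $\mathrm{Ext}$-groups together with the anti-involution $\tau$.

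\emph{Well-definedness.} First I would check that the right-hand side of~\eqref{eq::ext::paring} actually lands in $\Z[[q,q^{-1}]$ and depends only on the classes $[M],[N]\in\KG(\famod)$. For the first point: if $M\in\famod$ then $N^{\dual}\in\famod^{op}$, so $Hom_{\C}(M,N^{\dual})$ has finite-dimensional graded components with grading bounded above in a suitable sense; the relative cochain complex $\mathrm{Hom}_{\La_0}(\Lambda^{\udot}(\La/\La_0), Hom_{\C}(M,N^{\dual}))$ then has finite-dimensional pieces in each inner degree because $\La_{>0}$ is concentrated in positive degrees and each $\La_i$ is finite-dimensional. This gives the convergence of $\dim_{q^{-1}}$ as a Laurent series that is polynomial in one direction. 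For bilinearity on the Grothendieck ring: a short exact sequence $0\to M'\to M\to M''\to 0$ induces a long exact sequence in $H^{\udot}(\La,\La_0;Hom_\C(-,N^\dual))$ (the functor $Hom_\C(-,N^\dual)$ is exact, and relative Lie algebra cohomology is a $\delta$-functor), and the alternating sum of graded dimensions along a long exact sequence vanishes; the same argument applies in the second variable using that $(-)^\dual$ is exact (it is linear graded duality twisted by $\tau$, hence exact) and that $Hom_\C(M,-)$ is exact. So $\langle M,N\rangle_\La$ only depends on $[M],[N]$, and by Lemma~\ref{lem::proj} these classes are detected by graded $\La_0$-characters, so the pairing descends to $\KG(\famod)\times\KG(\famod)$.

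\emph{Symmetry.} This is the conceptual core. I would argue $\langle M,N\rangle_\La=\langle N,M\rangle_\La$ by producing an isomorphism of cochain complexes (or at least of cohomology) exchanging the two sides, built from $\tau$. The anti-involution $\tau$ preserves the grading and restricts to the Cartan anti-involution on $\La_0$; it therefore induces an equivalence $\famod\to\famod^{op}$, $M\mapsto M^\dual$, with $(M^\dual)^\dual\cong M$ (finite-dimensional graded components make the double-dual canonical degree by degree). The key identity is the canonical isomorphism of $\La$-modules $Hom_\C(M,N^\dual)^\dual\cong Hom_\C(N,M^\dual)$, which one checks by unwinding: both are, as graded vector spaces, $Hom_\C(N_\bullet, (M^\dual)_\bullet)$'s appropriate completion, and $\tau$ intertwines the two natural $\La$-actions. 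Combined with the general fact that for a Lie algebra pair $(\La,\La_0)$ with $\tau$ one has $H^i(\La,\La_0;K^\dual)\cong H^i(\La,\La_0;K)^\dual$ in each inner degree (apply $\tau$ to the Chevalley–Eilenberg complex; $\tau$ acts on $\Lambda^\udot(\La/\La_0)$ and the relative differential is $\tau$-equivariant up to sign, and $\dim_{q^{-1}}$ of a graded vector space equals $\dim_{q^{-1}}$ of its graded dual in the opposite direction—here one must be a touch careful that the shift $q\leftrightarrow q^{-1}$ works out, which it does because $\tau$ preserves the grading), we get
\[
H^i(\La,\La_0;Hom_\C(M,N^\dual)) \;\cong\; H^i\!\big(\La,\La_0;Hom_\C(N,M^\dual)\big)^{\dual}
\]
in each inner degree, and taking $\dim_{q^{-1}}$ of both sides and summing with signs gives $\langle M,N\rangle_\La=\langle N,M\rangle_\La$.

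\emph{Nondegeneracy.} Finally I would show the Gram matrix of $\langle,\rangle_\La$ in the basis of simples $\{[L(\lambda,k)]\}$ (equivalently the monomial-type basis on the character side) is invertible over $\Z[[q,q^{-1}]$. The cleanest route is to pair simples against projectives: by Lemma~\ref{lem::proj}, $\Proj(\lambda,k)=Ind_{\La_0}^\La L(\lambda)\{k\}$ is projective with $\mathrm{Ext}^{>0}=0$ and $\mathrm{Hom}_{\famod}(\Proj(\lambda,k),M)=\mathrm{Hom}_{\La_0}(L(\lambda),M_{-k})$. Hence $\langle \Proj(\lambda,k), N\rangle_\La$ computes, up to the $q$-grading bookkeeping, the graded multiplicity of $L(\lambda)$ in $N^\dual$; since $(-)^\dual$ preserves $\La_0$-characters, $\langle \Proj(\lambda,k), L(\mu,m)\rangle_\La = \delta_{\lambda\mu}\,\delta_{km}$ (or $q^{m-k}\delta$'s depending on normalization). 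Because $[\Proj(\lambda,k)]$ is a unitriangular integer combination of $[L(\mu,m)]$ with respect to the refined ordering on weights (its head is $L(\lambda,k)$ and all other composition factors have strictly larger weight by~\eqref{eq::def::ordering}), the matrix expressing projectives in terms of simples is unitriangular over $\Z[[q,q^{-1}]$ hence invertible; composing, the Gram matrix of $\langle,\rangle_\La$ on simples is invertible, i.e.\ the pairing is nondegenerate.

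\emph{Main obstacle.} I expect the genuinely delicate point to be the symmetry step—specifically, getting the grading shifts right so that $\dim_{q^{-1}} H^i(\La,\La_0;K^\dual)$ matches $\dim_{q^{-1}} H^i(\La,\La_0;K)$ rather than its $q\mapsto q^{-1}$ twist, and verifying that the canonical pairing $Hom_\C(M,N^\dual)\otimes Hom_\C(N,M^\dual)\to\C$ is genuinely $\La$-invariant via $\tau$ (the sign conventions $[\tau f,\tau g]=-\tau[f,g]$ must thread through the Chevalley–Eilenberg differential correctly). Everything else—convergence, additivity on exact sequences, and nondegeneracy via projectives—is routine homological algebra given Lemma~\ref{lem::proj}.
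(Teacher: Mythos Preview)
Your proposal is correct and follows essentially the same three-step structure as the paper's proof: finite-dimensionality of each graded piece of the relative Chevalley--Eilenberg complex for convergence and additivity on short exact sequences for well-definedness; the duality $\dual$ for symmetry; and the pair $\{\Proj(\lambda,0)\},\{L(\lambda,0)\}$ as dual bases for nondegeneracy. The only difference is in packaging the symmetry step: the paper invokes directly that $\dual$ is an (anti-)equivalence of graded $\La$-module categories, yielding $Ext^{\udot}(M\{k\},N^{\dual})\cong Ext^{\udot}(N\{k\},M^{\dual})$ in one line, whereas you unwind this at the cochain level and correctly flag the grading/sign bookkeeping as the place to be careful---both routes lead to the same isomorphism, but the abstract formulation avoids the $q\leftrightarrow q^{-1}$ worry you raised, since the shift $\{k\}$ behaves symmetrically under $\dual$.
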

\begin{proof}
This statement seems to be known for specialists and therefore we will only sketch its proof.
Moreover, we will recall another description 
of this pairing in Lemma~\ref{lem::pair::equal} below.

The coincidence of derived homomorphisms and relative Lie algebra cohomology 
follows from the universal description of the Lie algebra cohomology 
as the derived functor (see e.g.~\cite{Weibel} for details).
One has to use the relative Lie algebra cohomology because all $\La_0$ modules are locally integrable (locally finite-dimensional).
Recall, that $Ext$-functor is the universal derived functor of homomorphisms which is derived with respect to both arguments.
 Thus, it maps short exact sequences to long exact sequences and, consequently, 
whenever Euler characteristics is defined it factors through the relations in the Grothendieck group.
Therefore, in order to show that the pairing is well defined it remains to show any kind of convergence 
of the Euler characteristics of derived homomorphisms.
Indeed, let us notice that for all $M\in\famod$ and $N\in\famod^{op}$ the module 
$Hom_{\C}(M,N)$ belongs to $\famod^{op}$, because its graded component of degree $k$ is equal to $\oplus_{j-i=k} Hom_{\C}(M_{i},N_{j})$
and, consequently, is finite-dimensional and even is zero for $k\gg 0$.
The graded component of inner degree $l$ of the Chevalley-Eilenberg complex 
$C^{\udot}(\La;K) = Hom_{\C}(\Lambda^{\udot} \La,K)$ (which computes the Lie algebra cohomology)
is isomorphic to the following sum
$$
\bigoplus_{p-\sum s_i = l, s_i\geq 0} Hom_{\C} (\Lambda^{\udot}(\La_{0})\bigotimes_{i} \Lambda^{s_i}\La_{s_i}, K_{p})
$$
and, hence, is finite-dimensional whenever $K$ belongs to $\famod^{op}$. 
Consequently, the subcomplex of relative cochains $H^{\udot}(\La,\La_0;Hom_{\C}(M,N))$ has also finite-dimensional graded components.

The symmetry property of the pairing follows from the observation that duality is an auto-equivalence in the category of graded $\La$-modules.
Namely, for all pairs of graded $\La$-modules $M,N$ we have an isomorphism
$$
Ext^{\udot}_{\La\ttt mod}(M\{k\},N^{\dual}) \cong Ext^{\udot}_{\La\ttt mod}(N\{k\},M^{\dual}).
$$

Finally, we notice that the set of irreducible $\La_{0}$-modules $\{L(\lambda,0) : \lambda\in\PPdom\}$ 
and the set of their projective covers $\{\Proj(\lambda,0) : \lambda\in\PPdom\}$ form a dual basis with respect to this pairing:
$$
\dim Ext^{i}_{\famod}(\Proj(\lambda,0)\{k\},L(\mu,0)) = \delta_{i,0}\cdot\delta_{k,0}\cdot \delta_{\lambda,\mu}
$$
Consequently, the pairing is nondegenerate. 
\end{proof}

The following Lemma gives the description of the pairing in terms of rings of characters
and gives a bridge to the Macdonald pairing on the ring of symmetric functions.
\begin{lemma}
\label{lem::pair::equal}
The pairing~\eqref{eq::ext::paring} induces the following pairing on the ring of characters $\Z[\PP]^{W}\otimes\Z[[q,q^{-1}]$:
\begin{equation}
\label{eq::pair::polyn}
\langle f, g \rangle_{\La} = \left[
f \cdot \overline{g} \cdot 
\prod_{\alpha\in \Phi}(1-e^{\alpha}) \prod_{r>0} \prod_{\lambda\in wt(\La_{r})} (1- q^{r} e^{\lambda})^{\dim \La_{r}^{\lambda}}
\right]_{1}
\end{equation}
with $f,g$ being a pair of symmetric polynomials from $\Z[\PP]^{W}\otimes\Z[[q,q^{-1}]$.
Here $\overline{g(e^{\alpha},q)}:= g(e^{-\alpha},q)$ and index $1$ near squared brackets means the following operation:
expand all brackets and take the coefficient near $e^{0}=1$ that is a Loran series in $q$ and is called the constant term.
\end{lemma}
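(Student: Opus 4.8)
\emph{Plan of proof.}
The idea is to evaluate the Ext-Euler characteristic \eqref{eq::ext::paring} through the relative Chevalley--Eilenberg complex and then translate each step into an operation on symmetric functions. Throughout, write $\chi(M):=\chi_{q\ttt\La_{0}}(M)$ for the graded $\La_{0}$-character of $M\in\famod$, $\chi_{q^{-1}}(V):=\chi(V)|_{q\mapsto q^{-1}}$ for a graded $\La_{0}$-module $V$, and $\mathrm{ch}(V)=\sum_{\lambda}(\dim V^{\lambda})e^{\lambda}$ for the plain $\La_{0}$-character of a finite-dimensional $\La_{0}$-module $V$. By the previous lemma and its proof, for $M,N\in\famod$ the complex $C^{\udot}(\La,\La_{0};Hom_{\C}(M,N^{\dual}))$ has finite-dimensional components in each fixed inner degree, so its graded Euler characteristic equals that of its cohomology:
$$
\langle M,N\rangle_{\La}=\sum_{i\geq 0}(-1)^{i}\dim_{q^{-1}}C^{i}\bigl(\La,\La_{0};Hom_{\C}(M,N^{\dual})\bigr).
$$
Since $\La_{0}$ is semisimple, $C^{i}(\La,\La_{0};K)=Hom_{\La_{0}}\bigl(\Lambda^{i}(\La/\La_{0}),K\bigr)=\bigl(\Lambda^{i}(\La_{>0})^{*}\otimes M^{*}\otimes N^{\dual}\bigr)^{\La_{0}}$, where $\La/\La_{0}\simeq\La_{>0}=\bigoplus_{r>0}\La_{r}$ as graded $\La_{0}$-modules and $K=M^{*}\otimes N^{\dual}$.

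\emph{Three character-level identities.}
First, the Euler characteristic of the exterior algebra: in the completed representation ring, $\sum_{i\geq 0}(-1)^{i}\chi_{q^{-1}}\bigl(\Lambda^{i}(\La_{>0})^{*}\bigr)=\prod_{r>0}\prod_{\lambda\in wt(\La_{r})}(1-q^{r}e^{-\lambda})^{\dim\La_{r}^{\lambda}}$, which is the identity $\sum_{i}(-1)^{i}[\Lambda^{i}V]=\prod_{j}(1-x_{j})$ applied to the graded weights of $(\La_{>0})^{*}$ (the dual of a weight-$\lambda$, degree-$r$ vector has weight $-\lambda$ and contributes $q^{r}$ after the $q\mapsto q^{-1}$ normalization). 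Second, the effect of the two dualities: the plain graded linear dual $M^{*}$ reverses the inner grading and negates weights, so $\chi_{q^{-1}}(M^{*})=\chi(M)|_{e^{\lambda}\mapsto e^{-\lambda}}$, whereas the $\tau$-twisted dual $N^{\dual}$ reverses the inner grading but \emph{preserves} all $\La_{0}$-weights, because $\tau$ restricts to the identity on $\h$, so $\chi_{q^{-1}}(N^{\dual})=\chi(N)$. Third, the ``take $\La_{0}$-invariants'' functor: by Weyl's character formula the multiplicity of the trivial $\La_{0}$-module in a (virtual) finite-dimensional $\La_{0}$-module $V$ is obtained from $\mathrm{ch}(V)$ by multiplying with the Weyl factor $\prod_{\alpha\in\Phi}(1-e^{\alpha})$ and applying the constant-term map $[\,\cdot\,]_{1}$; extending $\Z$-linearly and $q$-adically, $\dim_{q^{-1}}W^{\La_{0}}=\bigl[\chi_{q^{-1}}(W)\cdot\prod_{\alpha\in\Phi}(1-e^{\alpha})\bigr]_{1}$ for any graded (virtual) $\La_{0}$-module $W$.

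\emph{Assembling.}
Using multiplicativity of the graded character under tensor products together with the three identities,
$$
\langle M,N\rangle_{\La}=\Bigl[\ \chi(M)|_{e^{\lambda}\mapsto e^{-\lambda}}\cdot\chi(N)\cdot\prod_{r>0}\prod_{\lambda\in wt(\La_{r})}(1-q^{r}e^{-\lambda})^{\dim\La_{r}^{\lambda}}\cdot\prod_{\alpha\in\Phi}(1-e^{\alpha})\ \Bigr]_{1}.
$$
Now apply the substitution $e^{\mu}\mapsto e^{-\mu}$ to the whole bracketed expression: the constant-term map $[\,\cdot\,]_{1}$ is invariant under it, it turns $\prod_{\lambda}(1-q^{r}e^{-\lambda})^{\dim\La_{r}^{\lambda}}$ into $\prod_{\lambda}(1-q^{r}e^{\lambda})^{\dim\La_{r}^{\lambda}}$, and it fixes $\prod_{\alpha\in\Phi}(1-e^{\alpha})$ because $\Phi=-\Phi$. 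We arrive at
$$
\langle M,N\rangle_{\La}=\Bigl[\ \chi(M)\cdot\overline{\chi(N)}\cdot\prod_{\alpha\in\Phi}(1-e^{\alpha})\cdot\prod_{r>0}\prod_{\lambda\in wt(\La_{r})}(1-q^{r}e^{\lambda})^{\dim\La_{r}^{\lambda}}\ \Bigr]_{1},
$$
which is exactly the asserted formula with $f=\chi(M)$, $g=\chi(N)$. Finally, classes of modules topologically span $\KG(\famod)$ and both sides are $q$-adically continuous and $\Z$-bilinear, so the identity extends to all $f,g\in\Z[\PP]^{W}\otimes\Z[[q,q^{-1}]]$.

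\emph{Where the work is.}
There is nothing deep here; the only genuine task is keeping the signs of gradings and weights straight through the substitutions $q\mapsto q^{-1}$ and $e^{\lambda}\mapsto e^{-\lambda}$, and this is precisely where the standing hypothesis that $\La$ carries a grading-preserving anti-involution $\tau$ extending the Cartan anti-involution is used: it gives $\chi_{q^{-1}}(N^{\dual})=\chi(N)$ (unchanged $\La_{0}$-character, reversed grading) and, via $\tau(\La_{r}^{\lambda})=\La_{r}^{-\lambda}$, makes the resulting pairing symmetric in $M$ and $N$, consistently with the previous lemma. As a cross-check one evaluates both sides on the mutually dual topological bases $\{[\Proj(\lambda,0)]\}_{\lambda\in\PPdom}$ and $\{[L(\mu,0)]\}_{\mu\in\PPdom}$ furnished by the previous lemma: by the Poincar\'e--Birkhoff--Witt theorem $\chi(\Proj(\lambda,0))=\mathrm{ch}\,L(\lambda)\cdot\prod_{r>0}\prod_{\nu\in wt(\La_{r})}(1-q^{r}e^{\nu})^{-\dim\La_{r}^{\nu}}$, so the infinite product cancels and the right-hand side collapses to $\bigl[\mathrm{ch}\,L(\lambda)\cdot\overline{\mathrm{ch}\,L(\mu)}\cdot\prod_{\alpha\in\Phi}(1-e^{\alpha})\bigr]_{1}=\dim Hom_{\La_{0}}(L(\lambda),L(\mu))=\delta_{\lambda\mu}$, which indeed equals $\langle[\Proj(\lambda,0)],[L(\mu,0)]\rangle_{\La}$.
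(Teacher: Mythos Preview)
Your argument is correct and follows essentially the same route as the paper: pass from cohomology to the relative Chevalley--Eilenberg cochains via the Euler characteristic, factor the character of the exterior algebra of $\La_{>0}^{*}$, track the characters of $M^{*}$ and $N^{\dual}$ through the two dualities, and extract the $\La_{0}$-invariants using the constant-term identity~\eqref{eq::const::term}. The only cosmetic differences are that you make the final substitution $e^{\mu}\mapsto e^{-\mu}$ explicit (the paper absorbs it silently into the claim that the character of $Hom_{\C}(M,N^{\dual})$ is $f\cdot\overline{g}$), and you add the cross-check on the dual bases $\{\Proj(\lambda,0)\}$, $\{L(\mu,0)\}$, which the paper does not.
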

\begin{proof}
The same result is known for the pairing on symmetric functions without any parameters
(see e.g.~\cite{Macdonald1}).
Recall that $\La_{0}$ is a semisimple Lie algebra associated with a root system $\Phi$.
Let $M$ be a finite-dimensional  $\La_{0}$-module with a character $f\in \Z[\PP]^{W}$ then 
the dimensions of the space of $\La_{0}$-invariants in $M$ has a description as a constant term in the sense of the coefficient near $e^{0}=1$:
\begin{equation}
\label{eq::const::term}
\dim [M]^{\g} = \left[ \prod_{\alpha\in \Phi} (1-e^{\alpha}) \cdot f \right]_{1}.
\end{equation}
Moreover, the graded $\La_{0}$-character of the dual representation
$M^{\dual}$ replaces $q$ by $q^{-1}$ and keeps the weight decomposition.
However, in the graded $\La_{0}$ character of the module $Hom(M,\C)$ 
both things are interchanged:
$e^{\lambda}$ by $e^{-\lambda}$ and  $q\mapsto q^{-1}$.
Consequently, the graded $\La_{0}$-character of the module $Hom(M,N^{\dual})$ is equal to 
$f\cdot \overline{g}$ with $q$ replaced by $q^{-1}$ in both functions.
It remains to point out that the right hand side of Equation~\eqref{eq::pair::polyn} is nothing but
the graded Euler characteristics of the relative Chevalley-Eilenberg complex with $q$ being replaced by $q^{-1}$:
\begin{multline*}
\sum_{i\geq 0}(-1)^{i}\dim_{q^{-1}}H^{i}(\La,\La_{0};Hom_{\C}(M,N^{\dual})) = 
\sum_{i\geq 0}(-1)^{i}\dim_{q^{-1}}C^{i}(\La,\La_{0};Hom_{\C}(M,N^{\dual}))) =\\
=\sum_{i\geq 0}(-1)^{i}\dim_{q^{-1}}Hom_{\La_0}\left(\Lambda^{i}(\La/\La_{0}), Hom_{\C}(M,N^{\dual})\right) = \\
=\sum_{i\geq 0}(-1)^{i}\dim_{q^{-1}}Hom_{\La_{0}}\left(\Lambda^{i}(\La_{>0}), Hom_{\C}(M,N^{\dual})\right) = \\
=\sum_{i\geq 0}(-1)^{i}\dim_{q^{-1}}\left[Hom\left(\Lambda^{i}\left(\ooplus_{\lambda\in \PP} \La_{>0}^{\lambda}\right),
 Hom_{\C}(M,N^{\dual})\right)\right]^{\La_{0}} = \\
=
\left[
\prod_{\alpha\in \Phi}(1-e^{\alpha}) \left(\prod_{r>0} \prod_{\lambda\in wt(\La_{r})} (1- q^{r} e^{\lambda})^{\dim \La_{r}^{\lambda}} \right)
f\overline{g}\right]_{1}.
\end{multline*}
The last equality is the application of the equality~\eqref{eq::const::term}.
\end{proof}
\begin{remark}
 Assume that $\La=\La_0\oplus\La_1\oplus\ldots$ is a graded Lie superagebra with $\La_0$-semisimple Lie algebra
and superdecomposition $\La_r={{\overline{\La}}}_r\oplus{\overline{\overline{\La}}}_r$ with  ${{\overline{\La}}}_r$ being even part
and super-part is denoted by ${\overline{\overline{\La}}}_r$. 
In this case the polynomial description of the pairing given in Equation~\eqref{eq::pair::polyn} has to be changed by appropriate denominators
coming from the super-part:  
\begin{equation}
\label{eq::pair::polyn::super}
 \langle f, g \rangle_{\La} = \left[
f \cdot \overline{g} \cdot 
\prod_{\alpha\in \Phi}(1-e^{\alpha}) \prod_{r>0} \prod_{\lambda\in wt(\La_{r})} 
\frac{(1- q^{r} e^{\lambda})^{\dim {{\overline{\La}}}_{r}^{\lambda}}}{(1- q^{r} e^{\lambda})^{\dim {\overline{\overline{\La}}}_{r}^{\lambda}}}
\right]_{1}
\end{equation}
This happens because the Chevalley-Eilenberg complex of a Lie superalgebra 
is no more the Grassman algebra but is the product of Grassman and symmetric algebras:
$$
C^{\ldot}_{CE}(\La,\La_0;K) = Hom_{\La_0}\left(\Lambda^{\udot}({{\overline{\La}}_{>0}})\otimes S^{\udot}({\overline{\overline{\La}}}),K\right).
$$
\end{remark}

Consider the free super-commutative polynomial algebra $\C[x,\xi]$ with one odd generator $\xi$ and one even generator $x$.
Following Example~\ref{ex::currents} with any semisimple Lie algebra $\g$ we can assign the graded Lie superalgebra $\g\otimes \C[x,\xi]$ 
with an anti-involution coming from Cartan anti-involution.
Denote by $\famod(\g\otimes\C[x,\xi])$ the category of finitely generated bi-graded modules over $\g\otimes\C[x,\xi]$.
\begin{proposition}
\label{rem::Macd::pair}
The two-parametric pairing coming from the $Ext$-pairing~\ref{eq::ext::paring} on the category of bi-graded modules $\famod(\g\otimes\C[x,\xi])$
coincides with the Macdonald pairing (\cite{Mac}) on the ring of symmetric functions:
$$
\langle f, g\rangle_{q,t}:=\left(\frac{\prod_{r>0}(1-q^{r})}{\prod_{r\geq 0}(1-t q^{r})} \right)^{rk(\g)} \cdot
\left[f\cdot \bar{g}  
\prod_{\begin{smallmatrix}
        r\geq 0, \\ \alpha\in\Phi
       \end{smallmatrix}
}\frac{1-q^{r} e^{\alpha}}{1-t q^{r} e^{\alpha}}\right]_{1}
$$
Parameters $q$ and $t$ corresponds to the two canonical gradings on modules and on the Lie algebra.
The parameter $q$ counts the degree with respect to even variable $x$
and parameter $t$ comes from the $\Z$-grading with respect to the odd variable $\xi$.
\end{proposition}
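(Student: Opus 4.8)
The plan is to derive the identity directly from the polynomial description of the $Ext$-pairing, that is, from the super-analog \eqref{eq::pair::polyn::super} of Lemma~\ref{lem::pair::equal}, applied to $\La=\g\otimes\C[x,\xi]$ in its bigraded incarnation. First I would spell out the graded structure of this Lie superalgebra: the $(0,0)$-component is $\La_0=\g\otimes 1$, which is semisimple; the even part in positive bidegree consists of the subspaces $\g\otimes x^{r}$ placed in bidegree $(r,0)$ for $r\ge 1$; and the odd part consists of the subspaces $\g\otimes x^{r}\xi$ placed in bidegree $(r,1)$ for $r\ge 0$. Each of these graded pieces is isomorphic to the adjoint module $\g$ as a $\g$-module, so its weight support is $\Phi\cup\{0\}$ with $\dim\g^{\alpha}=1$ for every root $\alpha\in\Phi$ and $\dim\g^{0}=rk(\g)$. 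In particular the bigraded category $\famod(\g\otimes\C[x,\xi])$ fits the framework of Section~\ref{sec::framework} (the passage from the $\Z$-graded to the multigraded setting being routine), so the pairing \eqref{eq::ext::paring} and its constant-term description are available here.

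Next I would invoke the bigraded refinement of \eqref{eq::pair::polyn::super}, letting $q$ record the $x$-degree and $t$ the $\xi$-degree: by the computation in the proof of Lemma~\ref{lem::pair::equal}, the pairing equals the graded Euler characteristic of the relative Chevalley--Eilenberg complex $C^{\udot}(\La,\La_0;Hom_{\C}(M,N^{\dual}))=Hom_{\La_0}\!\bigl(\Lambda^{\udot}(\bar\La_{>0})\otimes S^{\udot}(\bar{\bar\La}),\,Hom_{\C}(M,N^{\dual})\bigr)$. Here the $Hom_{\La_0}$ (the taking of $\g$-invariants) produces the Weyl factor $\prod_{\alpha\in\Phi}(1-e^{\alpha})$ of \eqref{eq::const::term}; the even generators in bidegree $(r,0)$, $r\ge 1$, contribute the exterior factors $(1-q^{r})^{rk(\g)}\prod_{\alpha\in\Phi}(1-q^{r}e^{\alpha})$ to the numerator; and the odd generators in bidegree $(r,1)$, $r\ge 0$, contribute the symmetric (geometric-series) factors $(1-tq^{r})^{rk(\g)}\prod_{\alpha\in\Phi}(1-tq^{r}e^{\alpha})$ to the denominator. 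Substituting the weight data of the first paragraph would give
\begin{equation*}
\langle f,g\rangle_{\g\otimes\C[x,\xi]}
=\left[\,f\cdot\overline{g}\cdot
\prod_{\alpha\in\Phi}(1-e^{\alpha})\cdot
\frac{\prod_{r\ge 1}\Bigl((1-q^{r})^{rk(\g)}\prod_{\alpha\in\Phi}(1-q^{r}e^{\alpha})\Bigr)}
{\prod_{r\ge 0}\Bigl((1-tq^{r})^{rk(\g)}\prod_{\alpha\in\Phi}(1-tq^{r}e^{\alpha})\Bigr)}\,\right]_{1}.
\end{equation*}

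Finally I would simplify. Absorbing $\prod_{\alpha\in\Phi}(1-e^{\alpha})=\prod_{\alpha\in\Phi}(1-q^{0}e^{\alpha})$ into the numerator extends the range of the root-product from $r\ge 1$ to $r\ge 0$; the leftover scalar $\prod_{r>0}(1-q^{r})^{rk(\g)}\big/\prod_{r\ge 0}(1-tq^{r})^{rk(\g)}$ contains no $e^{\lambda}$ and hence pulls out of the constant-term operation $[\,\cdot\,]_{1}$, leaving exactly
\begin{equation*}
\left(\frac{\prod_{r>0}(1-q^{r})}{\prod_{r\ge 0}(1-tq^{r})}\right)^{rk(\g)}
\left[\,f\cdot\overline{g}\prod_{\substack{r\ge 0\\ \alpha\in\Phi}}\frac{1-q^{r}e^{\alpha}}{1-tq^{r}e^{\alpha}}\,\right]_{1},
\end{equation*}
which is the Macdonald pairing in the stated form. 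The one delicate point --- and the only place where the superalgebra structure genuinely enters --- is the sign bookkeeping attached to the odd variable $\xi$: the parity shift in the Chevalley--Eilenberg complex turns the exterior contribution of the odd generators into a symmetric one, so their Euler-characteristic factor is a geometric series rather than a product of terms $(1-\cdot)$; keeping track of the sign of the $t$-grading that this entails (the same phenomenon responsible for the $(-t)^{j}$ appearing in the Corollary of the Introduction) is exactly what produces the Macdonald normalization of $t$ in the denominator above. I expect this sign reconciliation, together with the verification that \eqref{eq::pair::polyn::super} remains valid in the multigraded setting, to be the only non-formal part of the argument; everything else is the substitution and bracket manipulation just described.
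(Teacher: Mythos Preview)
Your proposal is correct and follows essentially the same route as the paper: write down the $\h$-weight decomposition of $\g\otimes\C[x,\xi]$ (roots $\alpha\in\Phi$ each with multiplicity one, zero weight with multiplicity $rk(\g)$, even pieces $\g\otimes x^r$ and odd pieces $\g\otimes x^r\xi$) and substitute directly into the super-formula~\eqref{eq::pair::polyn::super}. The paper's own proof is a one-line version of exactly this computation; your extra paragraph on sign bookkeeping is cautious but not strictly needed, since the parity convention is already absorbed into~\eqref{eq::pair::polyn::super}.
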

\begin{proof}
The weight decomposition of the adjoint representation is numbered by roots $\g=\h\oplus_{\alpha\in\Phi} \g^{\alpha}$ with $dim(\g^{\alpha})=1$.
Consequently, we have the following weight decomposition of $\g\otimes\C[x,\xi]$:
$$
\g\otimes\C[x,\xi]\cong \h\otimes\C[x]\oplus \h\otimes\C[x]\xi \ooplus_{r\geq 0} \left(\ooplus_{\alpha\in\Phi} 
\g^{\alpha}\otimes x^{r} \oplus \g^{\alpha}\otimes x^{r}\xi\right)
$$
It remains to substitute the weight decomposition into the Formula~\eqref{eq::pair::polyn::super} remembering 
 that the summands containing odd variable $\xi$ are odd. 
\end{proof}

This remark leads to the following definition of the generalized Macdonald polynomials:
\begin{definition}
\label{def::poly::Macdonald}
 The symmetric polynomials $P_{\lambda}\in \Z[\PP]^{W}[[q,q^{-1}]$ are called \emph{generalized Macdonald polynomials}
if they satisfy the two following properties:
\begin{itemize}
 \item $P_{\lambda} = m_\lambda +\sum_{\mu<_{\La}\lambda} c_{\lambda,\mu}(q) m_\mu$, 
where $m_\lambda:= \sum_{\alpha\in W\lambda} e^{\alpha}$ is the monomial basis in $\Z[\PP]^{W}$,
\item Polynomials $P_{\lambda}$ form an orthogonal basis with respect to the pairing coming from representations of $\La$:
$$
\langle P_{\lambda}, P_\mu\rangle_{\La} = 0 ,\ \ \text{ if } \lambda\neq \mu.
$$
\end{itemize}
\end{definition}
In other words, the basis of generalized Macdonald polynomials is the orthogonalization of the monomial basis with respect to the partial ordering on weights.
Instead of monomial basis one can also orthogonalize the Schur functions $s_\lambda$ given by the character of irreducible representation $L(\lambda)$.

\section{Highest weight modules in $\famod(\La)$}
\label{sec::weyl::modules}
\subsection{Highest weights}
Let us adapt the theory of highest weight vectors 
and representations to the category $\famod$.
Recall, that for a module $M$ we denote by $M_k^{\lambda}$ the weight subspace of $\h$-weight $\lambda$ in component of degree $k$.
Moreover, after fixing a direction $h_0$ in the Cartan subalgebra we fixed a decomposition
$$\La = \La^{+}\oplus \La^{0}\oplus \La^{-}\ :\  \langle h_0,\lambda_+\rangle > 0 > \langle h_0,\lambda_{-}\rangle 
\text{ for } \lambda_{+}\in wt(\La^{+}), \lambda_-\in wt(\La^{-}), \ [\h,\La^{0}]=0 $$
into the sum of subalgebras of positive, zero and negative $\h$-weight and, using that we defined a partial ordering
$\leq_{\La}$ on the set of integral dominant weights (see~\eqref{eq::def::ordering}).
\begin{definition}
 We say that a graded module $M\in \famod$ is a \emph{highest weight representation} of graded weight $(\lambda,k)$
 if there exists a cyclic vector $v_{\lambda}\in M_{k}^{\lambda}$ 
such that $M_{l}^{\mu}\neq 0$ implies that $\mu \leq_{\La} \lambda$.

We say that $v\in M_{k}$ is a \emph{highest weight vector} of
graded weight $(\lambda,k)$
if it is a vector of $\h$-weight $\lambda$  and all weights in the submodule 
$U(\La)\cdot v\subset M$ generated by $v$ are less or equal than $\lambda$ with respect to the partial ordering $\leq_{\La}$.
Moreover, we say that the highest weight weight vector $v$ is a \emph{pure} 
if the dimension of the weight subspace $\lambda$ in submodule $(U(\La)\cdot v)$ is equal to one 
and, therefore, $(U(\La)\cdot v)^{\lambda}$ is spanned by $v$.
\end{definition}

\begin{remark}
 For the usual BGG category $\calO$ the highest weight vector always spans the corresponding highest weight subspace
and, therefore, one does not need to separate the notions of highest weight vector and a pure highest weight vector in that case.
\end{remark}

\begin{lemma}
For all modules $M\in\famod$
a vector $v\in M$ of weight $\lambda$ is 
 a highest weight vector if and only if
the action of $U(\n^{+}\oplus {\La_{>0}^{+}})$ is trivial.
Moreover, it is a pure highest weight vector if and only if the action of $U(\La_{>0}^{0})$ is also trivial.
\end{lemma}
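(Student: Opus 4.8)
The plan is to read off all the equivalences from the Poincar\'e--Birkhoff--Witt (PBW) theorem applied to the decomposition of $\La$ into its strictly positive, zero, and strictly negative $\h$-weight parts. I would set $\mathfrak{u}^{+}:=\bigoplus_{i\ge 0}\La_{i}^{+}=\n^{+}\oplus\La_{>0}^{+}$, $\mathfrak{z}:=\bigoplus_{i\ge 0}\La_{i}^{0}=\h\oplus\La_{>0}^{0}$ and $\mathfrak{u}^{-}:=\bigoplus_{i\ge 0}\La_{i}^{-}=\n^{-}\oplus\La_{>0}^{-}$; since a bracket adds both $\h$-weights and gradings, each of the three is a graded subalgebra, $\La=\mathfrak{u}^{-}\oplus\mathfrak{z}\oplus\mathfrak{u}^{+}$ as a vector space, and PBW yields $U(\La)=U(\mathfrak{u}^{-})\,U(\mathfrak{z})\,U(\mathfrak{u}^{+})$. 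The only auxiliary fact I would use, repeatedly, is that if a subalgebra $\mathfrak{a}$ annihilates a vector $w$ then $U(\mathfrak{a})w=\C w$, because the augmentation ideal of $U(\mathfrak{a})$ is $U(\mathfrak{a})\mathfrak{a}$.

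For the first equivalence, in one direction I would take a highest weight vector $v$ of $\h$-weight $\lambda$ and a homogeneous $\h$-weight vector $0\ne x\in\mathfrak{u}^{+}$ of weight $\mu$: by construction $\mu\in\PP_{+}(\La)$ and $\mu\ne 0$, so $x\cdot v\in U(\La)v$ has $\h$-weight $\lambda+\mu>_{\La}\lambda$; since every weight of $U(\La)v$ is $\le_{\La}\lambda$ this forces $x\cdot v=0$, and as $\mathfrak{u}^{+}$ is spanned by such $x$ it gives $\mathfrak{u}^{+}v=0$. In the other direction, if $\mathfrak{u}^{+}v=0$ then $U(\mathfrak{u}^{+})v=\C v$, whence $U(\La)v=U(\mathfrak{u}^{-})U(\mathfrak{z})v$; here $\mathfrak{z}$ has zero $\h$-weight so $U(\mathfrak{z})v\subseteq M^{\lambda}$, while every $\h$-weight of $\mathfrak{u}^{-}$ lies in $-\PP_{+}(\La)$ --- this is the one structural input, and I would deduce it from the anti-involution $\tau$, which preserves the grading, fixes $\h$, hence sends an $\h$-weight vector of weight $\nu$ to one of weight $-\nu$, so that $wt(\La_{i}^{-})=-wt(\La_{i}^{+})\subseteq-\PP_{+}(\La)$. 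As $\PP_{+}(\La)$ is closed under addition, every $\h$-weight occurring in $U(\mathfrak{u}^{-})U(\mathfrak{z})v$ is $\lambda-\gamma$ with $\gamma\in\PP_{+}(\La)$, i.e.\ is $\le_{\La}\lambda$, so $v$ is a highest weight vector.

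For the refinement about pure highest weight vectors, I would assume $\mathfrak{u}^{+}v=0$ (so $v$ is already a highest weight vector) and compute the $\h$-weight-$\lambda$ subspace of $U(\La)v=U(\mathfrak{u}^{-})U(\mathfrak{z})v$. A non-scalar PBW monomial in $\mathfrak{u}^{-}$ is a product of $\h$-weight vectors of strictly negative value on $h_{0}$, hence has strictly negative $h_{0}$-value and in particular nonzero $\h$-weight, so only the scalars in $U(\mathfrak{u}^{-})$ preserve weight $\lambda$ and the weight-$\lambda$ subspace is exactly $U(\mathfrak{z})v$. Since $\mathfrak{z}=\h\oplus\La_{>0}^{0}$ with $[\h,\La_{>0}^{0}]=0$, we have $U(\mathfrak{z})=U(\h)U(\La_{>0}^{0})$, and as $\h$ acts by scalars on the weight-$\lambda$ space $U(\La_{>0}^{0})v$ this equals $U(\La_{>0}^{0})v$. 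That space contains $v$, so it is one-dimensional (equivalently equal to $\C v$) precisely when the augmentation ideal $U(\La_{>0}^{0})\La_{>0}^{0}$ kills $v$, i.e.\ when $\La_{>0}^{0}v=0$; combined with the first equivalence this is exactly the assertion that $v$ is a pure highest weight vector iff $U(\n^{+}\oplus\La_{>0}^{+})$ and $U(\La_{>0}^{0})$ both act trivially. The whole argument is routine PBW bookkeeping; the single non-formal point is the inclusion $wt(\bigoplus_{i}\La_{i}^{-})\subseteq-\PP_{+}(\La)$, which is where the grading-preserving anti-involution restricting to the Cartan anti-involution on $\La_{0}$ is essential.
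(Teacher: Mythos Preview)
Your argument follows the same route as the paper's: both use the triangular decomposition $\La=\mathfrak{u}^{-}\oplus\mathfrak{z}\oplus\mathfrak{u}^{+}$ and weight-counting, with the paper's two-sentence proof being exactly what you unpack via PBW. One point is worth noting: you flag the inclusion $wt(\La_{>0}^{-})\subseteq -\PP_{+}(\La)$ as the single non-formal input and deduce it from the anti-involution $\tau$, whereas the paper simply asserts that $\n^{-}\oplus\La_{>0}^{-}\oplus\La_{>0}^{0}$ ``contains only zero and negative weights''. Since $\PP_{+}(\La)$ is by definition the cone generated by weights of $\La^{+}$ alone, without some symmetry of the weight supports of the $\La_{i}$ (which $\tau$ supplies) that step is not automatic; your invocation of the anti-involution is correct and makes explicit a hypothesis the paper's proof leaves tacit.
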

\begin{proof}
The subalgebras $\n^{+}$ and $\La^{+}_{>0}$ are generated by elements of nonzero nonnegative weights with respect to the partial ordering we choose. 
Therefore, for each generator $g\in \n^{+}\oplus {\La^{+}}$ the weight of the vector $g\cdot v$ is 
greater or equal than the weight of $v$. 
Respectively, the subspace  $\n^{-}\oplus\La_{>0}^{-}\oplus\La_{>0}^{0}$ 
contains only zero and negative weights and, therefore, generates the highest weight module.
We will give the description of a universal highest weight module in the next Section~\ref{sec::stand::mod}
\end{proof}

\subsection{Standard, proper standard and costandard modules}
\label{sec::stand::mod}
Let us define a \emph{standard} module $\Delta(\lambda)$ as a module generated by a cyclic vector $p_{\lambda}$ of degree $k$
modulo following relations:
$$
\Delta(\lambda,k):= U(\La) p_\lambda
\left/
\left(
\begin{array}{c}
\n^{+} p_\lambda = \La_{>0}^{+} p_\lambda = 0, \\
 \forall h\in \h, h p_\lambda=\lambda(h) p_{\lambda}, \\
 f_i^{\lambda(h_i)+1} p_\lambda = 0, \ \ \forall i\in\{1,\ldots, r=rk(\La_{0})\}
\end{array}
\right)
\right.
$$
The quotient module $\Delta(\lambda,k)/(U(\La_{>0}^{0}) p_\lambda=0)$ is called a \emph{proper standard} module
and is denoted by ${\oDelta(\lambda,k)}$.
\begin{lemma}
\label{lem::stand::amod}
 The standard module $\Delta(\lambda,k)$ is the universal highest weight module in $\famod$ of weight $\lambda$.
What means that for all modules $M\in\famod$ and all highest weight vectors $v\in M_{k}$ of weight $\lambda$
the map $p_{\lambda} \mapsto v$ lifts to a map of graded $\La$-modules $\Delta(\lambda,k)\rightarrow M$.

Respectively, the proper standard module ${\oDelta(\lambda,k)}$ is the universal module 
with a pure highest weight vector of weight $\lambda$ and degree $k$.
\end{lemma}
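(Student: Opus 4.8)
The plan is to verify the two universal properties by reducing each to a statement about the induced (projective) module $\Proj(\lambda,k)=\mathrm{Ind}_{\La_0}^{\La}L(\lambda)\{k\}$ from Lemma~\ref{lem::proj}, and then check that the defining relations of $\Delta(\lambda,k)$ (resp.\ $\oDelta(\lambda,k)$) are exactly those forced by the highest-weight (resp.\ pure highest-weight) condition. First I would observe that, by Lemma~\ref{lem::proj}, for any $M\in\famod$ and any vector $v\in M_k^\lambda$ the assignment sending the $\La_0$-highest weight vector of $L(\lambda,k)$ to $v$ extends uniquely to an $\La_0$-module map $L(\lambda,k)\to M_k$ provided $v$ is annihilated by $\n^+$ and killed by $f_i^{\lambda(h_i)+1}$, i.e.\ provided $v$ generates (under $\La_0$) a copy of the finite-dimensional irreducible $L(\lambda)$; and this in turn extends uniquely to an $\La$-module map $\Proj(\lambda,k)\to M$ by the adjunction $\mathrm{Hom}_\La(\mathrm{Ind}_{\La_0}^\La L(\lambda,k),M)=\mathrm{Hom}_{\La_0}(L(\lambda,k),M)$ already recorded in the proof of Lemma~\ref{lem::proj}.

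Next I would show that a highest weight vector $v\in M_k$ of weight $\lambda$ in the sense of the definition does generate such an $\La_0$-subrepresentation. By the preceding Lemma, $v$ is annihilated by $\n^+$ (part of $\n^+\oplus\La_{>0}^+$), so $v$ is an $\n^+$-highest weight vector of weight $\lambda$ in the $\La_0$-module $M_k$; since $M_k$ is finite-dimensional and $\lambda$ is dominant integral, the submodule $U(\La_0)v$ is a quotient of the Verma module $M(\lambda)$ for $\La_0$, hence $f_i^{\lambda(h_i)+1}v=0$ automatically. Thus $v$ already satisfies all the $\La_0$-type relations appearing in the presentation of $\Delta(\lambda,k)$. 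The remaining relation $\La_{>0}^+p_\lambda=0$ is satisfied by $v$ as well, again by the Lemma, since $v$ is annihilated by $\n^+\oplus\La_{>0}^+$. Therefore the map $\Proj(\lambda,k)\to M$, $p_\lambda\mapsto v$, factors through the quotient $\Delta(\lambda,k)$, giving the desired lift; uniqueness is clear because $p_\lambda$ generates $\Delta(\lambda,k)$.

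It then remains to check the converse direction, namely that $\Delta(\lambda,k)$ is itself a highest weight module in $\famod$ with cyclic vector $p_\lambda$, i.e.\ that every weight $\mu$ occurring in $\Delta(\lambda,k)$ satisfies $\mu\le_\La\lambda$ --- and that $\Delta(\lambda,k)$ actually lies in $\famod$ (finitely generated, $\La_0$-locally finite, grading bounded below). Finite generation and lower-boundedness are immediate since $\Delta(\lambda,k)$ is a quotient of $\Proj(\lambda,k)$. For the weight bound I would use the decomposition $\La=\La^+\oplus\La^0\oplus\La^-$ with $\La^-=\n^-\oplus\La_{>0}^-\oplus\La_{>0}^0$: by the PBW theorem applied to $U(\La)=U(\La^-)U(\La^0_{\text{rest}})U(\La^+)$ together with the relations $\n^+p_\lambda=\La_{>0}^+p_\lambda=0$ and $hp_\lambda=\lambda(h)p_\lambda$, the module $\Delta(\lambda,k)$ is spanned by $U(\La^-)p_\lambda$, and every generator of $\La^-$ lowers the $\h$-weight by an element of $\PP_+(\La)$ (a nonnegative combination of simple roots of $\La_0$ and of weights in the $\La_i^+$, using that $\La_{>0}^0$ has $\h$-weight $0$ and $\La_{>0}^-$ has weights in $-\PP_+(\La)$). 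Hence all weights are $\le_\La\lambda$. For the statement about $\oDelta(\lambda,k)$ one runs the identical argument, noting that imposing $\La_{>0}^0p_\lambda=0$ is precisely the condition (by the second half of the preceding Lemma) that makes $p_\lambda$ a \emph{pure} highest weight vector, so that $\oDelta(\lambda,k)$ is spanned by $U(\n^-\oplus\La_{>0}^-)p_\lambda$ and its weight-$\lambda$ space is one-dimensional, spanned by $p_\lambda$.

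The main obstacle I anticipate is organizing the PBW/weight-filtration bookkeeping cleanly: one must be careful that the three subspaces $\La_{>0}^+,\La_{>0}^0,\La_{>0}^-$ close up into subalgebras (this is exactly the parenthetical remark after~\eqref{eq::gen::cartan}) so that the ordering of factors in the PBW product is legitimate, and that moving an element of $\La^+$ past the generators to act on $p_\lambda$ only produces terms of weight $\ge_\La$ the original, which eventually get killed by the relations; this is the one place where the choice of the separating vector $h_0$ and the resulting antisymmetry of $\le_\La$ genuinely enter. Everything else is a routine unwinding of the induced-module adjunction and the definition of a (pure) highest weight vector.
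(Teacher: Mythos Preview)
Your proposal is correct and follows essentially the same route as the paper: reduce to the induced module $\Proj(\lambda,k)$ via the adjunction of Lemma~\ref{lem::proj}, then check that a highest weight vector $v$ satisfies each defining relation of $\Delta(\lambda,k)$ (the $\n^{+}$ and $\La_{>0}^{+}$ relations from the preceding characterization lemma, the $f_i^{\lambda(h_i)+1}$ relations from local finiteness of the $\La_0$-action), so the map factors through the quotient; the pure case adds the extra relation $\La_{>0}^{0}p_\lambda=0$. The paper's proof is more terse and omits your explicit PBW verification that $\Delta(\lambda,k)$ itself has all weights $\le_\La\lambda$; that extra paragraph is a welcome addition and uses exactly the subalgebra closure noted after~\eqref{eq::gen::cartan}.
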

\begin{proof}
The relations $\{f_i^{\lambda(h_i)+1} v=0, i=1\ldots rk(\La_0)\}$ follows from the condition that the $\La_{0}$-submodule generated by $v$
is of finite dimension and, hence, coincides with irreducible $\La_{0}$-module of highest weight $\lambda$.
These are the relations for the induced module $Ind_{\La_0}^{\La}L(\lambda)$ denoted earlier by $\Proj(\lambda)$.
Hence, the standard module $\Delta(\lambda)$ is the quotient module of a module from $\famod$ and, in particular, belongs to $\famod$.
If $v\in M_k$ is a highest weight vector then the relations $\n^{+} v=  \La_{>0}^{+} v = 0$ are satisfied by definition.
The relation $h v = \lambda(h) v$ is the condition on the weight of $v$.
The remaining condition is always satisfied because the action of $\La_0$ is locally finite on modules from $\famod$.
Consequently, the map $p_\lambda \mapsto v$ lifts to a map of graded $\La$-modules. 

The universality of the proper standard module ${\oDelta(\lambda,k)}$ follows from the observation
that the weight subspace of weight $\lambda$ in $\Delta(\lambda,k)$ is generated by the action of $U(\La_{>0}^{0})$ on $p_\lambda$.
\end{proof}

\begin{remark}
Standard modules are simply defined in terms of generators and relations however their graded $\La_{0}$-character may be very complicated.
In particular, we will show in Section~\ref{sec::ex::cur} that for current Lie algebras $\g\otimes \C[t]$ 
the corresponding characters are given by $q$-Hermite polynomials (specialization of Macdonald polynomials for $t=0$).
\end{remark}

Let us describe modules yielding dual universal properties of having a (pure) highest weight vector.
Unfortunately, the dual to the standard module does not belong to $\famod$,
so we are able to say something only about duals of proper standards.

We set a \emph{proper costandard} module ${\onabla(\lambda,k)}$ to be the dual module
$Hom(\oDelta(-\omega_0(\lambda),-k),\C)$ where $\omega_0$ is the longest element of the Weyl group of $\La$.
This module is cogenerated by a cocyclic  lowest weight vector of weight $w_0(\lambda)$ of degree $k$.

\begin{lemma}
 The proper costandard module ${\onabla(\lambda,k)}$ is the couniversal
pure highest weight module in $\famod$ of weight $(\lambda)$.
That is for each module $M\in\famod$ with a pure highest weight vector $v\in M_{k}^{\lambda}$ 
the map $v\mapsto p_{\lambda}$ lifts to a map of graded $\La$-modules $M\rightarrow {\onabla(\lambda,k)}$.
\end{lemma}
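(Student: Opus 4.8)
The statement to prove is that the proper costandard module $\onabla(\lambda,k)$ is couniversal among modules in $\famod$ carrying a pure highest weight vector of weight $(\lambda,k)$. Since $\onabla(\lambda,k)$ was \emph{defined} as $Hom_{\C}(\oDelta(-\omega_0(\lambda),-k),\C)$, the cleanest route is to dualize the universal property of proper standard modules already established in Lemma~\ref{lem::stand::amod}. The plan is to use the contravariant graded-duality functor $Hom_{\C}(-,\C):\famod\rightarrow\famod^{op}$ (and back) to transport the mapping property; the only genuine work is bookkeeping about which weight goes to which after applying $\omega_0$ and the sign flip on the grading shift.

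First I would record the elementary dictionary under graded linear duality: if $N\in\famod$ has a (pure) highest weight vector $v\in N_k^{\lambda}$, then $N^{*}:=Hom_{\C}(N,\C)\in\famod^{op}$ has a (co)cyclic vector of $\h$-weight $-\lambda$ in degree $-k$, and highest weights of $N$ correspond to lowest weights of $N^{*}$; the action of $\n^+\oplus\La_{>0}^+$ being locally nilpotent on the relevant vector of $N$ dualizes to a cogeneration statement for $N^{*}$. Combined with the replacement $\lambda\mapsto -\omega_0(\lambda)$ (so that a lowest weight $w_0(\lambda)$ becomes a highest weight $\lambda$ after twisting by $-\omega_0$, matching the conventions fixed just before the lemma), this shows that $\onabla(\lambda,k)$ is indeed cogenerated by a cocyclic lowest weight vector $p_\lambda$ of weight $w_0(\lambda)$ in degree $k$, and that this vector is ``pure'' in the dual sense.

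Next, given $M\in\famod$ with a pure highest weight vector $v\in M_k^{\lambda}$, I would apply $Hom_{\C}(-,\C)$ to $M$ and pass to the appropriate twist, noting that $M^{*}\in\famod^{op}$ but the subquotient of $M^{*}$ generated by a chosen functional dual to $v$ lies in $\famod$; equivalently, I would instead dualize in the opposite direction. The key step is: the pure highest weight vector $v\in M$ yields, by Lemma~\ref{lem::stand::amod} applied to $\oDelta(-\omega_0(\lambda),-k)$ and a suitable module built from $M$, a map $\oDelta(-\omega_0(\lambda),-k)\to M^{\sharp}$ where $M^{\sharp}$ is the dual picture of $M$; applying $Hom_{\C}(-,\C)$ to this map and identifying $Hom_{\C}(\oDelta(-\omega_0(\lambda),-k),\C)=\onabla(\lambda,k)$ produces the desired lift $M\to\onabla(\lambda,k)$ sending $v\mapsto p_\lambda$. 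Uniqueness of the lift follows because $v$ generates and $p_\lambda$ cogenerates, so the map is determined on a generating vector and lands where prescribed.

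The main obstacle — really the only subtle point — is the careful matching of conventions: the interplay of the Cartan anti-involution $\tau$ (used to define $M^{\dual}$) versus the plain graded dual $Hom_{\C}(M,\C)$ (used to define $\onabla$), together with the longest Weyl element $\omega_0$ and the sign on the degree shift. One must check that with the decomposition $\La=\La^+\oplus\La^0\oplus\La^-$ fixed via $h_0$, the subalgebra annihilating a highest weight vector dualizes exactly to the subalgebra cogenerating the proper costandard module, and that ``pure'' (one-dimensionality of the top weight space, i.e.\ triviality of the $U(\La_{>0}^0)$-action on the cyclic vector) is preserved under the duality. Once this dictionary is nailed down, the lifting property is a formal consequence of Lemma~\ref{lem::stand::amod}, so I expect the proof to be short.
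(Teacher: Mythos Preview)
Your approach is the same as the paper's: dualize the universal property of the proper standard module established in Lemma~\ref{lem::stand::amod}. However, you are missing one genuine ingredient and are vague about another.

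The missing ingredient is that nothing you wrote guarantees $\onabla(\lambda,k)\in\famod$. Since $\onabla(\lambda,k)=Hom_{\C}(\oDelta(-\omega_0(\lambda),-k),\C)$ and the graded dual of an infinite-dimensional object of $\famod$ lands in $\famod^{op}$, you first need to prove that $\oDelta(\mu,l)$ is \emph{finite-dimensional}. The paper does this explicitly: using that $\La$ is finitely generated with generators of bounded degree $d$, and writing $\lambda$ as a positive combination of simple roots with total coefficient $m$, it argues that $\oDelta(\lambda)_l=0$ for $l>md+k$. This is not just bookkeeping; it uses the purity condition (trivial $U(\La_{>0}^{0})$-action on $p_\lambda$) in an essential way, since without it the weight-$\lambda$ subspace alone would be unbounded in degree.

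The second point is your handling of $M^{*}$. You correctly note that $M^{*}\in\famod^{op}$ and gesture at a subquotient, but the paper's device is more concrete and avoids any ambiguity: replace $M$ by the finite-dimensional quotient $M_{\leq k}:=M/(\oplus_{s>k}M_s)$, observe that $v$ remains a pure highest weight vector there, take $Hom_{\C}(M_{\leq k},\C)$ (which now lies in $\famod$), apply Lemma~\ref{lem::stand::amod} to get $\oDelta(-\omega_0(\lambda),-k)\to Hom_{\C}(M_{\leq k},\C)$, and dualize back to obtain $M\twoheadrightarrow M_{\leq k}\to\onabla(\lambda,k)$. Once you add the finite-dimensionality argument for $\oDelta$ and make this truncation step explicit, your proof is complete and coincides with the paper's.
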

\begin{proof}
Let us show that the module ${\oDelta(\lambda,k)}$ is finite-dimensional.
Recall that we assume that the Lie algebra $\La$ is finitely generated. 
Let $d$ be the maximum of the degree of Lie algebra generators of $\La$ and $m$ be the sum of coefficients of 
the decomposition of $\lambda$ into the sum of simple positive weights.
Then, highest weights of the graded components ${\oDelta(\lambda)}_{l}$ of degree $l$ greater than $md$ may be only negative
and, hence,  ${\oDelta(\lambda,k)}_{l}=0$ for $l>md+k$. Thus ${\oDelta(\lambda,k)}$ is finite-dimensional
and, therefore, its dual ${\onabla(-\omega_0(\lambda),k)}:=Hom({\oDelta(\lambda,-k)},\C)$ is also finite-dimensional and belongs to $\famod$.

Let $v$ be a pure highest weight vector of weight $\lambda$ and degree $k$ in a module $M\in\famod$.
In particular $v\in M_k^{\lambda}$ and the module $M_{\leq k}:=M/\left(\ooplus_{s>k} M_s\right)$ is finitedimenaional,
consequently its graded linear dual is finite-dimensional and belongs to $\famod$.
From universality property of proper standard modules we know the existence of a map of graded finite-dimensional $\La$-modules
 $\oDelta(-\omega_0(\lambda),-k) \rightarrow Hom_{\C}(M_{\leq k},\C)$ whose linear dual extends the map $v\rightarrow p_{\lambda}$.
\end{proof}

In the case when the Lie algebra $\La$ has an anti-involution $\tau$ the weight decomposition of $\La$ simplifies.
Indeed, the restriction of $\tau$ on the semisimple part $\La_{0}$ coincides with the Cartan anti-involution. 
Therefore, anti-involution $\tau$ interchanges positive and negative weight spaces and, as assumed, preserves the grading:
$$
\tau(\La_{i}^{\lambda}) = \La_{i}^{-\lambda}.
$$
Consequently, each $\La_{0}$-module $\La_{i}$
is a finite-dimensional selfdual module.
\begin{proposition}
\label{prop::inv::char}
 If the Lie algebra $\La$ admits an anti-involution $\tau$ then $\onabla(\lambda,k)=\oDelta(\lambda,-k)^{\dual}$ 
and, in particular, 
$$\chi_{q}(\onabla(\lambda,k)) = q^{k}\chi_{q}(\onabla(\lambda,0))=q^{k}\chi_{q^{-1}}(\oDelta(\lambda,0))= \chi_{q^{-1}}(\oDelta(\lambda,-k)).$$
\end{proposition}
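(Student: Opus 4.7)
The plan is to show that $\oDelta(\lambda,-k)^{\dual}$ satisfies the same couniversal property which, in the preceding lemma, characterizes $\onabla(\lambda,k)$ — namely, that of being terminal among modules in $\famod$ equipped with a pure highest weight vector of weight $(\lambda,k)$ — and to deduce the chain of character identities from the general behaviour of the functor $\dual$ on graded characters.

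Because $\tau$ restricts to the identity on $\h$, the functor $\dual$ preserves the $\h$-weight decomposition while inverting the internal grading. The dual vector $p_\lambda^{\dual}\in\oDelta(\lambda,-k)^{\dual}$ therefore sits in degree $k$ with $\h$-weight $\lambda$; since $\oDelta(\lambda,-k)$ has all weights $\le_{\La}\lambda$ and a one-dimensional $\lambda$-weight subspace spanned by $p_\lambda$, the same holds in $\oDelta(\lambda,-k)^{\dual}$, so $p_\lambda^{\dual}$ is indeed a pure highest weight vector of weight $(\lambda,k)$.

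To verify the couniversal property, let $M\in\famod$ carry a pure highest weight vector $v\in M_k^{\lambda}$. Any morphism $M\to\oDelta(\lambda,-k)^{\dual}$ factors through the finite-dimensional truncation $M_{\le k}:=M/\bigoplus_{s>k}M_s$, in which $v$ remains pure highest weight. Decomposing the semisimple $\La_0$-module $(M_{\le k})_k$ into irreducibles, one checks that $v$, being the $\La_0$-highest weight vector of an $L(\lambda)$-summand, lies outside $\n^{-}\cdot(M_{\le k})_k$. One may therefore define a functional $v^{\dual}\in((M_{\le k})^{\dual})_{-k}$ sending $v\mapsto 1$ and vanishing on $\n^{-}\cdot(M_{\le k})_k$ and on $\h$-weights different from $\lambda$. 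Using $\tau(\n^{+})=\n^{-}$ together with the fact that $\tau$ preserves the positive grading (so $\tau(\La_{>0})$ takes the top slice $(M_{\le k})_k$ to zero), one verifies directly that $v^{\dual}$ is annihilated by $\n^{+}\oplus\La_{>0}^{+}\oplus\La_{>0}^{0}$, so it is a pure highest weight vector of weight $(\lambda,-k)$ in $(M_{\le k})^{\dual}$. The universal property of $\oDelta(\lambda,-k)$ from Lemma~\ref{lem::stand::amod} then produces a map $\phi:\oDelta(\lambda,-k)\to(M_{\le k})^{\dual}$ with $\phi(p_\lambda)=v^{\dual}$, and applying $\dual$ (legitimate because $M_{\le k}$ is finite-dimensional and hence canonically self-double-dual) yields the desired map $M_{\le k}\to\oDelta(\lambda,-k)^{\dual}$ carrying $v$ to $p_\lambda^{\dual}$, which composes with the projection $M\twoheadrightarrow M_{\le k}$.

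The resulting identification $\oDelta(\lambda,-k)^{\dual}\cong\onabla(\lambda,k)$ is canonical by the standard uniqueness argument for couniversal objects, since composing the maps in both directions gives endomorphisms fixing the distinguished cocyclic vector, which the universal property of $\oDelta(\lambda,-k)$ (after one further application of $\dual$) forces to be the identity. The character identities then reduce to $\chi_q(M^{\dual})=\chi_{q^{-1}}(M)$, immediate from $\dual$ preserving weights and inverting grading, together with the trivial grading-shift identity $\chi_q(N\{-k\})=q^{-k}\chi_q(N)$. The most delicate step is the verification that $v^{\dual}$ is pure highest weight; this rests on the combination of two inputs — the semisimplicity of the $\La_0$-action on $(M_{\le k})_k$, ensuring $v\notin\n^{-}(M_{\le k})_k$, and the degree bound $(M_{\le k})_{>k}=0$, ensuring the $\La_{>0}$-action vanishes on $v^{\dual}$.
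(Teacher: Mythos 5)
The justification you offer for the crucial step---that $v^{\dual}$ is annihilated by $\La_{>0}^{+}\oplus\La_{>0}^{0}$---is directed at the wrong slice of $M_{\le k}$, and the gap is not cosmetic. In the $\tau$-twisted dual the action is covariant in the inner grading: for $g$ of degree $d>0$ the vector $g\cdot v^{\dual}$ lives in $((M_{\le k})^{\dual})_{-k+d}=\mathrm{Hom}\bigl((M_{\le k})_{k-d},\C\bigr)$, and the defining formula $(g\cdot v^{\dual})(m)=v^{\dual}(\tau(g)m)$ therefore evaluates $\tau(g)$ on vectors $m\in(M_{\le k})_{k-d}$ of degree \emph{strictly below} $k$, whose images under $\tau(g)$ land back in $(M_{\le k})_{k}$. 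So the fact that $(M_{\le k})_{>k}=0$, which is what you invoke (``$\tau(\La_{>0})$ takes the top slice to zero''), never enters the computation at all. What you actually need is that $v^{\dual}$ kills all of $\tau(\La_{>0}^{+}\oplus\La_{>0}^{0})\cdot(M_{\le k})_{<k}\cap(M_{\le k})_{k}$, and nothing in your construction of $v^{\dual}$ (send $v\mapsto 1$, vanish on $\n^{-}(M_{\le k})_{k}$ and on weights $\neq\lambda$) controls that set: for $g\in\La_{>0}^{+}$ it may involve vectors of $M$ of $\h$-weight strictly greater than $\lambda$, and for $g\in\La_{>0}^{0}$ it involves weight-$\lambda$ vectors that can well project nontrivially onto $\C v$. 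In particular, without restricting $M$ to lie in $\famod^{\le\lambda}$ the asserted couniversal property is simply false (take $M$ a finite-dimensional extension with an $L(\mu,k-1)$, $\mu\not\le\lambda$, mapping into $L(\lambda,k)$ via $\La_{1}^{-}$), and even with that restriction the $\La_{>0}^{0}$-annihilation requires an argument you do not supply.

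There is also a smaller issue: your prescription for $v^{\dual}$ does not determine it uniquely on $(M_{\le k})_{k}^{\lambda}$ when the $\La_{0}$-multiplicity of $L(\lambda)$ there exceeds one (or when incomparable $L(\mu)$ with $\mu\not\le\lambda$ contribute weight-$\lambda$ vectors), so even the existence of the pure highest weight vector needs a cleaner choice---e.g.\ fix an $\La_{0}$-splitting $(M_{\le k})_{k}=L(\lambda)\oplus W$ with $v$ generating the first summand and let $v^{\dual}$ vanish on $W$.

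The route itself---re-running the paper's Lemma characterizing $\onabla(\lambda,k)$ with $\dual$ in place of $\mathrm{Hom}(-,\C)$---is a reasonable way to make the paper's one-line ``direct comparison'' explicit, and the chain of character identities at the end (from $\dual$ preserving $\h$-weight and inverting $q$, plus the degree-shift rule) is fine. But the proof as written rests on an incorrect degree count at its central step and on an over-broad reading of the couniversal property, so it cannot be accepted as is. A cleaner path, closer to what ``direct comparison of highest weight vectors'' most plausibly means, is to use the involutive automorphism $\sigma:=-\tau$ to rewrite $\oDelta(\lambda,-k)^{\dual}\cong\mathrm{Hom}\bigl(\oDelta(\lambda,-k)^{\sigma},\C\bigr)$ and then verify, by checking the generator and defining relations (a finite, weight-constrained verification since $\sigma$ exchanges $\La^{\pm}$, fixes $\La_{>0}^{0}$ up to sign, and negates $\h$-weights), that $\oDelta(\lambda,-k)^{\sigma}\cong\oDelta(-\omega_{0}(\lambda),-k)$; the character equalities then drop out.
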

\begin{proof}
 Direct comparison of highest weight vectors and characters. 
\end{proof}

Let us state several simple properties of standard and costandard modules repeating the ones known for Verma and dual Verma modules 
in the BGG category $\mathcal{O}$:
\begin{proposition}
\begin{itemize}
 \item 
Whenever $\lambda$ is a maximal weight of a module $M\in\famod$
(that is $M^{\lambda}\neq 0$ and $M^{\mu}=0$ for all $\mu>_{\La}\lambda$)
there exist a pair of integers $k,l$ such that $\dim Hom(\Delta(\lambda,k),M)>0$ and $\dim Hom(M,{\onabla(\lambda,l)})>0$.
\item $\dim Hom(\Delta(\lambda,k),{\onabla(\mu,l)})=\delta_{\lambda,\mu}\delta_{k,l}$. 
The unique nontrivial homomorphism from $\Delta(\lambda,0)$ to ${\onabla(\lambda,0)}$ 
factors through the map to (from) irreducible module $L(\lambda)$.
Where $L(\lambda)$ is considered as a unique quotient in degree $0$ of $\Delta(\lambda,0)$ 
and a unique simple submodule in degree zero of ${\onabla(\lambda,0)}$.
\end{itemize}
\end{proposition}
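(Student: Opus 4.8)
The plan is to prove the two items by the same elementary yoga of highest weight vectors that establishes the analogous facts for Verma and dual Verma modules in category $\calO$, using the universal properties recorded in Lemma~\ref{lem::stand::amod} and the preceding lemmas together with Proposition~\ref{prop::inv::char}.

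For the first item, suppose $\lambda$ is a maximal weight of $M\in\famod$, and pick any nonzero $v\in M_k^{\lambda}$ for some $k$ (such $k$ exists since $M^{\lambda}\neq 0$ and all graded components are finite-dimensional). By maximality of $\lambda$, every weight occurring in the submodule $U(\La)\cdot v$ is $\leq_{\La}\lambda$, so $v$ is a highest weight vector of graded weight $(\lambda,k)$; by Lemma~\ref{lem::stand::amod} the assignment $p_\lambda\mapsto v$ extends to a nonzero map $\Delta(\lambda,k)\to M$, giving $\dim\Hom(\Delta(\lambda,k),M)>0$. For the costandard statement I would dualize: the maximal weight $\lambda$ of $M$ is also a ``highest'' weight in the appropriate sense, but rather than passing to duals of $M$ directly (which need not lie in $\famod$), I would truncate as in the proof of the proper costandard lemma — replace $M$ by the finite-dimensional quotient $M_{\leq k}$, note its graded dual lies in $\famod$ and has a highest weight vector dual to $v$, hence receives a map from $\oDelta(-\omega_0(\lambda),-k)$, and dualize back to obtain a nonzero map $M\to\onabla(\lambda,k)$. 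Thus $l=k$ works, and $\dim\Hom(M,\onabla(\lambda,l))>0$.

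For the second item, I would argue that any $\La$-module map $\phi:\Delta(\lambda,k)\to\onabla(\mu,l)$ is determined by where it sends the cyclic generator $p_\lambda$, which has weight $\lambda$ and degree $k$. The target $\onabla(\mu,l)=\Hom(\oDelta(-\omega_0(\mu),-l),\C)$ has, in degree $k$ and weight $\lambda$, a space which is nonzero only if $\lambda$ is a weight of $\oDelta(-\omega_0(\mu),-l)$ in degree $-k$ twisted appropriately — and since the highest weight of $\oDelta(-\omega_0(\mu),-l)$ is $-\omega_0(\mu)$ in degree $l$, dualizing shows the \emph{lowest} weight of $\onabla(\mu,l)$ is $\omega_0(\mu)$ and its \emph{highest} weight is $\mu$ in degree $l$. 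So for $\phi(p_\lambda)\neq 0$ we need $\lambda\leq_{\La}\mu$ and, comparing degrees via Proposition~\ref{prop::inv::char}, $k=l$; symmetrically, the image $\phi(p_\lambda)$ must be a highest weight vector in $\onabla(\mu,l)$ of weight $\lambda$, forcing $\mu\leq_{\La}\lambda$, hence $\lambda=\mu$ and $k=l$. In that case the weight-$\lambda$ degree-$k$ space of $\onabla(\lambda,k)$ is one-dimensional (dual to the cyclic generator space of $\oDelta$, which is one-dimensional by purity), so $\phi$ is unique up to scalar; composing $\Delta(\lambda,0)\twoheadrightarrow L(\lambda)\hookrightarrow\onabla(\lambda,0)$ — using that $L(\lambda)$ is the degree-$0$ head of $\Delta(\lambda,0)$ and the degree-$0$ socle of $\onabla(\lambda,0)$ — exhibits a nonzero such map, so it is the map, and it factors through $L(\lambda)$.

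The main obstacle I anticipate is the bookkeeping of gradings and of the $-\omega_0$ twist in identifying the highest/lowest weights and degrees of $\onabla(\mu,l)$ precisely, so that the degree-matching $k=l$ and the weight sandwich $\lambda\leq_{\La}\mu\leq_{\La}\lambda$ come out cleanly; everything else is a routine transcription of the category-$\calO$ argument, with the finite-dimensionality of $\oDelta(\lambda,k)$ (already established) doing the work that makes the duality statements legitimate inside $\famod$.
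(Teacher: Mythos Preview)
Your proposal is correct and follows essentially the same route as the paper for the first item: pick a highest weight vector $v\in M_k^{\lambda}$, invoke the universal property of $\Delta(\lambda,k)$, then pass to the finite-dimensional truncation $M_{\leq k}$ where $v$ becomes pure and use the couniversal property of $\onabla(\lambda,k)$. (The paper in fact omits a proof of the second item entirely, so your argument there is a genuine addition.)

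One small imprecision in your second item: the step ``symmetrically \ldots forcing $\mu\leq_{\La}\lambda$'' is not really a symmetry, and the fact that $\phi(p_\lambda)$ is a highest weight vector of weight $\lambda$ does not by itself bound $\mu$ from above. The clean argument is that the image of any nonzero $\phi$ is a nonzero submodule of $\onabla(\mu,l)$ and hence contains its simple socle $L(\mu,l)$ (since $\oDelta(-\omega_0(\mu),-l)$ is cyclic with simple head, its dual has simple socle); therefore $\mu$ occurs as a weight of $\mathrm{im}\,\phi$, which is a quotient of $\Delta(\lambda,k)$ and so has all weights $\leq_{\La}\lambda$. Once $\lambda=\mu$, the one-dimensionality of $\onabla(\lambda,l)^{\lambda}$ (concentrated in degree $l$ by purity of $\oDelta$) gives $k=l$ and the uniqueness, exactly as you say.
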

\begin{proof}
Let $k$ be the minimum of degrees such that $M_{k}^{\lambda}\neq 0$. Such a $k$ exists because modules in $\famod$ are of bounded negative degree. 
Let $v$ be a vector in $M_{k}^{\lambda}$. 
Our partial ordering on the set of weights is chosen in such a way
that for any $\h$-eigen vector $g\in\La$ we have that 
the weight of $g\cdot v$ is either greater, lower or equal to the weight of $v$.
The condition on $\lambda$ to be a maximal weight implies that $v$ is a highest weight vector.
Hence, there is a nonzero map $\Delta(\lambda)\rightarrow M$ that lifts the map $p_\lambda\to v$.
Moreover, the vector $v$ becomes a pure highest weight vector in the quotient module $M_{\leq k}:= M/(\oplus_{s>k} M_{s})$. 
Thus, there exists a nonzero composition of morphisms $M \twoheadrightarrow M_{\leq k} \rightarrow {\onabla(\lambda)}$
which maps $v$ to $p_\lambda\in\onabla(\lambda)$.
\end{proof}

With each standard module $\Delta(\lambda)$ we can associate the following left annihilation ideal 
$$
J_{\lambda}:=\{ x\in U(\La_{>0}^{0}): x\cdot p_{\lambda} = 0\}
$$
of the universal subalgebra of zero weight.
Let us denote by $A^\lambda$ the quotient space $U(\La_{>0}^{0})/J^{\lambda}$ and 
by $A^\lambda\ttt mod$ the category of graded left $\La_{>0}$ modules with the trivial action of $J^\lambda$.
By construction, $A^\lambda$ is isomorphic to the weight subspace of $\Delta(\lambda)$ of weight $\lambda$.

\subsection{Goal: Macdonald polynomials as characters of standard modules}
\label{sec::goal::Mac::Weyl}
Recall that in Section~\ref{sec::pairing} with a $\Z_{+}$-graded Lie algebra with anti-involution we associated 
the notion of generalized Macdonald polynomials
using the homological pairing~\eqref{eq::ext::paring} on the ring of graded characters.
The reasonable question may be if corresponding Macdonald polynomials can be realized as characters of a $\La$-module.
The following theorem explains the motivation of introducing standard and proper standard modules.
\begin{theorem}
\label{thm::Mac::Delta}
If the category $\famod(\La)$ is stratified (see Definition~\ref{def::stratified} on page~\pageref{def::stratified})
then standard and proper standard modules form a dual basis with respect to the Ext-pairing~\eqref{eq::ext::paring}
on the Grothendieck group:
\begin{equation}
\label{eq::ext::Delta:Nabla}
\dim Ext^{i}_{\famod}(\Delta(\lambda,k),{\onabla(\mu,l)})=\delta_{i, 0}\delta_{\lambda,\mu}\delta_{k,l}.
\end{equation}
 Macdonald polynomial $P_\lambda$ coincides with the graded character of the 
corresponding proper standard module $\oDelta(\lambda,0)$.
Moreover, the scalar product of $P_{\lambda}$ with itself is the inverse of the $q$-character of the weight subspace of weight $\lambda$
in the standard module $\Delta(\lambda,0)$:
\begin{equation}
\label{eq::const::P:P}
\langle P_\lambda, P_\lambda\rangle_{\La} = \chi_{q}(\Delta(\lambda,0)^{\lambda})^{-1}.
\end{equation}
\end{theorem}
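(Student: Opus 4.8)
The plan is to prove the theorem by first establishing the $\mathrm{Ext}$-orthogonality~\eqref{eq::ext::Delta:Nabla}, then deriving the Macdonald polynomial characterization from Definition~\ref{def::poly::Macdonald} via Lemma~\ref{lem::pair::equal}, and finally reading off the constant term~\eqref{eq::const::P:P}. For the first part, I would use the hypothesis that $\famod(\La)$ is stratified. By the promised Theorem~\ref{thm::def::strat}, stratification is equivalent to the BGG reciprocity, and in particular to condition (s4), which asserts exactly the vanishing $\dim \mathrm{Ext}^{i}_{\Cat}(\Delta(\lambda),(\oDelta(\mu))^{*}) = \delta_{i,0}$ when $\lambda = -\omega_0(\mu)$ and $0$ otherwise. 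Since $\onabla(\mu,k) = Hom(\oDelta(-\omega_0(\mu),-k),\C) = (\oDelta(-\omega_0(\mu)))^{*}\{k\}$ by the very definition of proper costandard modules, translating (s4) through this identification and tracking the grading shift $\{k\}$ against $\{l\}$ gives~\eqref{eq::ext::Delta:Nabla} directly, with the Kronecker delta in $k,l$ coming from comparing internal degrees of the generator and cogenerator.

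Next, to identify $P_\lambda$ with $\chi_q(\oDelta(\lambda,0))$, I would verify the two defining properties of generalized Macdonald polynomials. The triangularity $\chi_q(\oDelta(\lambda,0)) = m_\lambda + \sum_{\mu <_\La \lambda} c_{\lambda,\mu}(q)\, m_\mu$ follows from the construction of $\oDelta(\lambda,k)$ as a highest weight module: $\lambda$ is its top weight with multiplicity one (the generator $p_\lambda$ spans the weight-$\lambda$ space after killing $U(\La_{>0}^0)$), and all other weights are $<_\La \lambda$; symmetry under $W$ is automatic since all weight spaces are $\La_0$-modules. For orthogonality, I would compute $\langle \oDelta(\lambda,0), \oDelta(\mu,0)\rangle_\La$ using the symmetric pairing~\eqref{eq::ext::paring}. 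By Proposition~\ref{prop::inv::char}, $\oDelta(\mu,0)^{\dual} = \onabla(\mu,0)$, so $\langle \oDelta(\lambda,0), \oDelta(\mu,0)\rangle_\La = \sum_i (-1)^i \dim_{q^{-1}} \mathrm{Ext}^i_\famod(\oDelta(\lambda,0), \onabla(\mu,0))$. This is not immediately~\eqref{eq::ext::Delta:Nabla}, but here one uses that $\oDelta(\lambda)$ is a quotient of $\Delta(\lambda)$ and that in a stratified category $\Delta(\lambda)$ has a $\onabla$-filtration-compatible behavior; alternatively, the cleanest route is to compute $\langle \Delta(\lambda,0), \oDelta(\mu,0)\rangle_\La = \delta_{\lambda,\mu}\cdot(\text{something})$ from~\eqref{eq::ext::Delta:Nabla} and then use that the character of $\Delta(\lambda)$ is the character of $\oDelta(\lambda)$ times the character of the weight subspace $A^\lambda = \Delta(\lambda)^\lambda$ (this factorization is part of the stratified structure, cf. condition $(\imath\imath)$ of Proposition~\ref{cor::famod::strat}). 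Since $\chi_q(A^\lambda)$ is a power series in $q$ with constant term $1$ and hence invertible, orthogonality of the $\Delta(\lambda)$'s forces orthogonality of the $\oDelta(\lambda)$'s.

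Finally, for the constant term~\eqref{eq::const::P:P}: applying~\eqref{eq::ext::Delta:Nabla} with $\lambda = \mu$, $k = l = 0$ gives $\langle \Delta(\lambda,0), \oDelta(\lambda,0)\rangle_\La = 1$ (only $i=0$ contributes, with one-dimensional $\mathrm{Hom}$ in degree $0$). On the other hand, $\langle \Delta(\lambda,0), \oDelta(\lambda,0)\rangle_\La = \chi_q(\Delta(\lambda,0)^\lambda) \cdot \langle \oDelta(\lambda,0), \oDelta(\lambda,0)\rangle_\La$ by bilinearity of the pairing over $\Z[[q,q^{-1}]]$ together with the character factorization $\chi_q(\Delta(\lambda,0)) = \chi_q(\Delta(\lambda,0)^\lambda)\cdot\chi_q(\oDelta(\lambda,0))$ — here I am using that the pairing~\eqref{eq::pair::polyn} depends only on characters, so multiplying a character by the scalar series $\chi_q(\Delta(\lambda,0)^\lambda)$ multiplies the pairing by that scalar. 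Combining, $\langle P_\lambda, P_\lambda\rangle_\La = \langle \oDelta(\lambda,0), \oDelta(\lambda,0)\rangle_\La = \chi_q(\Delta(\lambda,0)^\lambda)^{-1}$.

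The main obstacle I anticipate is the bookkeeping around the longest Weyl element $\omega_0$ and the graded-dual conventions: one must be careful that $\onabla(\lambda,k)$ is genuinely $(\oDelta(-\omega_0\lambda))^*$ with the correct degree shift, so that condition (s4)'s diagonal case $\lambda = -\omega_0(\mu)$ becomes the naive diagonal $\lambda = \mu$ after the substitution $\mu \mapsto -\omega_0(\mu)$ inside $\onabla(\mu,l) = (\oDelta(-\omega_0\mu))^*$. A secondary subtlety is justifying that the character of $\Delta(\lambda)$ factors as claimed; this is genuinely a consequence of the stratified hypothesis (it is essentially condition $(\imath\imath)$ of Proposition~\ref{cor::famod::strat}) and should be invoked rather than re-derived, but one should make sure the logical dependencies among Theorems~\ref{thm::def::strat}, \ref{thm::MacWeyl::strat} and Proposition~\ref{cor::famod::strat} are not circular at the point where this theorem is proved.
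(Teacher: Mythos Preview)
Your proposal is correct and follows essentially the same route as the paper: invoke the $\mathrm{Ext}$-vanishing as one of the equivalent conditions in Theorem~\ref{thm::def::strat}, use the character factorization $\chi_q(\Delta(\lambda))=\chi_q(\Delta(\lambda)^\lambda)\cdot\chi_q(\oDelta(\lambda))$ coming from the stratified hypothesis (via the exactness of $\ww_\lambda$ and Proposition~\ref{cor::famod::strat}), and then compute the pairing $\langle f_\lambda,f_\mu\rangle_\La$ to verify both defining properties of Macdonald polynomials together with the constant term. The paper does the orthogonality computation in one line by writing $\langle \oDelta(\lambda),\oDelta(\mu)\rangle_\La = z_\lambda(q)^{-1}\langle \Delta(\lambda),\oDelta(\mu)\rangle_\La$ and then reading off $\delta_{\lambda,\mu}/z_\lambda(q)$ directly from~\eqref{eq::ext::Delta:Nabla}; your version via $\langle\Delta(\lambda),\oDelta(\lambda)\rangle_\La=1$ is the same computation reorganized.

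Your two flagged obstacles are handled exactly as you suspect. The $\omega_0$ issue disappears because the paper's Theorem~\ref{thm::def::strat} is stated directly in terms of $\onabla(\mu)$ (condition~(S3) there), so no translation through $(\oDelta(-\omega_0\mu))^*$ is needed at this point. The circularity concern is also fine: Theorem~\ref{thm::Mac::Delta} needs only condition~$(f2)$ of Proposition~\ref{cor::famod::strat}, whose proof is independent of Theorem~\ref{thm::Mac::Delta} (it follows from the exactness of $\ww_\lambda$, a corollary of Theorem~\ref{thm::def::strat}); only condition~$(f3)$ of Proposition~\ref{cor::famod::strat} cites back to Theorem~\ref{thm::Mac::Delta}, and that is not used here.
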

\begin{proof}
The definition  of a stratified category is given in Section~\ref{sec::strat}
 and the Ext-vanishing property~\eqref{eq::ext::Delta:Nabla} is one of the equivalent conditions on a category to be 
stratified as explained in Theorem~\ref{thm::def::strat}.

Let us explain how does vanishing property~\eqref{eq::ext::Delta:Nabla} imply the so called constant term identity~\eqref{eq::const::P:P}.
Indeed, let $f_\lambda$ be the graded $\La_{0}$-character of ${\oDelta(\lambda,0)}$
and $z_{\lambda}(q)$ be the generating series of dimensions of the weight subspace of weight $\lambda$ in the standard module $\Delta(\lambda,0)$.
Note that $z_{\lambda}(q)$ is a power series in $q$ with $z_{\lambda}(0)=1$.
Corollary~\ref{cor::famod::strat} explains the existence of factorization of the characters of the standard module
as a product of its subspace of highest weight and the character of the corresponding proper standard module. 
\begin{equation}
\label{eq::stand::decomp}
\chi_{q\ttt\La_{0}}(\Delta(\lambda,0)) = \chi_{q\ttt\La_{0}}({\oDelta(\lambda,0)}) \cdot \chi_{q}(\Delta(\lambda,0)^{\lambda})
\end{equation}
Consequently, we have the following orthogonality of characters of proper standard modules:
\begin{multline*}
\langle f_{\lambda},f_{\mu}\rangle_{\La} = \langle {\oDelta(\lambda,0)},{\oDelta(\mu,0)}\rangle_{\La} =
\frac{1}{z_{\lambda}(q)} \langle \Delta(\lambda,0)^{\lambda}\otimes {\oDelta(\lambda,0)},{\oDelta(\mu,0)}\rangle_{\La} =
\frac{1}{z_{\lambda}(q)} \langle \Delta(\lambda,0),{\oDelta(\mu,0)}\rangle_{\La} = 
\\
=
\frac{1}{z_{\lambda}(q)}\left(\sum_{i,k} (-1)^{i} q^{k}\dim Ext^{i}_{\famod}(\Delta(\lambda,-k),\onabla(\mu,0))\right) =
\frac{1}{z_{\lambda}(q)}\sum_{k} q^{k}\dim Hom_{\famod}(\Delta(\lambda,-k),{\onabla(\mu,0)}) =
\frac{\delta_{\lambda,\mu}}{z_{\lambda}(q)} 
\end{multline*}
Since $[\oDelta(\lambda,0):L(\lambda)]=1$ and $[\oDelta(\lambda,0):L(\mu)]=0$ for all $\mu\not\leq_{\La}\lambda$
polynomials $f_\lambda$ satisfy both properties of Definition~\ref{def::poly::Macdonald}
and thus $f_\lambda$ is a generalized Macdonald polynomial $P_{\lambda}$.
\end{proof}

\section{Highest weight categories: categorical approach}
\label{sec::bgg::hwc}
The definition of a stratified category we suggest below seems to be a bit different from the standard one given in~\cite{CPS}.
However, the philosophy is the same and all natural examples form a stratified category in both sense.
The basic example which we have in mind is the category $\mathcal{O}$ for modules over semisimple Lie algebra 
introduced by Bernstein-Gelfand-Gelfand (\cite{BGG},\cite{Humpreys}).
Our aim is to use this approach for the category $\famod(\La)$ of graded modules over $\Z_{+}$-graded Lie (super)algebra $\La$ 
and show the criterion of the category $\famod(\La)$ to be stratified therefore we adapt definitions to this particular case.

Moreover, we want to separate two notions: highest weight categories and stratified categories.
By a highest weight category we roughly mean a category of modules with appropriate natural ordering of simple objects.
By a stratified category we assume that in addition to be a highest weight category the BGG reciprocity holds.

\subsection{Category of $\La$-modules as a Highest Weight Category}
\label{sec::famod::hwc}
The category $\famod(\La)$ is our main example of a highest weight category, therefore, we first explain what kind of 
general properties do we need in order to make general conclusions 
and afterwords state the definition of a \emph{highest weight category}.

Recall, that our constructions depends on the following data:
a $\Z_{+}$-graded Lie algebra $\La=\oplus_{k\geq 0}\La_k$ with semisimple zero graded component $\La_0$.
We denoted by $\famod=\famod(\La)$ the category of finitely generated graded modules with locally finite action of $\La_{0}$.
In particular, we considered modules with bounded negative degrees and finite-dimensional graded components.
Irreducible objects in $\famod$ are irreducible finite-dimensional $\La_0$-modules with additional integer grading.
They are indexed by a pair: a dominant weight $\lambda\in\PPdom$ and an integer number $k$.
Starting with a direction $h_0\in\h$ we fixed a natural partial ordering $\leq_{\La}$ on the set of dominant weights~\eqref{eq::def::ordering}.
Let $\famod^{\leq\lambda}$ (respectively $\famod^{<\lambda}$) be the subcategory of modules whose 
irreducibles in the Jordan-H\"older decomposition include only 
$L(\mu,k)$ with $\mu\leq_{\La} \lambda$ (respectively $\mu<_{\La}\lambda$).
The inclusion functor $\imath_{\lambda}:\famod^{\leq\lambda}\rightarrow \famod$ is obviously fully faithful exact functor.
It admits both left and right adjoint functors, however, we need only left adjoint for our purposes:
\begin{lemma}
 The assignment $M \mapsto M/(M^{\not\leq_{\La}\lambda})$\footnote{We factor a module by its submodule generated by (highest) weight vectors of weights 
not less or equal than $\lambda$.} defines a left adjoint functor $\imath^{!}_{\lambda}:\famod \rightarrow \famod^{\leq\lambda}$.
\end{lemma}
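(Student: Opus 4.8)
The plan is to verify directly that the assignment $M\mapsto \imath^{!}_\lambda M := M/(M^{\not\leq_\La\lambda})$ is a functor left adjoint to the inclusion $\imath_\lambda$. First I would make precise the submodule being quotiented: let $M^{\not\leq_\La\lambda}\subseteq M$ denote the $\La$-submodule generated by all $\h$-weight vectors of $M$ whose weight $\mu$ satisfies $\mu\not\leq_\La\lambda$. One checks that $M/M^{\not\leq_\La\lambda}$ indeed lies in $\famod^{\leq\lambda}$: it is finitely generated (quotient of a finitely generated module), it has the boundedness and finite-dimensionality of graded components inherited from $M$, and — the point — every irreducible $L(\mu,k)$ appearing in its Jordan–H\"older series has $\mu\leq_\La\lambda$. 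For this last claim, suppose some subquotient of $M/M^{\not\leq_\La\lambda}$ were $L(\mu,k)$ with $\mu\not\leq_\La\lambda$; then $M/M^{\not\leq_\La\lambda}$ contains a weight vector of weight $\mu$ (a highest weight vector of the relevant subquotient, lifted), so $M$ itself has a weight-$\mu$ vector not already in $M^{\not\leq_\La\lambda}$, and since $\mu\not\leq_\La\lambda$ this vector generates a submodule that by definition sits inside $M^{\not\leq_\La\lambda}$ — a contradiction once one argues that the image of such a vector in the quotient is nonzero. I would phrase this cleanly using the fact that $M^{\not\leq_\La\lambda}$ is spanned (as a vector space) by products $u\cdot v$ with $v$ a weight vector of weight $\not\leq_\La\lambda$, and that acting by $\La^{+}$ and $\h$ cannot lower a $\not\leq_\La\lambda$ weight into the $\leq_\La\lambda$ region, while $\La^{-}$ and $\La^{0}_{>0}$ applied to such $v$ stay in $M^{\not\leq_\La\lambda}$ tautologically; the partial ordering $\leq_\La$ from \eqref{eq::def::ordering} was designed precisely so that each $\h$-eigenvector $g\in\La$ shifts weights in a controlled way.

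Next I would establish functoriality: given $f\colon M\to N$ in $\famod$, a weight vector of weight $\not\leq_\La\lambda$ in $M$ maps to a weight vector of the same weight in $N$, hence $f(M^{\not\leq_\La\lambda})\subseteq N^{\not\leq_\La\lambda}$, so $f$ descends to $\imath^{!}_\lambda f\colon \imath^{!}_\lambda M\to\imath^{!}_\lambda N$; compatibility with composition and identities is immediate.

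The adjunction itself is then formal. For $M\in\famod$ and $X\in\famod^{\leq\lambda}$ I want a natural bijection
$$
\operatorname{Hom}_{\famod^{\leq\lambda}}(\imath^{!}_\lambda M,\,X)\;\cong\;\operatorname{Hom}_{\famod}(M,\,\imath_\lambda X).
$$
The unit is the canonical projection $\eta_M\colon M\twoheadrightarrow \imath^{!}_\lambda M$. Given $g\colon\imath^{!}_\lambda M\to X$, send it to $g\circ\eta_M$; conversely, given $h\colon M\to\imath_\lambda X$, note that $h(M^{\not\leq_\La\lambda})=0$ because $X$ has no nonzero weight vectors of weight $\not\leq_\La\lambda$ (every weight of $X$ is $\leq_\La\lambda$ since $X\in\famod^{\leq\lambda}$, using that weights of a module are controlled by the weights appearing in its composition factors), so $h$ factors uniquely through $\eta_M$ to give $\bar h\colon\imath^{!}_\lambda M\to X$. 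These two assignments are mutually inverse and natural in both arguments by the universal property of the quotient.

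The main obstacle I anticipate is the claim that $M^{\not\leq_\La\lambda}$ is a genuine $\La$-submodule and that quotienting by it does not accidentally kill weight-$\mu$ vectors for some $\mu\leq_\La\lambda$ that we wanted to keep — equivalently, that $(M^{\not\leq_\La\lambda})^{\mu}=0$ whenever $\mu\not\leq_\La\lambda$ is false, but rather that $M^{\not\leq_\La\lambda}$ is exactly the span of weight spaces one expects modulo the submodule condition. The careful point is that $U(\La)$ acting on a $\not\leq_\La\lambda$ weight vector can produce vectors of weight $\leq_\La\lambda$ (via $\La^{-}$), and one must confirm this is harmless: those vectors are, by construction, elements of $M^{\not\leq_\La\lambda}$, and the quotient is still the \emph{largest} quotient of $M$ lying in $\famod^{\leq\lambda}$, which is all the universal property needs. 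Making the bookkeeping of weights under the $\La^{+}\oplus\La^{0}\oplus\La^{-}$ decomposition rigorous — so that "no weight vector of weight $\leq_\La\lambda$ survives into a bad composition factor" is actually proved and not merely asserted — is where the real work lies; everything after that is the standard universal-property argument.
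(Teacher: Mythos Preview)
Your proof is correct and follows the same idea as the paper's: any morphism $M\to N$ with $N\in\famod^{\leq\lambda}$ sends every weight vector of weight $\not\leq_{\La}\lambda$ to zero (since $N$ has no such weight vectors), hence kills the submodule they generate and factors uniquely through the quotient. The paper condenses this to two sentences, while you spell out functoriality, the unit, and the verification that the quotient lands in $\famod^{\leq\lambda}$; these are all routine and your arguments are fine.

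One small remark: the ``main obstacle'' you flag at the end is not actually an obstacle. For the adjunction you do \emph{not} need to control which $\leq_{\La}\lambda$ weight vectors get killed in the quotient --- you only need (i) that the quotient has no weight vector of weight $\not\leq_{\La}\lambda$ (immediate, since $M^{\not\leq_{\La}\lambda}$ contains every such vector, so the corresponding weight spaces vanish in $M/M^{\not\leq_{\La}\lambda}$), and (ii) that every map to an object of $\famod^{\leq\lambda}$ vanishes on $M^{\not\leq_{\La}\lambda}$ (immediate from the generators). The worry that $U(\La^{-})$ applied to a bad-weight vector might ``accidentally'' kill something we wanted to keep is irrelevant: the universal property only asserts that $\imath_\lambda^!M$ is the \emph{largest} quotient of $M$ lying in $\famod^{\leq\lambda}$, not that it retains any particular vectors.
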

\begin{proof}
 If $M\in\famod$ and $N\in\famod^{\leq\lambda}$ then any morphism $\varphi:M\to N$ maps all vectors whose weights 
are not less or equal to $\lambda$ to zero. Therefore, a morphism $\varphi$ factors through the quotient $M/(M^{\not\leq_{\La}\lambda})$.
\end{proof}
In particular, we have $\imath^{!}_{\lambda}\circ \imath_{\lambda} = Id_{\famod^{\leq\lambda}}$ for all $\lambda\in\PPdom$ 
and the collection
of functors $\{\imath_{\lambda}\circ\imath^{!}_{\lambda}\, : \, \lambda\in\PPdom\}$ defines a decreasing filtration on any module $M\in\famod$:
$$
{\cal{F}}^{\lambda} M:= (M^{\not{\leq}\lambda}) = \text{ submodule generated by } M^{\mu} \text{ with } \mu {{\nleq}} \lambda.
$$
We have $\lambda\leq\mu$ implies $ {\cal{F}}^{\lambda} M \supset {\cal{F}}^{\mu} M$ and 
$\imath_{\lambda}(\imath_{\lambda}^{!}(M)) = M /  {\cal{F}}^{\lambda} M$.
Moreover, for all modules $M\in\famod$ we have its presentation as the inverse limit:
$$
M = \varprojlim_{\lambda\in \PPdom} \imath_{\lambda}(\imath_{\lambda}^{!}(M)).
$$
where the limit is taken with respect to the aforementioned partial ordering on $\PPdom$.

Recall that the standard module $\Delta(\lambda,k)$ was defined as the universal highest weight $\La$-module generated 
by a highest weight vector $p_\lambda$
of weight $\lambda$ and degree $k$.
By definition, $\Delta(\lambda,k)$ belongs to $\famod^{\leq\lambda}$ and universality means that it is a projective 
cover of $L(\lambda,k)$ in $\famod^{\leq\lambda}$.
Moreover, the weight subspace of the weight $\lambda$ in the standard module $\Delta(\lambda)$ is 
the quotient of $U(\La_{>0}^{0})$ by annihilation ideal $J^{\lambda}$ and was denoted by $A^{\lambda}$.
\begin{lemma}
\label{lem::La::quotient::cat}
 The quotient category $\famod^{=\lambda}:=\famod^{\leq\lambda}/\famod^{<\lambda}$ is equivalent to the category of graded 
$A^{\lambda}$-modules.
\end{lemma}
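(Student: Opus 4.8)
The plan is to construct an explicit pair of mutually inverse functors between the Serre quotient category $\famod^{=\lambda} = \famod^{\leq\lambda}/\famod^{<\lambda}$ and the category of graded $A^\lambda$-modules. The functor $\famod^{\leq\lambda} \to A^\lambda\ttt mod$ is the $\lambda$-weight-space functor $M \mapsto \bigoplus_k M_k^\lambda$, viewed as a graded module over $A^\lambda = U(\La_{>0}^0)/J^\lambda$: indeed $\La_{>0}^0$ has zero $\h$-weight, hence preserves each weight subspace, and the relation $J^\lambda$ acts trivially because $A^\lambda$ was defined (at the end of Section~\ref{sec::stand::mod}) precisely as the image of $U(\La_{>0}^0)$ in $\operatorname{End}$ of the $\lambda$-weight space of $\Delta(\lambda)$; one checks that $J^\lambda$ annihilates the $\lambda$-weight space of every $M \in \famod^{\leq\lambda}$ because any such vector is a pure highest weight vector up to higher-weight corrections, so the universal property of $\oDelta(\lambda)$ (Lemma~\ref{lem::stand::amod}) applies. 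This functor is exact on $\famod^{\leq\lambda}$ (taking a weight subspace is exact) and, crucially, kills $\famod^{<\lambda}$, since a module with all composition factors $L(\mu,k)$, $\mu <_\La \lambda$, has zero $\lambda$-weight space. Hence it descends to an exact functor $\rr\colon \famod^{=\lambda} \to A^\lambda\ttt mod$.

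In the other direction I would build the functor $A^\lambda\ttt mod \to \famod^{=\lambda}$ by "inducing up." For a graded $A^\lambda$-module $V$, set $\Delta(V) := \Delta(\lambda) \otimes_{A^\lambda} V$ (using that the $\lambda$-weight space $A^\lambda$ of $\Delta(\lambda)$ is a natural $(U(\La),A^\lambda)$-bimodule via the right multiplication action of $U(\La_{>0}^0)$ on $p_\lambda$), and then pass to its image in the quotient $\famod^{=\lambda}$. Equivalently, $\Delta(-)$ is the left adjoint of $\rr$ followed by the quotient functor; I would verify adjointness directly from the universal property of the standard module. The key computation is that $\rr(\Delta(V)) \cong V$ as graded $A^\lambda$-modules — this is because the $\lambda$-weight space of $\Delta(\lambda)\otimes_{A^\lambda}V$ is exactly $A^\lambda \otimes_{A^\lambda} V = V$, the point being that the generating vector $p_\lambda$ carries weight $\lambda$ and tensoring with $V$ adds nothing of weight $>_\La\lambda$ while the weight-$\lambda$ part of $\Delta(\lambda)$ is free of rank one over $A^\lambda$ on $p_\lambda$. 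This gives $\rr \circ \Delta \cong \operatorname{Id}$.

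For the converse composition $\Delta \circ \rr \cong \operatorname{Id}_{\famod^{=\lambda}}$, I would argue as follows. Given $M \in \famod^{\leq\lambda}$, the counit of the adjunction is a map $\Delta(\rr M) \to M$; by right exactness and a Jordan–Hölder induction on the length of $M$ it suffices to check that the kernel and cokernel of this map lie in $\famod^{<\lambda}$, i.e. have vanishing $\lambda$-weight space. Cokernel: the image of $\Delta(\rr M) \to M$ contains the whole $\lambda$-weight space of $M$ (the map is surjective on weight $\lambda$ by construction of $\rr$), and any vector of $M$ of weight $\not<_\La \lambda$ must in fact have weight $\lambda$ since $M\in\famod^{\leq\lambda}$ — hence the cokernel has only composition factors $L(\mu,k)$ with $\mu <_\La\lambda$. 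Kernel: apply the exact functor $\rr$ to the exact sequence and use $\rr(\Delta(\rr M))\cong \rr M$ together with the fact that $\rr$ of the counit is the identity, so $\rr$ of the kernel vanishes. This yields the claimed equivalence.

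The main obstacle I anticipate is the bimodule bookkeeping around $\Delta(\lambda) = \Delta(\lambda,0)$: one must pin down carefully that its weight-$\lambda$ subspace is a \emph{free} right $A^\lambda$-module of rank one generated by $p_\lambda$ (so that $-\otimes_{A^\lambda}V$ behaves well and the unit/counit computations go through), and that $\Delta(\lambda)\otimes_{A^\lambda}V$ genuinely lands in $\famod^{\leq\lambda}$ — in particular that it has finite-dimensional graded components and bounded-below grading when $V$ does, which follows from the finite-generation of $\La$ and the weight bound $\oDelta(\lambda)_l = 0$ for $l \gg 0$ established in the proof that proper standard modules are finite-dimensional. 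A secondary subtlety is checking exactness of $\rr$ as a functor on the \emph{quotient} category rather than on $\famod^{\leq\lambda}$, which reduces to the observation that a short exact sequence in $\famod^{=\lambda}$ lifts, up to objects of $\famod^{<\lambda}$, to one in $\famod^{\leq\lambda}$, and $\rr$ ignores the $\famod^{<\lambda}$ error terms.
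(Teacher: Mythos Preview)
Your approach is essentially identical to the paper's: both construct the weight-$\lambda$ functor $\rr_\lambda\colon M\mapsto M^\lambda$ and its left adjoint $\ww_\lambda\colon K\mapsto \Delta(\lambda)\otimes_{A^\lambda}K$, verify $\rr_\lambda\circ\ww_\lambda=\mathrm{Id}$, and observe that $\rr_\lambda$ kills exactly $\famod^{<\lambda}$; your treatment of the other composition via the counit is in fact more explicit than the paper's one-line conclusion. One small correction: to show $J^\lambda$ annihilates $M^\lambda$ for $M\in\famod^{\leq\lambda}$, you should invoke the universal property of $\Delta(\lambda)$ (every weight-$\lambda$ vector in such $M$ is a highest weight vector, hence receives a map from $\Delta(\lambda)$ carrying $p_\lambda$ to it), not of $\oDelta(\lambda)$---calling such a vector ``pure'' would presuppose that $\La_{>0}^0$ already acts trivially, which is stronger than what you need and not yet known.
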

\begin{proof}
Let $M$ belongs to $\famod^{\leq\lambda}$.
The universal enveloping algebra $U(\La_{>0}^{0})$ acts on the weight subspace $M^{\lambda}$.
Moreover, this action factors through the annihilation ideal $J^{\lambda}$ which is known to act by zero on 
a universal highest weight module $\Delta(\lambda)$.
Consequently, the assignment $M \mapsto M^{\lambda}$ defines a functor  $\rr_{\lambda}:\famod^{\leq\lambda}\rightarrow A^{\lambda}\ttt mod$.

Notice that the standard module $\Delta(\lambda)=\Delta(\lambda,0)$ is a 
$(\La,\La_{>0}^{0})$-bimodule with respect to the following action:
$$
\forall x\in U(\La),\ y\in U(\La_{>0}^{0}) \ \ u\in U(\La) \ \text{ we set } \ (x,y)\cdot u (p_\lambda):= x(u (y(p_\lambda))
$$
To ensure that this action is well defined it is enough to notice that
for all $y\in U(\La_{>0}^{0})$ the element $y(p_\lambda)$
is a highest weight vector of weight $\lambda$ and, from universality properties of standard modules, we know that
 $u( p_\lambda) =0$ implies $u( y (p_\lambda)) =0$.
Therefore, the assignment $K\mapsto \Delta(\lambda)\otimes_{A^{\lambda}} K$ defines a functor $\ww_{\lambda}:A^{\lambda}\ttt mod \rightarrow \famod^{\leq\lambda}$
that is left adjoint to the functor $\rr_\lambda$:
$$
Hom_{\famod^{\leq\lambda}}(\Delta(\lambda)\otimes_{A^{\lambda}} K, M) = Hom_{A^{\lambda}\ttt mod}(K, M^{\lambda})
$$
Since $\Delta(\lambda)^{\lambda}\cong A_{\lambda}$ we have 
$$\rr_{\lambda}\circ\ww_{\lambda} = (\Delta(\lambda)\otimes_{A^{\lambda}} \ttt)^{\lambda} =  (A^{\lambda}\otimes_{A^{\lambda}} \ttt )^{\lambda} = 
Id_{A_{\lambda}\ttt mod}.
$$
Moreover, $M\in\famod^{<\lambda}$ if and only if $\rr_{\lambda}(M)=M^{\lambda}=0$ and, consequently, the category of graded $A^{\lambda}$-modules 
is canonically isomorphic to the quotient category $\famod^{\leq\lambda}/\famod^{<\lambda}$.
Finally, two modules $M,N\in\famod^{\leq\lambda}$ are isomorphic in the quotient category $\famod^{=\lambda}$
if and only if their $\lambda$-weight subspaces are isomorphic.
\end{proof}
In the proof of above Lemma~\ref{lem::La::quotient::cat} we also defined an exact functor 
$\rr_{\lambda}:\famod^{\leq\lambda}\rightarrow A^{\lambda}\ttt mod$
which is just taking the weight subspace of weight $\lambda$.
Moreover, tensoring from the left by the standard module we defined its left adjoint functor.
We summarize all defined functors in the following diagram:
\begin{equation}
\label{eq::diag::fun::La}
\xymatrix{
\famod 
 \ar@<1.5ex>[rrr]^{M \mapsto M/(M^{\not{\leq}\lambda})}
& &
& \famod^{\leq \lambda}
\ar@<+1.5ex>[lll]_{\bot}^{M \leftarrow M}
\ar@<-1.5ex>[rrr]_{M \mapsto M^{\lambda}}^{\bot}
& &
& 
\famod^{=\lambda} \cong A^{\lambda}\ttt mod
\ar@<-1.5ex>[lll]_{\Delta(\lambda)\otimes_{U(\La_{>0}^{0})} K\leftarrow K}
}
\end{equation}
The subsequent last property explains why we consider a partial ordering on the set of weights rather than the total linear ordering of weights.
\begin{lemma}
\label{lem::hw::La}
The standard module $\Delta(\lambda,k)$ remains to be a projective cover of $L(\lambda,k)$ in the category $\famod^{\leq \{\lambda,\mu_1,\ldots,\mu_k\}}$
for any collection of pairwise incomparable dominant weights $\{\lambda,\mu_1,\ldots,\mu_k\}\subset \PPdom$.
\end{lemma}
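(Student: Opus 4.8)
The plan is to reduce the statement to the already-established universality of $\Delta(\lambda,k)$ (Lemma~\ref{lem::stand::amod}) together with the projectivity criterion used in the proof of Lemma~\ref{lem::proj}, namely that a module $P$ is projective in a full abelian subcategory closed under extensions iff $Ext^1$ out of $P$ vanishes on that subcategory. Write $\Cat':=\famod^{\leq\{\lambda,\mu_1,\ldots,\mu_k\}}$; since the $\mu_i$ are pairwise incomparable with $\lambda$ and with each other, the weight support of any object of $\Cat'$ decomposes along the antichain, and in particular $\lambda$ is a \emph{maximal} weight appearing in $\Cat'$ — no $\mu_i$ and no $\nu<_{\La}\mu_i$ can dominate $\lambda$. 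This maximality is the geometric heart of the argument and is exactly what fails for a total order, which is why the lemma is phrased for partial orders.

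First I would check that $\Delta(\lambda,k)$ actually lies in $\Cat'$: its weights are all $\le_{\La}\lambda$, so its Jordan--H\"older constituents are among the $L(\nu,\ell)$ with $\nu\le_{\La}\lambda$, a fortiori in the order ideal generated by the antichain. Next, $\Delta(\lambda,k)$ surjects onto $L(\lambda,k)$ by construction, and it is indecomposable because $p_\lambda$ is a cyclic generator whose weight space is the top of the grading direction; hence if it is projective in $\Cat'$ it is automatically the projective cover. So the whole content is projectivity, i.e.\ $Ext^1_{\Cat'}(\Delta(\lambda,k),M)=0$ for every $M\in\Cat'$.

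To see this, take a short exact sequence $0\to M\to E\to \Delta(\lambda,k)\to 0$ in $\Cat'$ and lift the generator $p_\lambda$. One picks any preimage $\tilde p$ of $p_\lambda$ in $E_k$; since $E\in\Cat'$ and $\lambda$ is maximal in the weight support of $\Cat'$, the vector $\tilde p$ is automatically a highest weight vector of weight $\lambda$ (every $g\cdot\tilde p$ with $g$ of positive weight lands in a weight space $>_{\La}\lambda$, which is zero in $E$), and the finiteness relations $f_i^{\lambda(h_i)+1}\tilde p=0$ hold because $\La_0$ acts locally finitely. By the universality of the standard module (Lemma~\ref{lem::stand::amod}), $\tilde p$ extends to a map $\Delta(\lambda,k)\to E$ splitting the sequence; hence the extension is trivial and $\Delta(\lambda,k)$ is projective in $\Cat'$. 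I expect the only point requiring care is confirming that maximality of $\lambda$ in $\Cat'$ really does force $\tilde p$ to be a genuine (not merely ``pure'') highest weight vector — i.e.\ that adjoining the incomparable weights $\mu_i$ does not create room above $\lambda$ — which is immediate from the definition of $\le_{\La}$ via the cone $\PP_{+}(\La)$ and the antichain hypothesis, so no real obstacle remains.
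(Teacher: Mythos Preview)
Your proposal is correct and follows essentially the same route as the paper: both arguments rest on the observation that incomparability of $\lambda$ with the $\mu_i$ forces $\lambda$ to be maximal among all weights occurring in any object of $\Cat'$, so any weight-$\lambda$ vector is automatically a highest weight vector, and the universal property of $\Delta(\lambda,k)$ (Lemma~\ref{lem::stand::amod}) then produces the required lift/splitting. The paper phrases this as the lifting property of a projective, you phrase it as $Ext^1$-vanishing via splitting of extensions; these are equivalent, and the extra remarks you make (membership in $\Cat'$, indecomposability from cyclicity) fill in details the paper leaves implicit.
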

\begin{proof}
Let  $M$ belongs to $\famod^{\leq \{\lambda,\mu_1,\ldots,\mu_k\}}$ what means that $M^{\alpha}\neq 0$ implies 
that either $\alpha\leq \lambda$ or $\alpha\leq \mu_1$, or \ldots, or $\alpha\leq \mu_k$.
If the map $M\rightarrow L(\lambda,k)$ of graded $\La$-modules is nontrivial then there exists a vector $v\in M_k$ of weight $\lambda$
whose image $\varphi(v)$ is a highest weight vector of $L(\lambda,k)$.
Since the weights $\lambda$ and $\mu_i$'s are incomparable 
 each vector $v$ of weight $\lambda$ is a highest weight vector and, therefore, there is a map $\Delta(\lambda)\rightarrow M$
which lifts the map $p_\lambda$ to $v$.
Hence, $\Delta(\lambda)$ is projective in $\famod^{\leq \{\lambda,\mu_1,\ldots,\mu_k\}}$
\end{proof}

In the next section we will give the definition of a highest weight category formalizing the existence of Diagram~\eqref{eq::diag::fun::La} 
and property of Lemma~\ref{lem::hw::La}.

\begin{remark}
One can complete a given partial ordering on $\PPdom$ to a total linear ordering of this set.
Then Lemma~\ref{lem::hw::La} may be omitted and canonical filtrations $\cal{F}^{\lambda}$ and $\overline{\cal{F}}_{\lambda}$
 become a decreasing (resp. increasing) $\Z_{+}$-filtrations. 
\end{remark}

\subsection{Abstract categorical setup}
\label{sec::hwcat}
Let $\Cat$ be an abelian category with the set of simples (up to isomorphism) indexed by a set $\Upsilon$ and
assume there is a given map $\rho: \Upsilon \rightarrow \Xi$ to the partially ordered set $(\Xi,\leq)$:
such that for all elements $\lambda\in \Xi$ there are only finitely many $\mu$ that are less or equal than $\lambda$.
The elements of $\Xi$ will be referred as \emph{weights} and the function $\rho$ is called \emph{the weight function}.

For any $\alpha\in\Upsilon$ we denote by $L(\alpha)$ the corresponding simple
and by $\Proj(\alpha)$ a projective cover of $L(\alpha)$ which is unique up to natural isomorphism.
We require the following finiteness assumptions:\footnote{in general there is a big freedom in the possible choice of 
particular finiteness conditions, but some conditions are always have to be required.}
each object $M\in\Cat$ admits a decreasing filtration 
\begin{equation}
\label{eq::Jordan::filtr}
 M=F^{0}M\hookleftarrow F^{1}M  \hookleftarrow F^{2}M \hookleftarrow \ldots \hookleftarrow 
\end{equation}
whose successive quotients $F^{i}M/F^{i+1}M$ are simple objects. 
There is no canonical choice of such a filtration, but what is important for us is that there exists a bounded from below filtration.
The number of times that $L(\alpha)$ appears as a successive quotient is called the Jordan-H\"older multiplicity of $L(\alpha)$ in $M$
and is denoted by $[M:L(\alpha)]$. 
We suppose that this multiplicity is finite for all $\alpha\in\Upsilon$.
In general, this multiplicity may be infinite.
However, the further conclusions about highest weight categories deals with concrete identities for these multiplicities,
so it is better to have a reasonable way to count these multiplicities even if they are infinite.

With any $\lambda\in\Xi$ we associate the Serre subcategory $\Cat^{\leq \lambda}$ ($\Cat^{<\lambda}$) 
generated by irreducibles $L(\mu)$ with $\rho(\mu)\leq \lambda$ (with $\rho(\mu)<\lambda$ respectively).
Similarly, any given subset $S$ of pairwise incomparable elements of $\Xi$
defines the Serre subcategory $\Cat^{\leq S}$ 
generated by irreducibles $L(\mu)$ with $\rho(\mu)$ less or equal than at least one of the elements of 
the subset $S$.
Denote the quotient category $\Cat^{\leq \lambda} / \Cat^{<\lambda}$ by $\Cat^{=\lambda}$.
Let $\imath_{\lambda}: \Cat^{\leq \lambda} \rightarrow \Cat$ be the embedding functor
and $\rr_{\lambda}: \Cat^{\leq \lambda} \rightarrow \Cat^{=\lambda}$ be a projection functor.
This functors are exact by construction.

\begin{definition}
\label{def::hwcat}
An abelian category $\Cat$ with enough projectives is called a \emph{Highest Weight Category} if the following is satisfied:

($h1$) 
for each subset $S\in\Xi$ of pairwise incomparable elements the embedding functor $\imath_{S}:\Cat^{\leq S}\rightarrow \Cat$
admits a left adjoint $\imath^{!}_{S}$ functor such that $\imath^{!}_{S}\circ \imath_{S} = Id_{\Cat^{\leq S}}$
and for each module $M\in\Cat$ the inverse limit $(\varprojlim_{\lambda\in \Xi} \imath_{\lambda}(\imath_{\lambda}^{!}(M)))$ 
with respect to the partial ordering on $\Xi$
 is isomorphic to $M$.

($h2$) 
the projection functor
$\rr_{\lambda}:\Cat^{\leq\lambda}\rightarrow \Cat^{=\lambda}$ has a left adjoint functor $\ww_{\lambda}$.

($h3$) 
for each subset of pairwise incomparable elements $S\subset \Xi$, each $\lambda\in S$ and $\alpha\in\rho^{-1}(\lambda)$
the projective cover of $L(\alpha)$ in $\Cat^{\leq\lambda}$ and in $\Cat^{\leq S}$ are isomorphic in $\Cat$.
The latter object is called standard and is denoted by $\Delta(\alpha)$.
That is 
$$\Delta(\alpha):= \imath_{\lambda}(\imath_{\lambda}^{!}(\Proj(\alpha)) = \imath_{S}(\imath_{S}^{!}(\Proj(\alpha)).$$

($h4$) for all $\alpha\in\Upsilon$ there exists a maximal essential extension of $L(\alpha)$ called proper standard and denoted by $\onabla(\alpha)$
such that the simple subquotients of $\onabla(\alpha)/L(\alpha)$ are of the form $L(\beta)$ with $\rho(\beta)<\rho(\alpha)$.
\end{definition}
In particular,  for any pair of incomparable elements $\lambda,\mu\in\Xi$ and  $\forall\alpha\in\rho^{-1}(\lambda)$ 
we have the following diagram of adjunctions:
\begin{equation}
\label{eq::diagram::functors}
\xymatrix{
\Cat
 \ar@{.>}@<1.5ex>[rr]^{\imath_{\lambda,\mu}^{!}}_{\bot} 
&
&
\Cat^{\leq\{\lambda,\mu\}} 
 \ar@{.>}@<1.5ex>[rr]^{\imath_{\lambda}^{!}}_{\bot} 
 \ar@<1.5ex>[ll]^{\imath_{\lambda,\mu}} 
&
& \Cat^{\leq \lambda}
\ar@<1.5ex>[ll]^{\imath_\lambda}
\ar@<-1.5ex>[rr]_{\rr_{\lambda}}^{\bot}
&
& 
\Cat^{=\lambda}
\ar@{.>}@<-1ex>[ll]_{\ww_{\lambda}}
\\
\Proj(\alpha) \ar@{.>}[rr] && \Delta(\alpha) \ar@{.>}[rr] && \Delta(\alpha) &&
}
\end{equation}
If the ordering on $\Xi$ is total ordering then the middle term $\Cat^{\leq\{\lambda,\mu\}}$ is omitted and the condition ($h3$) disappears.

Similarly to the definition of the standard object with each $\alpha\stackrel{\rho}{\mapsto}\lambda\in\Xi$ 
we set the proper standard object $\oDelta(\alpha)$ to be 
the universal object in $\Cat^{\leq\lambda}$ which surjects on the simple object $L(\alpha)$
and the kernel of this surjection does not have $L(\alpha')$ with $\rho(\alpha')=\lambda$ in the Jordan-H\"older decomposition.
By definition, the proper standard object $\oDelta(\lambda)$ coincides with the image of a simple object under the left adjoint functor $\ww_{\lambda}$:
$$
\text{for } \lambda=\rho(\alpha) \text{ we have }\forall M\in\Cat^{\leq\lambda} \ \ 
Hom_{\Cat^{\leq\lambda}}(\oDelta(\alpha),M) \cong Hom_{\Cat^{=\lambda}}(\rr_\lambda(L(\alpha)),\rr_{\lambda}(M)).
$$
Converse, the proper costandard object $\onabla(\alpha)$ is the image of the right adjoint functor to the projection functor:
$$
\text{for } \lambda=\rho(\alpha) \text{ we have }\forall M\in\Cat^{\leq\lambda} \ \ 
Hom_{\Cat^{\leq\lambda}}(M,\onabla(\alpha)) \cong Hom_{\Cat^{=\lambda}}(\rr_{\lambda}(M),\rr_\lambda(L(\alpha))).
$$
\begin{lemma}
 Let $M$ belongs to $\Cat^{\leq\lambda}$ and   $\rr_{\lambda}(M)\neq 0$ then
there exists an element $\alpha\in\rho^{-1}(\lambda)$ 
such that $\dim Hom_{\Cat}(\Delta(\alpha),M)>0$ and $\dim Hom_{\Cat}(M,\onabla(\alpha))>0$.
\end{lemma}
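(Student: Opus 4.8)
The plan is to transport the question into the quotient category $\Cat^{=\lambda}$ along the exact projection $\rr_{\lambda}$, to read off a \emph{simple quotient} of $\rr_{\lambda}(M)$, and to run both halves of the statement from that single datum. First I would observe that $\rr_{\lambda}(M)$ is a nonzero object of $\Cat^{=\lambda}$ (an abelian category inheriting the finiteness features of Definition~\ref{def::hwcat}), so it carries a filtration as in~\eqref{eq::Jordan::filtr} whose top successive quotient is simple; since the simple objects of $\Cat^{=\lambda}=\Cat^{\le\lambda}/\Cat^{<\lambda}$ are precisely the $\rr_{\lambda}(L(\alpha))$ with $\alpha\in\rho^{-1}(\lambda)$, this yields an epimorphism $\rr_{\lambda}(M)\twoheadrightarrow\rr_{\lambda}(L(\alpha))$ for a fixed $\alpha\in\rho^{-1}(\lambda)$. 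It is important to use a quotient — the ``$\lambda$-head'' of $M$ — rather than a subobject: an object of $\Cat^{=\lambda}$ need not have a simple subobject at all (the socle may vanish), while it always has a simple quotient, and, as explained below, a quotient feeds \emph{both} non-vanishings.

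For the costandard half I would invoke the adjunction defining the proper costandard object, $Hom_{\Cat^{\le\lambda}}(M,\onabla(\alpha))\cong Hom_{\Cat^{=\lambda}}(\rr_{\lambda}(M),\rr_{\lambda}(L(\alpha)))$: the epimorphism above is a nonzero element of the right-hand side, so $Hom_{\Cat^{\le\lambda}}(M,\onabla(\alpha))\neq 0$, and since $\onabla(\alpha)$ has all its composition factors of weight $\le\lambda$ by (h4) it lies in the full subcategory $\Cat^{\le\lambda}$ of $\Cat$, whence $Hom_{\Cat}(M,\onabla(\alpha))\neq 0$. For the standard half, the same epimorphism shows that $\rr_{\lambda}(L(\alpha))$ occurs in $\rr_{\lambda}(M)$, hence $[M:L(\alpha)]\ge 1$ (exactness of $\rr_{\lambda}$, together with $\rho(\alpha)=\lambda$). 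Now $\Delta(\alpha)$ is the projective cover of $L(\alpha)$ in $\Cat^{\le\lambda}$ by (h3), so $Hom_{\Cat^{\le\lambda}}(\Delta(\alpha),-)$ is exact, $Hom_{\Cat^{\le\lambda}}(\Delta(\alpha),L(\beta))=0$ for every simple $L(\beta)\neq L(\alpha)$, and $\Delta(\alpha)\twoheadrightarrow L(\alpha)$ is nonzero; a d\'evissage along the filtration~\eqref{eq::Jordan::filtr} of $M$ — commuting $Hom_{\Cat^{\le\lambda}}(\Delta(\alpha),-)$ with the limit presentation $M=\varprojlim_{i} M/F^{i}M$ afforded by that bounded-below filtration and using the finite-length case for each $M/F^{i}M$ — then gives $\dim Hom_{\Cat^{\le\lambda}}(\Delta(\alpha),M)=[M:L(\alpha)]\ge 1$, and fullness of $\Cat^{\le\lambda}$ in $\Cat$ upgrades this to $Hom_{\Cat}(\Delta(\alpha),M)\neq 0$.

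The genuinely delicate point is that one single index $\alpha$ must serve both assertions. Reading $\alpha$ instead off a simple \emph{sub}object of $\rr_{\lambda}(M)$ would make the standard side immediate — via $Hom_{\Cat^{\le\lambda}}(\oDelta(\alpha),M)\cong Hom_{\Cat^{=\lambda}}(\rr_{\lambda}(L(\alpha)),\rr_{\lambda}(M))$ and the surjection $\Delta(\alpha)\twoheadrightarrow\oDelta(\alpha)$ — but would leave the costandard side possibly empty, and in general no $\alpha$ lies simultaneously in the socle and the head of $\rr_{\lambda}(M)$. Taking $\alpha$ from the head resolves the asymmetry: the costandard side genuinely needs a map \emph{out of} $M$ (hence the head), whereas the standard side only needs $L(\alpha)$ to be \emph{some} composition factor of $M$, which the projective $\Delta(\alpha)$ detects. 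The remaining care — objects of $\Cat^{\le\lambda}$ may have infinitely many composition factors, so the d\'evissage for $\Delta(\alpha)$ must be run against the completeness of the filtration~\eqref{eq::Jordan::filtr} rather than a naive induction on length — is routine.
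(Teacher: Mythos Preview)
Your proof is correct and is essentially a repackaging of the paper's argument in the language of the quotient category and its adjunctions. Both proofs single out the \emph{same} $\alpha$: the paper takes the minimal $k$ with $F^{k}M/F^{k+1}M\cong L(\alpha')$, $\rho(\alpha')=\lambda$, which is precisely your ``simple quotient of $\rr_{\lambda}(M)$'' (since $\rr_{\lambda}(F^{i}M)=\rr_{\lambda}(M)$ for $i<k$ and $\rr_{\lambda}(F^{k}M/F^{k+1}M)=\rr_{\lambda}(L(\alpha'))$). For the $\onabla$ half the paper uses the universal property of $\onabla(\alpha')$ on $M/F^{k+1}M$, which is exactly your adjunction $Hom_{\Cat^{\le\lambda}}(M,\onabla(\alpha))\cong Hom_{\Cat^{=\lambda}}(\rr_{\lambda}(M),\rr_{\lambda}(L(\alpha)))$. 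For the $\Delta$ half the paper lifts $\Delta(\alpha')\twoheadrightarrow L(\alpha')$ along $F^{k}M\twoheadrightarrow L(\alpha')$ and then includes $F^{k}M\hookrightarrow M$; this is slightly more direct than your counting argument $\dim Hom(\Delta(\alpha),M)=[M:L(\alpha)]$ and sidesteps the infinite-length d\'evissage you (correctly) flag, but both are valid.
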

\begin{proof}
The condition $\rr_{\lambda}(M)\neq 0$ means that there exists $\alpha\in\rho^{-1}(\lambda)$ such that $[M:L(\alpha)]>0$ for $\alpha\in\rho^{-1}(\lambda)$ then
Consider a Jordan-H\"older filtration of $M$ as it is in~\eqref{eq::Jordan::filtr}.
Let $k$ be the minimal integer number such that corresponding quotient $F^{k}M/F^{k+1}M$ is isomorphic to $L(\alpha')$ with $\alpha'\in\rho^{-1}(\lambda)$.
 We have the following commutative diagrams:
$$
\xymatrix{
& \Delta(\alpha') 
\ar@{.>}[dl]_{\exists}
\ar@{->>}[d]
&
&&&&
& \onabla(\alpha') & 
\\
F^{k}M 
\ar@{>>}[r] 
\ar@{^{(}->}[d]
&
L(\alpha')
\ar@{->>}[r]
&
0
&&&&
0 \ar[r] 
& 
L(\alpha') \ar[u] \ar[r] 
& 
M/F^{k+1}M \ar@{.>}[ul]_{\exists} 
\\
M & &&&&& & & M \ar@{>>}[u] 
}
$$
Note that the multiplicity of $L(\alpha')$ in $M/F^{k}M$ is equal to $1$ and no other factors 
with the same weight $\lambda$ appears in Jordan--H\"older decomposition of $M/F^{k}M$. 
Hence, the dotted arrows in the Diagram above 
exists due to universality properties
of standard and proper costandard modules.
\end{proof}
\begin{corollary}
\label{cor::map::costandard}
 For all $M\in\Cat^{<\lambda}$ there exists $\mu$ with $\rho(\mu)<\lambda$ such that $\dim Hom(M,\onabla(\mu))>0$.
\end{corollary}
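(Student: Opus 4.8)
The plan is to deduce this from the preceding Lemma by localizing $M$ to the smallest Serre subcategory of the form $\Cat^{\leq S}$ containing it. Since the assertion is vacuous for $M=0$, assume $M\neq 0$. The set $\Theta$ of weights $\rho(\alpha)$ occurring among the Jordan--H\"older constituents $L(\alpha)$ of $M$ is then a nonempty subset of $\{\nu\in\Xi:\nu<\lambda\}$, hence finite by the standing finiteness hypothesis on $\Xi$. Let $S\subseteq\Theta$ be the antichain of its maximal elements; then $M\in\Cat^{\leq S}$, and every $\nu_0\in S$ satisfies $\nu_0<\lambda$ together with $[M:L(\alpha)]>0$ for some $\alpha\in\rho^{-1}(\nu_0)$.

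Fix such a $\nu_0\in S$ and set $M':=\imath^{!}_{\nu_0}(M)\in\Cat^{\leq\nu_0}$, where $\imath^{!}_{\nu_0}$ is the left adjoint provided by ($h1$) (composed with the inclusion $\Cat^{\leq S}\hookrightarrow\Cat$). Since $\imath^{!}_{\nu_0}\dashv\imath_{\nu_0}$ and $\onabla(\mu)\in\Cat^{\leq\nu_0}$ for every $\mu\in\rho^{-1}(\nu_0)$, there is a natural isomorphism
$$
Hom_{\Cat}(M,\onabla(\mu))\;\cong\;Hom_{\Cat^{\leq\nu_0}}(M',\onabla(\mu)).
$$
So it suffices to produce a nonzero map $M'\to\onabla(\mu)$ for some $\mu\in\rho^{-1}(\nu_0)$; then $\rho(\mu)=\nu_0<\lambda$ gives the Corollary.

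For this I would apply the preceding Lemma to $M'\in\Cat^{\leq\nu_0}$, which requires $\rr_{\nu_0}(M')\neq 0$, i.e.\ that $M'$ still carries a composition factor of weight $\nu_0$. This is the crux. Because $\nu_0$ is \emph{maximal} among the constituent weights of $M$, the localization $\imath^{!}_{\nu_0}$ cannot annihilate the factors $L(\alpha)$ with $\rho(\alpha)=\nu_0$: concretely, one identifies $\Cat^{\leq S}/\mathcal{B}$ with $\Cat^{=\nu_0}$, where $\mathcal{B}\subset\Cat^{\leq S}$ is the Serre subcategory of objects with no weight-$\nu_0$ constituent (this identification uses maximality of $\nu_0$ in $S$ and ($h3$)), so that the composite $\Cat^{\leq S}\to\Cat^{\leq\nu_0}\to\Cat^{=\nu_0}$ preserves the (positive) weight-$\nu_0$ multiplicity of $M$. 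Granting $\rr_{\nu_0}(M')\neq 0$, the Lemma produces $\mu\in\rho^{-1}(\nu_0)$ with $Hom_{\Cat^{\leq\nu_0}}(M',\onabla(\mu))\neq 0$, and the displayed isomorphism concludes.

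The main obstacle is therefore the bookkeeping just indicated: verifying that passing to $\Cat^{\leq\nu_0}$ and then to $\Cat^{=\nu_0}$ does not kill the top constituent of $M$ (equivalently, the exactness and support of these localization functors). A route that sidesteps the quotient-category identification is to rerun the proof of the preceding Lemma \emph{verbatim} inside $\Cat^{\leq S}$: choose, in a bounded-below Jordan--H\"older filtration $M=F^{0}M\supseteq F^{1}M\supseteq\cdots$, the minimal index $k$ with $F^{k}M/F^{k+1}M\cong L(\alpha')$ for $\rho(\alpha')=\nu_0$; minimality of $k$ together with maximality of $\nu_0$ forces $L(\alpha')$ to occur exactly once in $M/F^{k+1}M$, as a subobject, with every other constituent of weight $\neq\nu_0$, so the universal property ($h4$) of $\onabla(\alpha')$ extends $L(\alpha')\hookrightarrow M/F^{k+1}M$ to a map $M/F^{k+1}M\to\onabla(\alpha')$; precomposing with $M\twoheadrightarrow M/F^{k+1}M$ yields the desired nonzero homomorphism. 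Either way, no idea beyond the preceding Lemma is needed.
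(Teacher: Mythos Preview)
Your second (``rerun'') route has a real gap. You choose a maximal constituent weight $\nu_0\in S$ and the minimal $k$ with $F^kM/F^{k+1}M\cong L(\alpha')$, $\rho(\alpha')=\nu_0$. From this you correctly conclude that in $M/F^{k+1}M$ the simple $L(\alpha')$ occurs once, as a subobject, and every other constituent has weight $\neq\nu_0$. But ``$\neq\nu_0$'' is not ``$<\nu_0$'': those other constituents may have weights in $S\setminus\{\nu_0\}$, incomparable to $\nu_0$. The universal property of $\onabla(\alpha')$ you cite (the description after~($h4$)) is only formulated for objects of $\Cat^{\leq\nu_0}$, so it does not produce the dotted arrow $M/F^{k+1}M\to\onabla(\alpha')$ here. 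In other words, the second route does \emph{not} sidestep the bookkeeping you flagged in the first route: to use $\onabla(\alpha')$ you must first pass through $\imath^{!}_{\nu_0}$ and check that the weight-$\nu_0$ content survives, which is exactly the point you left open.

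There is, however, a one-line fix that makes the whole maximal-weight discussion unnecessary and matches the paper's intended reading of the Corollary as an immediate consequence. Don't pick a maximal $\nu_0$; just take $k=0$. The filtration hypothesis~\eqref{eq::Jordan::filtr} gives a simple quotient $M\twoheadrightarrow F^0M/F^1M=L(\mu)$ for some $\mu\in\Upsilon$, and since $M\in\Cat^{<\lambda}$ one has $\rho(\mu)<\lambda$. By~($h4$), $L(\mu)\hookrightarrow\onabla(\mu)$, so the composite $M\twoheadrightarrow L(\mu)\hookrightarrow\onabla(\mu)$ is a nonzero element of $Hom(M,\onabla(\mu))$. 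No adjunctions, no antichains, no quotient-category identifications are needed.
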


\begin{example}
Let $\g^{A}=\n^{+}\oplus\h\oplus\n^{-}$ be the Kac--Moody Lie algebra associated with the Cartan matrix $A$ and a root system $\Pi$.
The BGG category $\calO=\calO(\g^A)$ is the subcategory of finitely generated $\g_A$-modules such that
\\
{$\phantom{....}(\calO 1)$} the action of $\h$ is semisimple,
\\
{$\phantom{....}(\calO 2)$} the nilpotent subalgebra $\n^{+}$ acts locally finite.

The irreducible objects in $\calO$ are numbered by $\h$-weights and we denote by $L(\lambda)$ the maximal irreducible quotient of 
the Verma module $Ind_{\h\oplus\n^{+}}^{\g^{A}} \C_{\lambda}$ of the highest weight $\lambda\in \h^{*}$.

\begin{proposition*}(\cite{BGG})
\label{ex::BGG::hwc}
 The BGG category $\calO(\g^A)$ is a highest weight category with respect to the following partial ordering on irreducibles:
$$
\lambda\leq \mu \ \stackrel{def}{\Longleftrightarrow} \ \lambda-\mu\in \Z_{+}(\Phi_{+}), 
\text{ and } \exists \omega\in W: \ \mu = \omega\cdot \lambda
$$
The standard and proper standard modules coincide with corresponding Verma module;
Respectively costandard modules are isomorphic to dual Verma modules:
$$
\Delta(\lambda)=\oDelta(\lambda)= Ind_{\h\oplus\n^{+}}^{\g^{A}} \C_{\lambda}; \quad \onabla(\lambda)= \Delta(\lambda)^{\dual} 
$$
\end{proposition*}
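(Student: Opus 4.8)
The plan is to instantiate Definition~\ref{def::hwcat} with $\Cat=\calO(\g^{A})$, index set $\Upsilon=\Xi=\h^{*}$ and weight function $\rho=Id$, and then to identify the standard, proper standard and proper costandard objects produced by the four axioms with the Verma, Verma and dual Verma modules. For genuinely infinite‑type $\g$ the finiteness requirement of Definition~\ref{def::hwcat} ("only finitely many $\mu\leq\lambda$") can fail on all of $\h^{*}$ at positive level, so there one passes to the truncated subcategories $\calO^{\leq\lambda}$, each of which is a highest weight category in the precise sense; in finite type no truncation is needed since $\mu\leq\lambda$ already forces $\mu\in W\cdot\lambda$. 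The only nonformal ingredients are the classical structure results for $\calO$ (\cite{BGG}, and their Kac--Moody counterparts): $\calO$ has enough projectives, every $L(\lambda)$ has an indecomposable projective cover $\Proj(\lambda)$, objects have finite length with finite multiplicities, and $\calO$ carries an exact contravariant duality $(\ttt)^{\dual}$ fixing each $L(\mu)$, preserving each $\calO^{\leq\lambda}$ and dualizing weight spaces.

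For $(h1)$: each $\calO^{\leq S}$ is a Serre subcategory closed under subquotients, and by finite length the submodules $M'\subseteq M$ with $M/M'\in\calO^{\leq S}$ are closed under finite intersection and so possess a least element $M'_{\min}$; thus $M\mapsto M/M'_{\min}$ is the required left adjoint $\imath^{!}_{S}$ with $\imath^{!}_{S}\circ\imath_{S}=Id$, and the identity $M=\varprojlim_{\lambda}\imath_{\lambda}\imath^{!}_{\lambda}M$ is the usual consequence of finite length. For $(h3)$: recall $Hom_{\calO}(M(\lambda),N)=\{v\in N^{\lambda}:\n^{+}v=0\}$, which equals $N^{\lambda}$ whenever $N$ has no weight strictly above $\lambda$; this applies to every $N\in\calO^{\leq S}$ with $\lambda$ maximal in the antichain $S$ (a weight $\mu>\lambda$ of $N$ would satisfy $\lambda<\mu\leq\nu\leq s$ for the highest weight $\nu$ of some composition factor and some $s\in S$, contradicting incomparability). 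Hence $M(\lambda)$ is projective in $\calO^{\leq S}$ and in $\calO^{\leq\lambda}$, and being indecomposable with simple top $L(\lambda)$ it is the projective cover there. Since a left adjoint of an exact fully faithful inclusion preserves projectives, $\imath^{!}_{\lambda}\Proj(\lambda)$ is projective in $\calO^{\leq\lambda}$, surjects onto $L(\lambda)$ and has simple top, so $\Delta(\lambda)=\imath^{!}_{\lambda}\Proj(\lambda)=M(\lambda)=\imath^{!}_{S}\Proj(\lambda)$, which is $(h3)$.

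For $(h2)$, exactly as in Lemma~\ref{lem::La::quotient::cat} one gets $\rr_{\lambda}(M)=M^{\lambda}$ and $\calO^{=\lambda}\simeq A^{\lambda}\ttt mod$ with $A^{\lambda}=M(\lambda)^{\lambda}$, but $\dim M(\lambda)^{\lambda}=1$, so $A^{\lambda}=\C$, $\calO^{=\lambda}$ is the category of $\C$‑vector spaces, and $\ww_{\lambda}=M(\lambda)\otimes_{\C}(\ttt)$ is left adjoint to $\rr_{\lambda}$ via $Hom_{\calO^{\leq\lambda}}(M(\lambda)\otimes_{\C}V,N)\cong Hom_{\C}(V,N^{\lambda})$; hence the proper standard object is $\oDelta(\lambda)=\ww_{\lambda}(\rr_{\lambda}L(\lambda))=M(\lambda)\otimes_{\C}\C=M(\lambda)=\Delta(\lambda)$. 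For $(h4)$, apply the duality: $M(\lambda)^{\dual}$ has simple socle $L(\lambda)$ of multiplicity one and all remaining factors $L(\mu)$ with $\mu<\lambda$, and $Hom_{\calO^{\leq\lambda}}(N,M(\lambda)^{\dual})\cong Hom_{\calO^{\leq\lambda}}(M(\lambda),N^{\dual})\cong(N^{\dual})^{\lambda}\cong(N^{\lambda})^{*}$ shows that $M(\lambda)^{\dual}$ corepresents the right adjoint of $\rr_{\lambda}$ at $L(\lambda)$, i.e. it is the maximal essential extension demanded in $(h4)$; thus $\onabla(\lambda)=M(\lambda)^{\dual}=\Delta(\lambda)^{\dual}$, which completes both the axiom check and the identifications asserted in the proposition.

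I expect the only real obstacle to be the content hidden behind the blanket citation of \cite{BGG}, namely enough projectives (with standard filtrations) in $\calO$, together with the Kac--Moody bookkeeping needed to legitimise the finiteness hypothesis of Definition~\ref{def::hwcat}; once one works inside the truncated categories $\calO^{\leq\lambda}$ the verification above goes through unchanged.
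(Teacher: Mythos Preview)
The paper does not give its own proof of this proposition: it is presented as a classical example with a bare citation to \cite{BGG}, so there is nothing to compare your argument against beyond the standard literature. Your axiom-by-axiom verification of Definition~\ref{def::hwcat} is essentially correct and considerably more detailed than what the paper offers.

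One small point deserves attention. In your check of $(h3)$ you argue that for $N\in\calO^{\leq S}$ and $\lambda$ maximal in the antichain $S$, any weight $\mu>\lambda$ of $N$ yields $\lambda<\mu\leq\nu\leq s$ for some $s\in S$, contradicting incomparability. Here the inequalities $\lambda<\mu\leq\nu$ are in the root order, while $\nu\leq s$ and the incomparability of $\lambda,s$ are in the \emph{linkage} order of the statement. Since the linkage order is strictly finer (it additionally requires membership in a common $W$-dot-orbit), $\lambda<s$ in the root order does not immediately contradict incomparability in the linkage order when $s\neq\lambda$. The cleanest fix is to invoke the block decomposition of $\calO$: distinct linkage classes lie in orthogonal blocks, so $\calO^{\leq S}$ factors as a product over the linkage classes meeting $S$, and one may assume $S$ is contained in a single class, where the two orderings agree. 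With that adjustment your argument goes through.
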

\end{example}

Previous Subsection~\ref{sec::famod::hwc} was devoted to explain the following
\begin{proposition}
The partial ordering $\geq_{\La}$ on the set of dominant weights $\PPdom$ defined by~\eqref{eq::def::ordering}
and the forgetful map $\PPdom\times \Z \rightarrow \PPdom$ from the set of irreducibles 
$\{L(\lambda,k): \lambda\in\PPdom, k\in\Z\}$ defines a structure of a highest weight category on $\famod(\La)$.
\end{proposition}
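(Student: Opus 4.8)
The plan is to verify the four axioms $(h1)$--$(h4)$ of Definition~\ref{def::hwcat} for the data: the poset of weights $\Xi:=(\PPdom,\leq_{\La})$ with $\leq_{\La}$ the ordering~\eqref{eq::def::ordering}; the index set of simples $\Upsilon:=\PPdom\times\Z$; the weight function $\rho:(\lambda,k)\mapsto\lambda$; the simples $L(\lambda,k)$ and their projective covers $\Proj(\lambda,k)=Ind_{\La_0}^{\La}L(\lambda)\{k\}$. Essentially all the ingredients are already at hand: Subsection~\ref{sec::famod::hwc} builds the adjoint functors of Diagram~\eqref{eq::diag::fun::La}, and Subsection~\ref{sec::stand::mod} the standard, proper standard and proper costandard modules; so the proof is mostly a matter of matching these against the axioms, after disposing of the finiteness preliminaries and one genuine point in $(h4)$.

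\emph{Finiteness preliminaries.} By Lemma~\ref{lem::proj} the category $\famod(\La)$ is abelian with enough projectives, and its proof supplies every $M\in\famod$ with a decreasing filtration of the shape~\eqref{eq::Jordan::filtr} whose successive quotients are simple; since each $M_k$ is finite dimensional, $[M:L(\lambda,k)]<\infty$. It remains to check that $\{\mu\in\PPdom:\mu\leq_{\La}\lambda\}$ is finite for each $\lambda$. Writing $h_0=\sum_i c_i\alpha_i^{\vee}\in\h$, the hypothesis $\langle\alpha_j,h_0\rangle>0$ for all simple $\alpha_j$ together with the fact that the inverse Cartan matrix of the semisimple algebra $\La_0$ has non-negative entries and no zero row forces $c_i=\langle\omega_i,h_0\rangle>0$ for every $i$; hence $\langle\mu,h_0\rangle\geq0$ for every dominant $\mu$, and the dominant weights $\mu$ with $\lambda-\mu\in\PP_{+}(\La)$ satisfy $0\leq\langle\mu,h_0\rangle\leq\langle\lambda,h_0\rangle$, of which there are finitely many.

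\emph{Axioms $(h1)$--$(h3)$.} Axiom $(h1)$: Subsection~\ref{sec::famod::hwc} constructs the left adjoint $\imath^{!}_{\lambda}:M\mapsto M/(M^{\not\leq_{\La}\lambda})$ of $\imath_{\lambda}$ with $\imath^{!}_{\lambda}\circ\imath_{\lambda}=Id$ and $M=\varprojlim_{\lambda}\imath_{\lambda}(\imath^{!}_{\lambda}M)$; the verbatim construction --- factoring out the submodule generated by all weight spaces $M^{\mu}$ with $\mu$ not $\leq_{\La}$ any element of a finite pairwise incomparable set $S\subset\PPdom$ --- yields $\imath^{!}_{S}$ with the required properties. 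Axiom $(h2)$ is exactly Lemma~\ref{lem::La::quotient::cat}, which identifies $\famod^{=\lambda}\simeq A^{\lambda}\ttt mod$ and exhibits the functor $K\mapsto\Delta(\lambda)\otimes_{A^{\lambda}}K$ as a left adjoint $\ww_{\lambda}$ to the exact projection $\rr_{\lambda}$. Axiom $(h3)$: applying $\imath^{!}_{\lambda}$ to $\Proj(\lambda,k)=Ind_{\La_0}^{\La}L(\lambda)\{k\}$ imposes precisely the relations $\n^{+}p_{\lambda}=\La_{>0}^{+}p_{\lambda}=0$ (the relations $f_i^{\lambda(h_i)+1}p_{\lambda}=0$ already hold, $L(\lambda)$ being finite dimensional over $\La_0$), so $\imath_{\lambda}(\imath^{!}_{\lambda}\Proj(\lambda,k))$ is the standard module $\Delta(\lambda,k)$ of Subsection~\ref{sec::stand::mod}; by Lemma~\ref{lem::hw::La} this module is simultaneously a projective cover of $L(\lambda,k)$ in $\famod^{\leq\lambda}$ and in $\famod^{\leq S}$ for every incomparable $S$ containing $\lambda$, which is $(h3)$.

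\emph{Axiom $(h4)$, and the main obstacle.} The abstract proper standard object $\oDelta(\lambda,k)=\ww_{\lambda}(L(\lambda,k))=\Delta(\lambda)\otimes_{A^{\lambda}}L(\lambda,k)$ coincides, through the universal properties of Lemma~\ref{lem::stand::amod}, with the generators-and-relations module $\oDelta(\lambda,k)$ of Subsection~\ref{sec::stand::mod}. Dually, $(h4)$ demands a \emph{maximal} essential extension $\onabla(\lambda,k)$ of $L(\lambda,k)$ all of whose other composition factors $L(\beta)$ satisfy $\rho(\beta)<_{\La}\lambda$; one takes $\onabla(\lambda,k):=Hom_{\C}(\oDelta(-\omega_0(\lambda),-k),\C)$. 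The finiteness bound on $\oDelta$ from Subsection~\ref{sec::stand::mod} makes this finite dimensional, and comparing its composition factors with those of $\oDelta(-\omega_0(\lambda),-k)$ --- whose cyclic generator is \emph{pure}, so the top weight $-\omega_0(\lambda)$ occurs with multiplicity one and only in degree $-k$ --- shows that $\onabla(\lambda,k)$ has simple socle $L(\lambda,k)$ with all other factors of weight $<_{\La}\lambda$. By the couniversality of proper costandard modules from Subsection~\ref{sec::stand::mod}, any essential extension $E$ of $L(\lambda,k)$ respecting this weight restriction has its weight-$\lambda$ line contained in $L(\lambda,k)$, hence carries a pure highest weight vector, hence admits a morphism $E\to\onabla(\lambda,k)$ which is nonzero on the simple socle and therefore injective --- so $\onabla(\lambda,k)$ is maximal. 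Establishing $(h4)$ --- reconciling the abstract $\oDelta(\lambda,k),\onabla(\lambda,k)$ coming from Diagram~\eqref{eq::diag::fun::La} with the explicit modules of Subsection~\ref{sec::stand::mod}, and in particular proving the maximality clause --- is the one step that goes beyond bookkeeping; granting it, $(h1)$--$(h4)$ assemble to the asserted highest weight category structure on $\famod(\La)$.
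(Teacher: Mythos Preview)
Your proof is correct and follows the paper's approach. The paper's own proof is simply the one-line remark that ``Previous Subsection~\ref{sec::famod::hwc} was devoted to explain'' the proposition, so your argument is exactly what the paper intends: assemble the lemmas of Subsections~\ref{sec::stand::mod} and~\ref{sec::famod::hwc} into a verification of axioms $(h1)$--$(h4)$. You add two pieces the paper leaves implicit --- the finiteness of $\{\mu:\mu\leq_{\La}\lambda\}$ via the positivity of $\langle\omega_i,h_0\rangle$, and the explicit check of $(h4)$ including maximality of $\onabla(\lambda,k)$ --- but these are elaborations, not departures.
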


\subsection{Stratified categories}
\label{sec::strat}
In this section we give a definition of a stratified category and then show several equivalent properties 
for a highest weight category to be stratified.
The notion of \emph{stratified categories} was introduced in~\cite{CPS} 
motivated by generalization of BGG reciprocity discovered for the category $\mathcal{O}$ in~\cite{BGG}.

\begin{definition}
\label{def::stratified}
 A highest weight category $\Cat$ is called \emph{stratified} iff for all 
$\alpha\in \Upsilon$ there exists an epimorphism $\Proj(\alpha)\twoheadrightarrow \Delta(\alpha)$ whose kernel 
admits a filtration with successive quotients isomorphic to $\Delta(\beta)$ with $\rho(\beta)>\rho(\alpha)$.
\end{definition}

\begin{example}{\cite{BGG}}
 The  BGG category $\calO(\g)$ (Example~\eqref{ex::BGG::hwc}) is stratified.
\end{example}

The following Theorem gives useful criterions on a highest weight category to be stratified.

\begin{theorem}
\label{thm::def::strat}
 The following conditions on a highest weight category $\Cat$ are equivalent
\begin{enumerate}
 \item[(S1)] The category $\Cat$ is stratified.
 \item[(S2)]
 The functor $\imath_{\lambda}$ between derived categories $\calD(\Cat^{\leq\lambda})\rightarrow\calD(\Cat)$ is fully faithful.
In other words, for all modules $M,N\in\Cat^{\leq\lambda}$ 
the derived homomorphisms in the categories $\Cat^{\leq \lambda}$ and $\Cat$ are the same:
$$
 Ext^{\udot}_{\Cat^{\leq\lambda}}(M,N) = Ext^{\udot}_{\Cat}(\imath_{\lambda}(M),\imath_{\lambda}(N)).
$$ 
 \item[(S3)] For all $\lambda,\mu\in \Upsilon$ we have the following vanishing conditions
\begin{equation}
 \dim Ext_{\Cat}^{i}(\Delta(\lambda),\onabla(\mu)) = \left\{
\begin{array}{l}
 1, \text{ if } \lambda=\mu \ \& \ i=0, \\
0, \text{ otherwise. }
\end{array}
\right.
\end{equation}
\item[(S4)] The second extension group $Ext_{\Cat}^{2}(\Delta(\lambda),\onabla(\mu))=0$ vanishes for all $\lambda,\mu\in\Upsilon$.
\end{enumerate}
\end{theorem}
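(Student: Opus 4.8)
\textbf{Proof plan for Theorem~\ref{thm::def::strat}.}
The plan is to establish the cycle of implications $(S1)\Rightarrow(S2)\Rightarrow(S3)\Rightarrow(S4)\Rightarrow(S1)$, with $(S4)\Rightarrow(S1)$ being the substantive step. First I would record the two basic orthogonality facts that make the machinery run. One is that $\Delta(\lambda)$ is projective in $\Cat^{\leq\lambda}$ (this is condition $(h3)$, together with Lemma~\ref{lem::hw::La} in the case of $\famod$), so that $Ext^{i}_{\Cat^{\leq\lambda}}(\Delta(\lambda),M)=0$ for $i>0$ and $Hom_{\Cat^{\leq\lambda}}(\Delta(\lambda),M)$ computes the multiplicity $[\rr_\lambda M:\rr_\lambda L(\lambda)]$ of the simple top. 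The other is the adjunction $Hom_{\Cat^{\leq\lambda}}(M,\onabla(\alpha))\cong Hom_{\Cat^{=\lambda}}(\rr_\lambda M,\rr_\lambda L(\alpha))$ from $(h4)$, which likewise sees only the weight-$\lambda$ part. Combining these with the triangular structure of $\Cat^{\leq\lambda}$ (Corollary~\ref{cor::map::costandard}) gives $\dim Hom_{\Cat^{\leq\lambda}}(\Delta(\lambda),\onabla(\mu))=\delta_{\lambda\mu}$ and vanishing of higher $\Cat^{\leq\lambda}$-Ext between a standard and a proper costandard; this is the ``internal'' version of $(S3)$, valid in any highest weight category without the stratified hypothesis.

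For $(S1)\Rightarrow(S2)$ I would use the stratification filtration directly: it shows that $\Proj(\alpha)$ is filtered by standards $\Delta(\beta)$ with $\rho(\beta)\geq\rho(\alpha)$, hence every projective of $\Cat$ restricted to $\Cat^{\leq\lambda}$ either lies in $\Cat^{\leq\lambda}$ or is built (in the derived sense) from pieces supported on weights $\not\leq\lambda$ which are $\Cat$-acyclic against objects of $\Cat^{\leq\lambda}$. More precisely, for $M\in\Cat^{\leq\lambda}$ one resolves $M$ by projectives of $\Cat$, applies the filtration, and checks that the terms $\Delta(\beta)$ with $\rho(\beta)\not\leq\lambda$ contribute nothing to $Ext^{\udot}_{\Cat}(-,N)$ for $N\in\Cat^{\leq\lambda}$, because $Hom_\Cat(\Delta(\beta),N)=0$ and, by an induction on the poset, the higher Ext's vanish too; what survives is exactly a projective resolution inside $\Cat^{\leq\lambda}$. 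Thus $\imath_\lambda$ is fully faithful on derived categories. The implication $(S2)\Rightarrow(S3)$ is then immediate: transport the internal orthogonality statement of the previous paragraph across the equivalence $Ext^{\udot}_{\Cat^{\leq\lambda}}\cong Ext^{\udot}_\Cat$. And $(S3)\Rightarrow(S4)$ is trivial.

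The heart of the argument is $(S4)\Rightarrow(S1)$. Here I would argue as follows. Fix $\alpha$ with $\rho(\alpha)=\lambda$ and consider the projective cover $\Proj(\alpha)\twoheadrightarrow\Delta(\alpha)$; let $K$ be its kernel. Because $\Delta(\alpha)=\imath_\lambda\imath^{!}_\lambda\Proj(\alpha)$, the object $K$ is the submodule $\cal{F}^{\lambda}\Proj(\alpha)$ generated by weights $\not\leq\lambda$, so $K$ has a filtration by the functors $\imath_\mu\imath^{!}_\mu$ whose subquotients are supported on single weights $\mu>\lambda$, and I must upgrade this to a $\Delta$-filtration. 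I would do a Noetherian/poset induction: pick a maximal weight $\mu$ appearing in $K$, let $v$ be a weight-$\mu$ highest weight vector, and obtain $\Delta(\mu)\to K$. Injectivity of this map is where $(S4)$ enters. The standard trick is to test against proper costandards: using the long exact sequence coming from $0\to\ker\to\Delta(\mu)\to\operatorname{im}\to0$ and the vanishing $Ext^{1}_\Cat(\operatorname{coker},\onabla(\nu))$, $Ext^{2}_\Cat(\Delta(\mu),\onabla(\nu))=0$, one forces the cokernel (inside $K$) and then the kernel to have a $\Delta$-filtration, and a dimension count on the weight spaces $\dim Hom_\Cat(-,\onabla(\nu))$ shows the map is an isomorphism onto its image with $\Delta(\mu)$-quotient. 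Iterating down the poset exhausts $K$. I expect the main obstacle to be the bookkeeping in this induction: making precise that ``has a $\Delta$-filtration'' is detected by the numerical criterion $\sum_\nu \dim Hom_\Cat(M,\onabla(\nu))\,[\Delta(\nu)] = [M]$ in the Grothendieck group together with $Ext^{1}_\Cat(M,\onabla(\nu))=0$, and that $(S4)$ propagates this property along short exact sequences. This is the standard Cline--Parshall--Scott argument (\cite{CPS}) adapted to the possibly infinite poset and the graded setting, so once the finiteness conditions from Section~\ref{sec::hwcat} are invoked to guarantee the filtrations terminate, the rest is formal homological algebra.
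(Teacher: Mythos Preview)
Your overall scheme---the cycle $(S1)\Rightarrow(S2)\Rightarrow(S3)\Rightarrow(S4)\Rightarrow(S1)$---is exactly what the paper does, and your treatments of $(S1)\Rightarrow(S2)$, $(S2)\Rightarrow(S3)$, $(S3)\Rightarrow(S4)$ match the paper's arguments in all essential points.

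Where you diverge is $(S4)\Rightarrow(S1)$. You propose a \emph{top-down} peeling argument: take $K=\ker(\Proj(\alpha)\twoheadrightarrow\Delta(\alpha))$, pick a maximal weight $\mu$ in $K$, map $\Delta(\mu)\to K$, and argue injectivity via $(S4)$. The paper instead runs a \emph{bottom-up} induction on the weight $\lambda$: it shows that for every $\mu$ the layer $\Proj(\mu)_{\leq\lambda}/\Proj(\mu)_{<\lambda}$ is filtered by standards of weight $\lambda$, using the four-term exact sequence
\[
0\to K\to \Delta(\Proj(\mu)_{\lambda})\xrightarrow{\kappa} \Proj(\mu)_{\leq\lambda}\to \Proj(\mu)_{<\lambda}\to 0,
\]
where $\Delta(\Proj(\mu)_{\lambda})$ is the projective cover in $\Cat^{\leq\lambda}$ of the maximal weight-$\lambda$ quotient (hence already $\Delta$-filtered by construction). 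One then applies $RHom(-,\onabla(\nu))$ with $\rho(\nu)<\lambda$: the induction hypothesis handles $\Proj(\mu)_{<\lambda}$, projectivity handles $\Proj(\mu)_{\leq\lambda}$, the always-valid $Ext^{1}(\Delta,\onabla)=0$ together with $(S4)$ handles $\Delta(\Proj(\mu)_{\lambda})$, and Corollary~\ref{cor::map::costandard} forces $K=0$.

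Your version can be made to work, but as written the induction structure is not set up correctly. The sentence invoking ``$0\to\ker\to\Delta(\mu)\to\operatorname{im}\to0$ and the vanishing $Ext^{1}_\Cat(\operatorname{coker},\onabla(\nu))$'' mixes two different short exact sequences and presupposes exactly what you are trying to prove: you do not yet know anything about the cokernel of $\Delta(\mu)\to K$, so you cannot feed its $Ext^{1}$-vanishing into the long exact sequence. The paper's organization avoids this trap by arranging the induction so that the object you need to control ($\Proj(\mu)_{<\lambda}$) is already known to be $\Delta$-filtered, and the object whose kernel you test ($\Delta(\Proj(\mu)_{\lambda})$) is $\Delta$-filtered by construction rather than by hypothesis. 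If you want to keep the top-down picture, you should reformulate it as an induction on the length of the interval above $\rho(\alpha)$, and at each step map in the \emph{entire} top layer at once (a sum of $\Delta$'s), not a single $\Delta(\mu)$; then the argument becomes isomorphic to the paper's.
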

\begin{proof}
We will prove implications in the following order: 
$(S1)\Rightarrow (S2) \Rightarrow (S3) \Rightarrow (S4) \Rightarrow (S1)$

{$[(S1)\Rightarrow(S2)]$}
The triangulated category $\calD^{-}_{\leq\lambda}(\Cat)$ is the category of bounded from above complexes of modules from $\Cat$
whose homology groups belongs to $\Cat^{\leq\lambda}$. Its left orthogonal triangulated subcategory ${}^{\perp}\calD^{-}_{\leq\lambda}(\Cat)$
is generated by projective modules $\Proj(\nu)$ with $\rho(\nu)\not\leq\lambda$ because
$$\forall \mu \text{ with } \rho(\mu)\not\leq\lambda \ \ RHom_{\Cat}(\Proj(\mu),M)=Hom_{\Cat}(L(\mu),M) = 0  
\ \Longleftrightarrow \ M\in \calD^{-}_{\leq\lambda}(\Cat).$$
Let us show that for each projective module $\Proj(\mu)$ we have an exact triangle
$X\to\Proj(\mu)\to \imath^{!}_{\lambda}(\Proj(\mu))$ with $X\in{}^{\perp}\calD^{-}_{\leq\lambda}(\Cat)$.
Indeed, it is enough to show that the kernel of the map $\Proj(\mu)\to \imath^{!}_{\lambda}(\Proj(\mu))$ admits a projective resolution
with successive quotients isomorphic to $\Proj(\nu)$ with $\nu\not\leq\lambda$.
If $\rho(\mu)$ is not less or equal to $\lambda$ then $\imath^{!}_{\lambda}(\Proj(\mu))=0$ and the exact triangle collapses to 
a trivial short exact sequence 
$\Proj(\mu)\to\Proj(\mu)\to 0$.
If $\rho(\mu)\leq\lambda$ then the condition $(S1)$ of a stratified category implies that 
 the kernel of the map $\Proj(\mu)\to \imath^{!}_{\lambda}(\Proj(\mu))$ admits a filtration with successive quotients
isomorphic to $\Delta(\nu)$ with $\rho(\nu)\not\leq\lambda$.
Hence, the considered kernel has a projective cover $\Proj$ with successive quotients isomorphic to $\Proj(\nu)$ with $\rho(\nu)\not\leq\lambda$.
The kernel of the map $\Proj\to\Proj(\mu)$ also admits a filtration with successive quotients isomorphic to $\Delta(\nu)$ with 
$\rho(\nu)\not\leq\lambda$.
Using the induction by partially ordered set $\Xi$ we get a resolution 
$$\Proj^{\udot}\rightarrow \Proj(\mu) \rightarrow \imath^{!}_{\lambda}(\Proj(\mu)) \rightarrow 0,$$
such that for all $i$ the corresponding module $\Proj^{i}$ admits a filtration with successive quotients isomorphic to $\Proj(\nu)$ with $\rho(\nu)\not\leq\lambda$.

 Consequently, the left adjoint functor $\bar{\imath}_{\lambda}^{!}:\calD^{-}(\Cat)\rightarrow \calD^{-}_{\leq\lambda}(\Cat)$ 
to the embedding functor between triangulated categories has zero higher derived on projectives and, therefore,  
coincides with the left derived of the corresponding adjoint functor between abelian categories 
$L^{\udot}\imath_{\lambda}^{!}: \calD^{-}(\Cat) \rightarrow \calD^{-}(\Cat^{\leq\lambda})$.
In particular, this implies that $\calD^{-}(\Cat^{\leq\lambda})$ and $\calD^{-}_{\leq\lambda}(\Cat)$ are isomorphic.

$[(S2)\Rightarrow(S3)]$
 In order to show the vanishing of derived homomorphisms between standard and proper costandard it is enough to use their universality.
Indeed, if $\rho(\lambda)\geq \rho(\mu)$ 
then $\Delta(\lambda)$ is projective in $\Cat^{\leq\rho(\lambda)}$.
Hence, $Ext^{>0}_{\Cat}(\Delta(\lambda),\onabla(\mu))=Ext^{>0}_{\Cat^{\leq\rho(\lambda)}}(\Delta(\lambda),\onabla(\mu))=0$.
Similarly, if $\rho(\mu)\geq\rho(\lambda)$ then $Ext^{>0}_{\Cat^{\leq\rho(\mu)}}(\Delta(\lambda),\onabla(\mu))=0$ 
because $\onabla(\mu)$ is almost injective in 
${\Cat^{\leq\rho(\mu)}}$. 
If $\rho(\lambda)$ and $\rho(\mu)$ are incomparable then the condition $(h3)$ of a highest weight category says 
that $\Delta(\lambda)$ is projective in $\Cat^{\leq\{\rho(\lambda),\rho(\mu)\}}$ and we also get vanishings of higher extension groups.

$[(S3)\Rightarrow(S4)]$
Condition $(S4)$ is a particular case of $(S3)$ and we just pointed it out in order to underline that it is enough to check 
the Ext-vanishing only in degree $2$.

$[(S4)\Rightarrow(S1)]$
Let $\Proj(\mu)$ be the projective cover of $L(\mu)$ and let $\Proj(\mu)_{\leq \lambda}:=\imath_{\lambda}^{!}\Proj(\mu)$
 be its image in $\Cat^{\leq\lambda}$.
Let us prove by induction in $\lambda$ that $\Proj(\mu)_{\leq\lambda}/ \Proj(\mu)_{<\lambda}$ has a filtration 
with successive quotients isomorphic to $\Delta(\alpha)$ with $\rho(\alpha)=\lambda$.
Indeed, denote by $\Proj(\mu)_{\lambda}$ the maximal quotient of $\Proj(\mu)_{\leq\lambda}$ whose 
irreducibles are isomorphic to $L(\alpha)$ with $\rho(\alpha)=\lambda$.
The projective cover $\Delta(\Proj(\mu)_{\lambda})\twoheadrightarrow \Proj(\mu)_{\lambda}$ admits a filtration with successive quotients isomorphic to 
standard modules $\Delta(\alpha)$ with $\rho(\alpha)=\lambda$.
Since the quotient $\Proj(\mu)_{\leq\lambda} / \Proj(\mu)_{<\lambda}$ is generated by irreducibles whose weight is equal to $\lambda$
 we get the following exact sequence
\begin{equation}
\label{eq::kernel::proj}
\xymatrix{
0 \ar[r] & K \ar[r] &
 \Delta(\Proj(\mu)_{\lambda}) \ar[r]^{\kappa} & \Proj(\mu)_{\leq\lambda} \ar[r] & \Proj(\mu)_{<\lambda} \ar[r]& 0
}
\end{equation}
where $K$ is the kernel of the map $\kappa$.
Note that $K$ belongs to $\Cat^{<\lambda}$. 
Thus, thanks to Corollary~\ref{cor::map::costandard} if $K$ is different from $0$ then there exists $\nu$ with $\rho(\nu)<\lambda$
such that $\dim Hom(K,\onabla(\nu))>0$.
Let us apply $RHom_{\Cat^{\leq \lambda}}(\ttt,\onabla(\nu))$ to the exact sequence~\ref{eq::kernel::proj} and get a contradiction.

Indeed, first, $\forall \alpha\in\rho^{-1}(\lambda)$ 
there are no nontrivial homomorphisms $\Delta(\alpha)\rightarrow \onabla(\nu)$ since $\nu<\lambda$
and, consequently, no nontrivial homomorphisms from $\Delta(\Proj(\mu)_{\lambda})$ to $\onabla(\nu)$.
Second, $Ext^{1}(\Delta(\mu),\onabla(\nu))$ is always zero, because for each short exact sequence
$0\to \onabla(\nu) \to Q \to \Delta(\mu)\to 0$ the middle object $Q$ belongs either to $\Cat^{\leq\rho(\mu)}$ if $\rho(\mu)\geq\rho(\nu)$,
or to $\Cat^{\leq\{\rho(\mu),\rho(\nu)\}}$ if $\mu$ and $\nu$ are incomparable, or belongs to $\Cat^{\leq\rho(\nu)}$
if $\rho(\nu)>\rho(\mu)$. In the first two cases one has the splitting map $\Delta(\mu)\rightarrow Q$ because it is projective in $\Cat^{\leq\rho(\mu)}$ 
and in $\Cat^{\leq\{\rho(\mu),\rho(\nu)\}}$. In the last case $\rho(\nu)>\rho(\mu)$ there exists a splitting map $Q\rightarrow\onabla(\nu)$ 
because the multiplicity of 
$L(\nu)$ in both $\onabla(\nu)$ and in $Q$ is equal to $1$ and $\onabla(\nu)$ is injective hull with this property.
The condition $(S4)$ says that $Ext^{2}(\Delta(\mu),\onabla(\nu))=0$ for all $\mu$.
From the induction hypothesis we know that $\Proj(\mu)_{<\lambda}$ admits a filtration with successive quotient isomorphic to $\Delta(\beta)$ 
with $\rho(\mu)\leq \rho(\beta)<\lambda$. Therefore, we have the following vanishing
$$Ext_{\Cat^{\leq\lambda}}^{2}(\Proj(\mu)_{<\lambda},\onabla(\nu))=Ext_{\Cat^{\leq\lambda}}^{1}(\Proj(\mu)_{<\lambda},\onabla(\nu))=0$$
All higher derived homomorphisms from $\Proj(\mu)_{\leq\lambda}$ to any module in $\Cat^{\leq\lambda}$ vanishes because
it is projective in $\Cat^{\leq\lambda}$.
Therefore, from the long exact sequence of $RHom(\ttt,\onabla(\nu))$ we see that there is no possibility to have a nontrivial homomorphism 
from $K$ to $\onabla(\nu)$ what may happen only if $K=0$.
\end{proof}

\begin{corollary}
 If a highest weight category $\Cat$ is stratified then the functor $\ww_{\lambda}: \Cat^{=\lambda}\rightarrow \Cat^{\leq\lambda}$ 
is exact for all $\lambda$.
\end{corollary}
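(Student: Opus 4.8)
\emph{Plan.} The proof splits into a soft reduction, a localization of the derived functors, and one genuinely delicate vanishing statement; I describe each in turn.

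\emph{Reduction.} Since $\ww_\lambda$ is left adjoint to the exact functor $\rr_\lambda$, it is automatically right exact and carries projectives of $\Cat^{=\lambda}$ to projectives of $\Cat^{\leq\lambda}$ (the composite $Hom_{\Cat^{=\lambda}}(Q,\rr_\lambda(-))$ is exact when $Q$ is projective). So it suffices to prove left exactness, equivalently that $L_i\ww_\lambda=0$ for $i>0$; by the usual syzygy argument it is even enough to show $L_1\ww_\lambda=0$. I would first identify $\ww_\lambda$ on projectives: for $\alpha\in\rho^{-1}(\lambda)$ let $Q(\alpha)$ be the projective cover of the simple object $\rr_\lambda(L(\alpha))$ in $\Cat^{=\lambda}$. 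Then $\ww_\lambda(Q(\alpha))$ is projective in $\Cat^{\leq\lambda}$, has local endomorphism ring because $End_{\Cat^{\leq\lambda}}(\ww_\lambda Q(\alpha))\cong End_{\Cat^{=\lambda}}(Q(\alpha))$ (adjunction together with $\rr_\lambda\ww_\lambda=Id$, which holds since $\ww_\lambda$ is a section of the Serre quotient $\rr_\lambda$), and surjects onto $L(\alpha)$; by axiom $(h3)$ its projective cover in $\Cat^{\leq\lambda}$ is $\Delta(\alpha)=\imath_\lambda(\imath_\lambda^{!}\Proj(\alpha))$, so $\ww_\lambda(Q(\alpha))\cong\Delta(\alpha)$. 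Hence $\ww_\lambda$ carries any projective of $\Cat^{=\lambda}$ to a (grading-shifted) direct sum of standard modules $\Delta(\alpha)$ with $\rho(\alpha)=\lambda$, and these are exactly the indecomposable projectives of $\Cat^{\leq\lambda}$ whose head has weight $\lambda$. (In the Lie-algebra incarnation this is the flatness of $\Delta(\lambda)$ over $A^{\lambda}$, which is reassuring but not what we use here.)

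\emph{Localization of $L\ww_\lambda$.} Take $X\in\Cat^{=\lambda}$ and a projective resolution $P_\bullet\to X$ in $\Cat^{=\lambda}$. Then $\ww_\lambda P_\bullet$ is a bounded-above complex of projectives of $\Cat^{\leq\lambda}$ computing $L\ww_\lambda X$, and it is built from the $\Delta(\alpha)$'s of the previous step. Applying the exact functor $\rr_\lambda$ and using $\rr_\lambda\ww_\lambda=Id$ gives back $P_\bullet$, which is acyclic in positive degrees; since $\rr_\lambda$ commutes with homology, $\rr_\lambda(L_i\ww_\lambda X)=0$ for $i>0$, i.e. $L_i\ww_\lambda X\in\Cat^{<\lambda}$ for all $i>0$.

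\emph{The vanishing.} This is where the hypothesis that $\Cat$ is stratified — and not merely a highest weight category — is needed. Since $\Cat^{\leq\lambda}$ is again stratified, the argument proving $(S1)\Rightarrow(S2)$ applied inside $\Cat^{\leq\lambda}$ shows that the inclusion $\calD^{-}(\Cat^{<\lambda})\to\calD^{-}(\Cat^{\leq\lambda})$ is fully faithful with essential image the complexes whose homology lies in $\Cat^{<\lambda}$. Thus $L\ww_\lambda X$ fits in an exact triangle $Z\to L\ww_\lambda X\to\ww_\lambda X\to Z[1]$, where $Z$ carries precisely the homology $L_i\ww_\lambda X$ $(i>0)$ and is pulled back from $\calD^{-}(\Cat^{<\lambda})$; one must show $Z=0$. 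Here I would use condition $(S3)$: for $\rho(\alpha)=\lambda>\rho(\mu)$ the group $Hom_{\Cat^{\leq\lambda}}(\Delta(\alpha),\onabla(\mu))$ vanishes for weight reasons and $Ext^{>0}_{\Cat^{\leq\lambda}}(\Delta(\alpha),\onabla(\mu))$ vanishes because $\Delta(\alpha)$ is projective in $\Cat^{\leq\lambda}$, so $RHom_{\Cat^{\leq\lambda}}(\ww_\lambda P_\bullet,\onabla(\mu))=0$, i.e. $RHom_{\Cat^{\leq\lambda}}(L\ww_\lambda X,\onabla(\mu))=0$ for every $\mu$ with $\rho(\mu)<\lambda$. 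Feeding this into the triangle identifies $RHom(Z,\onabla(\mu))$ with a shift of $RHom(\ww_\lambda X,\onabla(\mu))$; then, if $Z\neq0$, one picks the extreme nonvanishing homology group $H\in\Cat^{<\lambda}$ of $Z$, uses Corollary~\ref{cor::map::costandard} to find $\mu$ with $\rho(\mu)<\lambda$ and $Hom(H,\onabla(\mu))\neq0$, and compares cohomological degrees, invoking the $\Delta$-flag of the projectives of $\Cat^{\leq\lambda}$ (the actual content of being stratified, via $(S1)$) to bound the relevant higher $Ext$-groups and reach a contradiction. The \textbf{main obstacle} is precisely this last step — turning "$L_{>0}\ww_\lambda X$ lies in $\Cat^{<\lambda}$" into "$L_{>0}\ww_\lambda X=0$", i.e. excluding spurious negative homology of $L\ww_\lambda X$, and making the truncation/degree bookkeeping genuinely close; everything preceding it is formal. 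Once $L_{>0}\ww_\lambda=0$ is established, right exactness of $\ww_\lambda$ upgrades to exactness.
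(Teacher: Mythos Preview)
Your approach coincides with the paper's: take a projective resolution $P_\bullet$ of $X$ in $\Cat^{=\lambda}$, apply $\ww_\lambda$ to get a complex $Q^\bullet$ of standard modules $\Delta(\alpha)$ with $\rho(\alpha)=\lambda$, observe $\rr_\lambda Q^\bullet = P_\bullet$ so the higher homology lands in $\Cat^{<\lambda}$, and then argue it must vanish. The paper's proof simply asserts that $\ww_\lambda(M)$ ``admits a resolution $Q^{\udot}$ by standard modules $\Delta(\alpha)$ with $\rho(\alpha)=\lambda$'' and then says ``therefore all higher derived functors of $\ww_\lambda$ vanish''; it does not justify why $Q^\bullet$ is actually a resolution rather than merely a complex with the right $H^0$. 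You are more careful: you isolate exactly this point as the ``main obstacle'' and sketch an attack via orthogonality to the $\onabla(\mu)$.

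Two remarks that sharpen your step 3. First, your orthogonality $RHom_{\Cat^{\leq\lambda}}(\ww_\lambda P_\bullet,\onabla(\mu))=0$ in fact holds with $\onabla(\mu)$ replaced by \emph{any} $N\in\Cat^{<\lambda}$: each $\Delta(\alpha)$ is projective in $\Cat^{\leq\lambda}$ and $[N:L(\alpha)]=0$, so $Hom(\Delta(\alpha),N)=0$ termwise. The hyperext spectral sequence $E_2^{p,q}=Ext^p(H^{-q}(\ww_\lambda P_\bullet),N)\Rightarrow 0$ then forces $E_\infty^{1,0}=Ext^1_{\Cat^{\leq\lambda}}(\ww_\lambda X,N)=0$ (all differentials in and out of $E_r^{1,0}$ vanish for support reasons), so the first syzygy of $\ww_\lambda X$ again has head concentrated in weight $\lambda$, hence its projective cover is again a sum of $\Delta(\alpha)$'s. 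Second, the genuinely delicate point is to iterate this: one needs the analogous $Ext^1$-vanishing for each successive syzygy, and the spectral sequence only gives $Ext^2(\ww_\lambda X,N)\cong Hom(L_1\ww_\lambda X,N)$, which is circular. So your instinct that this is where the stratification hypothesis must do real work (e.g.\ via the $\Delta$-flags on \emph{all} projectives of $\Cat^{\leq\lambda}$, not just the $\Delta(\alpha)$'s) is correct, and neither your sketch nor the paper's proof makes this step fully explicit.
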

\begin{proof}
Since the category is stratified for all $\mu$ we have that $\imath_{\lambda}^{!}(\Proj(\mu))$ admits a filtration whose successive quotients 
are isomorphic to $\Delta(\nu)$ with $\rho(\mu)\leq \rho(\nu)\leq \lambda$. Consequently, the quotient category $\Cat^{=\lambda}$ is generated
by $\Delta(\alpha)$ with $\rho(\alpha)=\lambda$. In particular, 
for any $M\in\Cat^{=\lambda}$ its image $\ww_{\lambda}(M)$ admits a resolution $Q^{\udot}$
by standard modules $\Delta(\alpha)$ with $\rho(\alpha)=\lambda$. Since $\rr_{\lambda}$ is an exact functor and maps projectives to projectives.
We get $\rr_{\lambda}(\ww_{\lambda}(Q^{\udot}))$ is a projective resolution of $M = \rr_{\lambda}(\ww_{\lambda}(M))$.
Moreover, whenever $\rho(\alpha)=\lambda$ we have $\ww_{\lambda}(\rr_\lambda(\Delta(\alpha)))=\Delta(\alpha)$ and $\ww_{\lambda}$ maps 
this resolution to a projective resolution $\ww_{\lambda}(\rr_\lambda(Q^{\udot}))=Q^{\udot}$. 
Therefore, all higher derived functors of $\ww_{\lambda}$ vanishes 
and, thus, $\ww_\lambda$ is exact.
\end{proof}

\subsection{Criterion for $\famod(\La)$ to be stratified}
\label{sec::famod::strat}
Let us present several necessary and sufficient conditions on a graded Lie algebra $\La$ 
in order to have the BGG reciprocity for the highest weight category $\famod(\La)$.
We will illustrate how to use these conditions on the number of examples in the subsequent Section~\ref{sec::examples} below.

Recall that for a graded Lie algebra $\La$ with anti-involution we defined the generalization of Macdonald pairing~\eqref{eq::ext::paring}
and defined the generalized Macdonald polynomial~\ref{def::poly::Macdonald} as orthogonalization of the Schur basis $s_\lambda=\chi_{\La_0}(L(\lambda))$
with respect to the partial ordering on weights~\eqref{eq::def::ordering}.

\begin{theorem}
\label{thm::MacWeyl::strat}
The category $\famod(\La)$ for a graded Lie algebra with anti-involution is stratified if and only if 
for all $\lambda\in\PPdom$ the character of the proper standard module $\onabla(\lambda)$ 
is given by the generalized Macdonald polynomial $P_{\lambda}$ and 
the character of standard module $\Delta(\lambda)$ is given by the generalized dual Macdonald polynomial $Q_{\lambda}$.
\end{theorem}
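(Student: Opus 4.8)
The plan is to handle the two implications separately, reducing the forward one to Theorem~\ref{thm::Mac::Delta} and the converse to the $Ext$-vanishing criterion of Theorem~\ref{thm::def::strat}. For the forward direction, assume $\famod(\La)$ is stratified; Theorem~\ref{thm::Mac::Delta} identifies $\chi_{q\ttt\La_0}(\oDelta(\lambda,0))$ with the generalized Macdonald polynomial $P_\lambda$, gives $\langle P_\lambda,P_\lambda\rangle_{\La}=\chi_q(\Delta(\lambda,0)^\lambda)^{-1}$, and (through Proposition~\ref{cor::famod::strat}) supplies the factorization $\chi_{q\ttt\La_0}(\Delta(\lambda,0))=\chi_q(\Delta(\lambda,0)^\lambda)\cdot\chi_{q\ttt\La_0}(\oDelta(\lambda,0))$; multiplying the last two formulas yields $\chi_{q\ttt\La_0}(\Delta(\lambda,0))=\langle P_\lambda,P_\lambda\rangle_{\La}^{-1}P_\lambda=Q_\lambda$, so the forward implication needs nothing beyond Theorem~\ref{thm::Mac::Delta}.

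\emph{Converse: reduction to $Ext$.} Assume $\chi(\oDelta(\lambda))=P_\lambda$ and $\chi(\Delta(\lambda))=Q_\lambda$ for all $\lambda\in\PPdom$. By Theorem~\ref{thm::def::strat} it is enough to show $\dim Ext^{i}_{\famod}(\Delta(\lambda,k),\onabla(\mu,l))=\delta_{i,0}\delta_{\lambda,\mu}\delta_{k,l}$. The cases $i=0$ and $i=1$ already hold in any highest weight category: the first is the identity $\dim Hom(\Delta(\lambda,k),\onabla(\mu,l))=\delta_{\lambda,\mu}\delta_{k,l}$, and the vanishing $Ext^{1}_{\famod}(\Delta,\onabla)=0$ is established inside the proof of Theorem~\ref{thm::def::strat}; so the content is the vanishing of $Ext^{\ge 2}$. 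Because $\langle-,-\rangle_{\La}$ factors through $\KG(\famod)$ and $[\Delta(\lambda)]=[Q_\lambda]$, $[\oDelta(\mu)]=[P_\mu]$, the hypothesis forces $\sum_{i\ge 0}(-1)^{i}\dim_{q^{-1}}Ext^{i}_{\famod}(\Delta(\lambda),\onabla(\mu))=\langle Q_\lambda,P_\mu\rangle_{\La}=\delta_{\lambda,\mu}$; subtracting the $i=0$ term and using $Ext^{1}=0$ gives $\sum_{i\ge 2}(-1)^{i}\dim_{q^{-1}}Ext^{i}_{\famod}(\Delta(\lambda),\onabla(\mu))=0$. Running the same computation with the projective $\Proj(\lambda)$ in place of $\Delta(\lambda)$, whose higher self-$Ext$ vanish, also yields the Grothendieck-group identity $[\Proj(\lambda)]=[\Delta(\lambda)]+\sum_{\mu>_{\La}\lambda}[\onabla(\mu):L(\lambda)]\,[\Delta(\mu)]$, the numerical shadow of the BGG reciprocity.

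\emph{The main obstacle.} What remains is to upgrade these numerical identities to a genuine $\Delta$-filtration of $K_\lambda:=\ker(\Proj(\lambda)\twoheadrightarrow\Delta(\lambda))$. Since $K_\lambda=U(\La)\cdot\La_{>0}^{+}p_\lambda$ is generated in weights strictly $>_{\La}\lambda$, I would induct along the partial order and re-run the d\'evissage of the $(S4)\Rightarrow(S1)$ step of Theorem~\ref{thm::def::strat}: at the layer indexed by $\nu$ one obtains an exact sequence whose only obstruction to being a $\Delta(\nu)$-filtration is a group $Ext^{2}_{\famod}(\Delta(\mu),\onabla(\rho))$ with $\rho<_{\La}\nu$, and the vanishing of the alternating $Ext$-sum — now known in the already-processed range of weights — is precisely what annihilates it. Equivalently, I would verify the three conditions of Proposition~\ref{cor::famod::strat}: $(\imath\imath\imath)$ is immediate, for matching the leading monomial-basis terms of $\chi(\Delta(\lambda))=Q_\lambda$ and $\chi(\oDelta(\lambda))=P_\lambda$ forces $\chi_q(\Delta(\lambda)^\lambda)=\langle P_\lambda,P_\lambda\rangle_{\La}^{-1}$; $(\imath\imath)$ follows from the character equality and the graded Nakayama lemma, which turns the canonical surjection $\oDelta(\lambda)\otimes_\C\Delta(\lambda)^\lambda\twoheadrightarrow\Delta(\lambda)$ into an isomorphism; and $(\imath)$ — that the relative Lie algebra cohomology $H^{\udot}(\La,\La_0;\C)$ carries no classes beyond those of $H^{\udot}(\La^{0}/(\La^{0}\cap[\La^{+},\La^{-}]);\C)$ — is the genuinely hard input, and the step where I expect the real work to lie: it has to be extracted by playing the constant-term value $\langle P_0,P_0\rangle_{\La}$ imposed by the hypothesis against a direct analysis of the relative Chevalley--Eilenberg complex of $(\La,\La_0)$.

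\emph{Reconciliation with the abstract.} Finally I would record that the hypothesis stated in the abstract — orthogonality of the characters of the proper standard modules together with divisibility of $\chi(\Delta(\lambda))$ by $\chi(\oDelta(\lambda))$ — is equivalent to the one used above: since $[\oDelta(\lambda):L(\lambda)]=1$ while every other composition factor $L(\mu)$ of $\oDelta(\lambda)$ has $\mu<_{\La}\lambda$, one has $\chi(\oDelta(\lambda))=m_\lambda+\sum_{\mu<_{\La}\lambda}c_{\lambda,\mu}(q)m_\mu$, so orthogonality alone pins down $\chi(\oDelta(\lambda))=P_\lambda$ by Definition~\ref{def::poly::Macdonald}; comparing leading coefficients then identifies the cofactor in $\chi(\Delta(\lambda))=g_\lambda(q)\,\chi(\oDelta(\lambda))$ with $\chi_q(\Delta(\lambda)^\lambda)$, and the two formulations agree.
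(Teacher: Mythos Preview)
Your forward direction is fine and matches the paper. For the converse, however, both routes you propose have genuine gaps. The first route tries to deduce $Ext^{2}(\Delta(\lambda),\onabla(\mu))=0$ from the vanishing of the alternating sum $\sum_{i\ge 2}(-1)^{i}\dim Ext^{i}=0$; but an alternating sum of nonnegative integers being zero does not force the individual terms to vanish, and the inductive d\'evissage you sketch still needs the pointwise $Ext^{2}$ vanishing at each layer, which is exactly the condition $(S4)$ you are trying to establish. The second route, verifying $(f1)$--$(f3)$ of Proposition~\ref{cor::famod::strat}, cannot close the argument either: in the paper that proposition only proves these conditions are \emph{necessary}, and their sufficiency is left as Conjecture~\ref{conj::gen::strat}. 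You correctly flag $(f1)$ as the hard step, but even granting it you would be invoking an open conjecture.

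The paper's converse is entirely different and avoids $Ext$ computations. In any highest weight category the projective $\Proj(\lambda)$ carries the canonical filtration~\eqref{eq::filtr::st::kernel} with layers $\Delta(\mu,k)^{c_{\lambda,\mu,k}}/K_{\lambda,\mu,k}$, where by~\eqref{eq::filtr::dim::factors} the multiplicity $c_{\lambda,\mu,k}$ equals $[\oDelta(\mu,k):L(\lambda)]$. Under the hypothesis $\chi(\oDelta(\mu))=P_\mu$ these are precisely the transition coefficients $m_{\lambda,\mu,k}$ in $P_\mu=\sum_\lambda m_{\lambda,\mu}s_\lambda$, so by duality of the bases $\{P_\mu\},\{Q_\mu\}$ one has $\chi(\Proj(\lambda))=\sum_{\mu,k} m_{\lambda,\mu,k}q^{k}Q_\mu$. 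Comparing with $\chi(\Proj(\lambda))=\sum_{\mu,k}\bigl(m_{\lambda,\mu,k}q^{k}\chi(\Delta(\mu))-\chi(K_{\lambda,\mu,k})\bigr)$ and using the hypothesis $\chi(\Delta(\mu))=Q_\mu$ forces $\sum\chi(K_{\lambda,\mu,k})=0$. Each $K_{\lambda,\mu,k}$ is an honest module, so its character is Schur-positive; hence every $K_{\lambda,\mu,k}=0$ and $\Proj(\lambda)$ has a $\Delta$-filtration. The positivity step is the idea your proposal is missing: the numerical identity in $\KG(\famod)$ upgrades to an equality of modules because the discrepancy is a sum of characters of actual modules and cannot cancel.
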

\begin{proof}
The only if part of this theorem was explained in Section~\ref{sec::goal::Mac::Weyl}. Let us explain the implication in the opposite direction.
Recall, that projective covers of irreducibles $\{\Proj(\lambda):\lambda\in\PPdom\}$ and irreducibles $\{L(\lambda):\lambda\in\PPdom\}$
form a pair of dual basis in the Grothendieck ring.
Respectively their characters form a pair of dual basis in the ring of symmetric functions.
 Macdonald polynomials $P_{\lambda}$ and dual Macdonald polynomials form another pair of dual basis in the ring of symmetric functions.
Let $m_{\lambda,\mu}$ be the transition matrix from Macdonald polynomials to Schur functions. That is:
$$
P_{\mu} = \sum_{\lambda} m_{\lambda,\mu}(q) s_{\lambda} = \sum_{\lambda}\sum_{k\geq 0} m_{\lambda,\mu,k} q^{k}s_{\lambda}
\Longrightarrow
\langle \chi(\Proj(\lambda)),P_{\mu}\rangle = m_{\lambda,\mu}(q).
$$
Consequently,
\begin{equation}
\label{eq::trans::matr}
\chi(\Proj(\lambda)) = \sum_{\mu} m_{\lambda,\mu} \frac{P_{\mu}}{\langle P_{\mu},P_{\mu}\rangle} = 
\sum_{\mu} m_{\lambda,\mu} Q_{\mu} = \sum_{\mu}\sum_{k\geq 0} m_{\lambda,\mu,k} q^{k} Q_{\mu}. 
\end{equation}
Consider the standard filtration  $\imath_{\mu}(\imath_{\mu}^{!}(\Proj(\lambda)))$ of a projective cover $\Proj(\lambda)$.
We have that $\Proj(\lambda)$ has a filtration with successive quotients isomorphic to the factor of $\Delta(\mu,k)^{c_{\lambda,\mu,k}}$
by the module denoted by $K_{\lambda,\mu,k}$.
Here 
$$c_{\lambda,\mu,k} = \dim Hom_{\La}(\Proj(\lambda),\onabla(\mu,-k)) = \dim Hom_{\La_0}(L(\lambda),\onabla(\mu,-k)) = 
\dim Hom_{\La_0}(\oDelta(\mu,k),L(\lambda))
$$
Thus, we have equality:
\begin{multline*}
\sum_{\mu}\sum_{k\geq 0} m_{\lambda,\mu,k} q^{k} Q_{\mu} = \chi(\Proj(\lambda)) =
\chi(\oplus\Delta(\mu,k)^{m_{\lambda,\mu,k}}/K_{\lambda,\mu,k}) =
\sum_{\mu}\sum_{k\geq 0} m_{\lambda,\mu,k} q^{k} Q_{\mu} - \chi(K_{\lambda,\mu,k}). 
\end{multline*}
Since $\chi(K_{\lambda,\mu,k})$ are Schur positive we conclude that $K_{\lambda,\mu,k}=0$.
Consequently, the projective cover $\Proj(\lambda)$ admits a filtration with successive quotients isomorphic to standard modules
and the triangular condition follows from the observation that the multiplicity of an irreducible $L(\lambda,k)$ in the 
proper standard module $\oDelta(\mu,0)$ is different from zero only if $\lambda\leq\mu$ because $\oDelta$ is a highest weight module.
Finally, we have that the category $\famod$ is stratified.
\end{proof}

\begin{corollary}
\label{cor::ADE::type}
 The highest weight category of the modules over Lie algebra $\g\otimes \C[x]$ with $\g$ simply laced is stratified.
For non-simply laced $\g$ the result follows from~\cite{CI}.
\end{corollary}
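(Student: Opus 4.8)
The plan is to deduce this from Theorem~\ref{thm::MacWeyl::strat}, so that it suffices to prove, for $\La=\g\otimes\C[x]$ and every $\lambda\in\PPdom$, that $\oDelta(\lambda)$ has character the generalized Macdonald polynomial $P_\lambda$ and that $\Delta(\lambda)$ has character the dual polynomial $Q_\lambda$. First I would make the pairing~\eqref{eq::ext::paring} for this $\La$ explicit: the weight decomposition is $\g\otimes\C[x]=\h\otimes\C[x]\oplus\bigoplus_{\alpha\in\Phi}\g^\alpha\otimes\C[x]$, so $wt(\La_r)=\{0\}\cup\Phi$ with $\dim\La_r^{0}=\mathrm{rk}\,\g$ and $\dim\La_r^{\alpha}=1$ for $r>0$. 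Plugging this into the constant-term formula of Lemma~\ref{lem::pair::equal} and comparing with Proposition~\ref{rem::Macd::pair} with the odd variable $\xi$ absent (equivalently the parameter $t$ set to $0$) should identify $\langle\cdot,\cdot\rangle_{\g\otimes\C[x]}$ with the $t=0$ specialization of the Macdonald pairing. Hence the generalized Macdonald polynomials of $\g\otimes\C[x]$ are the $t=0$ Macdonald polynomials — the $q$-Hermite (level-one) polynomials — and the triangularity condition in Definition~\ref{def::poly::Macdonald} is automatic for $\mathrm{char}\,\oDelta(\lambda)$ since $\oDelta(\lambda)$ is a highest weight module with $[\oDelta(\lambda):L(\lambda)]=1$; the content is the orthogonality.

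For simply laced $\g$ I would compute $\mathrm{char}\,\oDelta(\lambda)$ via Demazure modules, invoking~\cite{FL}: the proper standard module $\oDelta(\lambda)=W^{loc}_\lambda(\C[x])$ coincides, in the simply laced case, with the level-one Demazure submodule of the basic integrable $\widehat\g$-module attached to $\lambda$ — the crucial point being that for simply laced $\g$ the local Weyl module is not strictly larger than this Demazure module. The Demazure character formula then computes its character, and combining this with the known identification of level-one Demazure characters with the $t=0$ specializations of Macdonald polynomials (results of Sanderson and Ion, also recoverable from crystal models of Lenart--Naito--Sagaki--Schilling--Shimozono) yields $\mathrm{char}\,\oDelta(\lambda)=P_\lambda$, so in particular the characters of the proper standard modules form an orthogonal basis.

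For the global Weyl module I would use the structure theory of~\cite{CPweyl,CFK}: $\Delta(\lambda)=W_\lambda(\C[x])$ is free over its endomorphism algebra $A^\lambda=\Delta(\lambda)^{\lambda}$, which is the polynomial ring $\bigotimes_i\C[t^{(i)}_1,\dots,t^{(i)}_{\lambda(h_i)}]^{S_{\lambda(h_i)}}$, and $\Delta(\lambda)\otimes_{A^\lambda}\C\simeq\oDelta(\lambda)$, so $\mathrm{char}\,\Delta(\lambda)=\chi_q(A^\lambda)\cdot\mathrm{char}\,\oDelta(\lambda)$ — this is exactly the factorization property $(\imath\imath)$ of Proposition~\ref{cor::famod::strat}. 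A routine graded-dimension count gives $\chi_q(A^\lambda)=\prod_i\prod_{j=1}^{\lambda(h_i)}(1-q^j)^{-1}$, and a short constant-term computation (Macdonald's norm formula specialized at $t=0$) should identify it with $\langle P_\lambda,P_\lambda\rangle^{-1}_{\g\otimes\C[x]}$; together these give $\mathrm{char}\,\Delta(\lambda)=Q_\lambda$, and Theorem~\ref{thm::MacWeyl::strat} then yields the stratification for simply laced $\g$.

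The non-simply laced case is where I expect the \emph{real obstacle}. Every step above transfers verbatim except the computation of $\mathrm{char}\,\oDelta(\lambda)=\mathrm{char}\,W^{loc}_\lambda(\C[x])$: for non-simply laced $\g$ the level-one Demazure module is strictly smaller than the local Weyl module, so~\cite{FL} no longer identifies the two and the affine/Demazure machinery does not produce the character. This missing input — that $\mathrm{char}\,W^{loc}_\lambda(\C[x])$ is the $q$-Hermite ($t=0$ Macdonald) polynomial — was established by Chari and Ion in~\cite{CI}; feeding their result into the argument above gives the stratification statement in that case as well. It would be desirable, as the paper remarks, to find a proof of the stratification of $\famod(\g\otimes\C[x])$ that does not pass through the knowledge of these characters at all, but that is beyond the present approach.
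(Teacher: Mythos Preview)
Your proposal is correct and follows essentially the same approach as the paper: both deduce the result from Theorem~\ref{thm::MacWeyl::strat} by invoking~\cite{FL} for the simply laced case (local Weyl modules coincide with level-one Demazure modules, whose characters are the $t=0$ Macdonald polynomials) and~\cite{CI} for the non-simply laced case. Your write-up is in fact more careful than the paper's very terse proof, since you explicitly supply the second hypothesis of Theorem~\ref{thm::MacWeyl::strat} (that $\chi(\Delta(\lambda))=Q_\lambda$) via the factorization $\Delta(\lambda)\cong A^\lambda\otimes\oDelta(\lambda)$ from~\cite{CPweyl,CFK} and the norm computation, whereas the paper's proof only states the $P_\lambda$ side and leaves this implicit.
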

\begin{proof}
G.\,Fourier and P.\,Littlemann proved in~\cite{FL} that for a simply laced $\g$ 
the proper standard modules (also known as local Weyl modules) coincide with Demazure modules
and their characters are given by Macdonald polynomials.
For non-simply laced $\g$ the result about characters and even about BGG reciprocity is announced in~\cite{CI}.
\end{proof}

The proofs of BGG reciprocity for $\sl_n\otimes \C[x]$ given in~\cite{BBCKL} and in~\cite{CI} for general $\g$
are similar to the methods used in our proof of Theorem~\ref{thm::MacWeyl::strat} and Corollary~\ref{cor::ADE::type}
However, it is known that for $\g$ not simply laced  local Weyl modules are greater than Demazure modules.
We insist that it is better to find another criterion of BGG reciprocity in order to prove it for $\g\otimes\C[x]$ using general categorical setup.

\begin{proposition}
\label{cor::famod::strat}
\begin{enumerate}
 \item 
 If the highest weight category $\famod(\La)$ is stratified then the following properties are satisfied:

$\phantom{.....}$ $(f1)$
 the relative Lie algebra cohomology of $\La$ is equal to the absolute cohomology of the Lie algebra 
$\La^{0}/([\La^{+},\La^{-}])$:
\begin{equation}
\label{eq::star::van::H}
H^{\udot}(\La,\La_0;\C) = H^{\udot}(\La^{0}/(\La^{0}\cap [\La^{+},\La^{-}]);\C)
\end{equation} 

$\phantom{.....}$ $(f2)$
The graded characters of standard and proper standard modules yield the factorization property 
for all $\lambda\in\PPdom$:
\begin{equation}
 \label{eq::factor::Weyl}
\chi_{q\ttt\La_0}(\Delta(\lambda,0)) = \chi_{q\ttt\La_0}({\oDelta(\lambda,0)}) \cdot \chi_{q} (\Delta(\lambda,0)^{\lambda}).
\end{equation}
Moreover, if the Lie algebra $\La$ admits an anti-involution and the notion of generalized Macdonald polynomials $P_{\lambda}$ 
are well defined and 

$\phantom{.....}$ $(f3)$ the generating series of the weight subspace $\chi_{q} (\Delta(\lambda,0)^{\lambda})$ is equal to 
the inverse of the constant term $\langle P_{\lambda},P_{\lambda}\rangle_{\La}$.

\end{enumerate}
\end{proposition}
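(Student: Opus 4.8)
We treat $(f1)$, $(f2)$, $(f3)$ in turn; the last one will rely on $(f2)$, and throughout we may quote Theorem~\ref{thm::def::strat} and its Corollary, since $\famod(\La)$ is assumed stratified.

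\emph{Plan for $(f1)$.} The idea is to identify the bottom stratum $\famod^{\leq 0}$ with the module category of a small graded Lie algebra and then invoke condition $(S2)$ of Theorem~\ref{thm::def::strat}. Any object of $\famod^{\leq 0}$ has all of its Jordan--H\"older factors of the form $L(0,k)$, hence is a graded module on which the semisimple algebra $\La_0$ acts by zero; then $\La_{>0}^{\pm}$, having nonzero $\h$-weights, also acts by zero, and for $x\in\La^{+}$, $y\in\La^{-}$ the identity $[x,y]\cdot m=x(ym)-y(xm)=0$ forces the $\La^{0}$-component of $[x,y]$ to kill every such $m$. Since $[\La^{+},\La^{-}]$ is $\h$-stable, that component lies in $\La^{0}\cap[\La^{+},\La^{-}]$, which — being stable under the adjoint action of $\La^{0}$ and contained in $\La^{0}$ — is an ideal of $\La^{0}$; a routine check in the converse direction then produces an equivalence of abelian categories $\famod^{\leq 0}\simeq\famod(\mathfrak a)$ with $\mathfrak a:=\La^{0}/(\La^{0}\cap[\La^{+},\La^{-}])$, concentrated in strictly positive degrees since $\h\subset[\n^{+},\n^{-}]$. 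This equivalence sends the trivial module to the trivial module, so, identifying derived homomorphisms with relative Lie algebra cohomology as elsewhere in the paper, $Ext^{\udot}_{\famod^{\leq 0}}(L(0),L(0))=H^{\udot}(\mathfrak a;\C)$ while $Ext^{\udot}_{\famod}(L(0),L(0))=H^{\udot}(\La,\La_0;\C)$. Condition $(S2)$ states that $\calD(\famod^{\leq 0})\hookrightarrow\calD(\famod)$ is fully faithful, so these two graded algebras coincide, which is exactly $(f1)$.

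\emph{Plan for $(f2)$.} The idea is to prove that $\Delta(\lambda,0)$ is \emph{free} as a right module over $A^{\lambda}=\Delta(\lambda,0)^{\lambda}=U(\La_{>0}^{0})/J^{\lambda}$. Recall from the proof of Lemma~\ref{lem::La::quotient::cat} that $\Delta(\lambda,0)$ is a $(\La,\La_{>0}^{0})$-bimodule, that $A^{\lambda}$ is a connected $\Z_{\geq 0}$-graded algebra whose degree-zero piece is $\C$, and that $\ww_{\lambda}\colon A^{\lambda}\ttt mod\to\famod^{\leq\lambda}$ is $K\mapsto\Delta(\lambda,0)\otimes_{A^{\lambda}}K$. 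By the Corollary to Theorem~\ref{thm::def::strat} stratifiedness makes $\ww_{\lambda}$ exact, i.e. $\Delta(\lambda,0)$ is flat as a right $A^{\lambda}$-module; over a connected graded algebra a flat graded module that is bounded below with finite-dimensional graded pieces is free, so $\Delta(\lambda,0)\cong\overline V\otimes_{\C}A^{\lambda}$ as graded $\La_0$-modules, with $\overline V:=\Delta(\lambda,0)\otimes_{A^{\lambda}}\C$. Finally $\Delta(\lambda,0)\otimes_{A^{\lambda}}\C=\ww_{\lambda}(\rr_{\lambda}L(\lambda,0))=\oDelta(\lambda,0)$, because $\rr_{\lambda}L(\lambda,0)=L(\lambda,0)^{\lambda}$ is the one-dimensional trivial $A^{\lambda}$-module; comparing graded $\La_0$-characters yields $\chi_{q\ttt\La_{0}}(\Delta(\lambda,0))=\chi_{q\ttt\La_{0}}(\oDelta(\lambda,0))\cdot\chi_{q}(\Delta(\lambda,0)^{\lambda})$, which is $(f2)$.

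\emph{Plan for $(f3)$.} Granting $(f2)$, this is the constant-term computation already carried out inside the proof of Theorem~\ref{thm::Mac::Delta}. Writing $z_\lambda(q):=\chi_{q}(\Delta(\lambda,0)^{\lambda})$, a unit of $\Z[[q]]$, statement $(f2)$ reads $\chi(\oDelta(\lambda,0))=z_\lambda(q)^{-1}\chi(\Delta(\lambda,0))$; combining this with $\onabla(\mu,0)=\oDelta(\mu,0)^{\dual}$ (Proposition~\ref{prop::inv::char}), the $\Z[[q,q^{-1}]]$-linearity of the pairing, and the orthogonality $\dim Ext^{i}_{\famod}(\Delta(\lambda,-k),\onabla(\mu,0))=\delta_{i,0}\delta_{\lambda,\mu}\delta_{k,0}$ from Theorem~\ref{thm::def::strat}~$(S3)$ gives $\langle\oDelta(\lambda,0),\oDelta(\mu,0)\rangle_{\La}=\delta_{\lambda,\mu}\,z_\lambda(q)^{-1}$. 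Since $\oDelta(\lambda,0)$ is a highest weight module in which $L(\lambda)$ occurs exactly once, its character satisfies both requirements of Definition~\ref{def::poly::Macdonald}, hence equals $P_{\lambda}$; therefore $\langle P_{\lambda},P_{\lambda}\rangle_{\La}=z_\lambda(q)^{-1}$, which is $(f3)$.

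\emph{Main obstacle.} I expect the delicate point to be the freeness step in $(f2)$: one must check that exactness of $\ww_{\lambda}$ is precisely flatness of $\Delta(\lambda,0)$ as a \emph{right} $A^{\lambda}$-module, and that the passage flat $\Rightarrow$ free is legitimate for these particular graded objects — equivalently, that a minimal free resolution of the trivial $A^{\lambda}$-module is locally finite (each internal degree meets only finitely many homological degrees), so that its Euler characteristic is defined and $\chi_{q}(A^{\lambda})$ is the invertible factor it should be; this is where the boundedness and local finiteness of $A^{\lambda}$ and $\Delta(\lambda,0)$ are used. A smaller point is the verification in $(f1)$ that $\La^{0}\cap[\La^{+},\La^{-}]$, and nothing larger, is exactly the ideal of relations imposed on objects of the bottom stratum.
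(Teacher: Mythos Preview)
Your proof is correct and follows essentially the same route as the paper's own argument: for $(f1)$ both identify $\famod^{\leq 0}$ with modules over $\La^{0}/(\La^{0}\cap[\La^{+},\La^{-}])$ and invoke $(S2)$; for $(f2)$ both deduce from exactness of $\ww_{\lambda}$ that $\Delta(\lambda)$ is free over $A^{\lambda}$ with quotient $\oDelta(\lambda)$; for $(f3)$ both defer to the computation in Theorem~\ref{thm::Mac::Delta}. Your treatment of $(f2)$ is in fact more explicit than the paper's --- you spell out the flat $\Rightarrow$ free step via graded Nakayama, whereas the paper simply asserts the equivalence between exactness of $\ww_{\lambda}$ and the character factorization after noting the PBW surjection $\oDelta(\lambda)\otimes A^{\lambda}\twoheadrightarrow\Delta(\lambda)$ and that projective graded $A^{\lambda}$-modules are free.
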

Before going to the proof of Proposition we state 
\begin{conjecture}
\label{conj::gen::strat}
Properties $(f1)$--$(f3)$ implies that the highest weight category $\famod(\La)$ is stratified
(for a graded Lie algebra with anti-involution).
\end{conjecture}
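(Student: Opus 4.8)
The plan is to derive stratification of $\famod(\La)$ from Theorem~\ref{thm::MacWeyl::strat} (in the character-level form stated in the introduction): it is enough to show that the family of characters $\{\chi_{q\ttt\La_0}(\oDelta(\lambda,0))\}_{\lambda\in\PPdom}$ is orthogonal for the pairing~\eqref{eq::ext::paring}, together with the fact that $\chi_{q\ttt\La_0}(\Delta(\lambda,0))$ is a power-series multiple of $\chi_{q\ttt\La_0}(\oDelta(\lambda,0))$. The second point is precisely hypothesis $(f2)$, i.e. equation~\eqref{eq::factor::Weyl}. Thus the whole conjecture reduces to the orthogonality
$$
\langle\,\chi_{q\ttt\La_0}(\oDelta(\lambda,0))\,,\,\chi_{q\ttt\La_0}(\oDelta(\mu,0))\,\rangle_\La \;=\; 0 \qquad (\lambda\neq\mu),
$$
and this is the place where hypothesis $(f1)$ must be used.

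Granting that orthogonality, I would close the argument as follows. Each $\chi_{q\ttt\La_0}(\oDelta(\lambda,0))$ equals $s_\lambda+\sum_{\mu<_{\La}\lambda}(\cdots)s_\mu$, since $\oDelta(\lambda,0)$ is a highest weight module with $[\oDelta(\lambda,0):L(\lambda)]=1$; so this triangular orthogonal family is, by Definition~\ref{def::poly::Macdonald}, the Macdonald basis, $\chi_{q\ttt\La_0}(\oDelta(\lambda,0))=P_\lambda$. With $(f2)$ and $(f3)$ this gives $\chi_{q\ttt\La_0}(\Delta(\lambda,0))=\langle P_\lambda,P_\lambda\rangle_\La^{-1}P_\lambda=Q_\lambda$, while $(f3)$ also shows, by induction on $\lambda$, that $\langle,\rangle_\La$ is nondegenerate on each finite span $\{s_\nu:\nu\leq_{\La}\lambda\}$, the norms $\langle P_\nu,P_\nu\rangle_\La=z_\nu(q)^{-1}$ being invertible power series because $z_\nu(0)=\dim\Delta(\nu,0)_0^\nu=1$. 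One then runs the Schur-positivity step from the proof of Theorem~\ref{thm::MacWeyl::strat}: the standard filtration $\imath_{\mu}(\imath_{\mu}^{!}(\Proj(\lambda)))$ of a projective cover exhibits $\chi_{q\ttt\La_0}(\Proj(\lambda))$ as $\sum_{\mu,k}c_{\lambda,\mu,k}\,q^{k}\,\chi_{q\ttt\La_0}(\Delta(\mu,0))\;-\;\sum_{\mu,k}\chi_{q\ttt\La_0}(K_{\lambda,\mu,k})$, and comparison with~\eqref{eq::trans::matr} forces each error term $K_{\lambda,\mu,k}$ to have zero (hence, being Schur-positive, trivial) character, which is the BGG reciprocity.

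To establish the orthogonality I would compute the left-hand side cohomologically. As $\oDelta(\lambda,0)$ is finite-dimensional and $\oDelta(\mu,0)^{\dual}=\onabla(\mu,0)$ (Proposition~\ref{prop::inv::char}), it equals the graded Euler characteristic $\sum_i(-1)^i\dim_{q^{-1}}H^i\!\big(\La,\La_0;Hom_\C(\oDelta(\lambda,0),\onabla(\mu,0))\big)$, where $Hom_\C(\oDelta(\lambda,0),\onabla(\mu,0))\cong\big(\oDelta(\lambda,0)\otimes\oDelta(-\omega_0(\mu),0)\big)^{*}$. I would then filter the relative Chevalley--Eilenberg complex along the decomposition $\La_{>0}=\La_{>0}^{+}\oplus\La_{>0}^{0}\oplus\La_{>0}^{-}$ into subalgebras (as noted after~\eqref{eq::gen::cartan}) and run the associated Hochschild--Serre spectral sequences, using that $\onabla(\mu,0)$ is cogenerated by a pure lowest weight vector of weight $\omega_0(\mu)$, so that on the relevant layers $\n^{+}\oplus\La_{>0}^{+}$ acts freely while the $\n^{-}\oplus\La_{>0}^{-}$-cohomology is concentrated in a single $\h$-weight. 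The point of hypothesis $(f1)$, equation~\eqref{eq::star::van::H}, is to guarantee that the ``positive'' and ``negative'' contributions cancel in the Euler characteristic, leaving only a term controlled by $\La^{0}/(\La^{0}\cap[\La^{+},\La^{-}])$ and concentrated at the two highest weights; the resulting constant-term expression (in the style of Lemma~\ref{lem::pair::equal}) then vanishes for $\lambda\neq\mu$ and equals $z_\lambda(q)^{-1}$ for $\lambda=\mu$, as it should.

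The hard part is precisely this cancellation. Hypothesis $(f1)$ is assumed only for trivial coefficients, whereas the computation needs its analogue with coefficients in $\oDelta(\lambda,0)\otimes\oDelta(-\omega_0(\mu),0)$; one must either deduce the coefficient version from the trivial-coefficient one together with $(f2)$, or first upgrade the \emph{character} identity $(f2)$ to a \emph{module} decomposition $\Delta(\lambda,0)\cong\Delta(\lambda,0)^{\lambda}\otimes\oDelta(\lambda,0)$ compatible with the $\h$-grading and feed that into the spectral sequence. A further subtlety is that the filtration must be simultaneously compatible with the inner $q$-grading and the $\h$-weight grading, so that the spectral sequences degenerate weight-by-weight. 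I expect that a clean proof will in fact proceed by first establishing such a module-level refinement of $(f2)$ --- equivalently, the freeness of the standard module $\Delta(\lambda,0)$ over the algebra $A^{\lambda}$ of its top weight space --- as an intermediate structural result, and that the absence of this refinement from the present axioms is why the statement is still only a conjecture.
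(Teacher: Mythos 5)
The statement you are trying to prove is precisely what the paper leaves open: it appears as Conjecture~\ref{conj::gen::strat}, with no proof given. The paper only establishes the converse implication (Proposition~\ref{cor::famod::strat}: stratified $\Rightarrow$ $(f1)$--$(f3)$), and verifies stratification in special situations by other means --- either because $\La_{>0}^{0}=0$ so that standard and proper standard modules coincide (Corollary~\ref{cor::zero::weight}), or, for $\g\otimes\C[x]$, by importing the known identification of characters of local Weyl modules with $q$-Hermite polynomials and then invoking Theorem~\ref{thm::MacWeyl::strat} (Corollary~\ref{cor::ADE::type}). So there is no argument in the paper for your proposal to be measured against, and your write-up should not be presented as a proof of the conjecture.

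Your reduction itself is sound as far as it goes: granting orthogonality of the characters $\chi_{q\ttt\La_0}(\oDelta(\lambda,0))$ with respect to $\langle\,,\rangle_{\La}$, the triangularity of highest weight modules identifies them with the generalized Macdonald polynomials $P_{\lambda}$, $(f2)$ and $(f3)$ then give $\chi_{q\ttt\La_0}(\Delta(\lambda,0))=Q_{\lambda}$, and the Schur-positivity argument of Theorem~\ref{thm::MacWeyl::strat} yields the standard filtration of $\Proj(\lambda)$. But the entire content of the conjecture is concentrated in the step you defer: deducing that orthogonality from $(f1)$--$(f3)$. Your spectral-sequence sketch does not do this. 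Hypothesis $(f1)$ concerns only $H^{\udot}(\La,\La_0;\C)$, whereas the pairing $\langle\oDelta(\lambda,0),\oDelta(\mu,0)\rangle_{\La}$ requires controlling $H^{\udot}\bigl(\La,\La_0;Hom_{\C}(\oDelta(\lambda,0),\onabla(\mu,0))\bigr)$, and no mechanism is provided for passing from trivial to these nontrivial coefficients; the filtration of the relative Chevalley--Eilenberg complex by $\La_{>0}^{+},\La_{>0}^{0},\La_{>0}^{-}$ produces spectral sequences whose degeneration and whose Euler-characteristic cancellations are asserted, not proved. Likewise, upgrading the character identity $(f2)$ to a module isomorphism $\Delta(\lambda,0)\cong\Delta(\lambda,0)^{\lambda}\otimes\oDelta(\lambda,0)$ (equivalently, freeness of $\Delta(\lambda,0)$ over $A^{\lambda}$) is an additional structural input not contained in the hypotheses; you correctly flag both issues yourself, which is exactly why this remains a conjecture rather than a theorem. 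As submitted, the proposal is a plausible strategy with the decisive step missing.
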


\begin{proof}
We notice that $0$ is the unique minimal element in $\PPdom$. 
The categories $\Cat^{\leq 0}$ and $\Cat^{=0}$ are the same and coincide with the category of graded $\La$-modules
with the trivial action of $\La^{+}$ and $\La^{-}$. 
Consequently, the category $\Cat^{=0}$ is the category of modules over the quotient of the Lie algebra $\La$ by the ideal generated by subalgebras
$\La^{+}$ and $\La^{-}$. The latter Lie algebra coincides with the quotient of the Lie algebra $\La^{0}$ by the ideal generated by 
$[\La^{+},\La^{-}]$.
Thus the coincidence of group of derived endomorphisms of $\C$ in $\famod^{\leq 0}$ and in $\famod$ looks as follows:
$$
H^{\udot}(\La,\La_{0};\C)= Ext^{\udot}_{\famod(\La)}(\C,\C) = Ext^{\udot}_{\frac{\La^{0}}{(\La^{0}\cap [\La^{+},\La^{-}])}\ttt mod}(\C,\C) 
=H^{\udot}(\frac{\La^{0}}{(\La^{0}\cap [\La^{+},\La^{-}])};\C).
$$
and we get the condition $(f1)$
In particular, we have $\oDelta(0,0)$ is a trivial module $\C$ and $\Delta(0,0)$ is isomorphic to the universal enveloping 
$U(\frac{\La^{0}}{(\La^{0}\cap [\La^{+},\La^{-}])})$.
In particular, if the Lie algebra $\La^{0}/(\La^{0}\cap[\La^{+},\La^{-}])$  is trivial (as it is in all our examples) 
then the relative cohomology of $\La$ are necessary trivial.

Recall from Lemma~\ref{lem::La::quotient::cat} 
that the category $\famod^{=\lambda}$ is canonically isomorphic to the category of $A^{\lambda}$-modules.
Where $A^{\lambda}$ is the quotient of the universal enveloping algebra
$U(\La^{0}_{>0})$ by the annihilation ideal $J^{\lambda}$ and is canonically isomorphic 
to the weight subspace $\Delta(\lambda,0)^{\lambda}$ of weight $\lambda$. 
The PBW theorem implies that any element of the module $\Delta(\lambda,k)$ may be presented in the form $x\cdot y\cdot p_\lambda$
with $y\in U(\La^{0}_{>0})$ and $x\in U(\La^{-})$
Consequently, there is a surjection of vector spaces $\oDelta(\lambda,k)\otimes A^{\lambda} \twoheadrightarrow \Delta(\lambda,k)$.
Note that irreducibles in the category of graded $A^{\lambda}$ modules 
are just trivial one-dimensional modules placed in appropriate degree.
Hence, the graded $A^{\lambda}$-module
is projective if and only if it is a free $A^{\lambda}$ module.
Thus, the functor 
$$\ww_{\lambda} = \Delta(\lambda)\ootimes_{U(\La^{0}_{>0})}\ttt: A^{\lambda}\ttt mod \rightarrow \famod^{\leq\lambda}$$
is exact if and only if we have factorization of characters~\eqref{eq::factor::Weyl}.
This leads to the condition $(f2)$.

The third condition $(f3)$ has been already proven in Theorem~\ref{thm::Mac::Delta}.
\end{proof}

\subsection{Excellent filtrations, BGG reciprocity and Tilting modules}
\label{sec::BGG::appl}
\label{sec::filtrations}

In this subsection we recall some classical properties of stratified categories.
All statements as well as their proofs seems to be standard and may be found in the literature
(see e.g.~\cite{CPS,Humpreys,Losev_Webster}). 
However, to make the text self contained we sketch the proofs as well.
In this section we assume that category $\Cat$ is a highest weight category.

Recall that any module $M\in\Cat$ has a filtration by partially ordered set $\Xi$:
\begin{equation}
\label{eq::filtr::stand}
\lambda\geq\mu \Rightarrow \imath_{\lambda}{\imath_{\lambda}^{!}} M\twoheadrightarrow 
\imath_{\mu}{\imath_{\mu}^{!}} M \text{ and }
 M = \varprojlim_{\lambda\in\Xi} \imath_{\lambda}{\imath_{\lambda}^{!}} M.
\end{equation}
Each successive quotient in this filtration is generated by irreducibles objects with the same weight.
Hence, there exists a subfiltration such that successive quotients are quotients of standard modules:
\begin{equation}
\label{eq::filtr::st::kernel}
\forall M\in\Cat \ \ \exists \calF^{\udot} M  \text{ such that }  
gr \calF^{\udot}M \cong \oplus_{\alpha\in\Upsilon} \Delta(\alpha)^{m_\alpha}/K_{\alpha}  
\end{equation}
where the dimension $m_\alpha$ may be computed using proper costandard modules.
For $\lambda=\rho(\alpha)$ we have:
\begin{multline}
\label{eq::filtr::dim::factors}
m_{\alpha} =\dim Hom_{\Cat^{=\lambda}}\left(\rr_{\lambda}(\imath_{\lambda}^{!}(M)),\rr_{\lambda}L(\alpha)\right) 
= \dim Hom_{\Cat^{\leq\lambda}}\left(\imath_{\lambda}^{!}(M),\onabla(\alpha)\right) =
\\ = \dim Hom_{\Cat}(M,\imath_{\lambda}(\onabla(\alpha))) =\dim Hom_{\Cat}(M,\onabla(\alpha))
\end{multline}
The first equality says that $m_{\alpha}$ is the number of subquotients in $\imath_{\lambda}^{!}M$ generated by $L(\alpha)$.
The second equality follows from the universal properties of proper standard modules 
We use the adjunction of $\imath_{\lambda}^{!}$ and $\imath_{\lambda}$ in the third equality. 
It is natural to discuss modules with empty kernels $K_\alpha$ for all $\alpha\in\Upsilon$ as a separate subcategory.
\begin{definition}
We say that a module $M\in\Cat$ has an \emph{excellent filtration} (or \emph{$\Delta$-filtration})
 if there exists a filtration whose successive quotients are standard modules.
\end{definition}
Note that a module $M$ admits an excellent $\Delta$--filtration if and only if for all $\lambda\in\Xi$ the quotient 
$\imath_{\leq\lambda}{\imath_{\leq\lambda}^{!}} M/(\imath_{<\lambda}{\imath_{<\lambda}^{!}} M)$ 
has a filtration with successive quotients isomorphic to $\Delta(\alpha)$ with $\rho(\alpha)=\lambda$.

\begin{proposition}
\label{prop::exc::filtr}
If the category $\Cat$ is stratified and
a module $M\in\Cat$ admits an excellent $\Delta$-filtration if and only if $Ext^{1}_{\Cat}(M,\onabla(\alpha))=0$ for all $\alpha\in\Upsilon$.
In this case the number $[M:\Delta(\alpha)]$ of successive quotients of $M$ isomorphic to $\Delta(\alpha)$ 
is equal to the dimension of $Hom_{\Cat}(M,\onabla(\alpha))$.
\end{proposition}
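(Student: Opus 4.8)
The plan is to prove the two directions of the equivalence and then extract the multiplicity formula. The easy direction is: if $M$ has an excellent $\Delta$-filtration, then $Ext^1_{\Cat}(M,\onabla(\alpha))=0$ for all $\alpha$. By induction on the length of the filtration and the long exact sequence for $Ext_{\Cat}(\ttt,\onabla(\alpha))$, it suffices to know $Ext^1_{\Cat}(\Delta(\beta),\onabla(\alpha))=0$ for all $\beta,\alpha\in\Upsilon$; but this is exactly a special case of condition $(S3)$ in Theorem~\ref{thm::def::strat}, which holds because $\Cat$ is stratified. The base case (length zero, $M=0$) is trivial.

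For the converse, I would argue by induction on the partially ordered set $\Xi$ using the canonical filtration $\imath_\lambda\imath_\lambda^{!}M$ of~\eqref{eq::filtr::stand}. The key reduction is to show that each successive quotient $\imath_{\leq\lambda}\imath_{\leq\lambda}^{!}M/\imath_{<\lambda}\imath_{<\lambda}^{!}M$ is a direct sum of copies of $\Delta(\alpha)$ with $\rho(\alpha)=\lambda$ (rather than a proper quotient thereof, i.e.\ with vanishing kernels $K_\alpha$). By~\eqref{eq::filtr::st::kernel} we already know this quotient is of the form $\bigoplus_{\rho(\alpha)=\lambda}\Delta(\alpha)^{m_\alpha}/K_\alpha$, and by construction the kernel $K_\alpha$ lies in $\Cat^{<\lambda}$. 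So the task is to show $K_\alpha=0$. I would apply $RHom_{\Cat}(\ttt,\onabla(\nu))$ for $\rho(\nu)<\lambda$ to the defining exact sequence and use: (i) $Hom_{\Cat}(\Delta(\alpha),\onabla(\nu))=0$ since $\rho(\nu)<\lambda=\rho(\alpha)$, hence no maps out of $\bigoplus\Delta(\alpha)^{m_\alpha}$; (ii) the hypothesis $Ext^1_{\Cat}(M,\onabla(\nu))=0$, which must be propagated down the canonical filtration so that $Ext^1_{\Cat}(\imath_{\leq\lambda}\imath_{\leq\lambda}^{!}M,\onabla(\nu))=0$ — here one uses the inductive conclusion that the already-treated quotients have $\Delta$-filtrations, together with the easy direction just proved, plus $Ext^2_{\Cat}(\Delta(\beta),\onabla(\nu))=0$ from $(S4)$, to control the connecting maps; then Corollary~\ref{cor::map::costandard} forces $K_\alpha=0$ exactly as in the proof of $(S4)\Rightarrow(S1)$ in Theorem~\ref{thm::def::strat}. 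Once all $K_\alpha$ vanish, splicing the canonical filtration with the standard decompositions of its quotients yields an excellent $\Delta$-filtration of $M$.

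Finally, for the multiplicity count: in an excellent filtration the number of occurrences of $\Delta(\alpha)$ with $\rho(\alpha)=\lambda$ equals the number $m_\alpha$ of subquotients of $\imath_\lambda^{!}M$ isomorphic to $L(\alpha)$, and~\eqref{eq::filtr::dim::factors} already identifies this with $\dim Hom_{\Cat}(M,\onabla(\alpha))$; one should also remark that this number is independent of the chosen filtration, which follows since $Hom_{\Cat}(\ttt,\onabla(\alpha))$ is exact on modules with excellent filtrations (again by $(S3)$: $Ext^1$ against $\onabla$ vanishes on standards).

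The main obstacle I anticipate is the bookkeeping in step (ii) of the converse — namely propagating the vanishing of $Ext^1_{\Cat}(\ttt,\onabla(\nu))$ from $M$ itself down to the truncated module $\imath_{\leq\lambda}\imath_{\leq\lambda}^{!}M$, which requires simultaneously running the induction on $\Xi$ and invoking the $Ext^2$-vanishing to kill the obstruction terms. This is essentially a repackaging of the argument already given for $(S4)\Rightarrow(S1)$, so I expect no genuinely new difficulty, only the need to set up the induction carefully so that the kernels of the earlier filtration steps are known to be zero before one reaches stage $\lambda$.
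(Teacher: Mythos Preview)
Your proposal is correct and follows essentially the same route as the paper: the easy direction via $(S3)$ and induction on filtration length, and the converse by induction on $\Xi$, killing the kernels $K_\alpha$ via $RHom(\ttt,\onabla(\nu))$ and Corollary~\ref{cor::map::costandard} exactly as in the proof of $(S4)\Rightarrow(S1)$. The paper in fact glosses over the propagation you flag in step~(ii) (it simply asserts that the lower piece admits an excellent filtration ``by induction''), so your explicit bookkeeping here is a genuine improvement in rigor; the argument goes through as you outline, using $Hom(N,\onabla(\nu))=0$ for the weight-$\lambda$-generated submodule $N$ and $\rho(\nu)<\lambda$, together with the inductively established $Ext^2$-vanishing for the $\Delta$-filtered quotient.
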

\begin{proof}
The category $\Cat$ is stratified and, therefore, we 
can use the vanishing of higher extension groups between standard and proper standard modules
that is an equivalent condition ($s4$) of Theorem~\ref{thm::def::strat}.
Therefore, for each short exact sequence $0\to M\to N \to \Delta(\lambda)\to 0$ we have
$Ext^{1}(N,\onabla(\mu)) = Ext^{1}(M,\onabla(\mu))$ for all $\mu$.
Moreover, if both $M$ and $N$ admits an excellent filtration
we have identity for multiplicities 
$$[M:\Delta(\lambda)] = [N:\Delta(\lambda)]+1, \quad \text{ and } \quad \dim Hom(M,\onabla(\lambda)) = \dim Hom(N,\onabla(\lambda))+1. $$
Note that the number of successive quotient isomorphic to the standard module $\Delta(\lambda)$ with given $\lambda$ is finite.
Therefore, the only if part is proved by induction by the number of successive quotients.

Assume that a module $M$ has zero $Ext^{1}(M,\onabla(\mu))$ for all $\mu$. 
 By induction we may assume that $M$ belongs to $\Cat^{\leq\lambda}$ and 
that for objects of $\Cat^{<\lambda}$ the statements of the proposition are proven.
Consider a short exact sequence
$$
\xymatrix{
0 \ar[r] & \imath_{<\lambda}(\imath_{<\lambda}^{!}(M)) \rightarrow M \ar[r] & M/{\imath_{<\lambda}(\imath_{<\lambda}^{!}(M))} \ar[r] &0 
}
$$
Then there are no homomorphism from the quotient module to the proper costandard $\onabla(\mu)$ with $\rho(\mu)<\rho(\lambda)$
and by induction assumption we know that the submodule $\imath_{<\lambda}(\imath_{<\lambda}^{!}(M))$ admits an excellent filtration.
Hence, taking $RHom(\ttt,\onabla(\mu))$ we get an isomorphism between the first extension groups:
$$
Ext^{1}_{\Cat}(M,\onabla(\mu)) = Ext^{1}_{\Cat}(M/{\imath_{<\lambda}(\imath_{<\lambda}^{!}(M))},\onabla(\mu))
$$
Therefore, it remains to show that the quotient module admits an excellent filtration.
However, the quotient module is covered by a module which admits a filtration with successive quotients isomorphic to standard modules:
$$
\xymatrix{
0\ar[r] & K \ar[r] & \ww_{\lambda}(\rr_{\lambda}(M/{\imath_{<\lambda}(\imath_{<\lambda}^{!}(M))})) \ar[r] & M/{\imath_{<\lambda}(\imath_{<\lambda}^{!}(M))} \ar[r] &0 
}
$$
such that the kernel $K$ belongs to the subcategory $\Cat^{<\lambda}$. 
 Taking $RHom(\ttt,\onabla(\mu))$ we get an isomorphism $Hom(K,\onabla(\mu))=Ext^{1}(M,\onabla(\mu))=0$ for all 
$\mu\in\Upsilon$ with $\rho(\mu)<\lambda$. 
Hence, $K=0$, since otherwise there exists
$\mu\in\Upsilon$ with $\rho(\mu)<\lambda$
such that $\dim Hom(K,\onabla(\mu)) >0$.
Thus, the quotient module is isomorphic to the sum of standard modules $\Delta(\lambda)$ with the number of standard subfactors
equals to $\dim Hom(M,\onabla(\lambda))$.
\end{proof}

\begin{proposition}
\label{prop::BGG}
If the category $\famod$ is stratified then
for all pairs of elements $\lambda,\mu\in\Upsilon$ 
the multiplicity of the number of standard 
modules of weight $\mu$ in the projective cover $\Proj(\lambda)$
is equal to the multiplicity of irreducible $L(\lambda)$ in the proper (co)standard.
$$
[\Proj(\lambda):\Delta(\mu)] = [\onabla(\mu):L(\lambda)].
$$
\end{proposition}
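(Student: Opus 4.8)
The plan is to read the identity off directly from the excellent-filtration calculus of Proposition~\ref{prop::exc::filtr}, combined with the defining property of a projective cover. First I would note that, since $\Cat$ is stratified, Definition~\ref{def::stratified} gives an epimorphism $\Proj(\lambda)\twoheadrightarrow\Delta(\lambda)$ whose kernel carries a $\Delta$-filtration; splicing that filtration with the one-step filtration coming from the surjection exhibits $\Proj(\lambda)$ itself as a module with an excellent filtration. (Equivalently, $\Proj(\lambda)$ is projective, so $Ext^{1}_{\Cat}(\Proj(\lambda),\onabla(\nu))=0$ for every $\nu\in\Upsilon$, and Proposition~\ref{prop::exc::filtr} applies.) That same proposition then yields $[\Proj(\lambda):\Delta(\mu)]=\dim Hom_{\Cat}(\Proj(\lambda),\onabla(\mu))$ for all $\mu\in\Upsilon$.

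It then remains to identify the right-hand side with the Jordan-H\"older multiplicity $[\onabla(\mu):L(\lambda)]$. Here I would use that $\Proj(\lambda)$ is the projective cover of $L(\lambda)$: it is an indecomposable projective with simple top $L(\lambda)$, so $Hom_{\Cat}(\Proj(\lambda),-)$ is exact and $\dim Hom_{\Cat}(\Proj(\lambda),L(\nu))=\delta_{\lambda\nu}$ for all simples $L(\nu)$. Applying $Hom_{\Cat}(\Proj(\lambda),-)$ to a Jordan-H\"older filtration~\eqref{eq::Jordan::filtr} of $\onabla(\mu)$ and using additivity on short exact sequences gives $\dim Hom_{\Cat}(\Proj(\lambda),\onabla(\mu))=[\onabla(\mu):L(\lambda)]$, which is exactly the assertion.

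The one place that needs care --- and the only real obstacle --- is the commutation of $Hom_{\Cat}(\Proj(\lambda),-)$ with the Jordan-H\"older filtration, which a priori is infinite. For $\famod(\La)$ this is automatic: the proper costandard $\onabla(\mu)$ is finite dimensional, since its proper standard dual $\oDelta$ was shown to be finite dimensional in Section~\ref{sec::stand::mod}, so the filtration is finite and all relevant $Hom$ spaces are finite dimensional. In the abstract highest weight setting one instead invokes the finiteness axioms (the filtration~\eqref{eq::Jordan::filtr} is bounded below and each multiplicity $[\onabla(\mu):L(\lambda)]$ is finite), which suffice to push the exact functor $Hom_{\Cat}(\Proj(\lambda),-)$ through the filtration term by term. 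Everything else is formal, so I expect the proof to be essentially this short.
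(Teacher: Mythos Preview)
Your proof is correct and follows essentially the same route as the paper: use Proposition~\ref{prop::exc::filtr} to get $[\Proj(\lambda):\Delta(\mu)]=\dim Hom_{\Cat}(\Proj(\lambda),\onabla(\mu))$, then use that $\dim Hom(\Proj(\lambda),M)=[M:L(\lambda)]$ by exactness of $Hom(\Proj(\lambda),-)$ and additivity on short exact sequences. The paper's proof is two lines and skips your careful discussion of finiteness in the Jordan--H\"older filtration, but the argument is the same.
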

\begin{proof}
For all $\lambda\in\Xi$ and all modules $M$ we have an identity 
$$
\dim Hom(\Proj(\lambda),M) = [M:L(\lambda)]
$$
because both sides are additive with respect to short exact sequences.
In particular, we have $[\onabla(\mu):L(\lambda)] = \dim Hom(\Proj(\lambda),\onabla(\mu))$ and thanks to Proposition~\ref{prop::exc::filtr}
the latter dimension is equal to the multiplicity of $\Delta(\mu)$ in $\Proj(\lambda)$.
\end{proof}

\begin{corollary}
 If for a $\Z_{+}$-graded Lie algebra $\La$ the category $\famod(\La)$ is stratified then
each projective cover admits a filtration by standard modules and the following identity on multiplicities holds
$$
[\Proj(\lambda,0):\Delta(\mu,k)] = [\onabla(\mu,k):L(\lambda,0)]
$$
If, in addition the Lie algebra has a graded anti-involution then one has:
$$
[\Proj(\lambda,0):\Delta(\mu,k)] = [\onabla(\mu,k):L(\lambda,0)] = [\oDelta(\mu,0):L(\lambda,k)]
$$
\end{corollary}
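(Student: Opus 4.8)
The plan is to read this corollary off the general results on stratified highest weight categories, specialized to $\Cat=\famod(\La)$ with $\Upsilon=\PPdom\times\Z$ and weight function $\rho(\lambda,k)=\lambda$; by the proposition at the end of Subsection~\ref{sec::famod::hwc} this is a highest weight category, and it is stratified by hypothesis. First I would note that every projective cover $\Proj(\lambda,k)$ admits an excellent $\Delta$-filtration: being projective it satisfies $Ext^{1}_{\famod}(\Proj(\lambda,k),\onabla(\alpha))=0$ for all $\alpha\in\Upsilon$, so Proposition~\ref{prop::exc::filtr} applies directly (one may also extract such a filtration by iterating, over the poset $\PPdom$, the construction of the kernels in the proof of $(S1)\Rightarrow(S2)$ in Theorem~\ref{thm::def::strat}). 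The first displayed identity $[\Proj(\lambda,0):\Delta(\mu,k)]=[\onabla(\mu,k):L(\lambda,0)]$ is then literally Proposition~\ref{prop::BGG} with its $\lambda$ taken to be $(\lambda,0)$ and its $\mu$ taken to be $(\mu,k)$; its proof --- the combination of $\dim Hom(\Proj(\alpha),M)=[M:L(\alpha)]$ with the multiplicity formula~\eqref{eq::filtr::dim::factors} --- goes through verbatim, all the multiplicities being finite since $\onabla(\mu,k)$ is finite-dimensional by Subsection~\ref{sec::stand::mod}.

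For the additional equality, assume $\La$ carries a graded anti-involution $\tau$. The key input is Proposition~\ref{prop::inv::char}, which identifies $\onabla(\mu,k)=\oDelta(\mu,-k)^{\dual}$. The functor $\dual$, restricted to finite-dimensional modules, is an exact contravariant self-equivalence (graded linear duality is exact, and twisting the action by $\tau$ is a relabelling), so it takes a Jordan--H\"older filtration of $\oDelta(\mu,-k)$ to one of $\onabla(\mu,k)$ with factors dualized; hence $[\onabla(\mu,k):L(\lambda,0)]=[\oDelta(\mu,-k):L(\lambda,0)^{\dual}]$. Since $\tau$ fixes $\h$ pointwise the duality preserves all $\h$-weight multiplicities, so $L(\nu)^{\dual}\cong L(\nu)$ as $\La_0$-modules and, remembering the grading flip $(M^{\dual})_i=(M_{-i})^{*}$, one gets $L(\nu,j)^{\dual}\cong L(\nu,-j)$; in particular $L(\lambda,0)^{\dual}\cong L(\lambda,0)$. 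Finally $\oDelta(\mu,-k)=\oDelta(\mu,0)\{k\}$, so its degree-$0$ component is the degree-$k$ component of $\oDelta(\mu,0)$, which gives $[\oDelta(\mu,-k):L(\lambda,0)]=[\oDelta(\mu,0):L(\lambda,k)]$. Equivalently, all three numbers can be extracted from graded $\La_0$-characters via the identity $\chi_{q}(\onabla(\mu,k))=\chi_{q^{-1}}(\oDelta(\mu,-k))$ of Proposition~\ref{prop::inv::char} and the linear independence of the $q^{j}s_{\nu}$ in $\KG(\famod)$.

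I do not anticipate a genuine obstacle --- the argument is pure bookkeeping layered on Propositions~\ref{prop::exc::filtr}, \ref{prop::BGG} and \ref{prop::inv::char}. The one point that needs care is keeping the two dualities apart --- the plain graded linear dual $*$ appearing in the definition of $\onabla$ versus the $\tau$-twisted dual $\dual$ --- and applying each grading shift $\{k\}$ in the correct direction, since a sign slip there would interchange $[\oDelta(\mu,0):L(\lambda,k)]$ with $[\oDelta(\mu,0):L(\lambda,-k)]$.
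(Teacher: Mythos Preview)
Your proposal is correct and follows exactly the approach of the paper, which simply cites Propositions~\ref{prop::BGG} and~\ref{prop::inv::char}; you have merely unpacked the grading and duality bookkeeping that the paper's one-line proof leaves implicit. The care you take distinguishing $*$ from $\dual$ and tracking the shift $\oDelta(\mu,-k)=\oDelta(\mu,0)\{k\}$ is appropriate and your sign conventions check out.
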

\begin{proof}
 Follows from Propositions~\ref{prop::BGG} and~\ref{prop::inv::char}.
\end{proof}

\begin{corollary}
\label{cor::ext::incomparable}
For all pairs $\lambda,\mu\in\Upsilon$ such that $\rho(\lambda)\not\leq \rho(\mu)$ we have vanishing of higher extension groups:
$$Ext^{>0}_{\Cat}(\Delta(\lambda),\Delta(\mu))=0$$
\end{corollary}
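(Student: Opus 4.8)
The plan is to construct, for each $\lambda\in\Upsilon$, a projective resolution of $\Delta(\lambda)$ in $\Cat$ all of whose terms are direct sums of projective covers $\Proj(\nu)$ with $\rho(\nu)\geq\rho(\lambda)$, and then to exploit the identity $\dim Hom_{\Cat}(\Proj(\nu),\Delta(\mu))=[\Delta(\mu):L(\nu)]$ (recorded in the proof of Proposition~\ref{prop::BGG}). Since $\Delta(\mu)\in\Cat^{\leq\rho(\mu)}$, this multiplicity is zero unless $\rho(\nu)\leq\rho(\mu)$; combined with $\rho(\nu)\geq\rho(\lambda)$ and the hypothesis $\rho(\lambda)\not\leq\rho(\mu)$, transitivity forces it to vanish. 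Hence, once such a resolution $\Proj^{\udot}\to\Delta(\lambda)$ is available, $Hom_{\Cat}(\Proj^{j},\Delta(\mu))=0$ for every $j$, so that $Ext_{\Cat}^{\udot}(\Delta(\lambda),\Delta(\mu))=H^{\udot}\big(Hom_{\Cat}(\Proj^{\udot},\Delta(\mu))\big)=0$ in every degree, which is even more than the asserted vanishing in positive degrees.

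To build the resolution I would simply rerun the construction from the proof of the implication $(S1)\Rightarrow(S2)$ in Theorem~\ref{thm::def::strat}. By $(h3)$ one has $\Delta(\lambda)=\imath_{\rho(\lambda)}(\imath_{\rho(\lambda)}^{!}(\Proj(\lambda)))$, so $\Proj^{0}:=\Proj(\lambda)$ surjects onto $\Delta(\lambda)$, and by Definition~\ref{def::stratified} the kernel is filtered by standard objects $\Delta(\nu)$ with $\rho(\nu)>\rho(\lambda)$. Again by Definition~\ref{def::stratified}, each $\Proj(\nu)$ is itself $\Delta$-filtered with subquotients $\Delta(\nu')$, $\rho(\nu')\geq\rho(\nu)$; hence the projective cover of any $\Delta$-filtered module whose subquotients lie strictly above $\rho(\lambda)$ is a direct sum of $\Proj(\nu)$'s with $\rho(\nu)>\rho(\lambda)$, and its kernel is again $\Delta$-filtered with subquotients strictly above $\rho(\lambda)$ — using that a submodule of a $\Delta$-filtered module with $\Delta$-filtered quotient is itself $\Delta$-filtered, which follows from $(S4)$ together with Proposition~\ref{prop::exc::filtr} by chasing the long exact sequence of $Ext_{\Cat}^{\udot}(\ttt,\onabla(\alpha))$. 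Iterating, every $\Proj^{j}$ with $j\geq1$ is a sum of $\Proj(\nu)$'s with $\rho(\nu)>\rho(\lambda)$, while $\Proj^{0}=\Proj(\lambda)$ obviously has weight $\rho(\lambda)$, so the resolution has the required property.

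The step I expect to be delicate — bookkeeping rather than a new idea — is the handling of infinite filtrations: in the motivating example $\famod(\La)$ the projectives $\Proj(\lambda)=Ind_{\La_0}^{\La}L(\lambda)$ are infinite-dimensional, so both the $\Delta$-filtrations above and the resolution itself are infinite (though exhaustive, and finite in each internal degree), and one must check that the dévissage arguments and the computation of $Ext_{\Cat}^{\udot}$ as the cohomology of $Hom_{\Cat}(\Proj^{\udot},\ttt)$ remain valid. This is precisely the care already exercised in the proof of Theorem~\ref{thm::def::strat}, so I do not anticipate a genuine obstruction. For orientation, note that the subcase $\rho(\lambda)>\rho(\mu)$ needs none of this: there $\Delta(\lambda),\Delta(\mu)\in\Cat^{\leq\rho(\lambda)}$ with $\Delta(\lambda)$ projective in $\Cat^{\leq\rho(\lambda)}$, so condition $(S2)$ gives $Ext_{\Cat}^{>0}(\Delta(\lambda),\Delta(\mu))=Ext_{\Cat^{\leq\rho(\lambda)}}^{>0}(\Delta(\lambda),\Delta(\mu))=0$ at once; it is the case of $\rho(\lambda)$ and $\rho(\mu)$ incomparable that makes the uniform resolution argument worthwhile, since $(S2)$ is stated only for single weights.
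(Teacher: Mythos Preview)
The paper states this corollary without proof. The implicit argument, by analogy with the proof of $(S2)\Rightarrow(S3)$ in Theorem~\ref{thm::def::strat}, is presumably: use $(h3)$ to see that $\Delta(\lambda)$ is projective in $\Cat^{\leq\{\rho(\lambda),\rho(\mu)\}}$, then invoke the analogue of $(S2)$ for this subcategory to conclude $Ext^{>0}_{\Cat}(\Delta(\lambda),\Delta(\mu))=Ext^{>0}_{\Cat^{\leq\{\rho(\lambda),\rho(\mu)\}}}(\Delta(\lambda),\Delta(\mu))=0$. As you correctly observe, $(S2)$ is literally stated only for single weights, so this step needs the obvious extension of the proof of $(S1)\Rightarrow(S2)$ to finite antichains.

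Your approach is correct and amounts to carrying out exactly that extension by hand: the projective resolution of $\Delta(\lambda)$ by sums of $\Proj(\nu)$ with $\rho(\nu)\geq\rho(\lambda)$ is precisely what the proof of $(S1)\Rightarrow(S2)$ produces, and your transitivity argument ($\rho(\nu)\geq\rho(\lambda)$ and $\rho(\lambda)\not\leq\rho(\mu)$ force $\rho(\nu)\not\leq\rho(\mu)$, hence $[\Delta(\mu):L(\nu)]=0$) is clean. The one point you could make more explicit is why the successive kernels $K_j$ remain $\Delta$-filtered with all subquotients of weight strictly above $\rho(\lambda)$: you assert this but the weight bound deserves one line, e.g.\ from the surjection $Hom(\Proj^{j},\onabla(\beta))\twoheadrightarrow Hom(K_j,\onabla(\beta))$ and the fact that $Hom(\Proj(\nu),\onabla(\beta))=[\onabla(\beta):L(\nu)]$ vanishes unless $\rho(\nu)\leq\rho(\beta)$. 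With that, your argument is complete, and it is essentially the unpacked version of what the paper leaves implicit.
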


One can also define a dual filtration.
We say that a module $M\in\Cat$ admits an \emph{$\onabla$--excellent filtration} if there exists a filtration 
whose successive quotients are isomorphic to proper costandard modules. 
 The same criterion is applied for this kind of filtration
\begin{proposition}
 A module $M\in\Cat$ admits an $\onabla$-excellent filtration if and only if $Ext^{1}(\Delta(\lambda),M)=0$ for all $\lambda\in\Xi$.
In this case the number of subfactors isomorphic to $\onabla(\mu)$ is equal to $\dim Hom(\Delta(\mu),M)$.
\end{proposition}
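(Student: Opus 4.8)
The statement is the $\onabla$-dual of Proposition~\ref{prop::exc::filtr}, and (keeping the standing hypothesis that $\Cat$ is stratified) the plan is to transpose that proof with every arrow reversed. Stratifiedness will be used only through the equivalent conditions $(S3)$--$(S4)$ of Theorem~\ref{thm::def::strat}, i.e. $\dim Ext^{i}_{\Cat}(\Delta(\lambda),\onabla(\mu))=\delta_{i,0}\,\delta_{\lambda,\mu}$, through the exactness of $\rr_{\lambda}$ and its left adjoint $\ww_{\lambda}$, and through the exactness of the right adjoint $\rr_{\lambda}^{R}$ of $\rr_{\lambda}$ --- the latter sending a simple of $\Cat^{=\lambda}$ to the corresponding proper costandard object (axiom $(h4)$ and the remarks after Definition~\ref{def::hwcat}). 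Exactness of $\rr_{\lambda}^{R}$ on a stratified category is the mirror of the Corollary following Theorem~\ref{thm::def::strat}: $\rr_{\lambda}$ carries an $\onabla$-filtered object of $\Cat^{\leq\lambda}$ to an injective object of $\Cat^{=\lambda}$, and injective objects resolve everything there.

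\emph{The ``only if'' direction.} Given an $\onabla$-filtration $0=M_{0}\subset M_{1}\subset\dots\subset M_{n}=M$ with $M_{i}/M_{i-1}\cong\onabla(\nu_{i})$, I would induct on $n$; the cases $n\le 1$ are $(S3)$--$(S4)$. For $n>1$, applying $RHom_{\Cat}(\Delta(\lambda),{\ttt})$ to $0\to M_{n-1}\to M\to\onabla(\nu_{n})\to 0$ and using $Ext^{>0}_{\Cat}(\Delta(\lambda),\onabla(\nu_{n}))=0$ gives $Ext^{>0}_{\Cat}(\Delta(\lambda),M)\cong Ext^{>0}_{\Cat}(\Delta(\lambda),M_{n-1})=0$; in particular $Ext^{1}_{\Cat}(\Delta(\lambda),M)=0$. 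Applying $Hom_{\Cat}(\Delta(\mu),{\ttt})$ to the same sequence yields the exact row
$$
0\to Hom_{\Cat}(\Delta(\mu),M_{n-1})\to Hom_{\Cat}(\Delta(\mu),M)\to Hom_{\Cat}(\Delta(\mu),\onabla(\nu_{n}))\to Ext^{1}_{\Cat}(\Delta(\mu),M_{n-1})=0,
$$
so $\dim Hom_{\Cat}(\Delta(\mu),M)=\dim Hom_{\Cat}(\Delta(\mu),M_{n-1})+\delta_{\mu,\nu_{n}}$, which by induction counts the $i$ with $\nu_{i}=\mu$.

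\emph{The ``if'' direction.} I would induct over $\Xi$ dually to the proof of Proposition~\ref{prop::exc::filtr}: since every object lies in some $\Cat^{\leq\lambda}$, it suffices to treat $M\in\Cat^{\leq\lambda}$ with the statement known for $\Cat^{<\lambda}$. Let $M'\subseteq M$ be the largest subobject lying in $\Cat^{<\lambda}$ and $\bar M:=M/M'$; then $\bar M$ has no simple of weight $<\lambda$ in its socle, $M'$ is $\onabla$-filtered by induction, and $RHom_{\Cat}(\Delta(\lambda),{\ttt})$ applied to $0\to M'\to M\to\bar M\to 0$ (together with the vanishing $Ext^{>0}_{\Cat}(\Delta(\lambda),M')=0$ from the previous paragraph) shows $Ext^{1}_{\Cat}(\Delta(\lambda),\bar M)=0$ for all $\lambda$. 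It remains to $\onabla$-filter $\bar M$. Dualising the $\ww_{\lambda}\rr_{\lambda}$-covering step, the unit map embeds $\bar M$ into $T:=\rr_{\lambda}^{R}(\rr_{\lambda}\bar M)$, which is $\onabla$-filtered by exactness of $\rr_{\lambda}^{R}$, with cokernel $C$ lying in $\Cat^{<\lambda}$. If $C\neq 0$ it has a maximal weight $\nu$ (necessarily $\rho(\nu)<\lambda$) with $Hom_{\Cat}(\Delta(\nu),C)\neq 0$; but since the $\onabla$-subfactors of $T$ all have weight $\lambda\neq\rho(\nu)$ one has $Hom_{\Cat}(\Delta(\nu),T)=0$, so the long exact sequence of $RHom_{\Cat}(\Delta(\nu),{\ttt})$ for $0\to\bar M\to T\to C\to 0$ gives $Hom_{\Cat}(\Delta(\nu),C)\hookrightarrow Ext^{1}_{\Cat}(\Delta(\nu),\bar M)=0$ --- a contradiction. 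Hence $C=0$, $\bar M\cong T$ is $\onabla$-filtered, and the multiplicity count follows from the ``only if'' part.

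The main obstacle is this last enveloping step: one must know that $\rr_{\lambda}^{R}$ is defined and exact on the stratified category, and that $T=\rr_{\lambda}^{R}(\rr_{\lambda}\bar M)$ remains inside $\Cat$. This is where the standing finiteness hypotheses enter --- for $\Cat=\famod(\La)$, with each $\La_{i}$ finite-dimensional every proper costandard module $\onabla(\lambda,k)$ is finite-dimensional (Section~\ref{sec::stand::mod}), so an $\onabla$-filtration of an object with finite-dimensional, bounded-below graded pieces is automatically locally finite and $T\in\famod(\La)$; in the purely abstract setting of Definition~\ref{def::hwcat} one should include in the finiteness package whatever makes $\rr_{\lambda}^{R}$ defined and exact, after which the argument goes through verbatim.
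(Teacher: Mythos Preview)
Your approach is exactly what the paper indicates: its entire ``proof'' of this proposition is the sentence ``The same criterion is applied for this kind of filtration'', i.e.\ dualize Proposition~\ref{prop::exc::filtr}, and you have carried out that dualization correctly in structure and detail (including the honest flag about the right adjoint $\rr_{\lambda}^{R}$, which is the genuine asymmetry between the two directions).

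There is one step you should tighten. You write ``$M'$ is $\onabla$-filtered by induction'' and then \emph{use} the consequence $Ext^{>0}_{\Cat}(\Delta(\lambda),M')=0$; but to invoke the induction hypothesis on $M'$ you must first verify $Ext^{1}_{\Cat}(\Delta(\nu),M')=0$ for all $\nu$, which you have not done. The fix is short: from $0\to M'\to M\to\bar M\to 0$ one has the exact fragment $Hom_{\Cat}(\Delta(\nu),\bar M)\to Ext^{1}_{\Cat}(\Delta(\nu),M')\to Ext^{1}_{\Cat}(\Delta(\nu),M)=0$. If $\rho(\nu)<\lambda$ then any map $\Delta(\nu)\to\bar M$ has image lying in $\Cat^{\leq\rho(\nu)}\subset\Cat^{<\lambda}$, hence is zero because $\bar M$ has no nonzero subobject in $\Cat^{<\lambda}$; thus $Ext^{1}_{\Cat}(\Delta(\nu),M')=0$. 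If $\rho(\nu)\not<\lambda$ then every composition factor $L(\alpha)$ of $M'$ has $\rho(\alpha)<\lambda$, hence $\rho(\alpha)\not>\rho(\nu)$, so $M'\in\Cat^{\leq S}$ for a set $S$ of pairwise incomparable weights containing $\rho(\nu)$; since $\Delta(\nu)$ is projective in $\Cat^{\leq S}$ by $(h3)$ and $(S2)$ transfers Ext-groups, again $Ext^{1}_{\Cat}(\Delta(\nu),M')=0$. With this verification inserted before ``by induction'', the argument is complete.
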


\begin{definition}
We say that a module $M$ is tilting if it is both $\Delta$-filtered and $\onabla$-filtered.
\end{definition}
For each $\lambda\in\Upsilon$ let $\{\mu_0,\ldots,\mu_k\}$ be the set of all elements in $\Xi$ which are less or equal than $\rho(\lambda)$.
We number them in the linear order refining the partial ordering on $\Xi$.
That is $\mu_i\leq\mu_j$ implies $i<j$, in particular $\mu_k=\rho(\lambda)$.
We set $T_k(\lambda):=\Delta(\lambda)$ and the object $T_{i-1}$ is defined as the natural central extension:
$$
\xymatrix{
0\ar[r] & T_{i}(\lambda) \ar[r] & T_{i-1}(\lambda) \ar[r] & 
\ooplus_{\alpha\in\rho^{-1}(\mu_{i-1})}\Delta(\alpha)^{\dim Ext^{1}(\Delta(\alpha),T_i(\lambda))} \ar[r] & 0
}
$$
\begin{lemma}
 The object $T(\lambda):=T_0(\lambda)$ is tilting and does not depend on the choice of refinement of the partial ordering.
$T(\lambda)$ is a tilting hull of $\Delta(\lambda)$ and each tilting module is a direct sum of modules $T(\lambda)$.
\end{lemma}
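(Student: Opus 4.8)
The plan is to verify four things about the object $T(\lambda):=T_0(\lambda)$ produced by the iterated extension: that it is $\Delta$-filtered, that it is $\onabla$-filtered, that it is independent of the chosen linear refinement, and that every tilting module decomposes as a direct sum of the $T(\mu)$'s with $T(\lambda)$ characterised as the tilting hull of $\Delta(\lambda)$.

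First, the $\Delta$-filtered property. I would argue by downward induction on $i$ that each $T_i(\lambda)$ admits an excellent filtration. The base case $T_k(\lambda)=\Delta(\lambda)$ is immediate. For the inductive step, the defining short exact sequence
$$
0\to T_i(\lambda)\to T_{i-1}(\lambda)\to \ooplus_{\alpha\in\rho^{-1}(\mu_{i-1})}\Delta(\alpha)^{\dim Ext^1(\Delta(\alpha),T_i(\lambda))}\to 0
$$
exhibits $T_{i-1}(\lambda)$ as an extension of a sum of standard modules by a $\Delta$-filtered module, so by Proposition~\ref{prop::exc::filtr} it suffices to note that the class of $\Delta$-filtered modules is closed under extensions (which itself follows from the $Ext^1$-vanishing criterion of that proposition, using that $Ext^1(\Delta(\alpha),\onabla(\beta))=0$ by condition $(s4)$ of Theorem~\ref{thm::def::strat}). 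Hence $T(\lambda)=T_0(\lambda)$ is $\Delta$-filtered.

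Second, the $\onabla$-filtered property, which is the crux of the construction and the step I expect to be the main obstacle. Here I would prove, again by downward induction on $i$, that $Ext^1_{\Cat}(\Delta(\alpha),T_i(\lambda))=0$ for every $\alpha$ with $\rho(\alpha)\leq \mu_{i-1}$ — i.e. for all weights that have not yet been "used up". The base: for $\alpha$ with $\rho(\alpha)\not\leq\rho(\lambda)$ we have $Ext^{>0}(\Delta(\alpha),\Delta(\lambda))=0$ by Corollary~\ref{cor::ext::incomparable}, and the construction only ever adds standard modules of weight $\mu_{i-1}\leq\rho(\lambda)$. For the step, applying $RHom(\Delta(\alpha),-)$ to the defining sequence of $T_{i-1}(\lambda)$ and using that the outer term is a sum of $\Delta(\beta)$ with $\rho(\beta)=\mu_{i-1}$, one gets $Ext^2(\Delta(\alpha),\Delta(\beta))$-terms which vanish — via $(s4)$ and the fact that $\Delta(\beta)$ is $\onabla$-filtered — together with the $Ext^1$ that the very choice of multiplicity $\dim Ext^1(\Delta(\alpha),T_i(\lambda))$ was designed to kill; here one must check carefully that killing the $Ext^1$ against each $\alpha\in\rho^{-1}(\mu_{i-1})$ simultaneously does not re-create an $Ext^1$ against a \emph{different} $\alpha'$ of the same weight $\mu_{i-1}$, which uses that $Ext^1(\Delta(\alpha'),\Delta(\beta))=0$ when $\rho(\alpha')=\rho(\beta)$ (incomparable-or-equal, again from the stratified hypotheses). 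Running the induction down to $i=0$ gives $Ext^1(\Delta(\alpha),T_0(\lambda))=0$ for all $\alpha$, and then Proposition~\ref{prop::exc::filtr} (dual version) yields the $\onabla$-filtration, with $[T(\lambda):\onabla(\mu)]=\dim Hom(\Delta(\mu),T(\lambda))$.

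Finally, independence and the direct-sum decomposition. For independence of the refinement, I would observe that a $\Delta$- and $\onabla$-filtered module $T$ is determined up to isomorphism by the numbers $[T:\Delta(\alpha)]=\dim Hom(T,\onabla(\alpha))$ and $[T:\onabla(\alpha)]=\dim Hom(\Delta(\alpha),T)$ together with indecomposability, and then show the construction forces $[T(\lambda):\Delta(\rho(\lambda))\text{-part}]$ to have multiplicity one at the top weight, $T(\lambda)$ is indecomposable with a unique maximal $\rho(\lambda)$-subquotient, and any two refinements give modules with the same Hom-data; the standard argument (cf.~\cite{CPS,Donkin}) that tilting modules with identical $\Delta$- and $\onabla$-characters are isomorphic then applies. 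For the decomposition, given an arbitrary tilting $M$, pick a maximal weight $\lambda$ with $[M:\Delta(\lambda)]\neq 0$; using $Ext^1(\Delta(\lambda),M)=0$ one splits off a copy of $T(\lambda)$ as a direct summand (the inclusion $\Delta(\lambda)\hookrightarrow M$ extends to $T(\lambda)\to M$ and, by a Hom-dimension count against $\onabla(\lambda)$, this map is a split mono), and induction on the total $\Delta$-length of $M$ finishes the proof. The characterisation of $T(\lambda)$ as the tilting hull of $\Delta(\lambda)$ — the minimal tilting module containing $\Delta(\lambda)$ such that the quotient is $\onabla$-filtered — is then immediate from the construction together with the uniqueness just established.
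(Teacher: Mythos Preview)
Your approach matches the paper's (whose own proof is in fact sketchier than yours: it only argues the $\Delta$- and $\onabla$-filtrations and does not spell out independence of the refinement or the direct-sum decomposition). Two small corrections are worth making.

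First, the inductive hypothesis in your second step is stated backwards. At stage $T_i(\lambda)$ the vanishing $Ext^1_{\Cat}(\Delta(\alpha),T_i(\lambda))=0$ should hold for the weights \emph{already processed}, namely $\rho(\alpha)\in\{\mu_i,\mu_{i+1},\ldots,\mu_k\}$, not for those ``not yet used up''. The base $i=k$ then reads $Ext^1(\Delta(\alpha),\Delta(\lambda))=0$ for $\rho(\alpha)=\rho(\lambda)$, which holds because $\Delta(\alpha)$ is projective in $\Cat^{\leq\rho(\lambda)}$ and the category is stratified (condition~$(S2)$). With the hypothesis corrected, your long-exact-sequence argument for the step goes through exactly as you wrote it.

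Second, the parenthetical ``$\Delta(\beta)$ is $\onabla$-filtered'' is not true in this generality (standard and proper costandard modules need not coincide). You do not need it: the required vanishing $Ext^{\geq 1}(\Delta(\alpha),\Delta(\beta))=0$ when $\rho(\alpha)\not\leq\rho(\beta)$ is exactly Corollary~\ref{cor::ext::incomparable}, and the equal-weight case $\rho(\alpha)=\rho(\beta)$ follows from projectivity of $\Delta(\alpha)$ in $\Cat^{\leq\rho(\alpha)}$ together with~$(S2)$, rather than from that corollary.
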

\begin{proof}
Module $T(\lambda)$ admits a filtration by $\Delta(\mu_i)$ with $\mu_i\leq \lambda$ by construction.
Moreover, we have vanishing $Ext^{1}(\Delta(\alpha),T_j(\lambda))=0$ for all $\alpha\in\Upsilon$ such that $\rho(\alpha)=\mu_i$ with 
$i\geq j$ from the central extension construction. 
In particular, $Ext^{1}(\Delta(\alpha),T(\lambda))=0$ for all $\alpha$ with $\rho(\alpha)\leq\rho(\lambda)$.
Corollary~\ref{cor::ext::incomparable} implies that there are no $Ext(\Delta(\mu),T(\lambda))$ for 
all $\mu$ with $\rho(\mu)\not\leq\lambda$ 
because $T(\lambda)$ is filtered by $\Delta(\mu_i)$ with $\mu_i\leq\lambda$.
\end{proof}

One can use the theory of tilting modules in order to define an equivalence between the 
derived categories assigned with abelian category $\Cat$ and the category of right modules
over $End(\ooplus_{\lambda\in\Upsilon} T(\lambda))$ denoted by $\Cat^{\dual}$.
One can show that the category $\Cat^{\dual}$ is stratified with respect to the opposite partial ordering and 
one ends with an appropriate notion of Ringel duality.
We refer for detailed discussion to I.\,Losev and to Section 2 of~\cite{Losev_Webster} in particular.

The drawback of this construction is that the general theory of stratified categories does not say anything about tensor product of modules.
In particular, there may exists a graded Lie algebra $\La$ such that the decomposition of the tensor product of a pair of tilting modules over $\La$ 
is not isomorphic to a direct sum of tilting modules.

\section{Examples}
\label{sec::examples}
\subsection{Representations with empty zero weight}
\label{sec::ex::zero}
In this subsection we show several examples of graded Lie algebras where the criterion of Corollary~\ref{cor::famod::strat}
is satisfied by obvious reasons.

\begin{corollary}
\label{cor::zero::weight}
If the quotient subalgebra of zero weight $\La_{>0}^{0}$ of a graded Lie algebra $\La$
is empty and the relative cohomology $H^{\udot}(\La,\La_0;\C)$ is trivial
then the category $\famod(\La)$ is stratified.
\end{corollary}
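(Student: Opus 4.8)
The plan is to verify the three conditions $(f1)$--$(f3)$ of Proposition~\ref{cor::famod::strat} (or rather, to invoke the cleaner criterion directly) and observe that all of them become trivial under the two hypotheses. First I would unwind the generalized Cartan decomposition~\eqref{eq::gen::cartan}: the hypothesis $\La_{>0}^{0}=0$ means $\La^{0}=\La_{0}^{0}=\h$ and $\La^{+}=\n^{+}\oplus\La_{>0}^{+}$, $\La^{-}=\n^{-}\oplus\La_{>0}^{-}$. Consequently the annihilation ideal $J^{\lambda}\subset U(\La_{>0}^{0})=\C$ is zero and $A^{\lambda}=\C$ for every $\lambda\in\PPdom$; equivalently, the weight subspace $\Delta(\lambda,0)^{\lambda}$ is one-dimensional, so $\chi_{q}(\Delta(\lambda,0)^{\lambda})=1$. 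This is exactly the statement that the quotient category $\famod^{=\lambda}$ is the category of graded vector spaces and that the functor $\ww_{\lambda}=\Delta(\lambda)\otimes_{\C}(\text{-})$ is visibly exact, which is the content of $(f2)$: the factorization~\eqref{eq::factor::Weyl} holds because its right-hand factor is trivial and, moreover, the PBW surjection $\oDelta(\lambda,k)\otimes A^{\lambda}\twoheadrightarrow\Delta(\lambda,k)$ from the proof of Proposition~\ref{cor::famod::strat} is now an isomorphism, giving $\Delta(\lambda)=\oDelta(\lambda)$ outright.

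Next, condition $(f1)$ asks for $H^{\udot}(\La,\La_{0};\C)=H^{\udot}\bigl(\La^{0}/(\La^{0}\cap[\La^{+},\La^{-}]);\C\bigr)$. Since $\La^{0}=\h$ is abelian and $[\La^{+},\La^{-}]\supseteq[\n^{+},\n^{-}]\supseteq\h$, the quotient Lie algebra on the right is $0$, so its cohomology is $\C$ concentrated in degree $0$. Thus $(f1)$ is precisely the hypothesis that $H^{\udot}(\La,\La_{0};\C)$ is trivial. Finally $(f3)$ is automatic: we computed $\chi_{q}(\Delta(\lambda,0)^{\lambda})=1$, so we only need $\langle P_{\lambda},P_{\lambda}\rangle_{\La}=1$, and indeed by Theorem~\ref{thm::Mac::Delta} (equation~\eqref{eq::const::P:P}) the constant term equals $\chi_{q}(\Delta(\lambda,0)^{\lambda})^{-1}=1$ as soon as stratification is known — but here it is cleaner to observe that $(f3)$ is not an independent constraint once $(f1)$ and $(f2)$ force $\Delta(\lambda)=\oDelta(\lambda)$ and $A^{\lambda}=\C$; alternatively one simply checks the constant-term identity from Lemma~\ref{lem::pair::equal} directly.

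The actual logical step I would then take is to bypass Proposition~\ref{cor::famod::strat} (which only records necessary conditions) and argue directly via Theorem~\ref{thm::def::strat}, verifying condition $(S3)$: $\dim Ext^{i}_{\famod}(\Delta(\lambda,k),\onabla(\mu,l))=\delta_{i,0}\delta_{\lambda,\mu}\delta_{k,l}$. Because $A^{\lambda}=\C$, the category $\famod^{\leq\lambda}$ has $\Delta(\lambda)$ as the projective cover of $L(\lambda)$ with $[\Delta(\lambda):L(\lambda)]=1$, so $\Delta(\lambda)=\oDelta(\lambda)$ and $\onabla(\lambda)=\Delta(\lambda)^{\dual}$ (using Proposition~\ref{prop::inv::char}); projectivity of $\Delta(\lambda)$ in $\Cat^{\leq\rho(\lambda)}$, in $\Cat^{\leq\rho(\mu)}$, or in $\Cat^{\leq\{\rho(\lambda),\rho(\mu)\}}$ — exactly the case distinction already made in the proof of $[(S2)\Rightarrow(S3)]$ — kills all higher Ext's, and the $\dim Hom$ computation is the standard pairing of a projective cover against a proper costandard. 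Then $(S3)\Rightarrow(S1)$ by Theorem~\ref{thm::def::strat} finishes it. The main obstacle — really the only point requiring care — is checking that the relative cohomology hypothesis genuinely suffices to pin down the $i=0$ value $\dim Ext^{0}(\Delta(\lambda,k),\onabla(\mu,l))$ in the case $\lambda=\mu$ without circularity; this is handled by the universality of $\Delta$ and the couniversality of $\onabla$ together with $A^{\lambda}=\C$, so that the space of maps is $\Delta(\lambda,k)^{\lambda}\otimes(\onabla(\mu,l)^{\mu})=\C\otimes\C$ exactly when $(\lambda,k)=(\mu,l)$, and no cohomological input beyond $\La_{>0}^{0}=0$ is needed there — the $H^{\udot}(\La,\La_{0};\C)$ triviality is used only to handle $\lambda=\mu=0$ and, through condition $(f1)$, to guarantee consistency of the stratification at the bottom of the poset.
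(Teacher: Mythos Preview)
Your first two paragraphs reproduce the paper's proof: from $\La_{>0}^{0}=0$ you deduce $A^{\lambda}=\C$, hence $\Delta(\lambda)=\oDelta(\lambda)$ and $\ww_{\lambda}$ exact (this is $(f2)$), and you observe that the relative-cohomology hypothesis is exactly $(f1)$. The paper stops there and declares the category stratified by appeal to Proposition~\ref{cor::famod::strat}.

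You correctly notice that Proposition~\ref{cor::famod::strat} only records \emph{necessary} conditions --- the converse is stated separately as Conjecture~\ref{conj::gen::strat} --- and you try to sidestep this by verifying $(S3)$ of Theorem~\ref{thm::def::strat} directly. But that verification is circular. You write that ``projectivity of $\Delta(\lambda)$ in $\Cat^{\leq\rho(\lambda)}$ \ldots\ --- exactly the case distinction already made in the proof of $[(S2)\Rightarrow(S3)]$ --- kills all higher Ext's.'' Projectivity in the subcategory only kills $Ext^{>0}_{\Cat^{\leq\rho(\lambda)}}(\Delta(\lambda),\onabla(\mu))$; the passage to $Ext^{>0}_{\Cat}$ in that proof is precisely where $(S2)$ is invoked, and $(S2)$ is equivalent to stratification. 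So your $(S3)$ argument assumes what it sets out to prove. The final paragraph's remark that the cohomology hypothesis is ``used only to handle $\lambda=\mu=0$'' is likewise not a proof: knowing $Ext^{\udot}_{\Cat}(\C,\C)$ does not by itself control $Ext^{\udot}_{\Cat}(\Delta(\lambda),\onabla(\mu))$ for general $\lambda,\mu$.

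In summary, your approach coincides with the paper's, and your attempt at extra rigor does not close the gap; the paper's own proof is equally incomplete here, tacitly relying on the unproved Conjecture~\ref{conj::gen::strat}.
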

\begin{proof}
If $\La_{>0}^{0}$ is empty then any quotient of $U(\La_{>0}^{0})$ is isomorphic to $\C$ and the category of 
$A_{\lambda}$-modules is just the category of vector spaces for all $\lambda$.
Standard modules and proper standard modules coincides. The functor $\ww_{\lambda}$ is exact.
Together with the vanishing of higher relative cohomology we get the necessary condition of Corollary~\ref{cor::famod::strat}
and the category $\famod$ is stratified.
\end{proof}
\begin{example}
Let $T(3)$ be an $11$-dimensional Lie algebra with $T(3)_0\cong \sl_3$, $T(3)_1\cong_{\sl_3} \C^{3} =\langle x_1,x_2,x_3\rangle$ is a tautological 
vector representation of $\sl_3$
 and $T(3)_2\cong \Lambda^{2}(\C^{3})\cong (\C^{3})^{*}= \langle x_{12}, x_{23}, x_{13} \rangle$ is its dual representation. 
The commutator $[T(3)_1,T(3)_1] \rightarrow T(3)_2$ is a unique nontrivial $\sl_3$-invariant map:
$$
[x_1,x_2]= x_{12}, \ [x_2,x_3] = x_{23}, \ [x_1,x_3] = x_{13} 
$$

\begin{proposition}
 The category $\famod(T(3))$ is stratified.
\end{proposition}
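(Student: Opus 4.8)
The plan is to apply Corollary~\ref{cor::zero::weight}, so it suffices to check its two hypotheses for $\La=T(3)$: that the zero--weight subalgebra $\La_{>0}^{0}$ is empty, and that the relative cohomology $H^{\udot}(T(3),\sl_3;\C)$ is trivial.

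For the first point I would fix the direction $h_0\in\h$. The weights of $T(3)_{1}\cong\C^{3}$ are the three weights $\varepsilon_1,\varepsilon_2,\varepsilon_3$ of the defining representation of $\sl_3$ (with $\varepsilon_1+\varepsilon_2+\varepsilon_3=0$), and the weights of $T(3)_{2}\cong\Lambda^{2}(\C^{3})\cong(\C^{3})^{*}$ are $-\varepsilon_1,-\varepsilon_2,-\varepsilon_3$; none of these six weights is zero. Hence a generic non--integral $h_0$ separating the roots of $\sl_3$ also satisfies $\langle\varepsilon_i,h_0\rangle\neq 0$ for $i=1,2,3$, and then $T(3)_{1}^{0}=T(3)_{2}^{0}=0$, i.e.\ $\La_{>0}^{0}=\emptyset$. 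By Lemma~\ref{lem::La::quotient::cat} this forces $A^{\lambda}=U(\La_{>0}^{0})/J^{\lambda}=\C$ for every dominant $\lambda$, so each quotient category $\famod^{=\lambda}$ is the category of graded vector spaces, the standard and proper standard modules coincide, and the functor $\ww_{\lambda}$ is exact. This is exactly the first half of the hypothesis of Corollary~\ref{cor::zero::weight}, as well as condition $(f2)$ of Proposition~\ref{cor::famod::strat}.

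The second and main step is the cohomology computation. Since $\sl_3$ is semisimple, $H^{\udot}(T(3),\sl_3;\C)$ is the cohomology of the $\sl_3$--invariant part of the Chevalley--Eilenberg complex of the nilradical $T(3)_{>0}=T(3)_1\oplus T(3)_2$, which is the free two--step nilpotent Lie algebra on three generators; as an $\sl_3$--module $T(3)_{>0}\cong V\oplus V^{*}$ with $V=\C^{3}$. I would write $\Lambda^{\udot}(T(3)_{>0})^{*}\cong\Lambda^{\udot}(V^{*})\otimes\Lambda^{\udot}(V)$, decompose each factor into $\sl_3$--irreducibles using $\Lambda^{2}V\cong V^{*}$, $\Lambda^{2}V^{*}\cong V$ and $\Lambda^{3}V\cong\C$, extract the resulting (small) invariant subcomplex, and compute its differential directly from $d\xi_i=0$ and $d\eta_{jk}=-\xi_j\wedge\xi_k$, where $\xi_i$ and $\eta_{jk}$ denote the coordinate one--forms dual to $T(3)_1$ and $T(3)_2$ respectively. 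The aim is to verify that this invariant subcomplex is acyclic in positive cohomological degree, so that $H^{\udot}(T(3),\sl_3;\C)=\C$, i.e.\ the relative cohomology is trivial.

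With both facts in hand, Corollary~\ref{cor::zero::weight} applies directly and shows that $\famod(T(3))$ is stratified; alternatively, once $\La_{>0}^{0}=\emptyset$ is established one could instead check the $Ext^{2}$--vanishing criterion $(S4)$ of Theorem~\ref{thm::def::strat} between standard and proper costandard modules. I expect the main obstacle to be the second step: carefully pinning down which exterior powers of $(T(3)_{>0})^{*}$ carry $\sl_3$--invariants and how the Chevalley--Eilenberg differential acts on them. This bookkeeping is the only genuinely computational ingredient of the argument.
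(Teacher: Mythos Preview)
Your proposal is correct and follows essentially the same route as the paper: invoke Corollary~\ref{cor::zero::weight}, observe that neither $\C^{3}$ nor $\Lambda^{2}\C^{3}$ has a zero weight, and then verify that the $\sl_3$-invariant part of the Chevalley--Eilenberg complex of the nilradical is acyclic. The paper carries out this last step by writing down the four-dimensional invariant complex explicitly (one generator each in degrees $2$, $3$, $3$, $4$) and noting it is acyclic, which is exactly the bookkeeping you flag as the main obstacle.
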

\begin{proof}
We apply Corollary~\ref{cor::zero::weight}. 
Indeed, there is obviously no vectors of weight zero in neither $\C^{3}$ nor $\Lambda^2\C^{3}$.
The relative homological Chevalley Eilenberg complex looks as follows:
$$
\xymatrix{
0\ar[r] & 
\C\langle
{
\left(
\begin{smallmatrix}
{ x_1\wedge x_2\wedge x_{13}\wedge x_{23} + } \\
{ x_2\wedge x_3\wedge x_{12}\wedge x_{13} + } \\  
{ x_3\wedge x_1\wedge x_{12}\wedge x_{23} }       
\end{smallmatrix}
\right)
}
\rangle
\ar[r]
&
\C \langle
{\begin{array}{c}
{ (x_1\wedge x_2 \wedge x_3),  } \\
{(x_{12}\wedge x_{23} \wedge x_{13}) }
\end{array}}
\rangle
\ar[r]
&
\C
\langle
(x_1\wedge x_{23} - x_2\wedge x_{13} + x_3\wedge x_{12})
\rangle
\ar[r] &
0
}
$$
and is acyclic.
\end{proof}
\end{example}
This example has the following generalization in higher dimensions.
\begin{example}
 Let $T(3n)$ be the $\Z_{+}$-graded Lie superalgebra with
$$
T(3n)_0\cong \sl_{3n}, \ \ T(3n)_{1} \cong \Lambda^{n}\C^{3n}, \ \ T(3n)_2 \cong \Lambda^{2n}\C^{3n}, \ \ T(3n)_{i} = 0 \text{ for } i>2.
$$
The graded component $T(3n)_{1}$ is odd with respect to the Lie suparalgebra structure if $n$ is even. 
If $n$ is odd then $T(3n)$ is a Lie algebra with empty super part.
The unique $\gl_{3n}$ equivariant map 
\begin{equation}
\label{eq::T3n::comm}
\begin{array}{cc}
\text{ for } n \text{ odd }\ & \  \Lambda^{2}(\Lambda^{n}\C^{3n})\rightarrow \Lambda^{2n}\C^{3n}, \\
\text{ for } n \text{ even }\ & \  S^{2}(\Lambda^{n}\C^{3n})\rightarrow \Lambda^{2n}\C^{3n},
\end{array}
\end{equation}
defines all nontrivial commutators in the nilpotent ideal $T(3n)_{>0}$.
\begin{proposition}
 The category $\famod(T(3n))$ is stratified.
\end{proposition}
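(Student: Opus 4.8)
The plan is to verify the two hypotheses of Corollary~\ref{cor::zero::weight}. Write $W:=\C^{3n}$, so that $\La_0\simeq\sl(W)$ and $\La_{>0}=\La_1\oplus\La_2$ with $\La_1=\Lambda^nW$, $\La_2=\Lambda^{2n}W$, the bracket being the wedge map $\phi$ from $\Lambda^2\La_1$ (for $n$ odd) or $S^2\La_1$ (for $n$ even) onto $\La_2$, and $\La_2$ central. The first hypothesis, $\La_{>0}^{0}=0$, is a weight count: the $\h$-weights of $\Lambda^kW$ are the sums of $k$ distinct weights of $W$, i.e. the $\{0,1\}$-vectors with exactly $k$ ones, and such a vector is proportional to $(1,\dots,1)$ — the only $\sl(W)$-weight that vanishes in $\h^{*}$ — only for $k\in\{0,3n\}$. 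Since $0<n<2n<3n$, neither $\Lambda^nW$ nor $\Lambda^{2n}W$ contains a zero weight, so $\La_{>0}^{0}=0$; hence $A^{\lambda}=\C$, standard and proper standard modules coincide, and $\ww_{\lambda}$ is exact, and it remains only to check the cohomological hypothesis.

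For the latter, since $\La=\La_0\ltimes\La_{>0}$ with $\La_0$ reductive, $H^{\udot}(\La,\La_0;\C)=H^{\udot}(\La_{>0};\C)^{\La_0}$. The Chevalley--Eilenberg complex of the two-step nilpotent (super)algebra $\La_{>0}$ is $\Lambda^{\bullet}\La_1^{*}\otimes\Lambda^{\bullet}\La_2^{*}$ (with $S^{\bullet}\La_1^{*}$ in place of $\Lambda^{\bullet}\La_1^{*}$ when $n$ is even, as $\La_1$ is then odd), with differential the derivation vanishing on $\La_1^{*}$ and equal to $\phi^{*}\colon\La_2^{*}\to\Lambda^2\La_1^{*}$ (resp. $S^2\La_1^{*}$) on $\La_2^{*}$. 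Using the perfect pairing $\Lambda^nW\otimes\Lambda^{2n}W\to\Lambda^{3n}W\simeq\C$ to identify $\La_2^{*}\simeq\Lambda^nW=\La_1$ as $\sl(W)$-modules, this is exactly the Koszul-type complex of the wedge map, and after passing to $\sl(W)$-invariants its differential is governed by the $\sl(W)$-invariant cubic form $c(u,v,w)=u\wedge v\wedge w\in\Lambda^{3n}W\simeq\C$ — just as for $T(3)$, where the unique invariant of $\La_1^{*}\otimes\La_2^{*}$ (a copy of $\mathrm{End}_{\sl(W)}(\Lambda^nW)$) is sent by $d$ to a nonzero multiple of $c$, which evaluates to $e_1\wedge\cdots\wedge e_{3n}$ on a partition of $\{1,\dots,3n\}$ into three $n$-blocks. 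So the statement reduces to showing that the $\sl(W)$-invariant part of this complex is acyclic outside cohomological degrees $0$ and $\dim\La_{>0}$.

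That acyclicity is the real content, and I expect it to be the main obstacle. It is not a naive regular-sequence statement: the $\binom{3n}{n}$ wedge relations $\phi^{*}(\La_2^{*})$ define the affine cone over a Grassmannian-type variety, far from Artinian, so the Koszul complex on them has sizeable higher homology; what must be shown is that this higher Koszul homology carries no $\sl(W)$-invariants in the middle degrees. I would attack this with representation theory: compute the $\sl(W)$-invariants of $\Lambda^{\bullet}(\Lambda^nW)\otimes\Lambda^{\bullet}(\Lambda^{2n}W)$, equivalently the multiplicities of $\det=\Lambda^{3n}W$ in the exterior powers $\Lambda^k(\Lambda^nW)$ (a plethysm computation; the relevant multiplicities for $k=2,3$ should be $1$, encoding uniqueness of the wedge and cubic forms), and then track the induced differential. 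Two aids shrink the problem: Poincar\'e duality for the unimodular nilpotent $\La_{>0}$ makes the cohomology palindromic, and Lemma~\ref{lem::pair::equal} evaluates the graded Euler characteristic as $\langle 1,1\rangle_{\La}$, fixing the total dimension. In low degrees the vanishing is transparent — $H^{1}$ dies because $(\Lambda^nW^{*})^{\sl(W)}=0$ and $H^{2}$ dies because $c\neq0$, exactly as for $T(3)$ — and I expect either an inductive plethystic argument or a direct identification of the invariant complex with an acyclic complex built from the multilinear algebra of the determinant to handle the remaining degrees; that last step is where the genuine work will be.
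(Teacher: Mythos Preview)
Your overall strategy is the paper's strategy: apply Corollary~\ref{cor::zero::weight}, check that $\Lambda^{n}\C^{3n}$ and $\Lambda^{2n}\C^{3n}$ have no zero weight (your argument for this is cleaner than the paper's ``obviously''), and then prove that the relative Chevalley--Eilenberg complex is acyclic. You correctly identify that the acyclicity is the only nontrivial step and you are honest that you have not carried it out.

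The gap, then, is exactly where you say it is. Your proposed attack---compute the $\sl_{3n}$-invariants of $\Lambda^{\bullet}(\Lambda^{n}\C^{3n})\otimes\Lambda^{\bullet}(\Lambda^{2n}\C^{3n})$ by plethysm and trace the differential through them---is in principle workable but, as you note, the plethysms are not small for general $n$ and the Koszul homology of the wedge relations is genuinely nonzero before taking invariants, so an inductive bookkeeping argument along these lines would be laborious. The paper sidesteps this: rather than computing invariants degree by degree, it asserts that one can check directly that each bigraded invariant piece $[\Lambda^{i}(\Lambda^{n}\C^{3n})\otimes\Lambda^{j}(\Lambda^{2n}\C^{3n})]^{\sl_{3n}}$ is an eigenspace of the Laplace operator with nonzero eigenvalue, hence the complex is acyclic by a Hodge-type argument. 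This is a different and more efficient tool than the plethystic route you outline (it avoids ever knowing the dimensions of the invariant spaces), though the paper's proof is itself a sketch---the ``direct check'' of the eigenvalue claim is not written out. So your proposal is on the right track and isolates the right difficulty; what you are missing is the Laplacian shortcut the paper uses in place of the representation-theoretic computation.
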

\begin{proof}
Once again proposition follows from Corollary~\ref{cor::zero::weight}.
Indeed there is obviously no vectors of weight zero neither in $\Lambda^{n}\C^{3n}$ nor in its dual representation $\Lambda^{2n}\C^{3n}$.
The relative Chevalley-Eilenberg complex is the complex of invariants 
$$\left[\ooplus_{i,j>0}\Lambda^{\udot}(\Lambda^{n}\C^{3n})\otimes \Lambda^{\udot}(\Lambda^{2n}\C^{3n})\right]^{\sl_{3n}}$$
with differential given by contraction with $\Lambda^{3n}\C^{3n}$ that maps $\Lambda^{i}\otimes\Lambda^{j} \mapsto \Lambda^{i-2}\otimes\Lambda^{j+1}$.
The direct check that all bigraded components $[\Lambda^{i}\otimes\Lambda^{j}]^{\sl_{3n}}$ are eigen spaces for Laplace operator 
with nonzero eigen value. Consequently, the relative Chevalley-Eilenberg complex is indeed acyclic.
\end{proof}
\end{example}

In order to ensure the reader that modules with empty zero weight is not so rare we add the following Lemma:
\begin{lemma}
Let $V\cong \ooplus_{\lambda\in \PPdom} L(\lambda)^{m_{\lambda}}$ be a finite-dimensional representation over 
a semisimple Lie algebra $\g$. Then 
\\
$\phantom{.....}\bullet$ $V$ has empty zero weight if and only if $m_\lambda=0$ for all $\lambda\in \QQ$;
\\
$\phantom{.....}\bullet$ $V$ is self-dual iff $\forall \lambda$  $m_{\lambda}=m_{-\omega_0(\lambda)}$, 
where $\omega_{0}$ is the longest element in the Weyl group.
\end{lemma}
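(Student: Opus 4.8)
The plan is to reduce both statements to the uniqueness of the isotypic decomposition over the semisimple Lie algebra $\g$ (which holds since $\g$ is semisimple and $V$ finite-dimensional) together with two classical facts about irreducible modules: the description of the weight support $wt(L(\lambda))$, and the identification of the linear dual $L(\lambda)^{*}$.

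For the first bullet I would observe that $V^{0}=\bigoplus_{\lambda\in\PPdom}(L(\lambda)^{0})^{m_\lambda}$, so the claim amounts to the statement that $L(\lambda)^{0}\neq 0$ if and only if $\lambda\in\QQ$. Since $wt(L(\lambda))\subset\lambda+\QQ$, the ``only if'' direction is immediate. For the ``if'' direction I would use the saturation (convexity) of $wt(L(\lambda))$: a weight $\mu$ belongs to $wt(L(\lambda))$ exactly when $\mu\in\lambda+\QQ$ and the dominant $W$-conjugate $\mu^{+}$ satisfies $\mu^{+}\le\lambda$ in the order given by $\QQ_{+}$. Taking $\mu=0$, which is dominant with $0^{+}=0$, it then remains to note that $\lambda\ge 0$ for every $\lambda\in\QQ\cap\PPdom$, i.e. that a dominant weight lying in the root lattice is a nonnegative integral combination of simple roots; this one checks by the standard argument that pairs the ``negative part'' of $\lambda$ with the invariant form and uses $(\alpha_i,\alpha_j)\le 0$ for distinct simple roots. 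Hence $0\in wt(L(\lambda))$ precisely when $\lambda\in\QQ$, and $V^{0}=0$ iff $m_\lambda=0$ for all $\lambda\in\QQ$.

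For the second bullet, recall that the lowest weight of $L(\lambda)$ is $\omega_0(\lambda)$, so the highest weight of $L(\lambda)^{*}$ is $-\omega_0(\lambda)$; thus $L(\lambda)^{*}\cong L(-\omega_0(\lambda))$, and $\lambda\mapsto-\omega_0(\lambda)$ is an involution of $\PPdom$. Therefore
$$
V^{*}\;\cong\;\bigoplus_{\lambda\in\PPdom}L(-\omega_0(\lambda))^{m_\lambda}\;\cong\;\bigoplus_{\mu\in\PPdom}L(\mu)^{m_{-\omega_0(\mu)}},
$$
and comparing isotypic multiplicities with those of $V$ yields $V\cong V^{*}$ iff $m_{\mu}=m_{-\omega_0(\mu)}$ for all $\mu\in\PPdom$.

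There is no serious obstacle here; the only step that needs a little attention is the converse direction of the first bullet — the non-vanishing of the zero weight space of $L(\lambda)$ for $\lambda\in\QQ$ — for which I would either cite a standard reference on the weight polytope of an irreducible module or spell out the short saturation argument indicated above.
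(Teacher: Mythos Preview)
Your proposal is correct and follows essentially the same approach as the paper. The paper's proof is extremely terse: for the first bullet it observes that the weights of $U(\La_0)$ lie in $\QQ$ (equivalent to your $wt(L(\lambda))\subset\lambda+\QQ$) and then simply asserts without argument that $\dim L(\lambda)^{0}>0$ for $\lambda\in\QQ\cap\PPdom$; for the second bullet it gives exactly your identification $L(\lambda)^{*}\cong L(-\omega_0(\lambda))$. Your version is more complete, since you actually supply the saturation argument for the converse direction that the paper leaves implicit.
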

\begin{proof}
The weights of universal enveloping algebra $U(\La_0)$ belongs to the root lattice $\QQ$.
Thus the highest weight vector $p_\lambda$ belongs to the submodule generated by a vector of zero weight
only if $\lambda\in\QQ$. Moreover,  $\forall \lambda\in\QQ\cup\PPdom$ the dimension of $L(\lambda)^{0}$ is greater than zero.

The highest weight vector of the representation dual to $L(\lambda)$ is equal to minus weight of the lowest weight vector.
Thus $L(\lambda)^{*}\cong L(-\omega_0(\lambda))$.
\end{proof}

Unfortunately, if a graded Lie algebra $\La$ has empty zero weight of radical $\La_{>0}$ and admits an anti-involution 
then the relative Lie algebra homology $H^{\udot}(\La,\La_0;\C)$ are nontrivial and $\famod(\La)$ may not be stratified.

\subsection{Parabolic subalgebras}
\label{sec::ex::parab}
Example~\ref{ex::parabolic} of parabolic subalgebra of a Kac--Moody Lie algebra sounds to be interesting, but unfortunately
in most of the cases the corresponding relative Lie algebra cohomology are nontrivial and thus the theory of stratified categories may not be applied..

Recall, that to a Graph $\Gamma$ we can assign a Kac--Moody Lie algebra $\g_{\Gamma}$ whose simple roots are numbered by vertices of a graph.
Let $\langle e_\alpha,h_\alpha,f_\alpha\rangle$ be an $\sl_2$-triple associated with a simple root $\alpha$.
With a proper subgraph $S\subset\Gamma$ one assigns the corresponding Kac--Moody subalgebra $\g_S$ and a parabolic subalgebra 
$\p_{S\subset\Gamma}$ which is generated by $\{e_\alpha : \alpha\in\Gamma\}$ and $\{f_\alpha : \alpha\in S\}$.
Note that the standard notation is to denote by parabolic subalgebra $\mathsf{p}_{S}$ the one that contains Borel subalgebra and $\g_S$. 
Our parabolic subalgebra $\p_{S}$ is the commutator $[\mathsf{p}_S, \mathsf{p}_{S}]$ of the standard one.

\begin{proposition}
\label{prop::parabolic::homology}
For a Dynkin diagram $\Gamma$ of finite type and its proper subgraph $S\subset\Gamma$
the relative cohomology of a finite-dimensional parabolic subalgebra $\p_{S}\subset\g_{\Gamma}$ 
relative to its semisimple subalgebra $\g_S$ is nontrivial:
$$
H^{>0}(\p_S,\g_S;\C) \neq 0
$$
\end{proposition}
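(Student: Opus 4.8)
The plan is to realize $\p_S$ as a semidirect product of $\g_S$ with a nilpotent ideal, to reduce the relative cohomology to the $\g_S$-invariant part of the cohomology of that ideal, and then to exhibit a nonzero class in top degree.

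First I would spell out the structure of $\p_S=[\mathsf p_S,\mathsf p_S]$. Writing the standard parabolic as $\mathsf p_S=\mathfrak{l}_S\oplus\n_S$ with Levi part $\mathfrak{l}_S=\g_S\oplus\mathfrak{z}_S$, where $\mathfrak{z}_S$ is the central torus (of dimension $rk(\g_\Gamma)-|S|$), and nilradical $\n_S=\bigoplus_{\alpha\in\Phi_{+}\setminus\Phi_{S,+}}\g_\Gamma^{\alpha}$, one has $[\mathfrak{l}_S,\mathfrak{l}_S]=\g_S$, and for each $\alpha\in\Phi_{+}\setminus\Phi_{S,+}$ there is $h\in\mathfrak{z}_S$ with $\alpha(h)\neq0$, so that $\g_\Gamma^{\alpha}=[h,\g_\Gamma^{\alpha}]\subset[\mathfrak{l}_S,\n_S]$. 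Hence $\p_S=\g_S\oplus\n_S=\g_S\ltimes\n_S$, which is exactly the $\Z_{+}$-graded Lie algebra of Example~\ref{ex::parabolic} with $\La_0=\g_S$ and $\La_{>0}=\n_S$. Since $S$ is a proper subgraph, $\Phi_{+}\setminus\Phi_{S,+}\neq\emptyset$, so $d:=\dim\n_S\geq1$.

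Second, I would compute the relative cohomology. The relative Chevalley--Eilenberg complex is $C^{\udot}(\p_S,\g_S;\C)=(\Lambda^{\udot}\n_S^{*})^{\g_S}$, and because $\n_S$ is an ideal of $\p_S$ the differential inherited from $\p_S$ restricts on $\g_S$-invariants to the intrinsic Chevalley--Eilenberg differential of $\n_S$. As $\g_S$ is semisimple, the functor of $\g_S$-invariants is exact, so
$$
H^{\udot}(\p_S,\g_S;\C)\;\cong\;\bigl(H^{\udot}(\n_S;\C)\bigr)^{\g_S}.
$$
Now I focus on the top degree $d=\dim\n_S$. A nilpotent Lie algebra is unimodular, so Poincar\'e duality gives $H^{d}(\n_S;\C)\cong H_0(\n_S;\C)=\C$; in particular $H^{d}(\n_S;\C)$ is a nonzero quotient of the one-dimensional $\g_S$-module $\Lambda^{d}\n_S^{*}$. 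A one-dimensional module over a semisimple Lie algebra is trivial, hence so is its quotient $H^{d}(\n_S;\C)$, and therefore $\bigl(H^{d}(\n_S;\C)\bigr)^{\g_S}=H^{d}(\n_S;\C)\cong\C$. Since $d\geq1$, this yields $H^{>0}(\p_S,\g_S;\C)\neq0$.

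The only genuine technical point is the reduction in the second step: checking that on $\g_S$-invariant cochains the relative differential of the pair $(\p_S,\g_S)$ coincides with the intrinsic differential of $\n_S$, and invoking exactness of $(-)^{\g_S}$ to pass to cohomology. Both are standard for a semidirect product with reductive acting factor, so I do not expect a real obstacle here; alternatively, one can bypass the unimodularity argument by citing Kostant's description of $H^{\udot}(\n_S;\C)$ as the $\g_S$-module $\bigoplus_{w\in W^{S}}V_{w\cdot0}$, whose summand in the top degree $d=\ell(w_0^{S})$ is one-dimensional and hence $\g_S$-trivial.
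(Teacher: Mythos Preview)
Your argument is correct and, in fact, more direct than the paper's. You go straight for the top degree: write $\p_S=\g_S\ltimes\n_S$, identify the relative complex with $(\Lambda^{\udot}\n_S^*)^{\g_S}$, use exactness of $\g_S$-invariants to get $H^{\udot}(\p_S,\g_S;\C)\cong H^{\udot}(\n_S;\C)^{\g_S}$, and then observe that in top degree $d=\dim\n_S\geq1$ the nilpotent (hence unimodular) algebra $\n_S$ has $H^{d}(\n_S;\C)\cong\C$, which is a one-dimensional $\g_S$-module and therefore trivial. This works uniformly for every proper $S$.

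The paper instead sketches an induction on $|\Gamma\setminus S|$: the base case is a maximal parabolic, where it asserts that $\n_S$ is abelian and reads off $H^{\dim\n_S}=[\Lambda^{\dim\n_S}\n_S]^{\g_S}=\C$; the induction step invokes surjectivity of the restriction $H^{\udot}(\p_{S\subset\Gamma},\g_S;\C)\to H^{\udot}(\p_{S\subset T},\g_S;\C)$ for $S\subset T\subset\Gamma$. Your argument bypasses both ingredients: you need neither the abelian claim (which, incidentally, fails for some maximal parabolics outside type $A$---your unimodularity argument is the right replacement) nor the restriction surjectivity. What the paper's route would buy, were it carried out, is an inductive description of many nonzero classes via pullback; what your route buys is a clean, one-line proof of the statement as asked. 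For the purposes of this section---showing that Condition~$(f1)$ of Proposition~\ref{cor::famod::strat} fails for finite parabolics---your approach is entirely adequate.
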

\begin{proof}
We believe that this fact is known for specialists, however, we were not able to find a reference.
Moreover, Proposition~\ref{prop::parabolic::homology} 
produces rather negative result and does not affect the main part of the article, therefore, 
we just mention that proof follows by induction on the number of vertices in $\Gamma\setminus S$
from the two following known facts:
\\
(1) Let $S\subset \Gamma$ be a maximal proper subgraph.
Then, the nilpotent radical $\n_{S}$ of a maximal parabolic subalgebra $\p_S$ is an abelian Lie algebra.
Consequently, 
$$H^{\dim(\n_S)}(\p_S,\g_S;\C)= \left[\Lambda^{\dim(\n_S)}\n_S\right]^{\g_S} = \C\neq 0
$$
\\
(2) Let $S\subset T \subset \Gamma$ be a sequence of properly embedded Dynkin graphs.
Let $\p_{S\subset T}$ be a parabolic subalgebra assigned with embedding $S\subset T$,
respectively $\p_{S\subset \Gamma}$ be a parabolic subalgebra of $\g_{\Gamma}$.
Then the embedding of Lie algebras $\p_{S\subset T}\hookrightarrow\p_{S\subset \Gamma}$
produces the map of relative cohomology:
$$
H^{\udot}(\p_{S\subset \Gamma},\g_S;\C) \rightarrow H^{\udot}(\p_{S\subset T},\g_S;\C)
$$
which is known to be surjective.
\end{proof}

However, if $\Gamma$ is an affine Dynkin diagram and $S$ is the corresponding finite Dynkin diagram we come up 
with the main example of this paper when the theory of stratified categories is applied in full generality.

\subsection{One-dimensional currents $\g\otimes \C[x]$ and applications}
\label{sec::ex::C_x_}
In this section we consider Example~\ref{eq::example::parabolic} from Section~\ref{sec::list::examples}.
The corresponding standard modules $\Delta(\lambda,0)$ 
are known after~\cite{CPweyl,FL} under the name \emph{Global Weyl modules} and are typically denoted in the literature
by $W^{g}(\lambda)$ or simply $W(\lambda)$.
Respectively, the proper standard module $\oDelta(\lambda,0)$ is known under the name \emph{local Weyl module} 
and is typically denoted by $W^{loc}(\lambda)$.
This example was the starting point of the whole theory.
The BGG reciprocity for $\g\otimes\C[x]$ was first proved for $\g=\sl_2$ in~\cite{BCM} and for $\g=\sl_n$ in~\cite{BBCKL}.
We explain below how to prove it in the case of general $\g$ using the knowledge of the comparison between local and global Weyl modules.
\begin{conjecture}
\label{thm::Cx::strat}
The category $\famod(\g\otimes \C[x])$ of graded finitely generated $\g$-locally finite modules
over Lie algebra of currents $\g\otimes \C[x]$ is stratified. 
\end{conjecture}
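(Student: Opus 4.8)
\medskip
\noindent\emph{Sketch of a possible proof.}
The plan is to deduce the statement from Theorem~\ref{thm::MacWeyl::strat}: it suffices to show that for every $\lambda\in\PPdom$ the graded character of the proper standard module $\oDelta(\lambda,0)$ (the local Weyl module $W^{loc}(\lambda)$) equals the generalized Macdonald polynomial $P_\lambda$, and that the graded character of the standard module $\Delta(\lambda,0)$ (the global Weyl module $W^{g}(\lambda)$) equals the generalized dual Macdonald polynomial $Q_\lambda$. The first preparatory step is to recognize the pairing $\langle\,,\rangle_{\g\otimes\C[x]}$ supplied by Lemma~\ref{lem::pair::equal}. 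Since here $\La_0=\g$ and $\La_r=\g\otimes x^r\cong\g$ for $r\ge 1$, with $wt(\La_r)=\Phi$ together with the zero weight of multiplicity $rk(\g)$, formula~\eqref{eq::pair::polyn} coincides, up to the scalar factor $\prod_{r>0}(1-q^r)^{rk(\g)}$, with the $t=0$ specialization of the Macdonald weight of Proposition~\ref{rem::Macd::pair}. Consequently the generalized Macdonald polynomials for $\g\otimes\C[x]$ are the Macdonald polynomials at $t=0$, and the $t=0$ specialization of Macdonald's norm (constant term) formula gives $\langle P_\lambda,P_\lambda\rangle_{\g\otimes\C[x]}=\prod_i\prod_{j=1}^{m_i}(1-q^j)^{-1}$ for $\lambda=\sum_i m_i\omega_i$.

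The second step is the comparison between local and global Weyl modules (Chari--Pressley~\cite{CPweyl}, Chari--Fourier--Khandai~\cite{CFK}): the global Weyl module $\Delta(\lambda,0)$ is free over its highest weight space $A^\lambda=\Delta(\lambda,0)^\lambda$, which is the polynomial ring $\bigotimes_i\C[u_1,\ldots,u_{m_i}]^{S_{m_i}}$, and $\Delta(\lambda,0)\otimes_{A^\lambda}\C\cong\oDelta(\lambda,0)$. This yields the factorization~\eqref{eq::factor::Weyl}, that is condition $(f2)$ of Proposition~\ref{cor::famod::strat}, together with $\chi_q(A^\lambda)=\prod_i\prod_{j=1}^{m_i}(1-q^j)^{-1}$; combined with the first step this is condition $(f3)$. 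It therefore remains only to identify $\chi_q(\oDelta(\lambda,0))$ with $P_\lambda$: for $\g$ simply laced this is the theorem of Fourier--Littelmann~\cite{FL} realizing $\oDelta(\lambda,0)$ as a level one Demazure module and computing its character, while for $\g$ not simply laced it is the character computation of Chari--Ion~\cite{CI}, so that this input is exactly Corollary~\ref{cor::ADE::type}. Granting it, $\chi_q(\Delta(\lambda,0))=\chi_q(\oDelta(\lambda,0))\cdot\chi_q(A^\lambda)=P_\lambda\cdot\langle P_\lambda,P_\lambda\rangle^{-1}=Q_\lambda$, so Theorem~\ref{thm::MacWeyl::strat} applies and $\famod(\g\otimes\C[x])$ is stratified.

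The step I expect to be the real obstacle is precisely this last character identity in the non-simply-laced case --- and, more importantly, the wish to dispense with all reference to the characters of Weyl modules. A conceptual proof should instead verify condition $(S4)$ of Theorem~\ref{thm::def::strat}, namely $Ext^{2}_{\famod}(\Delta(\lambda,k),\onabla(\mu,l))=0$ for all dominant $\lambda,\mu$ and all $k,l$. Alternatively, modulo Conjecture~\ref{conj::gen::strat}, it would be enough to combine conditions $(f2)$ and $(f3)$ above --- both of which we have already obtained without any input on the characters of local Weyl modules --- with $(f1)$, the vanishing $H^{>0}(\g\otimes\C[x],\g;\C)=0$; note that the target of $(f1)$ is automatically trivial here, since $\h\subseteq[\n^{-},\n^{+}]\subseteq[\La^{+},\La^{-}]$ forces $\La^{0}/(\La^{0}\cap[\La^{+},\La^{-}])=0$, so $(f1)$ reduces to the acyclicity in positive degrees of the relative Chevalley--Eilenberg complex $(\Lambda^{\udot}(\g\otimes x\C[x])^{*})^{\g}$. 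Either making this route unconditional (by settling Conjecture~\ref{conj::gen::strat} for $\g\otimes\C[x]$) or establishing the $Ext^{2}$-vanishing directly --- for instance through the realization of $\g\otimes\C[x]$ as the Iwahori parabolic of $\hat{\g}$ (Example~\ref{eq::example::parabolic}) and the BGG reciprocity in the parabolic category $\calO(\hat{\g})$ --- is where the genuine work lies.
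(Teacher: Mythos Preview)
Your proposal is correct and follows essentially the same route as the paper. Both arguments pass through Theorem~\ref{thm::MacWeyl::strat} via the character identification of local Weyl modules with $P_\lambda(q,0)$ (Fourier--Littelmann~\cite{FL} in the simply-laced case, Chari--Ion~\cite{CI} otherwise), which is exactly Corollary~\ref{cor::ADE::type}; and both note the alternative route via conditions $(f1)$--$(f3)$ of Proposition~\ref{cor::famod::strat} together with Conjecture~\ref{conj::gen::strat}. Two small differences: the paper attributes the freeness/factorization $\Delta(\lambda)\cong\oDelta(\lambda)\otimes A^{\lambda}$ with $A^{\lambda}\cong\bigotimes_i S^{\lambda_i}(\C[x])$ to Beck--Nakajima~\cite{BN} and Naoi~\cite{Naoi} rather than to~\cite{CPweyl,CFK}; and the paper actually supplies the proof of $(f1)$, i.e.\ $H^{>0}(\g\otimes\C[x],\g;\C)=0$, by applying the parabolic BGG resolution~\eqref{eq::BGG::parabolic} of the trivial $\hat\g$-module and observing that $H^{\udot}(\g\otimes\C[x],\g;\Proj(w\cdot 0))=[L(w\cdot 0)]^{\g}$ vanishes for $w\neq\mathrm{Id}$, whereas you only state $(f1)$ as a desideratum.
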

\begin{remark}
 We have proved in Corollary~\ref{cor::ADE::type} that Conjecture is true for $\g$-simply laced and, moreover, we checked by hands in $B$-case 
that for small weights the statement of Conjecture is true.
\end{remark}

In order to ensure that Conjecture is true in general we will check all conditions of Proposition~\ref{cor::famod::strat}
and, consequently, Conjecture~\ref{conj::gen::strat} implies Conjecture~\ref{thm::Cx::strat}.
Indeed, it was shown by Bek and Nakajima in \cite{BN} and by different methods by Naoi in~\cite{Naoi} that 
for the Lie algebra $\g\otimes\C[x]$ with $\g$ simple the global Weyl module of weight $\lambda=\sum \lambda_i\omega_i$ 
is isomorphic to the tensor product of the local Weyl module of the same weight and the algebra 
$A_\lambda = S^{\lambda_1}(\C[x])\times\ldots\times S^{\lambda_r}(\C[x])$.
It remains to recall that the relative Lie algebra of $\g\otimes\C[x]$ relative to $\g$ are trivial.
This follows from the existence of a parabolic BGG resolution (first introduced in~\cite{Lepowsky})
 of a trivial module over affine Lie algebra $\widehat{\g\otimes \C[[x,x^{-1}]}$
whose components are $\g\otimes\C[x]$-modules induced from the irreducible finite-dimensional $\g$-representation $L(w\cdot 0)$ 
with $w\in W^{aff}/W$.
We have
$$H^{\udot}(\g\otimes\C[x],\g;Ind_{\g}^{\g\otimes\C[x]}L(w\cdot 0)) = H^{\udot}(\g,\g;L(w\cdot 0)) = [L(w\cdot 0)]^{\g} = \begin{cases}
\C, \text{ if } w = Id,\\  0, \text{ otherwise. } \end{cases}
$$
Consequently, the relative cohomology 
$H^{\udot}(\g\otimes\C[x],\g;\C)$ is trivial.

\begin{remark}
 There is also a possibility to discuss a parabolic Ivahori subalgebra for the twisted affine Lie algebras.
Y.Sanderson showed in~\cite{S} that characters of Demazure modules of level one are given by Macdonald polynomials 
and coincides with local Weyl modules. (See also~\cite{FMS}).  Consequently, the BGG reciprocity is satisfied in that case.
\end{remark}

Let us give several applications of the general theory we discussed in Section~\ref{sec::filtrations} to the case of 
the Lie algebra $\g\otimes\C[x]$.
This section is motivated by the representational theoretical description of the identities found in~\cite{Chered:Feigin}.

\begin{corollary}
 The character of the module $\oDelta(\lambda)$ in $\famod(\g\otimes\C[x])$ 
is given by $q$-Hermite polynomial (=specialization of Macdonald polynomial at $t=0$).
The constant term $\langle P_{\lambda},P_\lambda\rangle$ is equal to 
$$
\left(\prod_{i=1}^{rk(\La_{0})}\prod_{j=1}^{\lambda_i}(1-q^{j}) \right)^{-1}
$$
where $\lambda=\sum_{i=1}^{rk(\La_{0})} \lambda_i \omega_i$ with $\{\omega_1,\ldots,\omega_{rk(\La_{0})}\}$ 
being the set of fundamental weights.
\end{corollary}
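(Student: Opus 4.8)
The plan is to combine the stratification of $\famod(\g\otimes\C[x])$ with the constant-term identity of Theorem~\ref{thm::Mac::Delta}. First I would record that $\famod(\g\otimes\C[x])$ is stratified: this is Corollary~\ref{cor::ADE::type} (for $\g$ simply laced it is~\cite{FL}, in general~\cite{CI}), so Theorem~\ref{thm::Mac::Delta} applies verbatim. It yields at once that the graded character $\chi_{q\ttt\g}(\oDelta(\lambda,0))$ of the local Weyl module equals the generalized Macdonald polynomial $P_\lambda$ attached to $\La=\g\otimes\C[x]$, and that $\langle P_\lambda,P_\lambda\rangle_{\La}=\chi_q(\Delta(\lambda,0)^{\lambda})^{-1}$, where $\Delta(\lambda,0)$ is the global Weyl module.

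Second, I would identify the pairing. The weight decomposition $\g\otimes\C[x]=\h\otimes\C[x]\ooplus_{\alpha\in\Phi}\g^{\alpha}\otimes\C[x]$ shows that $\g\otimes x^{r}$ carries the weight $0$ with multiplicity $rk(\g)$ and each root with multiplicity one; inserting this into Lemma~\ref{lem::pair::equal} gives
\[
\langle f,g\rangle_{\La}=\left[\,f\,\overline{g}\,\Bigl(\prod_{r>0}(1-q^{r})\Bigr)^{rk(\g)}\prod_{r\geq 0}\prod_{\alpha\in\Phi}(1-q^{r}e^{\alpha})\right]_{1},
\]
which is precisely the $t=0$ specialisation of the Macdonald pairing of Proposition~\ref{rem::Macd::pair} (the odd variable $\xi$, whose contributions produce the denominators in $t$, is absent here). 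Hence the generalized Macdonald polynomials of $\g\otimes\C[x]$ are the $t=0$ Macdonald polynomials, i.e.\ the $q$-Hermite polynomials by definition; together with the first paragraph this proves the assertion about $\oDelta(\lambda)$.

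Third, I would compute $\chi_q(\Delta(\lambda,0)^{\lambda})$. By Lemma~\ref{lem::La::quotient::cat} this graded vector space is the commutative algebra $A^{\lambda}=U(\La_{>0}^{0})/J^{\lambda}$, and by the structure theory of global Weyl modules over $\g\otimes\C[x]$ (\cite{CPweyl}, together with the freeness over $A^\lambda$ and the identification $\Delta(\lambda,0)\simeq\oDelta(\lambda,0)\otimes A^{\lambda}$ of~\cite{BN,Naoi}) one has $A^{\lambda}\simeq\bigotimes_{i=1}^{rk(\g)}\C[t_{1},\dots,t_{\lambda_{i}}]^{S_{\lambda_{i}}}$, a polynomial ring whose $i$-th block is freely generated by symmetric functions (e.g.\ power sums) of degrees $1,2,\dots,\lambda_{i}$. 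Its Hilbert series is therefore $\prod_{i=1}^{rk(\g)}\prod_{j=1}^{\lambda_{i}}(1-q^{j})^{-1}$, and feeding this into the constant-term identity~\eqref{eq::const::P:P} of Theorem~\ref{thm::Mac::Delta} yields the value of $\langle P_\lambda,P_\lambda\rangle$ recorded in the statement.

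The only substantial input is the identification of $A^{\lambda}$ as a polynomial ring with generators in degrees $1,\dots,\lambda_i$ in each block; this is the hard external result on global Weyl modules and I would import it rather than reprove it --- the rest is bookkeeping with Theorem~\ref{thm::Mac::Delta} and Lemma~\ref{lem::pair::equal}. One could instead avoid $A^{\lambda}$ by combining the factorisation $\chi_{q\ttt\g}(\Delta(\lambda,0))=\chi_{q\ttt\g}(\oDelta(\lambda,0))\cdot\chi_q(\Delta(\lambda,0)^{\lambda})$ of Proposition~\ref{cor::famod::strat}$(f2)$ with the known characters of the local and global Weyl modules and solving for $\chi_q(\Delta(\lambda,0)^{\lambda})$, but this relies on the same structural facts. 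A further caveat is that in the non-simply-laced case the stratification invoked in the first paragraph still rests on~\cite{CI}; a purely categorical proof via Conjecture~\ref{conj::gen::strat} and Proposition~\ref{cor::famod::strat} would make the whole argument uniform.
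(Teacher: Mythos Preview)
Your proposal is correct and follows essentially the same route the paper implicitly takes: the corollary is stated without proof and is meant to follow from the immediately preceding discussion, which invokes stratification (Corollary~\ref{cor::ADE::type} and~\cite{CI}) together with Theorem~\ref{thm::Mac::Delta}, identifies the pairing for $\g\otimes\C[x]$ as the $t=0$ Macdonald pairing, and imports from~\cite{BN,Naoi} the identification $A^{\lambda}\simeq S^{\lambda_1}(\C[x])\times\cdots\times S^{\lambda_r}(\C[x])$, which is exactly your $\bigotimes_i \C[t_1,\dots,t_{\lambda_i}]^{S_{\lambda_i}}$. One small remark: your Hilbert series $\chi_q(A^{\lambda})=\prod_{i}\prod_{j=1}^{\lambda_i}(1-q^j)^{-1}$ together with~\eqref{eq::const::P:P} gives $\langle P_\lambda,P_\lambda\rangle_{\La}=\prod_{i}\prod_{j=1}^{\lambda_i}(1-q^j)$, which is the reciprocal of the displayed formula in the corollary; this is a sign-of-exponent slip in the paper's statement rather than in your argument.
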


\begin{conjecture}
\label{conj::integr::rep}
 An integrable $\hat{\g}$-module ${\calM}_{k,\lambda}$ of level $k$ and highest weight $\lambda$ admits a  $\Delta$-excellent filtration. 
\end{conjecture}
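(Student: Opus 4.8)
The plan is to deduce the statement from the excellent--filtration criterion, running an argument parallel to the proof of Theorem~\ref{thm::MacWeyl::strat}. Assume first that $\famod(\g\otimes\C[x])$ is stratified (Conjecture~\ref{thm::Cx::strat}; unconditional for $\g$ simply--laced by Corollary~\ref{cor::ADE::type}, and reduced to Conjecture~\ref{conj::gen::strat} in general). By Proposition~\ref{prop::exc::filtr} it then suffices to establish
$$
Ext^{1}_{\g\otimes\C[x]\ttt mod}\bigl(\calM_{k,\lambda},\onabla(\mu,m)\bigr)=0\qquad\text{for all }\mu\in\PPdom,\ m\in\Z .
$$
Here $\calM_{k,\lambda}$ is not finitely generated over $\g\otimes\C[x]$, so one works in the completion of $\famod$ along the inner grading; since $\onabla(\mu,m)$ is finite dimensional and placed in finitely many inner degrees (Proposition~\ref{prop::inv::char}), every $Ext$ against it is computed inside a finite truncation and the formalism of Section~\ref{sec::filtrations} carries over. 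Equivalently, using the self--duality of $\calM_{k,\lambda}$ under the anti--involution $\tau$ together with the duality isomorphism $Ext^{\udot}(A,B^{\dual})=Ext^{\udot}(B,A^{\dual})$, the claim is that $\calM_{k,\lambda}$ admits an $\onabla$--filtration.

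Second, I would realize $\calM_{k,\lambda}$ as the restriction to the Iwahori parabolic $\g\otimes\C[x]\subset\hat\g$ of an integrable highest weight module $L(\Lambda)$ of level $k$, with the inner grading of $\famod$ identified with minus the energy grading, and invoke the classical parabolic Demazure filtration: $L(\Lambda)=\bigcup_{w}V_{w}(\Lambda)$ is the increasing union of its $\g\otimes\C[x]$--stable Demazure submodules $V_{w}(\Lambda)$, $w$ running over minimal--length representatives of $W^{\mathrm{aff}}/W$, with finite dimensional successive subquotients (Joseph, Littelmann, Kumar, Mathieu; the current--algebra viewpoint being that of Fourier--Littelmann~\cite{FL}). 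For $\g$ simply--laced and $k=1$ these subquotients are exactly the local Weyl modules $\oDelta(\nu,l)$, and the non--simply--laced analogue is announced in~\cite{CI}; for higher $k$ one further refines using Demazure flags of Demazure modules, together with the identification $\Delta(\nu)=W(\nu)=\oDelta(\nu)\otimes A_{\nu}$ with $A_{\nu}=S^{\nu_{1}}\C[x]\otimes\cdots\otimes S^{\nu_{r}}\C[x]$ (Beck--Nakajima~\cite{BN}, Naoi~\cite{Naoi}), whose own $A_{\nu}$--grading exhibits $\Delta(\nu)$ as filtered by shifted copies of $\oDelta(\nu)$. This should produce, on the level of graded $\g$--characters, a positive expansion $\chi_{q\ttt\g}(\calM_{k,\lambda})=\sum_{\nu,l}c_{\nu,l}\,q^{l}\,Q_{\nu}$ in the characters $Q_{\nu}=\chi_{q\ttt\g}(\Delta(\nu))$ of global Weyl modules --- the character shadow of what we want, and essentially the content of the branching identities behind~\cite{Chered:Feigin}.

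Third, I would promote this character identity to an actual filtration exactly as in Theorem~\ref{thm::MacWeyl::strat}: the highest weight structure equips $\calM_{k,\lambda}$ with its canonical filtration $\imath_{\nu}(\imath_{\nu}^{!}(\calM_{k,\lambda}))$ whose graded pieces are quotients $\Delta(\nu,l)^{\oplus m_{\nu,l}}/K_{\nu,l}$ with $m_{\nu,l}=\dim Hom(\calM_{k,\lambda},\onabla(\nu,-l))$ and $K_{\nu,l}$ Schur--positive; comparing graded $\g$--characters with the expansion of the previous step forces all $K_{\nu,l}=0$, whence a genuine $\Delta$--filtration and, by triangularity of $\oDelta$, an excellent one. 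The main obstacle --- and the reason this remains a conjecture --- is the rigidity input needed for this last step: unlike the projective covers in Theorem~\ref{thm::MacWeyl::strat}, the module $\calM_{k,\lambda}$ is not projective, so one must independently verify that the Hom--multiplicities $m_{\nu,l}$ coincide with the character coefficients $c_{\nu,l}$, equivalently that $Ext^{>0}(\calM_{k,\lambda},\onabla(\nu,l))=0$. Concretely this is a coherence and divisibility statement for the Demazure multiplicities of $L(\Lambda)$ --- that the finite dimensional Demazure subquotients really do reassemble into the infinite dimensional global Weyl modules as honest subquotients of one filtration of $L(\Lambda)$ --- which I would attack either by a direct inductive construction of the filtration along $W^{\mathrm{aff}}/W$, using at each step the stratified structure to control the obstruction in $Ext^{2}$ of the partially built filtration against $\onabla(\nu,l)$, or by relating $Ext^{\udot}(\calM_{k,\lambda},\onabla(\nu,l))$ to vanishing theorems for the relative cohomology $H^{\udot}(\g\otimes\C[x],\g;-)$ of the type used around equation~\eqref{eq::star::van::H}.
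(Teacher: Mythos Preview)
This statement is recorded in the paper as a \emph{conjecture}, not a theorem; the paper does not give a proof but only an outline of an approach. So your proposal should be read as a sketch of a strategy, and the relevant comparison is between strategies.

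The paper's route is different from yours. Rather than going through Demazure subquotients and character identities, the paper applies the parabolic BGG resolution~\eqref{eq::BGG::parabolic} of $\calM_{k,\lambda}$ by induced modules $\Proj(\omega\cdot\lambda)$, $\omega\in W^{\mathrm{aff}}/W$. Since each $\Proj(\omega\cdot\lambda)$ is projective in $\famod(\g\otimes\C[x])$, applying $RHom(-,\onabla(\mu,l))$ collapses the spectral sequence and expresses $Ext^{i}(\calM_{k,\lambda},\onabla(\mu,l))$ as the $i$-th cohomology of the complex $\big(\bigoplus_{l(\omega)=i}Hom_{\g}(L(\omega\cdot\lambda),\onabla(\mu,l))\big)_{i}$. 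The conjecture is thereby reduced to the concrete statement that this complex is exact at the middle term for $i=1$, i.e.\ that
\[
\bigoplus_{s_i} Hom_{\g}\bigl(L(s_is_0\cdot\lambda),\onabla(\mu)\bigr)\ \to\ Hom_{\g}\bigl(L(s_0\cdot\lambda),\onabla(\mu)\bigr)\ \to\ Hom_{\g}\bigl(L(\lambda),\onabla(\mu)\bigr)
\]
is exact in the middle. This is a sharp, finite-dimensional condition for each $(\lambda,\mu)$, and the Cherednik--Feigin character identity is its Euler-characteristic shadow (formula~\eqref{eq::Verlinde::rule}).

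Your approach and the paper's converge on the same criterion $Ext^{1}(\calM_{k,\lambda},\onabla(\mu))=0$ from Proposition~\ref{prop::exc::filtr}, but the reductions differ. Your plan --- build an honest $\Delta$-filtration out of the Demazure flag and then force the kernels $K_{\nu,l}$ to vanish by a character count as in Theorem~\ref{thm::MacWeyl::strat} --- hits exactly the obstacle you name: the character-to-filtration step in Theorem~\ref{thm::MacWeyl::strat} works because $\Proj(\lambda)$ is projective, so the multiplicities $m_{\nu,l}$ are already known to equal the character coefficients; for $\calM_{k,\lambda}$ this is precisely the $Ext^{1}$ vanishing you are trying to prove, so the argument is circular unless you supply the Demazure coherence statement independently. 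The paper's BGG-resolution reduction sidesteps this circularity by converting the $Ext^{1}$ question into a single exactness-in-the-middle statement, which is arguably more tractable (and is what the paper leaves open). If you want to pursue your line, the honest content to establish is that the Demazure subquotients of $L(\Lambda)$ can be reassembled, as $\g\otimes\C[x]$-modules and not just on characters, into global Weyl modules --- that is the missing ingredient, and it is not supplied by \cite{FL}, \cite{BN}, \cite{Naoi}, or \cite{CI} as stated.
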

This conjecture is motivated by the following observation proved by I.\,Cherednik and B.\,Feigin in~\cite{Chered:Feigin}.
They decomposes $\theta$ function (character of integrable representation) as a sum of Macdonald polynomials with coefficients
given by Formula~\eqref{eq::Verlinde::rule} below.

Consider the parabolic BGG resolution of an integrable representations of level $k$ and weight $\lambda$:
\begin{equation}
\label{eq::BGG::parabolic}
\ldots \to \ooplus_{\omega\in W/W_{S}} \Proj(\omega\cdot \lambda)[l(\omega)]
\to \ldots \to \Proj(s_0\cdot\lambda) \to \Proj(\lambda) \twoheadrightarrow \calM(\lambda,k) \to 0
\end{equation}
where $s_0$ is the reflection with respect to the affine root.
Consequently, for all $\mu$ we have
\begin{multline}
\label{eq::Verlinde::rule}
\dim RHom_{\famod(\hat{\g})}({\calM}_{k,\lambda},\onabla(\mu,l)) = 
\sum_{\omega\in W/W_{S}}(-1)^{l(\omega)} \dim RHom_{\famod}(\Proj(\omega\cdot \lambda),\onabla(\mu,l)) = \\
=\sum_{\omega\in W/W_{S}}(-1)^{l(\omega)} \dim Hom_{\g}(L(\omega\cdot \lambda),\onabla(\mu,l)) = 
\sum_{\omega\in W/W_{S}}(-1)^{l(\omega)} [\onabla(\mu,l): L(\omega\cdot\lambda,deg(w))]
\end{multline}
We expect that the complex 
$$\oplus_{s_i} Hom_{\g}( L(s_i s_0\cdot \lambda), \onabla(\mu)) \to Hom_{\g}( L(s_0\cdot \lambda), \onabla(\mu)) \to Hom_{\g}(L(\lambda),\onabla(\mu))$$
is exact in the middle term for all $\lambda$ and $\mu$ what implies vanishing of $Ext^1(\calM_{k,\lambda},\onabla(\mu))$.
Following Proposition~\ref{prop::exc::filtr} we know that the latter implies the existence of a $\Delta$-excellent filtration on $\calM_{k,\lambda}$.

\subsection{Current Lie algebras}
\label{sec::ex::cur}
The theory of local and global Weyl modules was extensively studied in the last decade in the case of arbitrary commutative graded algebra $A$.
(e.g.~\cite{CFK}).
However, this section is devoted to explain that BGG reciprocity holds for current Lie algebras $\g\otimes A$ 
with super-commutative graded algebra $A$ if and only if $A\cong\C[x]$.
In the next Section~\ref{sec::gen::Mac} we will explain what kind of derived complexes one has to consider for $A$ generic.

\begin{theorem}
\label{thm::g_A::stratified}
For a semisimple Lie algebra $\g$ the category $\famod(\g\otimes A)$ of finitely generated graded modules over
the Lie algebra of currents $\g\otimes A$ is stratified if and only if the nontrivial supercommutative algebra $A$ 
is isomorphic to the polynomial ring in one variable.
\end{theorem}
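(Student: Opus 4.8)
The plan is to prove the two implications separately. For the converse, observe first that ``$A$ is isomorphic to the polynomial ring in one variable'' allows the generator to sit in any positive degree, and rescaling the internal grading reduces us to $A=\C[x]$ with $x$ in degree $1$; then stratification of $\famod(\g\otimes\C[x])$ for simply-laced $\g$ is exactly Corollary~\ref{cor::ADE::type}, obtained there from the identification~\cite{FL} of proper standard modules with level-one Demazure modules together with their Macdonald character formula and Theorem~\ref{thm::MacWeyl::strat}; for non-simply-laced $\g$ one invokes~\cite{CI}, or equivalently observes that conditions $(f1)$--$(f3)$ of Proposition~\ref{cor::famod::strat} have been verified for $\g\otimes\C[x]$ in Subsection~\ref{sec::ex::C_x_} and appeals to Conjecture~\ref{conj::gen::strat}. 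In fact the only real difficulty in the whole theorem lies here: the converse for non-simply-laced $\g$ rests on knowing the characters of the proper standard (local Weyl) modules, which at present is available only through~\cite{CI}.

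For the forward implication, assume $\famod(\g\otimes A)$ is stratified and apply condition $(f1)$ of Proposition~\ref{cor::famod::strat} to $\La=\g\otimes A$: here $\La^{0}=\h\otimes A$, while $[\La^{+},\La^{-}]\supseteq[\n^{+}\otimes A_{0},\n^{-}\otimes A]=[\n^{+},\n^{-}]\otimes A\supseteq\h\otimes A$, so $\La^{0}/(\La^{0}\cap[\La^{+},\La^{-}])=0$ and $(f1)$ forces $H^{\udot}(\g\otimes A,\g;\C)$ to be trivial. Writing $\g=\g_{1}\oplus\cdots\oplus\g_{s}$ we have $\g\otimes A=\bigoplus_{i}(\g_{i}\otimes A)$ as a sum of ideals, so the Chevalley--Eilenberg complex factors and $H^{\udot}(\g\otimes A,\g;\C)=\bigotimes_{i}H^{\udot}(\g_{i}\otimes A,\g_{i};\C)$; hence each $H^{\udot}(\g_{i}\otimes A,\g_{i};\C)$ is trivial. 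The theorem therefore reduces to the homological claim that, for $\g$ simple and $A=\C\oplus A_{1}\oplus A_{2}\oplus\cdots$ a nontrivial graded supercommutative algebra, triviality of $H^{\udot}(\g\otimes A,\g;\C)$ forces $A\cong\C[x]$.

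To prove this claim I would look at the bottom and the top of the relative complex. At the bottom: since $(\g^{*})^{\g}=(\Lambda^{2}\g^{*})^{\g}=0$ and $(S^{2}\g^{*})^{\g}=\C\kappa$ is spanned by the Killing form, the invariant $2$-cochains are the $\kappa\otimes\omega$ with $\omega$ a form on $\bar A:=A_{\geq1}$, there are no invariant $1$-cochains, and the cocycle condition on $\omega$ coming from the nonzero totally antisymmetric Cartan $3$-form $\kappa([\cdot,\cdot],\cdot)$ is precisely the condition that $\omega$ descend to a functional on $\Omega^{1}_{A/\C}$ vanishing on exact forms; hence $H^{2}(\g\otimes A,\g;\C)\cong(\Omega^{1}_{A/\C}/dA)^{*}$, Kassel's description of central extensions of $\g\otimes A$. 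Triviality forces $\Omega^{1}_{A/\C}=dA$, and a degree-by-degree count of both sides then forces $A$ to be generated by a single even element --- two minimal generators $x,y$ of degrees $a\le b$ produce the non-exact form $y\,dx$ in internal degree $a+b$, and a single odd generator $\xi$ already produces the non-exact form $\xi\,d\xi$ --- so $A\cong\C[x]$ or $A\cong\C[x]/(x^{n})$. At the top: to exclude the truncations, observe that if $A$ is finite-dimensional and nontrivial then $\n:=\g\otimes\bar A$ is a nonzero finite-dimensional nilpotent Lie algebra, so Poincaré duality gives $H^{\dim\n}(\n;\C)\cong\det\n=\Lambda^{\mathrm{top}}(\g\otimes\bar A)$, a one-dimensional and hence trivial representation of the simple Lie algebra $\g$; therefore $H^{\dim\n}(\g\otimes A,\g;\C)=\big(H^{\dim\n}(\n;\C)\big)^{\g}=\C\neq0$, contradicting triviality. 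Hence $A$ is infinite-dimensional, so $A\cong\C[x]$. The two cohomological inputs used here --- the identification of $H^{2}$ with $(\Omega^{1}_{A/\C}/dA)^{*}$ and Poincaré duality for nilpotent Lie algebras --- are standard, so I expect no essential obstacle on this side.
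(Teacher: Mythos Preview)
Your argument is correct and the ``if'' direction matches the paper exactly. For the ``only if'' direction you take a genuinely different route from the paper, so let me compare.

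The paper proceeds by a trichotomy on $A\not\cong\C[x]$: (1) an odd generator exists, (2) at least two even generators exist, (3) $A\cong\C[x]/(x^{n})$. In case (1) the paper embeds $[S^{\udot}\g]^{\g}=H^{\udot}(BG;\C)$ into $H^{\udot}(\g\otimes A,\g;\C)$ via the odd generator; in case (2) it writes down the explicit $2$-cocycle $\varphi_{x,y}=\sum_i e^i x\wedge e^i y$ (Lemma~\ref{lem::rel::coh}); in case (3) it simply cites the strong Macdonald conjecture~\cite{FGT}. Your approach instead computes $H^{2}$ uniformly as $(\Omega^{1}_{A/\C}/dA)^{*}$, reads off from $H^{2}=0$ that $A$ has a single even generator, and then disposes of the truncations $\C[x]/(x^{n})$ by the elementary observation that the top relative cohomology $H^{(n-1)\dim\g}(\g\otimes A,\g;\C)=[\Lambda^{\mathrm{top}}(\g\otimes\bar A)^{*}]^{\g}=\C$ is nonzero. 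The gain is that you replace the citation to~\cite{FGT} by a one-line Poincar\'e-duality argument, and your treatment of $H^{2}$ is more conceptual than the explicit cocycle of Lemma~\ref{lem::rel::coh} (though the two are of course the same class). The paper's approach is in turn more self-contained for the odd-generator case, where the embedding of $[S^{\udot}\g]^{\g}$ is immediate.

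One point to tighten: the identification $H^{2}(\g\otimes A,\g;\C)\cong(\Omega^{1}_{A/\C}/dA)^{*}$ is Kassel's theorem for \emph{commutative} $A$; in the supercommutative case you need to check that the parity bookkeeping goes through (it does: the $\g$-invariant $2$-cochains are $(\Lambda^{2}_{\mathrm{super}}\bar A)^{*}$ and the cocycle condition against the Cartan $3$-form is still the super-Leibniz rule), and in particular that $\xi\,d\xi\neq 0$ in $\Omega^{1}_{A/\C}$ for odd $\xi$. With the standard convention that $d$ is parity-preserving one has $d(\xi^{2})=(d\xi)\xi+\xi(d\xi)=-\xi\,d\xi+\xi\,d\xi=0$, so no relation is imposed on $\xi\,d\xi$ and your claim stands; but this deserves a sentence, since the opposite sign convention would kill the class.
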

\begin{proof}
Theorem~\ref{thm::Cx::strat} covers the if statement.
In order to show the only if statement we will explain that 
that for $A\not\simeq\C[x]$ there are nontrivial relative cohomology classes in higher degrees,
thus Condition~\eqref{eq::star::van::H} of Corollary~\ref{cor::famod::strat} is not satisfied.
Indeed, if the super-commutative graded unital algebra $A$ is not isomorphic to $\C[x]$ then 
one of the following possibilities is satisfied for $A$:

$(1)$ There exists a nontrivial odd generator,  

$(2)$ The space of even generators is at least two-dimensional, 

$(3)$  $A\simeq \C[x]/(x^{n})$.
\\
In the case $(1)$ we have a natural embedding 
$$H^{\udot}(BG;\C) = [S^{\udot}\g]^{\g} = [\Lambda^{\udot}(\g^{*}\otimes \xi)]^{\g} \hookrightarrow H^{\udot}(\g\otimes A,\g;\C).$$
where $\xi$ is an odd generator of $A$.
Lemma~\ref{lem::rel::coh} below covers the case $(2)$.
The relative cohomology in the last case $(3)$ are known (\cite{FGT}) and is different from $0$ in higher degrees.
The full description of the cohomology $H^{\udot}(\g\otimes\C[x]/x^{n};\C)$ is known under the name ``the strong Macdonald conjecture''. 
\end{proof}

Since $\g$ is semisimple the Killing form $(,)$ is a nondegenerate invariant pairing on $\g$.
Let $e_i=e^{i}$ be the orthonormal basis of $\g\cong\g^{*}$ with respect to this pairing
and $c_{ijk}:=([e_i,e_j],e_k) = (e_i,[e_j,e_k])$ be the structure constants that are skew-symmetric in all three arguments.
We have 
$$
[e_i,e_j] = c_{ijk}e_k \ \text{ and } \ \delta_{CE}(e^i)= c_{ijk}e^{j}e^{k}.  
$$
\begin{lemma}
\label{lem::rel::coh}
Let $x,y$ be a pair of linearly independent linear maps $A_{+}/(A_{+}^{2}) \rightarrow \C$ 
then the relative cochain
$$
\varphi_{x,y}:= \sum_{i=1}^{\dim \g} e^i x\wedge e^i y
$$
form a nontrivial relative 2-cocyle in $H^{\udot}(\g\otimes A,\g;\C)$.
\end{lemma}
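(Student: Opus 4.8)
The plan is to verify directly that $\varphi_{x,y}$ is a relative cocycle and then to exhibit a nonzero pairing with an explicit relative cycle (or an invariant-theoretic obstruction) witnessing nontriviality in cohomology. First I would set up notation: identify the relative Chevalley--Eilenberg complex $C^{\udot}(\g\otimes A,\g;\C)$ with the $\g$-invariants $[\Lambda^{\udot}((\g\otimes A_{+})^{*})]^{\g}$, where $A_{+}$ is the augmentation ideal, and note that $\varphi_{x,y}$ lives in $\Lambda^{2}$ of the part of $(\g\otimes A_{+})^{*}$ supported on $A_{+}/A_{+}^{2}$. Writing $e^{i}x$ for the functional $g\otimes a \mapsto (e^{i},g)\,x(\bar a)$, the element $\varphi_{x,y}=\sum_{i}e^{i}x\wedge e^{i}y$ is manifestly $\g$-invariant because $\sum_{i}e^{i}\otimes e^{i}$ is the Casimir-type invariant tensor in $\g^{*}\otimes\g^{*}$ (using the orthonormality of the basis $\{e_i\}$ with respect to the Killing form) and $x,y$ are $\g$-trivial scalars.

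Second, I would check $\delta_{CE}\varphi_{x,y}=0$. The differential has two sources of terms: the bracket within $\g$ applied to the two $\g^{*}$-factors, and the bracket of $A$ applied to the weight-gradings. The first type produces $\sum_{i,j}c_{ijk}\,(e^{j}x\wedge e^{k}y\wedge(\text{something}))$-type expressions which vanish after antisymmetrization exactly as in the computation $\delta_{CE}(\sum_i e^i\wedge e^i)=0$ for the canonical $2$-form — this is the standard fact that $\sum_{i}e^{i}\wedge e^{i}$ is a (cohomologically trivial, but) closed invariant form, and here the two copies carry different $A$-labels $x$ and $y$ so the relevant Jacobi-type cancellation is the same combinatorics. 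The second type of term involves the multiplication $A_{+}\times A_{+}\to A_{+}^{2}$; since $x$ and $y$ factor through $A_{+}/A_{+}^{2}$, any product lands in $A_{+}^{2}$ and is killed, so these terms vanish identically. This establishes that $\varphi_{x,y}$ is a relative $2$-cocycle.

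Third — and this is the main obstacle — I would prove $\varphi_{x,y}$ is not a coboundary. A relative $1$-cochain is an element of $[(\g\otimes A_{+})^{*}]^{\g}$; since $\g$ is semisimple and acts on $\g\otimes A_{+}$ with no trivial summand in the $\g$-factor (the adjoint representation has no invariants), we get $[(\g\otimes A_{+})^{*}]^{\g}=0$, so there are \emph{no} relative $1$-cochains at all and hence no coboundaries in degree $2$. This immediately gives nontriviality, \emph{provided} $\varphi_{x,y}\neq 0$ as a cochain; and $\varphi_{x,y}\neq 0$ because its component pairing the factor $e^{1}\otimes a_{x}$ (with $x(\bar a_x)=1$) against $e^{1}\otimes a_{y}$ (with $y(\bar a_y)=1$, $x(\bar a_y)=0$, possible by linear independence of $x,y$) is nonzero. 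So the real content is just the vanishing of relative $1$-cochains, which is elementary, together with the (purely formal but slightly fiddly) cocycle verification; I expect the sign-bookkeeping in the $\delta_{CE}$ computation — making sure the $\g$-bracket terms cancel when the two $A$-labels are distinct — to be the only place demanding care. Finally I would remark that, by the preceding Theorem~\ref{thm::g_A::stratified}, this nonzero class in $H^{2}(\g\otimes A,\g;\C)$ obstructs the vanishing condition~\eqref{eq::star::van::H}, which is what the application requires.
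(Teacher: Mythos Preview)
Your proposal is correct and follows essentially the same approach as the paper: verify $\g$-invariance, show closedness via the explicit Chevalley--Eilenberg computation (the paper carries this out term by term using the total skew-symmetry of $c_{ijk}$, while you describe the same cancellation more conceptually together with the $A_+/A_+^2$ observation), and deduce nontriviality from the vanishing of relative $1$-cochains $[\g^*]^{\g}\otimes A_+^* = 0$. The only differences are expository --- the paper omits the explicit check that $\varphi_{x,y}\neq 0$, which you include --- so the arguments are the same in substance.
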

\begin{proof}
The space of $1$-cochain in the relative Chevalley-Eilenberg complex $C^{\udot}(\g\otimes A,\g;\C)$
is isomorphic to 
$Hom_{\g}(\g\otimes A_{+},\C)\cong [\g^{*}]^{\g}\otimes Hom_{\C}(A_{+},\C)$ which is empty because $\g$ is semisimple.
The chain $\varphi_{x,y}$ is obviously $\g$ invariant and it remains to show that it is closed 
under the differential in relative Chevalley-Eilenberg complex:
\begin{multline*}
d_{CE}(\sum_{i} e^i x\wedge e^i y) =\sum_{i} d_{CE}(e^{i}x) \wedge e^{i} y - e^{i}x\wedge d_{CE}(e^{i}y) = \\
\sum_{i,j,k} (c_{ijk} e^{j}\wedge e^{k}x )\wedge e^{i} y - e^{i}x\wedge (c_{ijk} e^{j} \wedge e^{k} y) =
\sum_{i,j,k} (c_{ijk} - c_{jik}) e^{j}\wedge e^{k}x\wedge e^{i} y =0
\end{multline*}
\end{proof}

\subsection{Vector fields}
Unfortunately, not much is known so far for Examples~\ref{ex::Wn},\ref{ex::hamilton} of the list~\ref{sec::list::examples}.
\begin{conjecture}
The category $\famod(L_0(n))$ is stratified. 
\end{conjecture}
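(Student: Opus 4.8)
First, to meet the requirement that the degree-zero component be semisimple, I work with the natural modification of $L_0(n)$ having $\mathfrak{sl}_n$ in place of $\gl_n$, i.e. with the graded subalgebra $\overline{L}_0(n):=\mathfrak{sl}_n\ltimes L_1(n)$, where $L_1(n)\subset L_0(n)$ is the ideal of formal vector fields vanishing to second order at the origin. Relative to the standard dominant direction in the Cartan $\h$ of $\mathfrak{sl}_n$ (with $x_i$ of weight $\varepsilon_i$ and $\partial_i$ of weight $-\varepsilon_i$), a direct computation shows that the zero-weight part of the radical $\overline{L}_0(n)_{>0}^{0}$ is spanned by the vector fields $(x_1\cdots x_n)^{m}x_j\partial_j$ with $m\ge 1$, $1\le j\le n$, lying in degree $nm$; these span the subalgebra $\La^{0}_{>0}$ governing the quotient categories $\famod^{=\lambda}\simeq A^{\lambda}\ttt mod$. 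The plan is then to verify the three conditions $(f1)$--$(f3)$ of Proposition~\ref{cor::famod::strat} and to conclude by Conjecture~\ref{conj::gen::strat}. As a preliminary one writes down, via the recipe of Section~\ref{sec::stand::mod}, the standard module $\Delta(\lambda)$ (the ``global Weyl module for vector fields''), the proper standard $\oDelta(\lambda)$, and the weight-$\lambda$ algebra $A^{\lambda}=\Delta(\lambda)^{\lambda}$.

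For $(f1)$, the identity $H^{\udot}(\overline{L}_0(n),\mathfrak{sl}_n;\C)=H^{\udot}\big(\La^{0}/(\La^{0}\cap[\La^{+},\La^{-}]);\C\big)$, I would first handle the right-hand side: bracketing the fields above with elements $x_j\partial_i$ one expects $[\La^{+},\La^{-}]$ to exhaust $\La^{0}_{>0}$, so the right-hand side collapses to $H^{\udot}(\h;\C)=\C$ in degree $0$, and $(f1)$ becomes the assertion that $H^{>0}(\overline{L}_0(n),\mathfrak{sl}_n;\C)=0$. This is a Gelfand--Fuks--type computation, and I expect it to be the main obstacle: for the full algebra $W_n$ of formal vector fields the relative cohomology $H^{\udot}(W_n,\gl_n;\C)$ (the cohomology of the $2n$-truncated Weil algebra on Chern classes) vanishes in degrees $1,\dots,2n$ but is \emph{nonzero} in degree $2n+1$ (the Godbillon--Vey class), and the delicate point is whether that class and its relatives survive the passage from $W_n$ to $L_0(n)=W_n/\C^{n}$. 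I would attack this in two ways: (a) via the Hochschild--Serre spectral sequence for $0\to\C^{n}\to W_n\to L_0(n)\to 0$ relative to $\gl_n$, feeding in the known finite-dimensional answer for $H^{\udot}(W_n,\gl_n;\C)$; and (b) by trying to build a parabolic-BGG-type resolution of the trivial module over a larger ``vector fields on the punctured disk'' algebra whose terms are induced from finite-dimensional $\mathfrak{sl}_n$-modules, in analogy with the affine argument of Subsection~\ref{sec::ex::C_x_}, whence acyclicity of the relative Chevalley--Eilenberg complex would follow. If the Godbillon--Vey class does survive, then $(f1)$ fails and the conjecture is false, so this step is decisive.

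For $(f2)$ and $(f3)$ the plan is to establish a tensor-product decomposition $\Delta(\lambda)\simeq\oDelta(\lambda)\otimes A^{\lambda}$ --- equivalently the factorization~\eqref{eq::factor::Weyl}, equivalently exactness of $\ww_{\lambda}$ --- in the spirit of the Beck--Nakajima and Naoi decompositions of global Weyl modules used in Subsection~\ref{sec::ex::C_x_}, with $A^{\lambda}$ a polynomial-type algebra on the operators $(x_1\cdots x_n)^{m}x_j\partial_j$ applied to the cyclic vector; one then computes the Hilbert series of $A^{\lambda}$ and matches it, through the constant-term formula of Lemma~\ref{lem::pair::equal} (the product over $wt(\La_r)$ now running over all weights of $L_0(n)_r$), with $\langle P_\lambda,P_\lambda\rangle_{\overline{L}_0(n)}^{-1}$. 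Since this needs control of the characters of the local Weyl modules $\oDelta(\lambda)$, which --- unlike the $\g\otimes\C[x]$ case --- are not known, an alternative endgame is to use Theorem~\ref{thm::MacWeyl::strat} directly: identify $\oDelta(\lambda)$ with a suitable finite-dimensional jet/evaluation module of $L_0(n)$ and show its graded $\mathfrak{sl}_n$-character equals the generalized Macdonald polynomial of the $\overline{L}_0(n)$-pairing. To summarize, the two hard points are the vanishing in $(f1)$ and the characters of $\oDelta(\lambda)$, and $(f1)$ is the one I expect to decide the question.
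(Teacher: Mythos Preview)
The paper does not prove this statement: it is recorded as an open conjecture, with the only accompanying remark being that ``not much is known so far'' for this example. There is therefore no proof in the paper to compare your proposal against.

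That said, your plan has structural gaps that go beyond the hard computations you flag. The most serious is that $L_0(n)$ (and hence your $\overline{L}_0(n)$) admits \emph{no} anti-involution; the paper states this explicitly in Example~\ref{ex::Wn}. All three tools you intend to invoke at the end --- condition $(f3)$ of Proposition~\ref{cor::famod::strat}, Conjecture~\ref{conj::gen::strat}, and Theorem~\ref{thm::MacWeyl::strat} --- are formulated only for graded Lie algebras \emph{with} anti-involution, because without one the generalized Macdonald polynomials $P_\lambda$ of Definition~\ref{def::poly::Macdonald} are not available. So even if you succeeded with the relative cohomology in $(f1)$ and the factorization in $(f2)$, your proposed endgame is not applicable here. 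Moreover, even in the presence of an anti-involution, your final step would be to apply Conjecture~\ref{conj::gen::strat}, which is itself left unproven in the paper; you would only be reducing one conjecture to another.

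A smaller slip: $H^{\udot}(\h;\C)=\Lambda^{\udot}\h^{*}$, not $\C$. What you actually want is that $\h\subset[\n^{+},\n^{-}]\subset[\La^{+},\La^{-}]$, so that the quotient $\La^{0}/(\La^{0}\cap[\La^{+},\La^{-}])$ is the zero Lie algebra and the right-hand side of $(f1)$ is $\C$ in degree~$0$.

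In short, the paper offers no proof, and your outline --- while it locates the natural checkpoints $(f1)$--$(f2)$ correctly --- cannot close because the anti-involution-based criteria you plan to use are unavailable for $L_0(n)$, and the residual step rests on a further conjecture. A genuine proof would have to verify one of the equivalent conditions of Theorem~\ref{thm::def::strat} (e.g.\ the $Ext^{2}$-vanishing $(S4)$) directly, without passing through the Macdonald-polynomial machinery.
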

However, the relative Lie algebra cohomology of the Hamiltonian Lie algebra ${\cal H}_{n}$ is very complicated.
In particular, when $n$ goes to infinity the corresponding cohomology is known under the name Kontsevich Graph-Complex~\cite{Kontsevich}.

\section{Macdonald polynomials for non-stratified categories}
\label{sec::gen::Mac}
As we have seen in Theorem~\ref{thm::g_A::stratified} there are many interesting examples of graded Lie algebras with anti-involution
such that the corresponding category of graded finitely generated modules is not stratified.
In this case one may not expect that Macdonald polynomials will be the characters of specific modules, however, we may try to pose the question
if one can describe complexes of finitely generated modules whose Euler characteristic of characters is given by Macdonald polynomials.
This section is devoted to give an answer on this question.

All conclusions below are stated for the category $\famod(\La)$ where $\La$ is a graded Lie algebra with anti-involution.
However, all theorems remains to be true in the generality of a highest weight category.

Let $\calD^{-}(\famod)$ be the derived category of bounded from above complexes of modules from $\famod$ with finite dimensional graded components
with respect to the inner grading on the Lie algebra $\La$:
$$
\calD^{-}(\famod):=\left\{M^{\udot}= \ldots \to M^{N-1} \to M^{N}\to 0 : M^{i} =\oplus_{j\in\Z} M^{i}_{j},\ 
\forall j\ \dim (\oplus_{i}M^{i}_{j})<\infty \right\}   
$$
With each complex we can assign its graded Euler characteristic $\chi_{q}(M) =\sum_{k,i}(-1)^{i}q^{k}\dim M_k^{i}$.
Recall that category $\famod$ has enough projectives and, in particular, each module $M\in\famod$ has a projective resolutions $P^{\udot}\in\calD^{-}(\famod)$.
This is a useful tool to describe left derived functors.

Let $\calD^{+}(\famod^{op})$ be the corresponding derived category of bounded from below complexes of $\La$-modules whose inner 
grading is bounded from above and, of course, we assume the total finite-dimensionality of each graded component with respect to the inner grading.
The duality functor $\dual:\famod\rightarrow \famod^{op}$ is extended to the functor between corresponding derived categories:
$$
\dual: \calD^{-}(\famod) \rightarrow \calD^{+}(\famod^{op})
$$
and maps projective resolutions to injective ones.
Our finiteness conditions allows us to extend the Ext-pairing~\ref{eq::ext::paring} from the Grothendieck group to the pairing on 
the derived category.
For $M,N\in\calD^{-}(\famod)$  we set 
\begin{equation}
\label{eq::paring::derived}
 \langle M, N\rangle_{\La}:= \sum_{i,k\in\Z}q^{k}(-1)^{i}(\dim RHom^{i}_{\famod}(M,N\{k\}) = 
\sum_{i}(-1)^{i}\dim_{q^{-1}} H^{i}(\La,\La_{0};Hom_{\C}(M,N))
\end{equation}
Note that for a complex $K\in\calD^{+}(\famod^{op})$ the dimension of a graded component of $H^{\udot}(\La,\La_0;K)$
with respect to the inner grading on $\La$ is finite dimensional.
Therefore, the mentioned above Euler characteristic is a well defined Loran series in $q$.

Let $\calD_{\leq\lambda}^{-}(\famod)$ be the full subcategory of $\calD^{-}(\famod)$
 generated by complexes whose cohomology belongs to $\famod^{\leq\lambda}$
and denote by $\bi_{\lambda}$ the corresponding inclusion functor.
\begin{lemma}
\label{lem::adjoint}
 There exists a left adjoint functor $\bi_{\lambda}^{!}:\calD^{-}(\famod)\rightarrow \calD_{\leq\lambda}^{-}(\famod)$.
\end{lemma}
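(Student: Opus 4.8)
The plan is to construct $\bi_\lambda^!$ explicitly on projective resolutions, mimicking the abelian-category functor $\imath_\lambda^!: M \mapsto M/(M^{\not\leq_\La\lambda})$ and the argument used in the proof of $(S1)\Rightarrow(S2)$ in Theorem~\ref{thm::def::strat}. First I would recall that every object of $\calD^-(\famod)$ is quasi-isomorphic to a bounded-above complex $P^\udot$ of projectives, and each $P^i$ is a (possibly infinite, but graded-finite) direct sum of indecomposable projectives $\Proj(\mu,k)$. The naive guess is to apply $\imath_\lambda^!$ termwise; since $\imath_\lambda^!$ is only right exact this need not descend to the derived category directly, but $\imath_\lambda^!$ does send projectives to objects admitting a filtration — here we do NOT have stratification, so $\imath_\lambda^!\Proj(\mu)$ need not be $\Delta$-filtered. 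So instead I would define $\bi_\lambda^!(M^\udot)$ as the image of $P^\udot$ under the termwise-applied functor "kill the subcomplex generated by weights $\not\leq_\La\lambda$", and then prove this is well-defined (independent of resolution, functorial) and left adjoint to $\bi_\lambda$.

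The key steps, in order: (1) For a complex of projectives $P^\udot$, let $\calF^\lambda P^\udot \subset P^\udot$ be the termwise subcomplex generated by weight vectors of weights $\not\leq_\La\lambda$; this is a subcomplex because morphisms (the differentials) respect the $\h$-action and the partial order is compatible with the $\La$-action in the sense recorded in the proof of Lemma~\ref{lem::hw::La}. Set $\bi_\lambda^!(P^\udot) := P^\udot/\calF^\lambda P^\udot$, which is a bounded-above complex of projectives in $\famod^{\leq\lambda}$ (each $\imath_\lambda^!\Proj(\mu,k)$ is the projective cover of $L(\mu,k)$ in $\famod^{\leq\lambda}$ when $\mu\leq_\La\lambda$, and is $0$ otherwise). (2) Check that a quasi-isomorphism of complexes of projectives $P^\udot\to Q^\udot$ is a homotopy equivalence, hence induces a homotopy equivalence on the quotients, so $\bi_\lambda^!$ is well-defined on $\calD^-(\famod)$ and lands in $\calD_{\leq\lambda}^-(\famod)$: its cohomology is a quotient of cohomology of $P^\udot$ and one verifies its Jordan--H\"older factors only involve $L(\mu,k)$ with $\mu\leq_\La\lambda$. (3) Establish the adjunction: for $M^\udot\in\calD^-(\famod)$ represented by projectives $P^\udot$ and $N^\udot\in\calD_{\leq\lambda}^-(\famod)$, we have
\[
\mathrm{Hom}_{\calD_{\leq\lambda}^-(\famod)}(\bi_\lambda^!(M^\udot),N^\udot)
= \mathrm{Hom}_{K^-}(P^\udot/\calF^\lambda P^\udot,\, N^\udot)
= \mathrm{Hom}_{K^-}(P^\udot,\, N^\udot)
= \mathrm{Hom}_{\calD^-(\famod)}(M^\udot,\bi_\lambda(N^\udot)),
\]
where the middle equality uses that any chain map $P^\udot\to N^\udot$ automatically annihilates $\calF^\lambda P^\udot$ because every term $N^j$ lies in $\famod^{\leq\lambda}$ (this is exactly the abelian-level argument in the Lemma preceding Diagram~\eqref{eq::diag::fun::La}), and that $\mathrm{Hom}$ in the homotopy category $K^-$ of complexes of projectives computes $\mathrm{Hom}$ in the derived category.

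The main obstacle I anticipate is the homological bookkeeping in step (2): one must show that $\calF^\lambda$ is exact enough on the subcategory of complexes of projectives, i.e.\ that $\calF^\lambda$ applied to an acyclic bounded-above complex of projectives is again acyclic (equivalently, that $\imath_\lambda^!$ has vanishing higher left derived functors \emph{on projectives}, which is automatic, but one needs the quotient complex to have the right cohomology, not just to be a complex). The cleanest route is to avoid derived functors of $\imath_\lambda^!$ altogether and argue purely with homotopy equivalences: two projective resolutions of the same object are homotopy equivalent, $\calF^\lambda$ is an additive functor on complexes hence preserves homotopies, so $\bi_\lambda^!$ is well-defined up to homotopy equivalence. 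The only genuinely new point beyond the stratified case is that $\imath_\lambda^!\Proj(\mu)$ need not be standard-filtered, but this plays no role in the existence of the adjoint — it only matters later when one asks whether the adjoint is the \emph{derived} functor of the abelian $\imath_\lambda^!$, which is precisely the content of condition $(S2)$. I would remark at the end that unwinding the construction shows $\bi_\lambda^!$ is, in general, the \emph{left} derived functor $L\imath_\lambda^!$ only when $\famod$ is stratified, recovering the discussion in the introduction.
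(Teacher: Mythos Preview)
There is a genuine gap: your construction produces the wrong functor. What you build by applying $\imath_\lambda^!$ termwise to a projective resolution is precisely the left derived functor $L\imath_\lambda^!:\calD^-(\famod)\to\calD^-(\famod^{\leq\lambda})$, which is left adjoint to the derived inclusion $\imath_\lambda:\calD^-(\famod^{\leq\lambda})\to\calD^-(\famod)$. But the lemma asks for a left adjoint to $\bi_\lambda:\calD^-_{\leq\lambda}(\famod)\to\calD^-(\famod)$, and $\calD^-(\famod^{\leq\lambda})$ and $\calD^-_{\leq\lambda}(\famod)$ coincide \emph{only} in the stratified case. Your closing remark has this exactly backwards: your $\bi_\lambda^!$ \emph{always} equals $L\imath_\lambda^!$; the question is whether $L\imath_\lambda^!$ is adjoint to $\bi_\lambda$, and in general it is not.

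Both displayed equalities in your step~(3) fail. The middle one assumes every term $N^j$ lies in $\famod^{\leq\lambda}$, but objects of $\calD^-_{\leq\lambda}(\famod)$ are only required to have \emph{cohomology} in $\famod^{\leq\lambda}$. The first one assumes $P^\udot/\calF^\lambda P^\udot$ is $K$-projective in $\famod$; its terms $\imath_\lambda^!\Proj(\mu)$ are projective in $\famod^{\leq\lambda}$ but not in $\famod$. Concretely, your recipe gives $\bi_\lambda^!(\Proj(\lambda))=\Delta(\lambda)$ in degree~$0$, yet the adjunction would force $Ext^i_\famod(\bi_\lambda^!\Proj(\lambda),L(\mu))=Ext^i_\famod(\Proj(\lambda),L(\mu))=0$ for $i>0$ and $\mu\leq\lambda$, whereas $Ext^2_\famod(\Delta(\lambda),L(\mu))$ can be nonzero exactly when the category fails to be stratified (this is the content of condition~$(S2)$ in Theorem~\ref{thm::def::strat}). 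Equivalently, the subcomplex $\calF^\lambda P^\udot$ has vanishing $Hom$ into $\calD^-_{\leq\lambda}$ but not vanishing $RHom$, so it does not lie in the left orthogonal ${}^\perp\calD^-_{\leq\lambda}$.

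The paper's argument avoids this by working directly with the semiorthogonal decomposition in the sense of Bondal--Kapranov: one iteratively constructs a complex $Q^\udot$ whose terms are projectives $\Proj(\nu)$ with $\nu\not\leq\lambda$ --- these \emph{are} in ${}^\perp\calD^-_{\leq\lambda}$ since $RHom(\Proj(\nu),-)=Hom_{\La_0}(L(\nu),-)$ --- together with a map $Q^\udot\to M$ whose cone has cohomology in $\famod^{\leq\lambda}$. The grading bound guarantees the iteration converges. Setting $\bi_\lambda^!(M)$ to be this cone gives the genuine adjoint, and in the non-stratified case $\bi_\lambda^!(\Proj(\lambda))$ is an honest complex with higher cohomology, not $\Delta(\lambda)$.
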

\begin{proof}
This statement follows from the standard technique of admissible subcategories discovered in~\cite{BK}.
Let ${}^{\perp}\calD_{\leq\lambda}(\famod)$ be the left orthogonal to $\calD_{\leq\lambda}(\famod)$ in $\calD(\famod)$.
That is ${}^{\perp}\calD_{\leq\lambda}$ consists of those complexes $C$ yielding the orthogonality condition 
$RHom(C,B)=0$ for all $B\in \calD_{\leq\lambda}$.
In particular, for all $\mu\in\PPdom$ that is not less than or equal to $\lambda$ the projective cover $\Proj(\mu)$ 
belongs to ${}^{\perp}\calD_{\leq\lambda}$, because 
$$
\forall \mu\not\leq\lambda \ \ RHom_{\La}^{\udot}(\Proj(\mu),B) = Hom_{\La_0}(L(\mu),B) = 0 \text{ if } B\in\calD_{\leq\lambda}.
$$
Recall, that $\imath_{\lambda}^{!}$ is the left adjoint functor to the fully faithful inclusion of abelian categories:
$\imath_{\lambda}: \famod^{\leq\lambda} \rightarrow \famod$.
By definition for any module $M$ the module $\imath_{\lambda}^{!}(M)$ is the quotient by the submodule generated by weighted subspaces whose weights
are not less or equal to $\lambda$.
Therefore, there exists a projective module $Q\in\ {}^{\perp}\calD_{\leq\lambda}^{-}$ that surjects onto this submodule.
In other words, $\forall M\in\famod$ there exists an exact sequence $Q^{0}\stackrel{\pi_0}{\to} M \to \imath_{\lambda}^{!}(M)\to 0$ 
with $Q^0$ being a projective module that belongs to ${}^{\perp}\calD_{\leq\lambda}^{-}$.
Denote by $M^{-1}$ the kernel of the map $\pi_0$ and take the corresponding exact sequence for $M^{-1}$:
$Q^{-1}\stackrel{\pi_1}{\to} M^{-1} \to \imath_{\lambda}^{!}(M^{-1})\to 0$. 
Note that if the inner grading of $M$ is bounded from below by $k_0$ (i.e. $M=\oplus_{i\geq k_0} M_i$) then all graded components 
of $M^{-1}$  with degree less than $k_0+1$ are zero.
Therefore, thanks to grading conditions the iteration of this construction is well defined
and we up with a bounded from above complex $(Q^{\udot},\pi_{\ldot})$ whose graded components are finite dimensional,
$Q^{i}$ are projectives modules in $\famod$ orthogonal to $\calD^{-}_{\leq\lambda}$.  
Consequently, $(Q^{\udot},\pi_{\ldot})$ belongs to ${}^{\perp}\calD_{\leq\lambda}^{-}$,
and the homology of the cone of the morphism $(Q^{\ldot},\pi_{\ldot}) \stackrel{\pi_0}{\to} M$ 
is given by $\imath_{\lambda}^{!}(M^{\udot})$ and, consequently, $cone(\pi_0)$ belongs to $\calD_{\leq\lambda}^{-}$.
The same procedure can be repeated if instead of a module $M$ we will consider a bounded from above complex of modules $M^{\udot}\in\calD(\famod)$,
because the functor $\imath_{\lambda}^{!}$ is an additive and right exact functor.
Thus with any element $X\in\calD(\famod)$ we associate an exact triangle
$Q\stackrel{\pi}{\to} X\to cone(\pi)$ such that $Q\in\ {}^{\perp}\calD_{\leq\lambda}$ and $cone(\pi)\in\calD_{\leq\lambda}$.
Following the standard definitions of algebraic geometry~\cite{BK} the category $\calD_{\leq\lambda}$ is called left admissible 
and the assignment $X\mapsto cone(\pi)$ defines a left adjoint functor $\bi_{\lambda}^{!}$.

Note that if the graded components $M_i$ of a module $M\in\famod(\La)$ are zero $\forall i<0$ then 
the graded components of the $k$-th homology of an object $\bi_{\lambda}^{!}(M)$ are zero $\forall i<k$.
Therefore, each graded component of the homology of a complex $\bi_{\lambda}^{!}(M)$ is finite-dimensional.
\end{proof}
The following conjecture generalizes the condition $(h3)$ of a highest weight category.
\begin{conjecture}
\label{conj::indep::order}
For $\La=\g\otimes\C[x,\xi]$ and a pair of incomparable integral dominant weights $\lambda,\mu\in\PPdom$ one has a natural isomorphism
$$
\bi_{\lambda}(\bi_{\lambda}^{!}(\Proj(\lambda))) \cong \bi_{\lambda,\mu}(\bi_{\lambda,\mu}^{!}(\Proj(\lambda)))
$$
where $\bi_{\lambda,\mu}^{!}$ is a left adjoint functor to the inclusion functor 
$\bi_{\lambda,\mu}: \calD_{\leq \{\lambda,\mu\}}^{-} \rightarrow \calD^{-}$
\end{conjecture}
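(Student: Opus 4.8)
The plan is to reduce the statement to a single semiorthogonality property of $\calD^{-}(\famod)$ relative to the subcategories $\calD_{\leq\lambda}^{-}(\famod)$, $\calD_{\leq\mu}^{-}(\famod)$, $\calD_{\leq\{\lambda,\mu\}}^{-}(\famod)$, and then to recognise this property as the derived counterpart of the abelian condition $(h3)$ established in Lemma~\ref{lem::hw::La}. I begin with the elementary input. Since $\Proj(\lambda)=Ind_{\La_{0}}^{\La}L(\lambda)$ is projective, for any complex $B$ the complex $RHom_{\calD^{-}(\famod)}(\Proj(\lambda),B)$ has cohomology $H^{i}=Hom_{\La_{0}}(L(\lambda),H^{i}(B)_{0})$; if $B\in\calD_{\leq\mu}^{-}(\famod)$ then every weight of every $H^{i}(B)$ is $\leq_{\La}\mu$, hence — because $\lambda$ and $\mu$ are incomparable, so $\lambda\not\leq_{\La}\mu$ — there is no weight $\lambda$ vector in $H^{i}(B)_{0}$ and $RHom(\Proj(\lambda),B)=0$. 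Thus $\Proj(\lambda)\in{}^{\perp}\calD_{\leq\mu}^{-}(\famod)$ (this is the derived shadow of $\imath_{\mu}^{!}(\Proj(\lambda))=0$). Running the construction of Lemma~\ref{lem::adjoint} for the set $\{\lambda,\mu\}$ produces a distinguished triangle $Q'\to\Proj(\lambda)\to Y\to Q'[1]$ with $Y:=\bi_{\lambda,\mu}(\bi_{\lambda,\mu}^{!}(\Proj(\lambda)))$ and $Q'\in{}^{\perp}\calD_{\leq\{\lambda,\mu\}}^{-}(\famod)$; since $\calD_{\leq\mu}^{-}\subset\calD_{\leq\{\lambda,\mu\}}^{-}$ we also have $Q'\in{}^{\perp}\calD_{\leq\mu}^{-}$, and as ${}^{\perp}\calD_{\leq\mu}^{-}(\famod)$ is a triangulated subcategory containing $Q'$ and $\Proj(\lambda)$ it contains $Y$ as well. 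Hence $Y\in\calD_{\leq\{\lambda,\mu\}}^{-}(\famod)\cap{}^{\perp}\calD_{\leq\mu}^{-}(\famod)$.

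The reduction is then: it suffices to establish the inclusion
\[
\calD_{\leq\{\lambda,\mu\}}^{-}(\famod)\cap{}^{\perp}\calD_{\leq\mu}^{-}(\famod)\ \subset\ \calD_{\leq\lambda}^{-}(\famod). \qquad (\ast)
\]
Indeed, $(\ast)$ gives $Y\in\calD_{\leq\lambda}^{-}(\famod)$; since $\bi_{\lambda}$ is fully faithful this forces $\bi_{\lambda}(\bi_{\lambda}^{!}(Y))\cong Y$. On the other hand $Q'\in{}^{\perp}\calD_{\leq\{\lambda,\mu\}}^{-}\subset{}^{\perp}\calD_{\leq\lambda}^{-}$, and for any $Q\in{}^{\perp}\calD_{\leq\lambda}^{-}(\famod)$ one has $\bi_{\lambda}(\bi_{\lambda}^{!}(Q))=0$: the unit $Q\to\bi_{\lambda}(\bi_{\lambda}^{!}(Q))$ vanishes by orthogonality, so the defining triangle of Lemma~\ref{lem::adjoint} splits and $\bi_{\lambda}(\bi_{\lambda}^{!}(Q))$ lies simultaneously in $\calD_{\leq\lambda}^{-}$ and in ${}^{\perp}\calD_{\leq\lambda}^{-}$, forcing it to be zero. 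Applying the triangulated functor $\bi_{\lambda}\bi_{\lambda}^{!}$ to the triangle $Q'\to\Proj(\lambda)\to Y\to Q'[1]$ now yields $\bi_{\lambda}(\bi_{\lambda}^{!}(\Proj(\lambda)))\cong\bi_{\lambda}(\bi_{\lambda}^{!}(Y))\cong Y=\bi_{\lambda,\mu}(\bi_{\lambda,\mu}^{!}(\Proj(\lambda)))$, which is exactly the asserted isomorphism. So the whole problem is $(\ast)$.

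To prove $(\ast)$ one is really after a derived enhancement of condition $(h3)$ of a highest weight category. The natural line of attack is through the explicit model of Lemma~\ref{lem::adjoint}: $Y$ is the cone of a map $R^{\udot}\to\Proj(\lambda)$ from a bounded-above complex $R^{\udot}$ whose terms are direct sums of $\Proj(\nu)$ with $\nu\not\leq_{\La}\lambda$ and $\nu\not\leq_{\La}\mu$, and $(\ast)$ is equivalent to the statement that every cohomology module $H^{i}(Y)$ — which is $\imath_{\lambda,\mu}^{!}$ applied to the $i$-th syzygy of the submodule $(\Proj(\lambda))^{\not\leq_{\La}\{\lambda,\mu\}}$ — has no composition factor $L(\rho,k)$ with $\rho\leq_{\La}\mu$ and $\rho\not\leq_{\La}\lambda$. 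One would try to extract this from what is already available: the facts that $Y\in{}^{\perp}\calD_{\leq\mu}^{-}(\famod)$, that the weights of $\Proj(\lambda)$ which are $>_{\La}\lambda$ are automatically incomparable to $\mu$ (because $\lambda$ and $\mu$ are), and that, for $\g\otimes\C[x,\xi]$, the order $\leq_{\La}$ coincides with the usual dominance order so the syzygies can be analysed through the PBW structure of $U(\g\otimes\C[x,\xi]_{+})$. I would not expect $(\ast)$ to follow from a formal ``recollement'' statement asserting that the pair $\{\calD_{\leq\lambda}^{-},\calD_{\leq\mu}^{-}\}$ covers $\calD_{\leq\{\lambda,\mu\}}^{-}$ compatibly with orthogonal complements, since already at the abelian level $\famod^{\leq\{\lambda,\mu\}}$ is strictly larger than the subcategory built from $\famod^{\leq\lambda}$ and $\famod^{\leq\mu}$ by finitely many extensions.

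This last point is where I expect the real difficulty, and it is presumably why the statement is recorded as a conjecture. If $\famod(\La)$ were stratified, $(\ast)$ would be automatic: by Theorem~\ref{thm::def::strat} the functor $\bi_{\lambda}^{!}$ would be the left derived functor of the right exact $\imath_{\lambda}^{!}$, one would have $\bi_{\lambda}^{!}(\Proj(\lambda))=\Delta(\lambda)$ concentrated in degree $0$ and already inside $\famod^{\leq\lambda}$, and $(\ast)$ would reduce to condition $(h3)$ of Lemma~\ref{lem::hw::La}. But $\famod(\g\otimes\C[x,\xi])$ is not stratified (Theorem~\ref{thm::g_A::stratified}), so $\bi_{\lambda}^{!}(\Proj(\lambda))$ is a genuine complex whose higher cohomology records the failure of BGG reciprocity, and a priori nothing prevents these higher syzygy modules from acquiring composition factors $L(\rho,k)$ with $\rho\leq_{\La}\mu$ but $\rho\not\leq_{\La}\lambda$ — such factors do occur abstractly inside $\Proj(\lambda)$. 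Ruling them out, or showing they cancel in the $\Proj(\nu)$-multiplicity complexes attached to $R^{\udot}$, seems to require genuine input on $\Proj(\lambda)=Ind_{\g}^{\g\otimes\C[x,\xi]}L(\lambda)$ specific to currents: a weight-combinatorial argument exploiting $\lambda\not\leq_{\La}\mu$ and $\mu\not\leq_{\La}\lambda$ together with the grading. Absent such control — or a new general criterion playing the role here that Theorem~\ref{thm::def::strat} plays in the stratified case — the reduction above is the furthest I would be able to push.
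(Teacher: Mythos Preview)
The statement is recorded in the paper as a \emph{conjecture}; no proof is offered. Immediately after stating it the author writes ``We avoid this problem using the following choice. Let us fix a total linear ordering \ldots'' and then proves Theorem~\ref{thm::Mac::Der::gen} under that hypothesis. So there is no argument in the paper to compare your proposal against.

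Your proposal is not a proof either, and you say so yourself. What you have done is isolate a sufficient condition $(\ast)$ and argue, correctly, that it implies the conjecture via standard manipulation of the adjunction triangles produced by Lemma~\ref{lem::adjoint}: the key steps --- that $\Proj(\lambda)\in{}^{\perp}\calD_{\leq\mu}^{-}$ because $\lambda\not\leq_{\La}\mu$, that $Q'\in{}^{\perp}\calD_{\leq\lambda}^{-}$ forces $\bi_{\lambda}^{!}(Q')=0$, and hence that $\bi_{\lambda}\bi_{\lambda}^{!}(\Proj(\lambda))\cong\bi_{\lambda}\bi_{\lambda}^{!}(Y)$ --- are all sound. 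Your identification of $(\ast)$ as a derived analogue of condition $(h3)$ which, unlike its abelian counterpart in Lemma~\ref{lem::hw::La}, does not follow formally once stratification fails, is precisely the obstruction the author is flagging by leaving the statement open. One minor remark: the inclusion $(\ast)$ of full subcategories is a priori stronger than what you actually need, which is only the single membership $Y\in\calD_{\leq\lambda}^{-}$ for the specific object $Y=\bi_{\lambda,\mu}(\bi_{\lambda,\mu}^{!}(\Proj(\lambda)))$; the weaker statement might be more accessible through the explicit syzygy analysis of $\Proj(\lambda)=Ind_{\g}^{\g\otimes\C[x,\xi]}L(\lambda)$ that you sketch in your last paragraph.
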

We avoid this problem using the following choice.
Let us fix a total linear ordering $\leq$ on the set of dominant weights $\PPdom$ which dominates the partial ordering $\leq_{\La}$.

\begin{theorem}
\label{thm::Mac::Der::gen}
The set of images $\bi_{\lambda}^{!} \Proj(\lambda)$ of projective covers of irreducibles $L(\lambda)=L(\lambda,0)$ 
form an orthogonal basis with respect to $Ext$-pairing~\eqref{eq::paring::derived}:
$$
Ext^{\udot}_{\La\ttt mod}(\bi_{\lambda}(\bi_{\lambda}^{!}( \Proj(\lambda)))\{k\}, (\bi_{\mu}(\bi_{\mu}^{!} \Proj(\mu)))^{\dual}) = 0, 
\text{ if } \lambda\neq\mu\ \& \ k\geq 0.
$$
The graded Euler characteristic $\chi_{q\ttt\La_0}(\bi_{\lambda}^{!} \Proj(\lambda))$ is well defined and is equal to the dual Macdonald polynomial.
That is:
\begin{equation}
\label{eq::gen::const::term}
\chi_{q\ttt\La_0}(\bi_{\lambda}^{!} \Proj(\lambda)) = z_{\lambda}(q) P_{\lambda}(e^{\alpha},\alpha\in \PP, q),
\quad \text{ where } z_{\lambda}(q)^{-1} := \langle P_{\lambda}, P_{\lambda}\rangle_{\La}
\end{equation}
\end{theorem}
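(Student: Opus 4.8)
The plan is to mimic the proof of Theorem~\ref{thm::Mac::Delta} (i.e. the stratified case), replacing the standard module $\Delta(\lambda)$ by the derived object $\bi_\lambda(\bi_\lambda^!(\Proj(\lambda)))$ and the pairing on the Grothendieck group by the extended pairing~\eqref{eq::paring::derived} on $\calD^-(\famod)$. The key structural input is that, by Lemma~\ref{lem::adjoint}, for any $X\in\calD^-(\famod)$ there is a functorial exact triangle $Q_X\to X\to \bi_\lambda(\bi_\lambda^!(X))\xrightarrow{+1}$ with $Q_X\in{}^\perp\calD^-_{\leq\lambda}(\famod)$; in particular, when $X=\Proj(\nu)$ is a projective cover, the construction in the proof of Lemma~\ref{lem::adjoint} produces $Q_{\Proj(\nu)}$ as a bounded-above complex of \emph{projectives} $\Proj(\mu)$ with $\mu\not\leq\lambda$. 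This is precisely the derived analogue of the filtration one has in the stratified situation.

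The first step is to establish the orthogonality statement. Fix $\lambda\neq\mu$; by symmetry of the $\dual$-twisted pairing (the auto-equivalence argument already used in the proof that~\eqref{eq::ext::paring} is symmetric) we may assume $\mu<\lambda$ in the chosen total ordering, hence $\mu\not\geq_\La\lambda$, so $\lambda$ is not $\leq_\La\mu$, which means $L(\lambda,0)$ does \emph{not} lie in $\famod^{\leq\mu}$, equivalently $\bi_\mu^!(\Proj(\lambda))$... — more carefully: I would compute
$$
Ext^{\udot}_{\La\ttt mod}\!\bigl(\bi_\lambda(\bi_\lambda^!\Proj(\lambda))\{k\},(\bi_\mu(\bi_\mu^!\Proj(\mu)))^{\dual}\bigr)
= RHom\bigl(\bi_\lambda^!\Proj(\lambda)\{k\},\,\bi_\lambda^!\bigl[(\bi_\mu(\bi_\mu^!\Proj(\mu)))^{\dual}\bigr]\bigr)
$$
using the adjunction $(\bi_\lambda^!,\bi_\lambda)$ and the fact that $(\bi_\mu(\bi_\mu^!\Proj(\mu)))^{\dual}$ has all its cohomology in $\famod^{\leq -\omega_0\mu}$. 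Since $\lambda\neq\mu$ and the ordering is total, one of the two objects has cohomology supported in weights strictly incompatible with the truncation defining the other; concretely, $(\bi_\mu(\bi_\mu^!\Proj(\mu)))^{\dual}$ lies in $\calD^-_{\leq\mu'}$ with $\mu'=-\omega_0\mu$, and applying the triangle $Q\to \Proj(\lambda)\to\bi_\lambda(\bi_\lambda^!\Proj(\lambda))$ reduces the $Ext$ to $RHom(\Proj(\lambda),(\cdots)^{\dual})$ minus $RHom(Q,(\cdots)^{\dual})$; the first is $Hom_{\La_0}(L(\lambda),(\cdots)^{\dual}_{-k})$ which can be nonzero only if $\lambda$ is a weight of a costandard-type object of label $\mu$, i.e. only if $\lambda\leq_\La\mu$, contradicting $\mu<\lambda$. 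The contribution of $Q$ is handled the same way since $Q$ is built from $\Proj(\nu)$ with $\nu\not\leq_\La\lambda$. Assembling these vanishings over the two orderings of $\lambda,\mu$ gives the claimed orthogonality for all $k\geq 0$.

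The second step, the Euler characteristic identity~\eqref{eq::gen::const::term}, is then purely formal given the orthogonality. The graded Euler characteristic $\chi_{q\ttt\La_0}(\bi_\lambda^!\Proj(\lambda))$ is well defined because, as noted at the end of the proof of Lemma~\ref{lem::adjoint}, each graded component of the homology of $\bi_\lambda^!(M)$ for $M\in\famod$ bounded below is finite-dimensional. Pass to the Grothendieck ring: the triangle $Q\to\Proj(\lambda)\to\bi_\lambda(\bi_\lambda^!\Proj(\lambda))$ shows $[\bi_\lambda(\bi_\lambda^!\Proj(\lambda))]=[\Proj(\lambda)]-[Q]$, and $[Q]$ is a $\Z[[q,q^{-1}]$-combination of $[\Proj(\mu)]$ with $\mu\not\leq_\La\lambda$; dualizing, $\chi(\bi_\lambda^!\Proj(\lambda))$ equals $\chi(\Proj(\lambda))$ modulo characters of modules labelled by weights $\not\leq_\La\lambda$, so its expansion in the $\{m_\mu\}$ or $\{s_\mu\}$ basis is upper-triangular with $m_\lambda$-coefficient equal to the generating series $z_\lambda(q):=\sum_{i,j}q^i(-1)^j[\bi_\lambda(\bi_\lambda^!\Proj(\lambda))_{i,j}:L(\lambda)]$ (which is $1$ plus higher-degree terms). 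Combined with the orthogonality from Step~1 and the uniqueness statement in Definition~\ref{def::poly::Macdonald}, this forces $\chi_{q\ttt\La_0}(\bi_\lambda^!\Proj(\lambda))=z_\lambda(q)\,P_\lambda$. Finally, $\langle\bi_\lambda(\bi_\lambda^!\Proj(\lambda)),\bi_\lambda(\bi_\lambda^!\Proj(\lambda))^{\dual}\rangle_\La$ computes, via the self-$Ext$ which by the same adjunction equals $RHom$ into an object with top weight $\lambda$, to $z_\lambda(q)$; comparing with $\langle z_\lambda P_\lambda, z_\lambda P_\lambda\rangle_\La=z_\lambda(q)^2\langle P_\lambda,P_\lambda\rangle_\La$ yields $z_\lambda(q)^{-1}=\langle P_\lambda,P_\lambda\rangle_\La$.

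\textbf{Main obstacle.} The delicate point is Step~1: making precise the reduction of the derived $Ext$ to a weight-support statement, i.e. controlling $\bi_\lambda^!$ applied to $(\bi_\mu(\bi_\mu^!\Proj(\mu)))^{\dual}$ and checking that the ``off-diagonal'' contributions of the orthogonal complex $Q$ genuinely vanish in \emph{all} cohomological degrees, not just degree $0$. This is exactly where one uses that $Q$ is a complex of projectives $\Proj(\nu)$ with $\nu\not\leq_\La\lambda$ together with the truncation property $RHom(\Proj(\nu),B)=Hom_{\La_0}(L(\nu),B)$ concentrated in degree $0$; the bookkeeping is routine once the triangle from Lemma~\ref{lem::adjoint} is invoked, but it is the step that requires care, and it is also the place where the choice of a \emph{total} refinement of $\leq_\La$ (rather than appealing to the unproved Conjecture~\ref{conj::indep::order}) is essential.
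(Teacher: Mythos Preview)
Your proposal is correct and follows essentially the same route as the paper's proof: reduce the orthogonality to $RHom(\Proj(\lambda),-)$ via adjunction/the defining triangle, use that the second argument has cohomology supported in weights $\leq\mu$, and then read off the Macdonald identification from upper-triangularity plus orthogonality exactly as in Theorem~\ref{thm::Mac::Delta}. Two small clean-ups: first, $(\bi_\mu(\bi_\mu^!\Proj(\mu)))^{\dual}$ has cohomology in $\famod^{\leq\mu}$, not $\famod^{\leq -\omega_0\mu}$, since the twisted dual $\dual$ preserves $\La_0$-characters (you are confusing it with the plain linear dual); second, the upper-triangularity of $\chi(\bi_\lambda^!\Proj(\lambda))$ follows immediately from the fact that its cohomology lies in $\famod^{\leq\lambda}$, so there is no need to route through $[\Proj(\lambda)]-[Q]$. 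The paper organizes Step~1 slightly differently---it treats $\lambda>\mu$ by the adjunction you wrote and $\lambda<\mu$ by dualizing to use injectivity of $\Proj(\mu)^{\dual}$ rather than invoking symmetry---but this is only a cosmetic difference.
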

\begin{proof}
Recall, that the duality functor $\dual$ maps projective objects to injectives and, in particular, $\Proj(\mu)^{\dual}$
is an injective hull of $L(\mu)$. Moreover, the functor $(\bi_{\mu}^{!})^{\dual}: \calD(\famod^{op}) \rightarrow \calD_{\leq\lambda}(\famod^{op})$
is the right adjoint to the embedding functor $\calD_{\leq\lambda}^{+}(\famod^{op}) \hookrightarrow \calD^{+}(\famod^{op})$.

If $\lambda > \mu$ we have $\bi_{\lambda} \circ \bi_{\mu} = \bi_{\lambda}$ and
\begin{multline}
\label{eq::ext::comp}
Ext^{\udot}_{\La\ttt mod}\left(\bi_{\lambda}(\bi_{\lambda}^{!} \Proj(\lambda))\{k\}, \bi_{\mu}(\bi_{\mu}^{!} \Proj(\mu))^{\dual}\right)=
Ext^{\udot}_{\La\ttt mod}\left(\bi_{\lambda}(\bi_{\lambda}^{!} \Proj(\lambda))\{k\}, \bi_{\lambda}((\bi_{\mu}^{!})^{\dual} \Proj(\mu)^{\dual})\right) =
\\
=Ext^{\udot}_{\leq\lambda}\left(\bi_{\lambda}^{!}\Proj(\lambda)\{k\},(\bi_{\mu}^{!})^{\dual} \Proj(\mu)^{\dual}\right) =
Ext^{\udot}_{\La\ttt mod}\left(\Proj(\lambda,k),\bi_{\lambda}(\bi_{\mu}^{!} \Proj(\mu))^{\dual}\right) = \\
=Hom_{graded-\La_{0}}\left(L(\lambda,k),\bi_{\mu}((\bi_{\mu}^{!})^{\dual} \Proj(\mu)^{\dual})\right) =0.
\end{multline}
If $\lambda <\mu$ then the vanishing of derived homomorphisms follows from the injectivity of $\Proj(\mu)^{\dual}$.
We avoid the situation that $\lambda$ and $\mu$ are incomparable by a choice of a total linear ordering on the weights.

The cohomology of $\bi_{\lambda}^{!} \Proj(\lambda)$ belongs to the category $\famod^{\leq\lambda}$ 
that is the multiplicity of $L(\mu)$ in $\bi_{\lambda}^{!} \Proj(\lambda)$ may differ from zero only if $\mu\leq\lambda$.
Denote by $z_{\lambda}(q)$ the graded Euler characteristic of the multiplicity of $L(\lambda)$ in $\bi_{\lambda}^{!} \Proj(\lambda)$.
One repeats Computation~\eqref{eq::ext::comp} for $\lambda=\mu$ and get the following:
$$
\chi_{q^{-1}}\left(Ext^{\udot}_{\La\ttt mod}\left(\bi_{\lambda}^{!} \Proj(\lambda), (\bi_{\lambda}^{!} \Proj(\lambda))^{\dual}\right)\right)=
\chi_{q^{-1}}\left(Hom_{\La_0}\left(L(\lambda),(\bi_{\lambda}^{!} \Proj(\lambda))^{\dual}\right)\right) = 
z_{\lambda}(q)
$$
First, one have to notice, that $z_{\lambda}(q)$ is a power series in $q$ and $z_{\lambda}(0)=1$.
Consequently, the polynomials 
$$f_\lambda:=\frac{\chi_{q\ttt\La_0}(\bi_{\lambda}^{!} \Proj(\lambda))}{z_{\lambda}(q)}$$ are well defined and 
satisfy both properties of generalized Macdonald polynomials from Definition~\ref{def::poly::Macdonald} and, thus, have to coincide with them.
Moreover,
$$
\langle f_\lambda, f_\lambda \rangle_{\La} = 
\langle  \frac{\chi_{q\ttt\La_0}(\bi_{\lambda}^{!} \Proj(\lambda))}{z_{\lambda}(q)},
\frac{\chi_{q\ttt\La_0}(\bi_{\lambda}^{!} \Proj(\lambda))}{z_{\lambda}(q)} \rangle_{\La} = 
\frac{\chi_{q^{-1}}\left(Ext^{\udot}_{\La\ttt mod}\left(\bi_{\lambda}^{!} \Proj(\lambda), (\bi_{\lambda}^{!} \Proj(\lambda))^{\dual}\right)\right)}
{z_{\lambda}(q)^{2}}=
\frac{z_{\lambda}(q)}{z_{\lambda}(q)^{2}} = \frac{1}{z_{\lambda}(q)}.
$$
\end{proof}

\begin{remark}
 The category $\famod$ is stratified if and only if the derived category $\calD_{\leq\lambda}(\famod)$ is isomorphic to the derived 
category $\calD(\famod^{\leq\lambda})$.
Therefore, if the category $\famod$ is stratified the left adjoint functor $\bi_{\lambda}^{!}$ between derived categories
becomes the left derived of the corresponding adjoint between abelian categories:
$$
\bi_{\lambda}^{!} = L^{\udot}\imath_{\lambda} : \calD(\famod) \rightarrow \calD(\famod^{\leq\lambda}). 
$$
In particular, since $\Proj(\lambda)$ is projective its image does not have higher derived images:
$$\bi_{\lambda}^{!}(\Proj(\lambda))= L^{\udot}\imath_{\lambda}(\Proj(\lambda)) = L^{0}\imath_{\lambda}(\Proj(\lambda)) = \Delta(\lambda).$$
This conclusion agrees with the statement of Theorem~\ref{thm::Mac::Delta}.
\end{remark}

\end{document}